\newtheorem{thm}{Theorem}[section]
\newtheorem{lem}[thm]{Lemma}
\newtheorem{cor}[thm]{Corollary}
\newtheorem{prop}[thm]{Proposition}
\theoremstyle{definition}
\theoremstyle{remark}
\newtheorem{rem}[thm]{Remark}
\numberwithin{equation}{section}
\newcommand{\rmnum}[1]{\romannumeral #1}
\newcommand{\Rmnum}[1]{\expandafter\@slowromancap\romannumeral #1@}
\newcommand{\be}{\mathbf{e}}
\newcommand{\br}{\mathbf{r}}
\newcommand{\bu}{\mathbf{u}}
\newcommand{\bv}{\mathbf{v}}
\newcommand{\bw}{\mathbf{w}}
\newcommand{\bx}{\mathbf{x}}
\newcommand{\R}{\mathbb{R}}
\newcommand{\N}{\mathbb{N}}
\newcommand{\Z}{\mathbb{Z}}
\newcommand{\Q}{\mathbb{Q}}
\newcommand{\til}{\widetilde}
\newcommand{\dd}{\; \mathrm{d}}
\newcommand{\cov}{{\mathrm {cov}}}
\newcommand{\vol}{{\mathrm {vol}}}
\newcommand{\DI}{\mathrm{DI}}
\newcommand{\veps}{\varepsilon}
\begin{document}

\title{Hausdorff dimension of   weighted singular vectors in $\R^2$}


\author{Lingmin Liao}
\address{LAMA UMR 8050 CNRS, Universit\'e Paris-Est Cr\'eteil,
61 Avenue du G\'en\'eral de Gaulle, 94010 Cr\'eteil Cedex, France}
\email{lingmin.liao@u-pec.fr}

\author{Ronggang Shi}
\address{Shanghai Center for Mathematical Sciences, Fudan University, Shanghai 200433, PR China}
\email{ronggang@fudan.edu.cn}
\thanks{}

\author{Omri N. Solan}
\address{School of Mathematical Sciences, Tel Aviv University, Tel Aviv 69978, Israel}
\email{omrisola@mail.tau.ac.il}

\author{Nattalie Tamam}
\address{School of Mathematical Sciences, Tel Aviv University, Tel Aviv 69978, Israel}
\email{natalita@post.tau.ac.il}

\subjclass[2010]{Primary   11J13; Secondary 11K55, 37A17.}

\date{}


\keywords{Diophantine approximation,  weighted singular vector, Hausdorff dimension, homogeneous dynamics}

\begin{abstract}
Let $w=(w_1, w_2)$ be a pair of positive  real numbers with $w_1+w_2=1$ and $w_1\ge w_2$. 	
We show that the set of $w$-weighted singular vectors in $\R^2$ has Hausdorff dimension 
$2- \frac{1}{1+w_1}$. This extends the previous work of Yitwah Cheung on 
 the Hausdorff dimension of the usual  (unweighted) singular vectors in $\R^2$. 
\end{abstract}

\maketitle

\markright{}
\markleft{}

\section{Introduction}

 Let  $w=(w_1,w_2)$    
be a pair of positive  real numbers such that $w_1+w_2=1$.
Dirichlet's theorem with weight $w$
 (see \cite[Chapter \Rmnum{2}]{schmidt})\footnote{The form we state below is not explicitly stated 
 	in \cite{schmidt}, but it is a special case of    \cite[Theorem \Rmnum{2}.2C]{schmidt}.}
 states that for all $x=(x_1, x_2)\in \R^2$ and  $T> 1$ there is 
$(p, q)=(p_1, p_2, q)\in \Z^2\times \Z$ such that 
\begin{align*}
\left\{ 
\begin{array}{rl}
|qx_1-p_1|\hskip -6pt &< T^{-w_1} \\
|qx_2-p_2| \hskip -6pt  &< T^{-w_2} \\
0<q \hskip -6pt &\le  T
\end{array}
\right.
.
\end{align*}
There are different classes of vectors in $\R^2$ with more elaborate
Diophantine properties, e.g.~badly approximable vectors, Dirichlet's improvable 
vectors and singular vectors. 
Usually these sets have zero Lebesgue measure. 
The estimation of the size of them 
has a long history and is still  fast developing  in recent years, see 
e.g.~\cite{a1, a2, bpv, b15, c11, cc}.

A vector $x=(x_1, x_2)\in \R^2$ is said to be   $w$-singular if for every $\varepsilon>0$ there
exists $T_0>1$ such that for all $T>T_0$ the system of inequalities
\begin{align}\label{eq;improve}
\left\{ 
\begin{array}{rl}
|qx_1-p_1| \hskip -6pt &<\varepsilon^{w_1} T^{-w_1} \\
|qx_2-p_2| \hskip -6pt &<\varepsilon^{w_2} T^{-w_2} \\
0<q \hskip -6pt &< T
\end{array}
\right.
\end{align}
admits an integer solution $(p,q)\in \Z^2\times \Z$.
The set of $w$-singular vectors is denoted by  $\mathrm{Sing}(w)$. 
A vector $x\in \R^2$ is said to be singular if 
it is 
 $w$-singular  in the case where $w$ is unweighted, i.e.~when $w_1=w_2=\frac{1}{2}$.  

It is proved by Cheung \cite{c11} that the Hausdorff dimension of the set of  singular vectors in $\R^2$ is
$\frac{4}{3}$. Here and hereafter the Hausdorff dimension of a subset of $\R^d$ ($d\in \N:=\{1, 2, \ldots\}$) is with respect 
to the usual Euclidean metric.  Recently, Cheung and Chevallier \cite{cc} extended this result 
to   $\R^d$   $(d\ge 2)$ and proved that the set of singular vectors in $\R^d$ has Hausdorff 
dimension $\frac{d^2}{d+1}$. Recall that in  $\R$ only rational numbers are singular, so 
we understand the Hausdorff dimension of the set of  singular vectors in all Euclidean spaces.

The aim of this paper is to calculate  the Hausdorff dimension of the set  
of $w$-singular vectors in $\mathbb{R}^2$. 
\begin{thm}\label{thm;main}
	Suppose $w=(w_1, w_2)$ where  $w_1\ge w_2>0$ and $w_1+w_2=1$. Then  the Hausdorff dimension of $\mathrm{Sing}(w)$
	is 
	$2- \frac{1}{1+w_1}$.
\end{thm}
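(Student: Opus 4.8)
The plan is to transfer the problem to homogeneous dynamics via the Dani correspondence and then bound $\dim_H\mathrm{Sing}(w)$ from above and below separately. Put $g_t=\diag(e^{w_1 t},e^{w_2 t},e^{-t})$ and $u_x=\left(\begin{smallmatrix}1&0&x_1\\0&1&x_2\\0&0&1\end{smallmatrix}\right)$, and work in $X=\SL_3(\R)/\SL_3(\Z)$. Reformulating \eqref{eq;improve} in the variable $T=e^t$ (and harmlessly rescaling $\vre$) and using Mahler's compactness criterion, one checks that $x\in\mathrm{Sing}(w)$ if and only if the orbit $t\mapsto g_tu_x\Z^3$ is divergent in $X$. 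A short vector of $g_tu_x\Z^3$ must be of the form $g_t(qx_1-p_1,qx_2-p_2,q)$ with $q\ge1$; writing $\delta_{n,i}=|q_nx_i-p_{n,i}|$, such a vector has norm $<\vre$ precisely for $t$ in the interval $\bigl(\log(q_n/\vre),\,\min_i\tfrac1{w_i}\log(\vre/\delta_{n,i})\bigr)$. Hence $x\in\mathrm{Sing}(w)$ amounts to saying that, for every $\vre>0$, there is a sequence of rationals $p_n/q_n$ with $q_n\to\infty$ whose associated intervals cover a half-line, which is equivalent to the overlap relations $\delta_{n,i}\le\vre^{1+w_i}q_{n+1}^{-w_i}$ for $i=1,2$. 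This combinatorial skeleton, together with the fact that the binding direction is $i=1$ because $w_1\ge w_2$, is the backbone of both estimates, and it is where the exponent $\tfrac1{1+w_1}$ ultimately comes from.

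For the upper bound I would fix $\vre>0$ and cover $\mathrm{Sing}(w)$ by the nested anisotropic rectangles determined by admissible chains, the level-$n$ rectangle around $p_n/q_n$ having sidelengths $\asymp\vre^{1+w_i}q_{n+1}^{-w_i}/q_n$; subdividing into near-squares, summing the resulting $s$-dimensional sums over all chains, then optimizing $s$ and letting $\vre\to0$, should give $\dim_H\mathrm{Sing}(w)\le 2-\tfrac1{1+w_1}$. For the lower bound I would build inside $\mathrm{Sing}(w)$ a self-affine Cantor set $K$ realizing the skeleton with parameters $\vre_n\downarrow0$ chosen to decay slowly enough that each point of $K$ is genuinely $w$-singular, yet fast enough that each surviving rectangle spawns many pairwise disjoint children; placing the uniform measure $\mu$ on $K$, establishing $\mu(B(x,\rho))\le C\rho^s$ across all relevant scales, and invoking the mass distribution principle would then yield $\dim_H K\ge 2-\tfrac1{1+w_1}$. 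A useful consistency check is that for $w_1=w_2=\tfrac12$ the formula returns Cheung's value $\tfrac43$; an alternative route, through the parametric geometry of numbers and the variational principle for the Hausdorff dimension of divergent orbits, should also deliver the same number.

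The real work, and the place where one must go well beyond the naive picture, is in matching these two bounds. Branching only at the best-approximation denominators does not suffice: the relation tying $\delta_{n,i}$ to $q_{n+1}$ couples the size of the level-$n$ piece to the choice of $q_{n+1}$, and forces $q_{n+1}$ to be much larger than $q_n$, so that a crude implementation produces a Cantor set whose dimension is strictly below $2-\tfrac1{1+w_1}$. One must instead exploit the finer self-similar structure of the set of divergent orbits — ranges of intermediate denominators, and the interplay between the rank-one and rank-two parts of the cusp of $X$ — exactly as Cheung does in the unweighted case. The genuinely new difficulty here is that the flow $g_t$ is anisotropic, so the self-affine construction must contract the two coordinate axes at the correct relative rates (dictated by $w_1$ versus $w_2$) and the branching must be tuned so that the local dimension of $\mu$ equals, rather than falls short of, $2-\tfrac1{1+w_1}$; it is in pinning down this optimum that the larger weight $w_1$, and not an average of $w_1$ and $w_2$, enters.
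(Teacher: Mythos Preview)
Your outline tracks the paper's overall strategy --- Dani correspondence, self-affine cover via best approximations for the upper bound, self-affine Cantor construction for the lower bound --- and you correctly diagnose the central obstacle: the anisotropy of $g_t$ forces genuinely self-affine (not self-similar) structures, and a naive implementation undershoots the target. But what you have written is a plan rather than a proof; the two decisive mechanisms that resolve the difficulty you yourself flag are absent.

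For the upper bound, the paper does not simply chain consecutive best approximates. It defines a two-step relation: from $\bu$ one first passes to any $\bv$ in a set $D(\bu,\varepsilon)$ of nearby rationals lying in the \emph{same} hyperplane $H_\bu\subset\R^3$ (these are the ``intermediate denominators'' you allude to), and only then jumps to some $\bw\in E(\bu,\bv,\varepsilon)$ lying \emph{outside} $H_\bu$. The in-plane sum $\sum_{\bv\in D}(|\bu|/|\bv|)^t$ is bounded by $C'/(t-2)$, while the out-of-plane sum $\sum_{\bw\in E}(|\bv|/|\bw|)^t$ carries an extra factor of $\varepsilon$ coming from a covolume constraint on $\pi_\bv(\Z^3)$. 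It is precisely this $\varepsilon$ that lets one push $t$ down to $2+O(\sqrt\varepsilon)$ and hence $s$ down to $2-\tfrac{1}{1+w_1}+O(\sqrt\varepsilon)$. Without isolating the hyperplane step you will not see where this gain comes from, and ``summing the resulting $s$-dimensional sums over all chains'' will not close.

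For the lower bound the gap is more serious. You need sibling rectangles separated by $\rho_n W_{n-1}$ with $\rho_n$ not too small, but the obvious construction (keep all $\tau$ with $a_{t_n}h(\tau)\Z^3\in\mathcal L_3'\cap\mathcal K_{\varepsilon_n^2}^*$) only guarantees separation of order $e^{-nt}\varepsilon_n^2 W_{n-1}$, which is too weak and violates the hypothesis (iv) of the self-affine lower-bound machinery (Corollary~\ref{cor;real use}). The paper's fix is to impose the \emph{additional} condition $b_n a_{t_n}h(\tau)\Z^3\in\mathcal K_r^*$ with $b_n=\diag(e^{-w_2nt},e^{w_2nt},1)$; this shear forces any two nearby children to have either horizontal gap $\gtrsim W_n$ or vertical gap $\gtrsim L_n$, exactly what the self-affine dimension formula needs (see the remark after Lemma~\ref{lem;separation}). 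Showing that this extra constraint discards only a negligible fraction of children is the hardest step in the paper and requires the lattice-point counting of \S\ref{sec;count r3} (control of the sets $\mathcal S(\Lambda,\br,s)$). Your proposal neither introduces this auxiliary diagonal nor offers any substitute mechanism for achieving the required separation, so as it stands the lower-bound half does not go through.
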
 

\begin{rem}
 In the case where $w=(1,0)$ one can also define $w$-singular vectors in a similar way. The above formula  
 does not  hold in this degenerate case where the Hausdorff dimension is $1$. By symmetry, we can draw the whole picture of the Hausdorff dimension of $\mathrm{Sing}(w)$ when $w_1$ goes from $0$ to $1$.  We point out that the dimension graph has a non differential point $1/2$ and has jumps at $0$ and $1$ (see Figure 1). 
\end{rem}

\begin{figure}
\centering
\begin{pspicture}(1,-0.5)(1.25,5)
\psset{unit=2cm}
\psaxes{->}(0,0)(-0.7,-0.25)(1.9,2.25)
\psplot[plotstyle=curve, linewidth=0.035cm]{0.5}{1}{x x add 1 add x 1 add div }
\psplot[plotstyle=line,linestyle=dotted,linecolor=red,linewidth=0.03cm]{-0.45}{0.5}{x x add 1 add x 1 add div }
\psplot[plotstyle=line,linestyle=dotted,linecolor=red,linewidth=0.03cm]{1}{1.45}{x x add 1 add x 1 add div }
\psplot[plotstyle=curve, linewidth=0.035cm]{0}{0.5}{3 x sub x sub 2 x sub div }
\psplot[plotstyle=line,linestyle=dotted,linecolor=red,linewidth=0.03cm]{-0.45}{0}{3 x sub x sub 2 x sub div }
\psplot[plotstyle=line,linestyle=dotted,linecolor=red,linewidth=0.03cm]{0.5}{1.45}{3 x sub x sub 2 x sub div }

\psdots[dotsize=0.06](0,1)
\psdots[dotsize=0.06](1,1)
\rput(-0.15,-0.15){$0$}
\rput(-0.21,1.3){\tiny $4/3$}
\rput(-0.21,1.455){\tiny $3/2$}
\rput(0.5,-0.1){\tiny $1/2$}
\rput(0,1.5){$\circ$}
\rput(1,1.5){$\circ$}
\psline[linewidth=0.02cm,linecolor=blue, linestyle=dashed,dash=0.08cm 0.08cm](0,1.31)(0.5,1.31)
\psline[linewidth=0.02cm,linecolor=blue, linestyle=dashed,dash=0.08cm 0.08cm](0.5,0)(0.5,1.31)
\rput(0,2.4){$\dim_H {\rm Sing}(w)$}
\rput(1.85,-0.15){$w_1$}
\end{pspicture} 
\caption{The Hausdorff dimension of ${\rm Sing}(w)$.}
\end{figure}
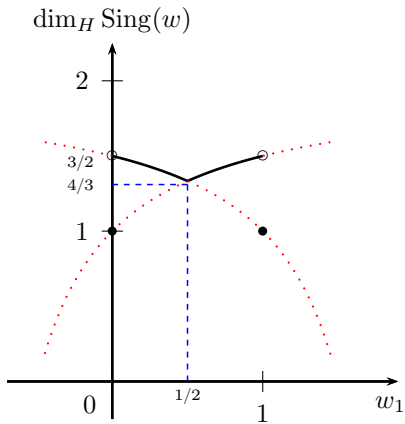

Here and hereafter we always assume that the weight vector $w$ satisfies the assumption of Theorem \ref{thm;main}.
It is  observed by Dani \cite{d85} that 
$w$-singular vectors correspond to   certain divergent trajectories in 
the space
$\mathcal L_3$ of unimodular lattices in $\R^3$ with respect to the
one-parameter semi-group
\begin{align}\label{eq;a t}
\mathcal A^+=\{a_t=\mathrm{diag}(e^{w_1t},e^{ w_2t},e^{ -t}): t\ge 0  \}.
\end{align} 
More precisely, 
$x\in \R^2$ is $w$-singular if and only if $\mathcal A^+ h(x)\Z^3$ is divergent where 
\begin{align}\label{eq;h x new}
 h(x)=\left(
 \begin{array}{ccc}
 1 & 0 & x_1 \\
 0 & 1 & x_2 \\
 0 & 0 & 1
 \end{array}
 \right).
\end{align}
Using this dynamical interpretation it is not hard to see that given $y\in \Q^2$, a vector  $x\in \R^2$ is $w$-singular if and only if 
$x+y$ is $w$-singular.
Therefore the conclusion of Theorem \ref{thm;main}
holds for any $U\cap \mathrm{Sing}(w)$ where $U$ is a nonempty   open subset of $\R^2$. 
Since $\Q^2\subset \mathrm{Sing}(w)$, the Minkowski dimension of $\mathrm{Sing}(w)$ is $2$ which is 
different from the Hausdorff dimension.

 The  lower bound  of Theorem \ref{thm;main} is proved
 by constructing a subset of $\mathrm{Sing}(w)$ with certain well-separated  self-affine structure using
 this dynamical interpretation. 
In fact, the denominator  $1+w_1 $ in the dimension formula  is the top Lyapunov exponent for the adjoint action of $\mathcal A^+$
 on the group 
 \begin{align}\label{eq;h x}
 \left \{ 
 h(x): x\in\R^2
 \right \}. 
 \end{align}
 This is reasonable since the top Lyapunov exponent corresponds to the shorter length of the rectangle in the 
 self-affine structure and the  shorter length is the length of the square  after chopping a rectangle into squares.
 Since our  construction has inductive nature it suffices to    look at the first 
 step to explain the ideas. We fix $t\gg 1$ and $\varepsilon<1$ with $\varepsilon \gg e^{-t} $ which means
 $\varepsilon^{-1} $ is negligible comparing to $e^{t}$. The lattice 
 $a_{t'} \Z^3$ moves to the cusp   in $\mathcal L_3$ as $t'$ goes from $0 $ to $t$, since the
 Euclidean  norm of 
 $a_{t'} \be_3$ where $\be_3=(0, 0, 1)\in \Z^3$ decays  exponentially. If we want  $x\in\R^2$
  satisfy $\|a_{t'} h(x) \be_3 \| \ll \varepsilon$ ($\|\cdot\|$ is the Euclidean norm) for $t'$  away from $0$, a reasonable condition  is $x\in U_0$
where  \[
 U_0=\{ (x_1, x_2)\in \R^2: |x_1|\le \varepsilon e^{-w_1t},  |x_2|\le\varepsilon e^{-w_2 t} \}.
 \]
 To play this game again we need $a_{t}h(x)\Z^3\in \mathcal L_3'$
 where 
 \begin{align}\label{eq;l prime}
 \mathcal L_3'=\{\Lambda\in \mathcal L_3: \Lambda\cap \R\be_3=r \Z\be_3 \mbox{ for  some } r 
 \mbox{ with } 1/2<r\le 1\}.
 \end{align}
 The cardinality of 
 $\{x\in U_0: a_{t} h(x)\Z^3\in \mathcal L_3' \}$ is up to some constants the cardinality of 
 $a_t\Z^3\cap M$ where $$M= \{(z_1, z_2, z_3)
 \in \R^3:|z_1|\le \varepsilon e^t |z_3| , |z_2|\le \varepsilon e^t|z_3|,  1/2<|z_3|\le 1 \}.$$
 It will follow from  lattice points counting  that this cardinality is approximately the area of 
 $M$ which is 
 $ \approx \varepsilon ^2 e^{2t}\approx e^{-t}\cdot  e^{3t}$.
 Here   $ e^{3t}$ is more or less the cardinality of the next subdivision of $U_0$ by rectangles of the size
 $2\varepsilon e^{-(2w_1+1)t}\times 2\varepsilon e^{-(2w_2+1)t}$,  so 
  it corresponds to the  full dimension $2$. 
In fact the   $-1$ in the numerator of $\frac{-1}{1+w_1}$ comes from the factor $e^{-t}$. 
To make the self-affine structure well-separated we need more conditions than just $a_th (x)\Z^3\in \mathcal  L_3'$ and the difficulty is to prove
that the cardinality of those $x$ is comparable to  $  \varepsilon ^2 e^{2t}$
using   geometry of numbers. 
In this part, 
different arguments are needed depending on whether $w_1>w_2$ or $w_1=w_2$. 
The lower bound is proved only in the  genuine weighted case $w_1>w_2$, since in the 
 unweighted case  the Hausdorff dimension is known.

Our proof of the   upper bound   follows 
 the similar  ideas of  \cite{c11}
and \cite{cc}. We use best approximation vectors with  weight $w$ to encode $\mathrm{Sing}(w)$
to get a self-affine covering of  the essential part of $\mathrm{Sing}(w)$. The main difference comparing to the unweighted case is that our
covering is self-affine instead of self-similar and this difference makes  the calculation more subtle.
In the unweighted case Einsiedler and Kadyrov \cite{ek12} have an estimate of the upper bound using entropy. 
The method in \cite{ek12} is further developed by Kadyrov, Kleinbock, Lindenstrauss and Margulis \cite{kklm} to estimate 
the upper bound  of the Hausdorff dimension of  general (unweighted) singular systems of   linear forms. The new input of this development is the use 
of the height function in  Eskin--Margulis--Mozes \cite{emm98} and its contracting property.
 Inspired by 
\cite{kklm}, it seems  that 
the number  $1$ in the numerator of $\frac{1}{1+w_1}$ might  also  be interpreted  as certain  average contracting rate of the  height function in \cite{emm98} with respect to $\mathcal A^+$ and the group
(\ref{eq;h x}).

Based on our interpretation of Theorem \ref{thm;main} and \cite[Corollary 1.2]{kklm} it seems likely   that the Hausdorff dimension of weighted 
 singular vectors in 
 $\R^d\ (d\ge 2)$ 
can be formulated  in a similar way. Namely, if we normalize the weights
so that the sum of positive weights  is equal to $1$, the Hausdorff dimension of weighted  singular 
vectors in $\R^d$ is 
\[
d-\frac{1}{\lambda_1}
\]
where $\lambda_1$ is the top Lyapunov exponent for the adjoint action of the corresponding one-parameter semi-group on the 
corresponding unipotent group.

Now  we turn to the  Hausdorff dimension of  vectors in $\R^2$ for which Dirichlet's theorem can be improved. For a positive real number  $\varepsilon<1$, we say $w$-weighted 
Dirichlet's theorem  is $\varepsilon $-improvable for $x\in \R^2$ if (\ref{eq;improve}) admits integer solutions
$(p, q)\in \Z^2\times \Z$
for $T$ sufficiently large.  Let $\mathrm{DI}(w, \varepsilon)$ be the set of vectors 
$x\in \R^2$ for which $w$-weighted Dirichlet's theorem is $\varepsilon$-improvable. 
It follows directly from the definition that 
\[
\mathrm{Sing}(w)=\bigcap_{0<\varepsilon <1} \mathrm{DI}(w, \varepsilon). 
\]
We remark here that in the unweighted case our set $\mathrm{DI}(w, \varepsilon)$ is $\mathrm{DI}_{\sqrt \varepsilon}(2)$ defined in 
\cite{c11} and \cite{cc}. 

Denote by $\dim_H$ the Hausdorff dimension. We have the following theorem.
\begin{thm}\label{thm;improve}
	Let $w=(w_1, w_2)$ where $w_1\ge w_2>0$ and $w_1+w_2=1$. There exists $C>0$ such that for all  $0< \varepsilon\le 2^{-5/w_2}$ one has  
	\begin{align}\label{eq;sun}
	2-\frac{1}{1+w_1}\le \dim_H \mathrm{DI}(w, \varepsilon)\le 2-\frac{1}{1+w_1}+C\sqrt \varepsilon. 
	\end{align}
\end{thm}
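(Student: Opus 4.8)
\emph{Sketch of the intended argument.} The lower bound is immediate: since the excerpt records $\mathrm{Sing}(w)=\bigcap_{0<\varepsilon<1}\mathrm{DI}(w,\varepsilon)$, we have $\mathrm{Sing}(w)\subseteq\mathrm{DI}(w,\varepsilon)$ for every $\varepsilon\in(0,1)$, so Theorem \ref{thm;main} gives $\dim_H\mathrm{DI}(w,\varepsilon)\ge 2-\frac{1}{1+w_1}$. All the work is in the upper bound, which I would obtain by rerunning the proof of the upper bound of Theorem \ref{thm;main} while tracking the dependence on $\varepsilon$.

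First I would pass --- using the $\Q^2$-translation invariance of $\mathrm{DI}(w,\varepsilon)$ together with the variant of Dani's correspondence under which $x\in\mathrm{DI}(w,\varepsilon)$ means that for all large $t$ the lattice $a_t h(x)\Z^3$ contains a nonzero vector in a fixed box of volume comparable to $\varepsilon$ --- to the problem of covering the part of $\mathrm{DI}(w,\varepsilon)$ inside a fixed unit square, outside a set of strictly smaller dimension. To each such $x$ I would then attach the sequence of its $w$-weighted best approximation pairs $(p_n,q_n)$; consecutive terms obey the standard recursive inequalities and cut out a nested sequence of rectangles $R_n(x)\ni x$ whose side lengths are explicit in $q_n$, $q_{n+1}$ and the approximation errors. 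The family $\{R_n(x)\}$, over all admissible $x$ and all $n$, is then a self-affine cover of the essential part of $\mathrm{DI}(w,\varepsilon)$, and $\dim_H\mathrm{DI}(w,\varepsilon)\le s$ would follow once one shows that for $s=2-\frac{1}{1+w_1}+C\sqrt\varepsilon$ the singular-value sum $\sum_R\varphi^s(R)$ stays bounded along the filtration, where $\varphi^s(R)=\ell^+(\ell^-)^{s-1}$ for a rectangle with side lengths $\ell^+\ge\ell^-$, this being the quantity that governs covers of $R$ by Euclidean balls.

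The core step, which I expect to be the main obstacle, is the quantitative counting estimate: given a stage-$n$ rectangle $R_n$, bound both the number of children $R_{n+1}\subseteq R_n$ and the ratio $\varphi^s(R_{n+1})/\varphi^s(R_n)$. Writing $e^{\tau}$ for the ratio of two consecutive best-approximation denominators, the children are indexed by admissible integer vectors in a convex body whose volume, by the $\varepsilon$-improvability constraint, is of order $\varepsilon$ times a power of $e^{\tau}$; Minkowski's theorem and a lattice-point count for $a_t h(x)\Z^3$ convert this into a bound on the number of children, while the two side-contraction factors (essentially $e^{-(1+w_1)\tau}$ and $e^{-(1+w_2)\tau}$, up to powers of $\varepsilon$) are read off from the best-approximation recursion. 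One then optimizes the exponent $s$ against the $\varepsilon$-dependent volume, and it is here --- in balancing a gain linear in a small parameter against a loss of reciprocal order, an arithmetic-geometric mean type trade-off --- that the loss degrades from $\varepsilon$ to $\sqrt\varepsilon$, and where the smallness hypothesis $\varepsilon\le 2^{-5/w_2}$ is used, to absorb logarithmic errors and to keep the stage lengths $\tau$ bounded below. The remaining technical difficulties are the ones familiar from \cite{c11,cc}: making the lattice-point count uniform over rectangles of widely varying eccentricity, and propagating the full singular-value function rather than a single contraction ratio, since the weighted problem is genuinely self-affine rather than self-similar.
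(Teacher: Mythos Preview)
Your proposal is correct and follows essentially the same route as the paper. The paper has already packaged the $\varepsilon$-dependent upper bound as Theorem~\ref{thm;estimate}: the fractal relation $(Q_{\varepsilon},\sigma_{\varepsilon},\beta)$ built from $w$-best approximates satisfies $\dim_H\mathcal F(Q_{\varepsilon},\sigma_{\varepsilon},\beta)\le 2-\tfrac{1}{1+w_1}+C\sqrt{\varepsilon}$, where the $\sqrt{\varepsilon}$ arises exactly as you say, by solving $\tfrac{C'\varepsilon}{(t-2)^2}\le 1$ in the product of the two counting Lemmas~\ref{lemma-D} and~\ref{lemma-E}. The proof of Theorem~\ref{thm;improve} is then a two-line reduction: from $A(x,\bu_i)<\varepsilon/|\bu_{i+1}|$ and $2^{-1/w_2}r(\bu_{i+1})<A(x,\bu_i)$ one gets $r(\bu_{i+1})|\bu_{i+1}|<2^{1/w_2}\varepsilon$, which via the proof of Lemma~\ref{lem;cover} gives $\mathrm{DI}(w,\varepsilon)^*\subset\mathcal F(Q_{2^{3/w_2}\varepsilon},\sigma_{2^{3/w_2}\varepsilon},\beta)$. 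One correction: the smallness constraint $\varepsilon\le 2^{-5/w_2}$ is not there to absorb logarithmic errors or control stage lengths; it is exactly the condition $2^{3/w_2}\varepsilon\le 2^{-2/w_2}$ needed to invoke Theorem~\ref{thm;estimate}, whose hypothesis $\varepsilon\le 2^{-2/w_2}$ in turn comes from Lemma~\ref{lem;equality} (ensuring $H_\bu=H_\bv$ for $\bv\in D(\bu,\varepsilon)$). The Dani correspondence is not used in the upper bound.
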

\begin{rem}
	The constant $C$ in Theorem \ref{thm;improve} is computable.
	In the unweighted case, the upper bound
	in (\ref{eq;sun}) is the same as \cite[Theorem 1.6]{c11}; while our method does not give good  lower bound 
	as in  \cite[Theorem 1.4]{cc}.
\end{rem}

Finally we  discuss  the divergent trajectories of $\mathcal A^+$ in  $\mathcal L_3$. 
We want to estimate the Hausdorff dimension of the set
\[
\mathcal D(\mathcal L_3, \mathcal A^+):=\{\Lambda\in \mathcal L_3: 
\mathcal A^+ \Lambda \mbox{ is divergent}\}. 
\]
Here the Hausdorff dimension is with respect to any Riemannian metric on the manifold 
$\mathcal L_3 \cong SL_3(\R)/SL_3(\Z)$. 
In the unweighted case the group (\ref{eq;h x}) is the unstable horospherical subgroup of $a_1$. Therefore as a corollary of the Hausdorff dimension 
of $\mathrm{Sing}(\frac{1}{2},\frac{1}{2})$ it is proved in \cite[Corollary 1.2]{c11} that the Hausdorff dimension of 
$\mathcal D(\mathcal L_3, \mathcal A^+)$
is $7\frac{1}{3}$. 
In authentic weighted case where $w_1>w_2$, the  unstable horospherical  subgroup 
of $a_1$  is the upper triangular unipotent group in $SL_3(\R)$
and  the group (\ref{eq;h x}) is  a proper subgroup of it.   
So we can not get the Hausdorff dimension of  divergent trajectories from Theorem \ref{thm;main}.
On the other hand, our method  for proving the lower bound of the Hausdorff dimension of $\mathrm{Sing}(w)$
can also be used to prove the following result. 
\begin{thm}\label{thm;slice}
	Suppose $w=(w_1, w_2) $ where $w_1+w_2=1$ and $w_1>w_2$. 
	For any  $\Lambda\in \mathcal L_3$ and any nonempty open subset $U$ of $\R^2$,   the Hausdorff dimension of 
	\[
	\{x\in U : \mathcal A^+ h(x)\Lambda
	\mbox{ is divergent}   \}
	\]
	 is at least $2-\frac{1}{1+w_1}$. 
	
\end{thm}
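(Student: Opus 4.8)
\emph{Proof proposal.} The plan is to run the self-affine Cantor construction that gives the lower bound in Theorem~\ref{thm;main}, but started from an arbitrary lattice in place of $\Z^3$, and then to reduce the general case to that situation. The construction is inductive and, inspecting it, the hypothesis it really uses at each stage is that the current lattice lies in $\mathcal L_3'$ (possibly together with a fixed set of separation conditions cutting out a compact set $\mathcal K\subseteq\mathcal L_3'$, with $\Z^3\in\mathcal K$ and the children of any $\Lambda_0\in\mathcal K$ again lying in $\mathcal K$); the choice $\Z^3$ enters only through the facts that $\Z^3\in\mathcal L_3'$ and that the construction then starts around the origin. Thus, for any $\Lambda_0\in\mathcal L_3'$ (after one initial stage, any $\Lambda_0\in\mathcal K$) the construction produces a nested family of rectangles whose limit set $C(\Lambda_0)$ is contained in $\{y\in\R^2:\mathcal A^+h(y)\Lambda_0\text{ is divergent}\}$, satisfies $\dim_H C(\Lambda_0)\ge 2-\frac1{1+w_1}$, and, by starting the induction at a sufficiently large time $t_1$, can be taken inside any prescribed neighbourhood of the origin. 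The weighted dimension estimate here is the delicate point (see below): one must keep $\asymp\varepsilon^2e^{2t}$ well-separated children at each stage and control the resulting self-affine set with its two contraction ratios $e^{-(1+w_1)t}$ and $e^{-(1+w_2)t}$.

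Now fix $\Lambda\in\mathcal L_3$ and a nonempty open $U\subseteq\R^2$. First I would steer $\Lambda$ into $\mathcal L_3'$ inside $U$. The set of directions $[v]\in\mathbb P^2(\R)$ of primitive vectors $v\in\Lambda$ is the image of $\mathbb P^2(\Q)$ under a projective transformation, hence dense in $\mathbb P^2(\R)$; therefore there is a primitive $v=(v_1,v_2,v_3)\in\Lambda$ with $v_3\neq0$ and $x_0:=(-v_1/v_3,\,-v_2/v_3)\in U$. Then $h(x_0)v=(0,0,v_3)$, so $h(x_0)\Lambda\cap\R\be_3=\Z\cdot(0,0,v_3)$ (using that $v$ is primitive), and choosing $s>0$ with $e^{-s}|v_3|\in(\tfrac12,1]$ gives $a_sh(x_0)\Lambda\in\mathcal L_3'$. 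Set $\Lambda_0:=a_sh(x_0)\Lambda$.

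Next I would transport the construction back to $U$. Since $h(x)=h(x-x_0)h(x_0)$ and $a_sh(y)a_s^{-1}=h(\psi_s(y))$ with $\psi_s(y_1,y_2)=(e^{(1+w_1)s}y_1,\,e^{(1+w_2)s}y_2)$, one has the exact identity
\[
a_{s+t'}\,h(x)\,\Lambda \;=\; a_{t'}\,h\!\big(\psi_s(x-x_0)\big)\,\Lambda_0 \qquad(x\in\R^2,\ t'\ge 0).
\]
As the arc $\{a_{t'}h(x)\Lambda:0\le t'\le s\}$ is compact, $\mathcal A^+h(x)\Lambda$ is divergent if and only if $\mathcal A^+h(\psi_s(x-x_0))\Lambda_0$ is divergent; equivalently, the affine bijection $\Phi\colon x\mapsto\psi_s(x-x_0)$ carries $\{x:\mathcal A^+h(x)\Lambda\text{ divergent}\}$ onto $\{y:\mathcal A^+h(y)\Lambda_0\text{ divergent}\}$. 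An affine bijection preserves Hausdorff dimension, and $\Phi(x_0)=0\in\Phi(U)$, so I would take the Cantor set $C(\Lambda_0)$ from the first paragraph small enough that $C(\Lambda_0)\subseteq\Phi(U)$; then $\Phi^{-1}(C(\Lambda_0))\subseteq U$ consists of points $x$ for which $\mathcal A^+h(x)\Lambda$ is divergent, and $\dim_H\Phi^{-1}(C(\Lambda_0))=\dim_H C(\Lambda_0)\ge 2-\frac1{1+w_1}$, proving the theorem.

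The main obstacle is the one already present in Theorem~\ref{thm;main}: proving, via geometry of numbers and uniformly over lattices in $\mathcal K$, that imposing the separation conditions still leaves $\asymp\varepsilon^2e^{2t}$ children suitably spread over the long direction, so that the induction iterates and the two-ratio self-affine dimension bound yields exactly the exponent $2-\frac1{1+w_1}$. Relative to Theorem~\ref{thm;main}, the only genuinely new ingredients are checking that this construction depends on $\Z^3$ only through $\Z^3\in\mathcal L_3'$ (so it applies to any $\Lambda_0\in\mathcal L_3'$), the density-of-rational-directions argument that produces a usable starting point $x_0\in U$ for an arbitrary $\Lambda$, and the routine observation that a bounded initial arc of the trajectory does not affect divergence.
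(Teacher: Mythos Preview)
Your proposal is correct and follows essentially the same approach as the paper's own proof: observe that the self-affine construction of \S\ref{sec;construction}--\S\ref{sec;refinement} uses only the property $\Z^3\in\mathcal L_3'$ (so it runs from any $\Lambda_0\in\mathcal L_3'$), locate $x_0\in U$ and a time $s$ with $a_s h(x_0)\Lambda\in\mathcal L_3'$ via density of lattice directions, and transport the fractal back to $U$ through the affine conjugation $a_s h(\,\cdot\,)a_s^{-1}=h(\psi_s(\,\cdot\,))$. One small correction: you should allow $s\in\R$ rather than $s>0$ (as the paper does), since the primitive $v\in\Lambda$ with $(-v_1/v_3,-v_2/v_3)\in U$ may well have $|v_3|<1$; the divergence equivalence survives unchanged, as for $s<0$ the tail $\{a_{s+t'}h(x)\Lambda:t'\ge 0\}$ already contains the full forward orbit $\{a_{t''}h(x)\Lambda:t''\ge 0\}$.
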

Theorem \ref{thm;slice} immediately implies the following corollary.
\begin{cor}\label{cor;div}
 	Let $w=(w_1, w_2)$ where $w_1> w_2>0$ and $w_1+w_2=1$. Then the Hausdorff dimension of 
 	$\mathcal D(\mathcal L_3, \mathcal A^+)$ is at least $8-\frac{1}{1+w_1}$. 
\end{cor}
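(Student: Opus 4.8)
The plan is to deduce Corollary~\ref{cor;div} from Theorem~\ref{thm;slice} together with a classical Fubini-type (slicing) inequality for Hausdorff dimension, exploiting the local product structure of $\mathcal L_3$ along the orbits of the group (\ref{eq;h x}); note that $w_1>w_2$, so Theorem~\ref{thm;slice} is available. The heuristic is that, near every lattice, $\mathcal D(\mathcal L_3,\mathcal A^+)$ contains the image of a ``$\mathrm{Sing}$-like'' planar set of Hausdorff dimension at least $2-\frac1{1+w_1}$, times a full $6$-dimensional transversal to (\ref{eq;h x}), and $6+\bigl(2-\frac1{1+w_1}\bigr)=8-\frac1{1+w_1}$.

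For the set-up, recall that $\mathcal L_3\cong SL_3(\R)/SL_3(\Z)$ is a manifold of dimension $\dim SL_3(\R)=8$, and that $\mathcal D(\mathcal L_3,\mathcal A^+)$ is a Borel set: by Mahler's compactness criterion it is the set of $\Lambda$ for which the length of a shortest nonzero vector of $a_t\Lambda$ tends to $0$ as $t\to\infty$, which is an $F_{\sigma\delta}$ condition. Fix $\Lambda_0=g_0\Z^3\in\mathcal L_3$. Let $\mathfrak h\subseteq\mathfrak{sl}_3(\R)$ be the Lie algebra of the $2$-dimensional group (\ref{eq;h x}) and choose a complementary subspace $\mathfrak v$, so $\dim\mathfrak v=6$. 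For small enough open neighborhoods $0\in U\subseteq\R^2$ and $0\in W\subseteq\mathfrak v$ the map
\[
\Phi\colon U\times W\to\mathcal L_3,\qquad \Phi(x,v)=h(x)\exp(v)g_0\Z^3,
\]
is a diffeomorphism onto an open subset of $\mathcal L_3$, since its differential at $(0,0)$ is invertible (its image is $\mathfrak h+\mathfrak v=\mathfrak{sl}_3(\R)$); after shrinking $U,W$ we may assume $\Phi$ is bi-Lipschitz with respect to a fixed Riemannian metric on $\mathcal L_3$ and the Euclidean metric on $U\times W$, hence dimension-preserving.

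Next, for each fixed $v\in W$ put $\Lambda_v=\exp(v)g_0\Z^3\in\mathcal L_3$, so that $\Phi(x,v)=h(x)\Lambda_v$ as a point of $\mathcal L_3$ and therefore
\[
\Phi^{-1}\bigl(\mathcal D(\mathcal L_3,\mathcal A^+)\bigr)\cap\bigl(U\times\{v\}\bigr)=\bigl\{x\in U:\mathcal A^+ h(x)\Lambda_v\ \text{is divergent}\bigr\}\times\{v\}.
\]
By Theorem~\ref{thm;slice}, for \emph{every} $v\in W$ this slice has Hausdorff dimension at least $\beta:=2-\frac1{1+w_1}$; in particular the slices are this large for a set of $v$ of positive $6$-dimensional Lebesgue measure. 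I would then apply the slicing inequality: if $A\subseteq\R^2\times\R^6$ is Borel and $\dim_H A^v\ge\beta$ for all $v$ in a set of positive Lebesgue measure, where $A^v=\{x:(x,v)\in A\}$, then $\dim_H A\ge 6+\beta$. (One fixes $\beta'<\beta$, passes by exhaustion to a compact positive-measure set of $v$'s each carrying a probability measure $\mu_v$ supported on a compact subset of $A^v$ with $\mu_v(B(x,r))\le Cr^{\beta'}$ for a common constant $C$, integrates $\mu_v\otimes\delta_v$ against Lebesgue measure in $v$ to obtain a measure $\mu$ on $A$ with $\mu(B(p,r))\le C'r^{\beta'+6}$, invokes the mass distribution principle, and lets $\beta'\uparrow\beta$.) Applying this to $A=\Phi^{-1}(\mathcal D(\mathcal L_3,\mathcal A^+))$ and transporting back through the bi-Lipschitz $\Phi$ yields $\dim_H\mathcal D(\mathcal L_3,\mathcal A^+)\ge 6+\beta=8-\frac1{1+w_1}$.

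There is no substantial obstacle here: granting Theorem~\ref{thm;slice}, the corollary is precisely the statement that Hausdorff dimension adds under this slicing, and the only items requiring care are the (classical) slicing inequality and the bi-Lipschitz reduction to $\R^8$ via the chart $\Phi$. Equivalently, one may run the whole argument in $SL_3(\R)$, where $(x,v)\mapsto h(x)\exp(v)g_0$ provides the product coordinates directly and the covering map $SL_3(\R)\to\mathcal L_3$ is a local isometry.
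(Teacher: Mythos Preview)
Your proof is correct and is exactly the argument the paper has in mind: the paper merely asserts that the corollary follows ``immediately'' from Theorem~\ref{thm;slice}, and the standard way to unpack this is precisely the product-chart-plus-slicing argument you give (local bi-Lipschitz coordinates $(x,v)\mapsto h(x)\exp(v)g_0\Z^3$, Theorem~\ref{thm;slice} applied to each slice $\Lambda_v$, and the Marstrand-type lower bound $\dim_H A\ge 6+\beta$). One minor caveat: in your parenthetical sketch of the slicing inequality, passing to a \emph{common} Frostman constant $C$ across all $v$ needs an extra exhaustion/measurable-selection step; since the inequality itself is classical you may simply cite it rather than reprove it.
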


We organize the paper as follows. In \S 2, we describe a fractal structure and develop some techniques for estimation of Hausdorff dimension.
\S 3 is devoted to counting lattice points in a convex subset of Euclidean space. In \S 4, we give the proof of the lower bound of the Hausdorff dimension of $\mathrm{Sing}(w)$ and the proof of Theorem \ref{thm;slice}. The proof of the upper bound of the Hausdorff dimension of $\mathrm{Sing}(w)$ and the proof of Theorem \ref{thm;improve} are given in the last section.

To make our presentation easier to follow, we give in Figures 2 and 3 the relations between the theorems for the lower and upper bounds  of  $\dim_H\mathrm{Sing}(w)$ 
respectively.

\begin{figure}
\centering
\begin{pspicture}(0,-3.2)(12,6.5)
\rput(1,6){Lemma \ref{1_step_bound}}
\rput(3.5,6){Lemma \ref{lem;count 1}}
\rput(5.5,6){Lemma \ref{lem;count 2}}
\psline{->}(7.3, 4.5)(6.5,4.5)
\rput(8.3,6){Lemma \ref{lem;teaching}}
\psline{->}(9.3, 4.5)(10.1,4.5)
\rput(11.1,6){Lemma \ref{lem;hyperplane}}
\psline{->}(8.3,4.8)(8.3, 5.7)
\psline{->}(11.1,5.7)(11.1, 4.8)
\rput(5.5,4.5){Lemma \ref{lem;primitive}}
\rput(8.3,4.5){Lemma \ref{cor;count 1}}
\psline{->}(8,4.8)(5.8, 5.7)
\psline{->}(8.6,5.7)(10.8, 4.8)
\rput(11.1,4.5){Lemma \ref{lem;technical}}
\psline{->}(10.8,4.2)(8.6, 3.4)
\psline{->}(1,5.7)(1, 4.8)
\psline{->}(5.5,5.7)(5.5, 4.8)
\psline{->}(3.5,5.7)(5.2, 4.8)
\rput(1,4.5){Lemma \ref{lem:measure_bound}}
\rput(3.1,4.53){Lemma \ref{claim:replace_elementary}}
\psline{->}(1,4.2)(1.8, 3.4)
\psline{->}(3,4.2)(2.2, 3.4)
\psline{->}(5.5,4.25)(5.5, 3.4)
\psline{->}(8.3,4.2)(8.3, 3.4)
\psline{->}(11.1,4.2)(11.1, 3.4)

\rput(2,3.1){Theorem \ref{thm:lower_bound}}
\psline{->}(2,2.8)(2, 1.9)
\psline{->}(8.3,2.8)(8.3, 1.9)
\psline{->}(5.5,2.8)(8, 1.9)
\psline{->}(10.8,2.8)(8.6, 1.9)
\rput(2,1.6){Corollary \ref{cor;real use}}
\psline{->}(2,1.3)(2, 0.4)
\rput(8.3,1.6){Lemma \ref{lem;card}}
\psline{->}(8.3,1.3)(8.3, 0.4)
\psline{->}(11.1,2.8)(11.1, 0.4)
\rput(5.5,3.1){Lemma \ref{lem;many vectors}}
\rput(8.3,3.1){Lemma \ref{lem;application 1}}
\rput(11.1,3.1){Lemma \ref{lem;application 2}}
\rput(2,0.1){Corollary \ref{cor;real real}}
\rput(8.3,0.1){Lemma \ref{lem;plenty}}
\rput(11.1,0.1){Lemma \ref{lem;separation}}

\psline{->}(2,-0.2)(4.5, -1.1)
\psline{->}(8.3,-0.2)(5, -1.1)
\psline{->}(11.1,-0.2)(5.8, -1.1)
\rput(5,-1.4){Proposition \ref{prop;transfer}}
\rput(9.7,-1.4){Lemma \ref{lem;contained}}
\psline{->}(5,-1.7)(6, -2.6)
\psline{->}(9.7,-1.7)(6.5, -2.6)
\rput(6.5,-2.9){Theorem \ref{thm;lower bound} (lower bound)}
\end{pspicture} 
\caption{The relations between theorems for the lower bound.}
\end{figure}
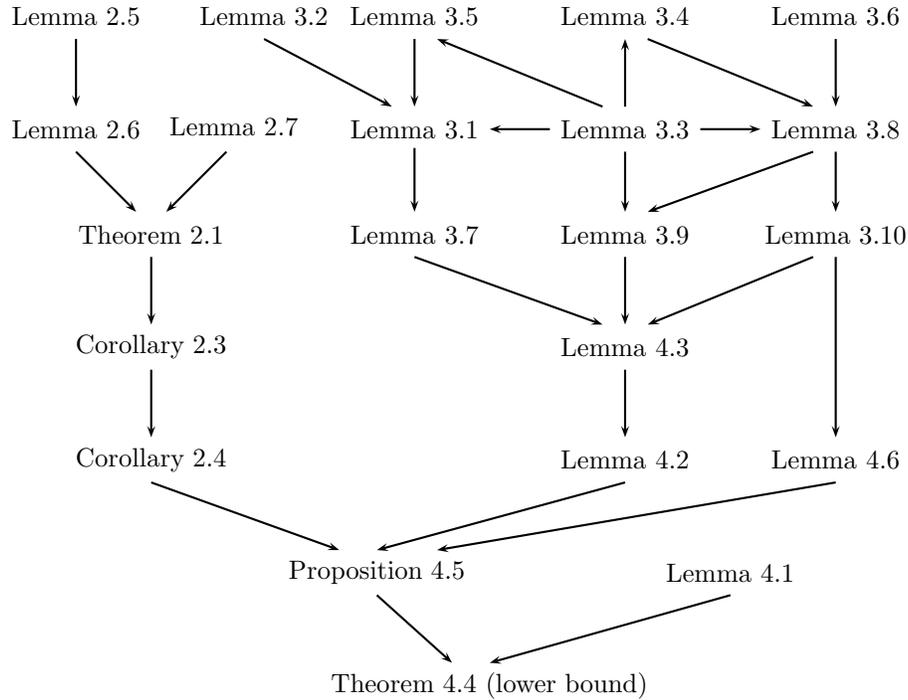

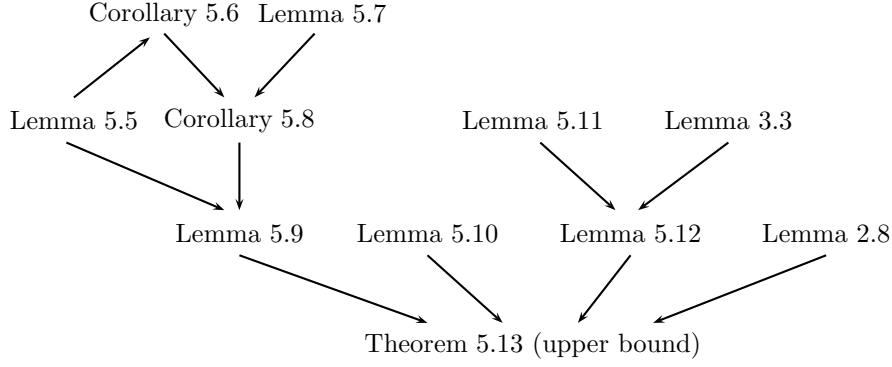
\begin{figure}
\centering
\begin{pspicture}(-1.5,-0.15)(12,5.5)
\rput(-0.2,3.1){Lemma \ref{lem;bound sequence}}
\psline{->}(-0.2,3.4)(0.81, 4.2)
\rput(1,4.5){Corollary \ref{cor;same size}}
\rput(3.1,4.53){Lemma \ref{lem;charcaterise}}
\psline{->}(1,4.25)(1.8, 3.4)
\psline{->}(3,4.25)(2.2, 3.4)
\rput(2,3.1){Corollary \ref{cor;singular}}
\psline{->}(2,2.8)(2, 1.9)
\psline{->}(-0.3,2.8)(1.8, 1.9)
\psline{->}(6,2.8)(7, 1.9)
\psline{->}(8.5,2.8)(7.3, 1.9)
\rput(2,1.6){Lemma \ref{lem;cover}}
\rput(4.5, 1.6){Lemma \ref{lemma-D}}
\rput(7.2,1.6){Lemma \ref{lemma-E}}
\rput(9.8,1.6){Lemma \ref{thm-general-upper}}
\rput(5.9,3.1){Lemma \ref{lem;equality}}
\rput(8.5,3.1){Lemma \ref{cor;count 1}}
\psline{->}(2,1.3)(4.5, 0.4)
\psline{->}(4.5,1.3)(5.5, 0.4)
\psline{->}(7.2,1.3)(6.5, 0.4)
\psline{->}(9.8,1.3)(7.5, 0.4)
\rput(5.9,0.1){Theorem \ref{thm;estimate} (upper bound)}
\end{pspicture} 
\caption{The relations between theorems for the upper bound.}
\end{figure}

\section{Fractal  structure  and Hausdorff dimension}
In this section we first review the description of a fractal structure using a rooted tree and
develop some techniques for estimating the lower bound of the Hausdorff dimension of a fractal set  from this structure. Then we prove 
an upper bound estimate theorem for a fractal set given by 
certain  relations.

\subsection{Fractal  structure}

Recall that a rooted tree is a connected graph $\mathcal T$ without cycles and with a distinguished vertex $\tau_0$, called the root of $\mathcal  T$. 
In this paper we identify $\mathcal T$ with the set of vertices of the tree $\mathcal T$. 
Any vertex  $\tau\in \mathcal  T$ is connected to 
$\tau_0$ by a unique path whose   length  is called the height of $\tau$.
The set of vertices of height $n$ is  denoted by $\mathcal T_n$.
Therefore $\mathcal T_0=\{ \tau_0\}$.
A vertex $\tau\in \mathcal T_{n}$ is  connected with a unique  $\tau_{n-1}\in \mathcal T_{n-1}$ and we  
say $\tau $ is a 
son of  $\tau_{n-1}$. 
The set 
of sons of $\tau\in \mathcal T$ is denoted by $\mathcal T(\tau)$.
A boundary point of $\mathcal T$ is 
a sequence of vertices    $\{\tau_n\}_{n\in \N}$ where 
$\tau_n$ is a son of $\tau_{n-1}$.   The boundary of $\mathcal T$ consists of all the 
boundary points and is denoted by $\partial \mathcal T$. 
For a vertex $\tau \in \mathcal T_n$ the set of ancestors of $\tau$
is
\[
\mathcal A(\tau):= \left\{ \tau_i: 0\le i\le n-1, \tau_{i+1}\in \mathcal T(\tau_{i}) \mbox{ where }\tau_n=\tau
\right\}.
\]

Let $Y$ be a Polish space, i.e.~ a separable completely metrizable topological space. 
A fractal structure on $Y$ is a pair $(\mathcal T, \beta)$
where $\mathcal T$ is a rooted tree and 
$\beta$ is a map from  $\mathcal T$ to the set of nonempty compact subsets of $Y$.
A fractal structure gives a set 
\[
\mathcal F(\mathcal T, \beta):=\{y\in Y: y\in \cap_{n=0}^\infty \beta(\tau_n) \mbox{ for some  } \{\tau_n\}\in \partial\mathcal T \},
\]
which is said to be the fractal with the structure $(\mathcal T, \beta)$. 
We remark that although each point of $\mathcal F(\mathcal T, \beta)$ should  correspond to  an infinite path, we do not assume each vertex of $\mathcal T$ has a son.
In particular if  $\mathcal T$  has only  finitely many vertices, then the fractal set $\mathcal F(\mathcal T, \beta)$
has to be  empty  according to our definition. 

We say that  $(\mathcal T, \beta)$ is a regular fractal structure if 
moreover the following properties hold:
\begin{itemize}
	\item  each vertex of $\mathcal T$ has a nonempty set of sons;
	\item if $\tau$ is a son of $\tau'$ then $\beta(\tau)\subset \beta(\tau')$;
	\item for any  $\{\tau_n\}\in \partial \mathcal T$ the diameter of 
	$\beta(\tau_n)$ goes to zero as $n$ tends to infinity.
\end{itemize}

We end up this section with several notations that will be used in the rest of the  paper. 
For a set $\mathcal S$ we use $\sharp \mathcal S$ to denote its cardinality. Let $A, B$ be two subsets of a metric space $(Y, d_Y)$, then 
\[
\mathrm{dist}\,(A, B):=\inf_{x\in A, y\in B} d_Y(x, y)\quad\mbox{and} \quad \mathrm{diam}\,(A):=\sup_{x, y\in A} d_Y(x, y). 
\]
We will only use the  above notation for the natural Euclidean metric on $\R^d$. For a real number $s$ we take
\[
\lceil s \rceil:=\inf\, \{n\in \Z: n\ge s\}\quad and \quad 
\lfloor s \rfloor :=\sup \, \{n\in \Z: n\le s\}. 
\]
For two nonnegative real numbers $s$ and $t$ the notation   $s \ll_{\mathcal S} t$ means that  there is a constant 
$C\ge 1$ possibly depending on elements of the set $\mathcal S$ such that $s\le C \,t$.
We call $C$ an implied constant for $s\ll_{\mathcal S} t$. 
  The notation 
$s\gg _{\mathcal S}t$ means $t\ll _{\mathcal S} s$ and the notation $s\asymp _{\mathcal S} t$
means both $s \ll_{\mathcal S} t$ and $s\gg _{\mathcal S}t$.

\subsection{Self-affine structure  and lower bound}\label{sec;lower}

 In this paper a rectangle means
a rectangle in $\R^2$ with sides  parallel to the axes. 
In particular,
a rectangle with  size $l_1\times l_2$ and center $x\in \R^2$ refers to the set  
\[
\{y\in \R^2: |y_1-x_1|\le l_1/2,\ |y_2-x_2|\le l_2/2 \}. 
\]
A self-affine structure on $\R^2$ is a fractal structure $(\mathcal T, \beta)$
on $\R^2$ such that 
for every $\tau \in \mathcal T$ the set $\beta(\tau)$ is a rectangle 
with  size  $W(\tau)\times L(\tau)$. 
A self-affine structure is said to be regular if 
the corresponding  fractal structure is regular.

The main result of this section is Theorem \ref{thm:lower_bound}.
What we are going to use in the lower bound calculation are Corollaries 
 \ref{cor;real use} and \ref{cor;real real} which are  simplified versions of the theorem.
We first prove 
these corollaries by assuming the theorem and then give the   proof of the theorem.

\begin{thm}
	\label{thm:lower_bound}
	Let $(\mathcal T, \beta)$ be a regular  self-affine  structure on $\mathbb R^2$.
	Suppose  there are sequences of positive  real numbers $\{W_n\},\{L_n\},  \{\rho_n \}, \{C_n \}$ indexed by $\N\cup \{0\}$ with 
	the following properties:
	\begin{enumerate}
		\item $W(\tau)=W_{n}$, $L(\tau)=L_{n}$ and $W_n\le L_n$ for all $n$
		and  $\tau \in \mathcal T_n$; 
		\item  $C_0=1$ and $\sharp \mathcal T(\tau)\geq C_{n}$ for all $n\in \N$ and  $\tau \in \mathcal T_{n-1}$;
		\item  $\rho_n \le 1$ for all $ n\in \N$. Moreover, for all
		$\tau_{n} \in \mathcal T_{n}$ and different $\tau, \kappa\in \mathcal T(\tau_{n})$ 
		\[
		\mathrm{dist}\left(\beta \left(\tau \right),\beta\left(\kappa \right)\right)\geq\rho_{n+1} W_{n}.
		\]
	\end{enumerate}
	Let 
	\begin{align*}
	P_{n}&=\prod_{k=0}^{n}C_{k},	\\	
	D_{n}&=\max\left\{ k\ge n\::\: L_{k}\ge W_{n}\right\},  \\
	s&=\sup\left\{ t >0\::\:\lim\limits _{n\to\infty}\frac{\log\left(P_n W_{n}^{ t }
	\rho_{n+1}^{ t  }
		\cdot \prod_{ i =n+1}^{D_{n}}\rho_{ i }C_i\right)}
	{\max\{D_{n}-n, 1\}}=\infty\right\} .
	\end{align*}
	If $s>1$, then $\dim_H \mathcal F(\mathcal T, \beta)\ge s$. 
	
\end{thm}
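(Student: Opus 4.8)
The plan is to build a Frostman-type measure $\mu$ on $\mathcal F(\mathcal T,\beta)$ by distributing mass uniformly through the tree, then bound from below the $\mu$-measure of an arbitrary small ball, which by the mass distribution principle will give $\dim_H\mathcal F(\mathcal T,\beta)\ge s$. Concretely, put $\mu(\beta(\tau))=P(\tau)^{-1}$ where $P(\tau):=\prod_{k=0}^{n}\sharp\mathcal T(\tau_k)$ for $\tau\in\mathcal T_n$ with ancestor chain $\tau_0,\dots,\tau_n=\tau$ (the standard uniform splitting of mass among sons); since each vertex has at least $C_n$ sons and $P_n=\prod_{k=0}^n C_k$, we get $\mu(\beta(\tau))\le P_n^{-1}$ for $\tau\in\mathcal T_n$. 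Regularity of the self-affine structure guarantees this is a well-defined Borel probability measure supported on $\mathcal F(\mathcal T,\beta)$.

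Next, fix $t$ with $1<t<s$; by definition of $s$ the quantity $\log\big(P_nW_n^{t}\rho_{n+1}^{t}\prod_{i=n+1}^{D_n}\rho_iC_i\big)\big/\max\{D_n-n,1\}\to\infty$, so in particular the numerator itself tends to $+\infty$, i.e. $P_nW_n^{t}\rho_{n+1}^{t}\prod_{i=n+1}^{D_n}\rho_iC_i\to\infty$. I would then take a ball $B=B(x,r)$ with $x\in\mathcal F(\mathcal T,\beta)$ and $r$ small, and locate the generation $n$ at which $r$ sits relative to the rectangle dimensions: choose $n$ so that $W_n$ is comparable to (but at least) $r$ in the short direction. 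The key geometric point is the separation hypothesis (3): distinct sons of a common vertex at level $n$ have $\beta$-images at distance $\ge\rho_{n+1}W_n$, and more generally, iterating down to level $D_n$ (the last level where the long side $L_k$ is still $\ge W_n$), the rectangles $\beta(\tau')$ for $\tau'\in\mathcal T_{D_n}$ lying under a fixed $\tau\in\mathcal T_n$ are separated by roughly $\prod_{i=n+1}^{D_n}\rho_i$ times $W_n$ in the short direction, while the long side has shrunk to $W_n$ order by the definition of $D_n$, so they become "square-like" of size $\asymp W_n$ and there are $\asymp\prod_{i=n+1}^{D_n}C_i$ of them packed (well-separated) in a region of diameter $\asymp W_n$. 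Hence a ball of radius $\asymp r\asymp W_n$ meets only boundedly many of these level-$D_n$ rectangles, so $\mu(B)\ll P_{D_n}^{-1}=P_n^{-1}\prod_{i=n+1}^{D_n}C_i^{-1}$, and one checks $P_n^{-1}\prod_{i=n+1}^{D_n}C_i^{-1}\ll r^{t}$ precisely because $P_nW_n^{t}\rho_{n+1}^{t}\prod_{i=n+1}^{D_n}\rho_iC_i\to\infty$ and $r\asymp W_n$ (the $\rho$ factors accounting for the separation losses at each intermediate level, and the $\rho_{n+1}^t$ factor handling the step from level $n$ into its sons before the region has become square-like). The supremum/limit condition with the $\max\{D_n-n,1\}$ denominator is exactly what is needed so that this estimate is uniform over all $n$ — a subexponential error in $D_n-n$ is absorbed.

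The main obstacle I expect is the careful bookkeeping in the intermediate regime between level $n$ and level $D_n$: there the rectangles $\beta(\tau')$ are still genuinely elongated (long side between $W_n$ and $L_n$), so a ball of radius $\asymp W_n$ may intersect many of them, and one cannot simply count level-$D_n$ descendants crudely. The right way is to split the estimate by generation: for a fixed $k$ with $n\le k\le D_n$, a ball of radius $\asymp W_n\le L_k$ can meet at most $O(L_k/(\rho_{k+1}W_k)+1)$ children-rectangles at level $k+1$ along the long direction of the level-$k$ rectangle, and the short-direction separation $\rho_{k+1}W_k$ restricts it in the other direction; multiplying these per-level intersection bounds down to level $D_n$ and comparing with $\prod C_k$ produces exactly the product $\prod_{i=n+1}^{D_n}\rho_i C_i$ in the denominator of the exponent. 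I would phrase this via a clean sublemma: for $x\in\mathcal F(\mathcal T,\beta)$ and $r\le W_n$, $\mu(B(x,r))\ll r^{t}$ uniformly, deduced by choosing $n=n(r)$ maximal with $W_n\ge r$ and running the per-level counting argument; then the mass distribution principle finishes. I would also double-check the boundary/degenerate cases ($D_n=n$, where the product is empty and the bound reduces to $P_nW_n^t\rho_{n+1}^t\to\infty$) and that $s>1$ is used only to ensure the resulting Hausdorff measure statement is non-vacuous in the relevant range and that $r\asymp W_n$ rather than $r\asymp L_n$ is the correct normalization for a ball.
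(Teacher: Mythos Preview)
Your overall strategy is the same as the paper's: construct the uniform-splitting measure $\mu$, bound the $\mu$-mass of a small set by counting how many level-$D_n$ rectangles it can meet via a per-generation argument, and invoke the mass distribution principle. You have also correctly identified where the work lies and why the denominator $\max\{D_n-n,1\}$ appears.

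However, your per-level count is wrong, and the first heuristic paragraph contains several incorrect claims that would derail the argument if followed literally. Specifically: the level-$D_n$ descendants of a fixed $\tau\in\mathcal T_n$ are \emph{not} separated by $\prod_{i=n+1}^{D_n}\rho_i\cdot W_n$ (separations do not multiply), and they fill the full rectangle $\beta(\tau)$ of diameter $\asymp L_n$, not a region of diameter $\asymp W_n$. In your refined per-level argument you then state the bound $O(L_k/(\rho_{k+1}W_k))$; this is also incorrect and would not produce the product $\prod\rho_i^{-1}$. The correct per-level bound is simply $O(\rho_i^{-1})$, and the geometric reason is the one you have not isolated: for every intermediate level $i$ with $n< i\le D_n$ one has $L_i\ge W_n$ by the very definition of $D_n$. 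Hence when a $W_n\times W_n$ square $S$ is intersected with a level-$(i-1)$ rectangle, each level-$i$ child that meets $S$ has long side $\ge W_n$ and therefore spans the full extent of $S$ in the long direction; the count then reduces to a one-dimensional packing in the short direction, where the parent has width $\le W_{i-1}$ and children are $\rho_i W_{i-1}$-separated, giving $\ll\rho_i^{-1}$ children. Iterating yields
\[
\sharp\{\tau'\in\mathcal T_{D_n}:\beta(\tau')\cap S\neq\emptyset\}\ \ll\ C^{\,D_n-n}\prod_{i=n+1}^{D_n}\rho_i^{-1}
\]
for an absolute constant $C$ (the paper gets $C=72$), and hence $\mu(S)\le C^{\,D_n-n}P_n^{-1}\prod_{i=n+1}^{D_n}\rho_i^{-1}C_i^{-1}$. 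Note the error factor is genuinely \emph{exponential} in $D_n-n$, not subexponential; this is precisely why the hypothesis demands $\log(\cdots)/\max\{D_n-n,1\}\to\infty$ rather than merely $\log(\cdots)\to\infty$.

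One further point: the paper does not bound $\mu(B(x,r))$ directly but instead, for an arbitrary open $U$, defines $n(U)$ as the last level at which $U\cap\mathcal F$ lies in a single $\beta(\kappa)$, shows $\mathrm{diam}(U)\ge\rho_{n+1}W_n$ via the separation hypothesis, and then covers $U\cap\mathcal F$ by $O(\mathrm{diam}(U)/W_n)$ elementary squares of side $W_n$ inside $\beta(\kappa)$. This is where the factor $\rho_{n+1}^{\,t}$ enters. Your ball formulation can be made to work, but you must be careful that a ball of radius $\asymp W_n$ need not sit inside a single level-$n$ rectangle in the short direction; the paper's choice of $n(U)$ via the tree handles this cleanly.
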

\begin{rem}

 If $D_n=n$  we interpret $\prod_{ i =n+1}^{D_{n}}\rho_i C_i=1$. Since $(\mathcal T, \beta)$ is regular, 
one has $W_n\to 0$ and hence $\rho_{n+1} W_n\to 0$. It follows that if $t<s$ then 
\[
\lim\limits _{n\to\infty}\frac{\log\left(P_{n}  W_{n}^{ t }
	\rho_{n+1}^{ t  }
	\cdot\prod_{ i =n+1}^{D_{n}}\rho_{ i }C_i\right)}
{\max\{D_{n}-n, 1\}}=\infty.
\]
The main difference between Theorem \ref{thm:lower_bound} and  lower bound theorems used in \cite{c11} and \cite{cc} is the factor $\prod_{ i =n+1}^{D_{n}}\rho_i C_i$ which is trivial  for the usual  self-similar  
structures (here self-similar refers to  $W_n=L_n$). 

\end{rem}

The formula of the  lower bound $s$ in Theorem   \ref{thm:lower_bound} 	 is much simpler 
in many interesting self-similar fractal structures
where $L_n=W_n$ and  
$D_n=n$ for all $n\in \N$. The following two corollaries  on   refinement of the lower bound formula
are only  interesting 
in 
authentic self-affine cases  where the following assumption (\rmnum{4}) of Corollary \ref{cor;real use} is needed.

\begin{cor}\label{cor;real use}
	Let the notation be as in  Theorem \ref{thm:lower_bound}. We moreover assume that there
	exists $k\in \N$  such that for all sufficiently large  $n\in \N$   the following conditions hold:
	(\rmnum{1})
	$
	D_n \le k n
	$; 
	(\rmnum{2})
	$e^{n/k}\le C_n\le e^{kn}$;
	(\rmnum{3})
	$\rho_n\ge e^{-nk}$; 
	(\rmnum{4})
	$\rho_nC_nL_n/L_{n-1}\ge n^{-k}$.
	Suppose that
	\begin{align}\notag
	s=\sup\left\{  t >0:\lim\limits _{n\to\infty}\frac{L_{n}}{W_{n}}P_n\cdot W_{n}^{ t }=\infty\right\}  =\liminf_{n\to\infty}\frac{\log\left(L_{n}P_n\right)}{-\log W_{n}}+1
	\end{align}
	is strictly bigger than $1$, then
	$\dim_H \mathcal F(\mathcal T, \beta)\ge s$.	
\end{cor}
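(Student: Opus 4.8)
The plan is to deduce this from Theorem \ref{thm:lower_bound} by showing that, under the extra hypotheses (i)--(iv), the somewhat opaque quantity $P_nW_n^{t}\rho_{n+1}^{t}\prod_{i=n+1}^{D_n}\rho_iC_i$ entering the exponent of that theorem differs from the simple quantity $\tfrac{L_n}{W_n}P_nW_n^{t}$ by a factor whose logarithm is $O_k(n\log n)$, which is negligible after division by $\max\{D_n-n,1\}$. Concretely, it is enough to check that for every $t$ with $1<t<s$ (where $s$ is the value in the statement) one has $\lim_{n\to\infty}\frac{\log(P_nW_n^{t}\rho_{n+1}^{t}\prod_{i=n+1}^{D_n}\rho_iC_i)}{\max\{D_n-n,1\}}=\infty$; then the exponent produced by Theorem \ref{thm:lower_bound} is $\ge s>1$, so that theorem applies and gives $\dim_H\mathcal F(\mathcal T,\beta)\ge s$. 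A preliminary observation, used twice below, is the estimate $-\log W_n\gg_k n^{2}$ for large $n$: the rectangles $\beta(\tau)$ with $\tau\in\mathcal T_n$ are pairwise disjoint (distinct level-$n$ vertices have a highest common ancestor, below which they lie in disjoint sibling rectangles) and all lie in $\beta(\tau_0)$, so $\sharp\mathcal T_n\cdot W_nL_n\le W_0L_0$; since $\sharp\mathcal T_n\ge\prod_{j=1}^{n}C_j=P_n$ and $W_n\le L_n$, hypothesis (ii) gives $W_n^{2}\le W_0L_0/P_n$ with $\log P_n\gg_k n^{2}$ (because $C_j\ge e^{j/k}$ for large $j$).

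For the main computation, fix $t$ with $1<t<s$ and set $B_n(t)=\log\bigl(\tfrac{L_n}{W_n}P_nW_n^{t}\bigr)=\log(L_nP_n)+(t-1)\log W_n$. Using the equivalent description $s=1+\liminf_n\frac{\log(L_nP_n)}{-\log W_n}$ (the two expressions for $s$ in the statement agree since $-\log W_n\to\infty$), one gets $B_n(t)\ge\tfrac{s-t}{2}(-\log W_n)$ for large $n$. On the other hand, writing $A_n(t)=\log\bigl(P_nW_n^{t}\rho_{n+1}^{t}\prod_{i=n+1}^{D_n}\rho_iC_i\bigr)$ and using $\prod_{i=n+1}^{D_n}C_i=P_{D_n}/P_n$, one finds $A_n(t)-B_n(t)=t\log\rho_{n+1}+\sum_{i=n+1}^{D_n}\log(\rho_iC_i)+\log\tfrac{W_n}{L_n}$. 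The crux is a lower bound for the middle sum: hypothesis (iv) reads $\log(\rho_iC_i)\ge-k\log i+\log L_{i-1}-\log L_i$, and the logarithmic differences telescope, giving $\sum_{i=n+1}^{D_n}\log(\rho_iC_i)\ge-k\sum_{i=n+1}^{D_n}\log i+\log L_n-\log L_{D_n}$.

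Substituting this, applying (iv) once more at the index $D_n+1$ together with $\rho_{D_n+1}\le1$ and $C_{D_n+1}\le e^{k(D_n+1)}$ (from (ii)) to get $W_n>L_{D_n+1}\ge L_{D_n}(D_n+1)^{-k}e^{-k(D_n+1)}$, and bounding the remaining terms with (i) and (iii), I expect to reach $A_n(t)-B_n(t)\ge-O_{k,t}(n\log n)$. Then $A_n(t)\ge\tfrac{s-t}{2}(-\log W_n)-O_{k,t}(n\log n)\gg_{k,t}n^{2}$ for large $n$, while $\max\{D_n-n,1\}\le kn$ by (i); hence $A_n(t)/\max\{D_n-n,1\}\gg_{k,t}n\to\infty$, which is exactly the condition needed. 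Letting $t\uparrow s$ then finishes the deduction as described above.

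I expect the main obstacle to be the telescoping control of $\prod_{i=n+1}^{D_n}\rho_iC_i$: a priori this product can be tiny and would ruin the estimate, and it is precisely hypothesis (iv) --- that the ``width ratio'' $\rho_iC_iL_i/L_{i-1}$ stays above a fixed negative power of $i$ --- that lets $\sum\log(\rho_iC_i)$ collapse to $\log(L_n/L_{D_n})$ up to the harmless correction $k\sum\log i$; a further application of (iv) at $D_n+1$ shows $\log L_{D_n}=\log W_n+O_k(n)$, after which $A_n(t)$ and $B_n(t)$ agree up to lower order. A secondary point needing care is the quadratic bound $-\log W_n\gg_k n^{2}$: merely knowing $-\log W_n\to\infty$ would not suffice, since we must dominate both the $O(n\log n)$ error and the $O(n)$ size of $\max\{D_n-n,1\}$, and it is the hypothesis $C_n\ge e^{n/k}$ that supplies the required quadratic growth.
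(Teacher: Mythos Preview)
Your argument is correct and follows essentially the same route as the paper's proof: both hinge on the telescoping identity $\sum_{i=n+1}^{D_n}\log(\rho_iC_i)=\log(L_n/L_{D_n})+\sum_{i=n+1}^{D_n}\log(\rho_iC_iL_i/L_{i-1})$ together with hypothesis (iv), then use $L_{D_n+1}<W_n$ (the paper extends the product to $D_n+1$ directly, you instead invoke (iv) once more at $D_n+1$ to bound $L_{D_n}/W_n$) to connect $A_n(t)$ to $\tfrac{L_n}{W_n}P_nW_n^t$. The only notable difference is cosmetic: you work additively with logarithms and prove $-\log W_n\gg_k n^2$ via an area/disjointness argument, while the paper works multiplicatively, shows the quantity is $\ge P_n^{\varepsilon}$, and appeals to $\log P_n\gg_k n^2$ from (ii) together with (i); both lead to the same conclusion.
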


The conclusion of this corollary implies that  the Hausdorff dimension of $\mathcal F(\mathcal T, \beta)$
is  equal to the lower Minkowski dimension.
Before the proof we explain the  additional assumptions of the corollary.  The assumptions
(\rmnum{2}) and (\rmnum{3}) are standard   and they are  satisfied by the usual  fractal structures of  Cantor sets. The assumption (\rmnum{1}) is 
a regularity condition which  means that  $D_n$ grows at most linearly in $n$. If we fix $\tau\in \mathcal T_{n-1}$ and enlarge all the $\beta(\kappa)$ for  $\kappa\in \mathcal T(\tau)$ to rectangles with the same centers and size 
$(W_{n}+\frac{1}{4}W_{n-1}\rho_n)\times (L_n+\frac{1}{4}W_{n-1}\rho_n)$, then they are mutually disjoint. 
Therefore, by assumption (2) in Theorem \ref{thm:lower_bound}  
\[
C_n\le  \sharp \mathcal T(\tau)\le \frac{W_{n-1 }L_{n-1} }{(W_{n}+\frac{1}{4}W_{n-1}\rho_n) (L_n+\frac{1}{4}W_{n-1}\rho_n)}.
\]
The assumption (\rmnum{4}) is satisfied if  $L_n\gg \rho_n W_{n-1}\gg  W_n$ and the above inequality is 
almost an equality up to sub-exponential factors. So (\rmnum{4}) means that the separation of  $\beta(\kappa)$ $(\kappa \in \mathcal T(\tau))$ is 
almost  optimal (see the  remark after Lemma \ref{lem;separation} for more explanations). 
\begin{proof}[Proof of Corollary \ref{cor;real use}]
	It follows from the 	assumptions (\rmnum{1})-(\rmnum{4}) that 
	given $t>0$ and  $\varepsilon>0$  one has 
	\begin{align}\label{eq;too cold 5}
	\min \, \left \{(\rho_{D_n+1}C_{D_n+1})^{-1},\  
	\rho_{n+1}^t  
	\prod_{i=n+1}^{D_n}\frac{\rho_iC_i L_i}{L_{i-1}}\right \}\ge P_n^{-\varepsilon}
	\end{align}
	for all  $n$  sufficiently large (depending on $t$ and $\varepsilon$). 
	We fix a real number  $t$   with $1< t< s$. Let $\varepsilon>0$ be  sufficiently  small so that 
	$1<\frac{t}{1-3\varepsilon}<s$.  
	It follows from the definition of $s$ that 
	\begin{align}\label{eq;very cold}
	\frac{L_{n}}{W_{n}}P_n\cdot W_{n}^{ \frac{t}{1-3\varepsilon} } \ge 1
	\end{align}
	for $n$ sufficiently large. 
	Let $n_0\in \N$ such that  (\ref{eq;too cold 5})  and (\ref{eq;very cold}) hold  for $n\ge n_0$. 
	Then for all $n\ge n_0$ we have 
	\begin{align*}
	P_n W_n^t 
	\rho_{n+1}^t 
	\prod_{i=n+1}^{D_n}\rho_i C_i& \ge 
	P_n^{1-\varepsilon} W_n^t 
	\rho_{n+1}^t 
	\prod_{i=n+1}^{D_n+1}\rho_i C_i&& \mbox{by (\ref{eq;too cold 5})}\\
	&=  P_n^{1-\varepsilon} W_n^t\cdot
	\rho_{n+1}^t
	\frac{L_{n}}{L_{D_n+1}} \cdot
	\prod_{i=n+1}^{D_n+1}\frac{\rho_iC_i L_i}{L_{i-1}}   \\
	&\ge P_n^{1-\varepsilon} W_n^t \cdot \frac{L_n}{W_n} \cdot  
	\rho_{n+1}^t 
	 \prod_{i=n+1}^{D_n+1} \frac{\rho_iC_i L_i}{L_{i-1}} \\
	&\ge P_n^{1-\varepsilon} W_n^t \cdot \frac{L_n}{W_n}\cdot  P_n^{-\varepsilon} && \mbox{by (\ref{eq;too cold 5})}\\
	& \ge P_n^{1-\varepsilon} W_n^t\cdot  \left (\frac{L_n}{W_n}\right)^{1-3\varepsilon}\cdot  P_n^{-2\varepsilon}\cdot P_n^{\varepsilon} \\
	&=\left [ P_n W_n^{\frac{t}{1-3\varepsilon}}\frac{L_n}{W_n}\right]^{1-3\varepsilon} P_n^{\varepsilon}\\
	&\ge P_n^{\varepsilon}  && \mbox{by (\ref{eq;very cold})}.    
	\end{align*}
	Therefore, the assumptions  (\rmnum1) (\rmnum2)  and Theorem \ref{thm:lower_bound}    imply  $$\dim_H\mathcal F(\mathcal T, \beta)\ge t.$$ The conclusion follows by  considering an arbitrary
	real number $t$ with   $1< t<s$.

\end{proof}

The assumptions (\rmnum{2})-(\rmnum{4}) in Corollary \ref{cor;real use}  are local, that is, they only depend on 
the data from height  $n-1$ to  $n$. The Hausdorff dimension of $\mathcal F(\mathcal T, \beta)$ 
can also be estimated via local data under an additional assumption. We state this observation as the following corollary. 
\begin{cor}\label{cor;real real}
	Let the notation be as in  Theorem \ref{thm:lower_bound}. We moreover assume that there
	exist $k,  n_0\in \N$   such that for all   $n\ge  n_0$ (\rmnum{2})-(\rmnum{4}) in Corollary \ref{cor;real use} hold,  $L_{kn}/L_{kn-1}\le  W_n/W_{n-1}$ and 
	$L_{kn_0-1}< W_{n_0-1}$. If
	\[
	\lim_{n\to \infty}  \frac{\log ({L_n} C_n/{L_{n-1}} )}{-\log (W_n/W_{n-1})}
	\]
	exists and is  equal to $r>0$, then $\dim_H\mathcal F(\mathcal T, \beta)\ge 1+r$.
\end{cor}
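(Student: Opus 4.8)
The plan is to deduce this from Corollary \ref{cor;real use} by checking that its hypotheses are met, after passing to the subtree obtained by looking only at heights that are multiples of $k$. Concretely, I would first observe that the index sets $\{kn : n \ge n_0\}$ carry all the asymptotic information: since $D_n$ in Theorem \ref{thm:lower_bound} is defined by $D_n = \max\{j \ge n : L_j \ge W_n\}$, the hypothesis $L_{kn}/L_{kn-1} \le W_n/W_{n-1}$ together with $L_{kn_0-1} < W_{n_0-1}$ will be used to show $D_{kn} < k(n+1)$, i.e. $D_{kn} \le kn + k - 1$, giving condition (\rmnum{1}) of Corollary \ref{cor;real use} with a slightly larger constant; this is because the ratio $L_j/W_{kn}$ can be tracked telescopically and is forced below $1$ by the time $j$ reaches $k(n+1)$. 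Conditions (\rmnum{2})--(\rmnum{4}) of Corollary \ref{cor;real use} are assumed directly for $n \ge n_0$, so the only remaining work is the value of $s$.

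Next I would compute $s$ using the formula from Corollary \ref{cor;real use},
\[
s = \liminf_{n\to\infty} \frac{\log(L_n P_n)}{-\log W_n} + 1,
\]
and show it equals $1 + r$. The idea is that $\log(L_n P_n) = \sum_{i=1}^n \log(L_i C_i / L_{i-1}) + \log(L_0 C_0)$ and $-\log W_n = \sum_{i=1}^n \bigl(-\log(W_i/W_{i-1})\bigr) - \log W_0$, so both numerator and denominator are (up to bounded additive terms) Cesàro-type sums of the quantities $\log(L_iC_i/L_{i-1})$ and $-\log(W_i/W_{i-1})$ respectively. By the hypothesis, the ratio of the $n$-th summands converges to $r$; since the denominators $-\log(W_i/W_{i-1})$ are positive (as $W_i \le W_{i-1}$, which follows from regularity forcing $W_n \to 0$ together with the nesting $\beta(\tau)\subset\beta(\tau')$ — I would want to make this monotonicity precise, perhaps only along the subsequence $kn$, which is why the subtree trick matters) and their partial sums diverge to $+\infty$, a standard Stolz--Ces\`aro argument gives that the ratio of partial sums also tends to $r$. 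Hence $s = 1 + r$, which is $> 1$ since $r > 0$, and Corollary \ref{cor;real use} applies to yield $\dim_H \mathcal F(\mathcal T,\beta) \ge 1 + r$.

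The main obstacle I anticipate is bookkeeping around the two different "clocks": Corollary \ref{cor;real use} wants its regularity hypothesis (\rmnum{1}), $D_n \le kn$, to hold with the running index $n$, but the comparison $L_{kn}/L_{kn-1} \le W_n/W_{n-1}$ mixes the fine index (for $L$) with a coarsened index (for $W$). Making sure the telescoping estimate $\prod_{j} L_j/L_{j-1}$ can be bounded against $\prod W_i/W_{i-1}$ over the right ranges, and that this really forces $L_j < W_{kn}$ once $j \ge k(n+1)$, is the delicate point; one has to feed in $L_{kn_0-1} < W_{n_0-1}$ as the base case and induct, being careful that within a block $kn \le j < k(n+1)$ the sequence $L_j$ may rise before the next comparison kicks in — but it cannot rise above $L_{k(n-1)} \le \cdots$, so a uniform-in-block bound suffices. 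Once that linear growth of $D_n$ is secured, the rest is the Stolz--Ces\`aro computation plus a direct invocation of the previous corollary.
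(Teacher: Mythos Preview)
Your overall strategy---verify condition (\rmnum{1}) of Corollary~\ref{cor;real use} and then identify $s=1+r$ via a Stolz--Ces\`aro argument---is the same as the paper's, but your execution introduces an unnecessary detour and contains an index confusion that muddies the key estimate.

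The subtree trick is not needed, and your stated target $D_{kn}<k(n+1)$ (i.e.\ $L_{k(n+1)}<W_{kn}$) is the wrong inequality: it does not follow directly from the hypotheses, and it is not what condition (\rmnum{1}) asks for. The paper works with the original tree and shows directly that $L_{kn}<W_n$ for every $n\ge n_0$; since $\{L_j\}$ is monotonically nonincreasing, this immediately gives $D_n<kn$ for all $n\ge n_0$, which is exactly condition (\rmnum{1}) with the running index $n$. The telescoping is
\[
L_{kn}=L_{kn_0-1}\prod_{i=kn_0}^{kn}\frac{L_i}{L_{i-1}}
\le L_{kn_0-1}\prod_{i=n_0}^{n}\frac{L_{ki}}{L_{ki-1}}
\le L_{kn_0-1}\prod_{i=n_0}^{n}\frac{W_i}{W_{i-1}}
=W_n\cdot\frac{L_{kn_0-1}}{W_{n_0-1}}<W_n,
\]
where the first inequality simply drops factors $\le 1$ and the second uses the hypothesis $L_{ki}/L_{ki-1}\le W_i/W_{i-1}$.

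Your worry that ``within a block $kn\le j<k(n+1)$ the sequence $L_j$ may rise'' is unfounded: regularity of $(\mathcal T,\beta)$ forces $\beta(\tau)\subset\beta(\tau')$ whenever $\tau$ is a son of $\tau'$, and since both are axis-parallel rectangles this yields $W_n\le W_{n-1}$ and $L_n\le L_{n-1}$ for all $n$. This monotonicity is exactly what makes the first inequality above work, and it also resolves your concern about the positivity of $-\log(W_i/W_{i-1})$ in the Stolz--Ces\`aro step.

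Your Stolz--Ces\`aro computation of $s=1+r$ is correct and in fact spells out a step the paper leaves implicit: writing $\log(L_nP_n)=\log(L_0C_0)+\sum_{i=1}^n\log(L_iC_i/L_{i-1})$ and $-\log W_n=-\log W_0+\sum_{i=1}^n\bigl(-\log(W_i/W_{i-1})\bigr)$, with the denominator summands nonnegative and summing to $+\infty$ (since $W_n\to 0$), the Stolz--Ces\`aro theorem gives $\liminf_n\frac{\log(L_nP_n)}{-\log W_n}=r$, hence $s=1+r>1$ and Corollary~\ref{cor;real use} applies.
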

\begin{proof}
	Since the sequence $\{ L_n\}$ is monotonically decreasing, for  $n\ge n_0$ we have 
	\begin{align*}
	L_{kn}& =L_{kn_0-1} \prod_{i=kn_0}^{kn} \frac{L_{i}}{L_{i-1}} 
	\le L_{kn_0-1}	\prod_{i=n_0}^n \frac{L_{ki}}{L_{ki-1}} \\
	& \le  L_{kn_0-1}\prod_{i=n_0}^n\frac{W_{i}}{W_{i-1}} 
	= W_n \frac{L_{kn_0-1}}{W_{n_0-1}}< W_n.
	\end{align*}
	So the  assumption (\rmnum{1}) of Corollary \ref{cor;real use} holds and we
	conclude that  the 
	Hausdorff dimension of $\mathcal F(\mathcal T, \beta)$ is bounded from below by $1+r$. 
	
\end{proof}

In the rest of this section we keep   the notation and assumptions of 
Theorem \ref{thm:lower_bound} and give a proof of it. 
We first develop  some tools for the proof of Theorem \ref{thm:lower_bound}. 
According to the assumptions there is a one-to-one correspondence between
$\mathcal F(\mathcal T, \beta) $ and  $\partial \mathcal T$. For each 
$x\in \mathcal F(\mathcal T, \beta) $  we let $\{\tau_n(x)\}_{n\in \N}\in \partial \mathcal T$  such that 
$\bigcap_{n\in \N} \beta(\tau_n(x) )=\{x\}$. 
We  take
$\tau_0(x)$ to be the root of $\mathcal T$ for all $x$.
Let $\mu$ be the measure on $
\mathcal F(\mathcal T, \beta)$ with the property that for all $y\in \mathcal F(\mathcal T, \beta)$ and $n\in \N$
\[
\frac{ \mu(\{x\in  \mathcal F(\mathcal T, \beta): \tau_{n}(x)=\tau_{n}(y)  \})}
{ \mu(\{x\in  \mathcal F(\mathcal T, \beta): \tau_{n-1}(x)=\tau_{n-1}(y)  \})}= \frac{1}{\sharp \mathcal T(\tau_{n-1}(y))}\le \frac{1	}{C_n}.
\]
For any $\tau\in \mathcal T$, we define  {\em elementary squares} of $\beta(\tau)$
to be the closed squares contained in $\beta(\tau)$  with side-length
$W(\tau )$. In the following two lemmas we estimate the measure of an elementary square.
\begin{lem}\label{1_step_bound} 
	Suppose $n\in \N\cup \{0 \}$ and  $D_n>n$.
	Let $\kappa\in \mathcal T_n$ and $\tau \in \mathcal T_{i-1}$ where 
	$ n+1\le i\le D_n$. Then  for     any elementary square 
	$S$ of $\beta(\kappa)$  one 
	has  
	\[
	\sharp\{\tau'\in\mathcal T(\tau )\::\: \beta\left(\tau'\right)\cap S\neq\emptyset\}\le 72\rho_{ i }^{-1}.
	\]
\end{lem}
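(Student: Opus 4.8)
The plan is to compare areas. Fix the elementary square $S \subset \beta(\kappa)$; it has side-length $W_n$. For a son $\tau' \in \mathcal T(\tau)$ the rectangle $\beta(\tau')$ has size $W_{i-1} \times L_{i-1}$ (since $\tau' \in \mathcal T_{i-1}$), wait — one must be careful: $\tau \in \mathcal T_{i-1}$ so its sons $\tau'$ lie in $\mathcal T_i$ and hence $\beta(\tau')$ has size $W_i \times L_i$. Because $n+1 \le i \le D_n$, the definition of $D_n$ gives $L_i \ge W_n$, and always $W_i \le L_i$ and $W_i \le W_n$ (the $W$-sequence is decreasing, as the structure is regular and nested). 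So each $\beta(\tau')$ is a rectangle whose longer side $L_i$ is at least $W_n$ while its shorter side $W_i$ is at most $W_n$.

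The key step is the separation hypothesis (3): distinct sons $\tau', \tau''$ of $\tau$ satisfy $\mathrm{dist}(\beta(\tau'), \beta(\tau'')) \ge \rho_i W_{i-1} \ge \rho_i W_i$ (using $W_{i-1} \ge W_i$). First I would enlarge each $\beta(\tau')$ to the rectangle $\beta^+(\tau')$ with the same center and size $(W_i + \tfrac{1}{3}\rho_i W_i) \times (L_i + \tfrac{1}{3}\rho_i W_i)$; by the separation bound these enlarged rectangles are pairwise disjoint (each side grows by at most $\tfrac13 \rho_i W_i$ on each end, wait — by $\tfrac16\rho_i W_i$ on each side, so two of them stay separated). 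Now if $\beta(\tau') \cap S \ne \emptyset$, then $\beta^+(\tau')$ is contained in the $\tfrac13\rho_i W_i$-neighborhood of $S$ intersected with a slab... more cleanly: $\beta^+(\tau') \subset S'$ where $S'$ is the square concentric with $S$ of side-length $W_n + 2L_i + \rho_i W_i \le W_n + 2L_i + W_n \le 4 L_i + \ldots$; using $L_i \ge W_n$ one bounds the side of $S'$ by a constant times $L_i$, say $\le 4L_i$ after absorbing $\rho_i \le 1$ and $W_n \le L_i$. Hence all the disjoint rectangles $\beta^+(\tau')$ meeting $S$ fit inside $S'$, so their number is at most
\[
\frac{\mathrm{area}(S')}{\min_{\tau'}\mathrm{area}(\beta^+(\tau'))} \le \frac{(4 L_i)^2}{(W_i + \tfrac13 \rho_i W_i)(L_i + \tfrac13\rho_i W_i)} \le \frac{16 L_i^2}{\tfrac13 \rho_i W_i \cdot L_i} = \frac{48 L_i}{\rho_i W_i}.
\]
This is not yet of the claimed form $72 \rho_i^{-1}$, so the final step is to replace the crude area-of-rectangle bound by the observation that $\beta(\tau') \cap S \ne \emptyset$ already forces $\beta(\tau')$ to lie in a slab of width $\le W_n + 2W_i \le 3W_n$ in the short direction; intersecting that slab with $S'$ gives a $3W_n \times 4L_i$ box, and the enlarged rectangles inside it, being disjoint with short side $\ge \tfrac13\rho_i W_i$ and long side $\ge \tfrac13 \rho_i W_i$, number at most $\frac{3W_n \cdot 4L_i}{(\tfrac13\rho_i W_i)(\tfrac13 \rho_i W_i)}$ — no, that reintroduces $\rho_i^{-2}$.

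The genuinely correct packing estimate, and the step I expect to be the main obstacle, is to count using only \emph{one} power of $\rho_i$: the rectangles $\beta(\tau')$ meeting $S$ all lie in a $\le 3W_n \times (W_n + 2L_i)$ box $B$; project $B$ and the rectangles onto the long axis. Two sons whose projections onto the long axis overlap must, by the $W_i \times L_i$ shape with $L_i$ long, be separated in the short direction by $\ge \rho_i W_i$, and the short extent of $B$ is $\le 3W_n \le 3 L_i$ wait that's the wrong direction — I need $\le 3W_n$ and $3W_n / (\rho_i W_i)$ involves $W_n/W_i$. The clean resolution uses $W_{i-1}$ in the separation: $\mathrm{dist} \ge \rho_i W_{i-1}$ and $W_n \le W_{i-1}$ is false in general ($i \ge n+1$ so $W_{i-1}\le W_n$). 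So the right move is: in the short direction $B$ has extent $\le 3W_n$; the centers of the $\beta(\tau')$ meeting $S$ that share a long-coordinate interval are $\rho_{i} W_{i-1}$-separated, and since the structure is built so that actually one should bound the number of \emph{long-coordinate} intervals by $\lceil (W_n + 2L_i)/L_i\rceil \le 3$ and the number per interval by $\lceil 3W_n/(\rho_i W_i) + 1\rceil$. I would therefore prove the lemma by first establishing the auxiliary inequality $W_i \gg \rho_i W_n$ — plausibly false, so in fact I expect the paper resolves this by a more careful two-directional covering argument with explicit constants, arranging that the product of the two one-dimensional counts is $\le 72\rho_i^{-1}$; nailing down which direction carries the $\rho_i^{-1}$ and why the other direction contributes only an absolute constant is the crux, and I would carry it out by intersecting with $S$ directly (width $W_n$) rather than with the enlarged slab, using $L_i \ge W_n$ to make the long direction contribute $O(1)$ and the short direction, with separation $\rho_i W_i$ inside width $W_n \le $ ... hence $O(\rho_i^{-1})$ once one also uses $W_i \ge$ some multiple of $W_n$ coming from regularity of the structure at consecutive levels.
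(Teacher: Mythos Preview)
Your attempt has a genuine gap: you never use the containment $\beta(\tau')\subset\beta(\tau)$. All the sons you are counting are sons of a \emph{single} $\tau\in\mathcal T_{i-1}$, so every $\beta(\tau')$ sits inside the rectangle $\beta(\tau)$, whose short side is $W_{i-1}$. Hence everything that meets $S$ already lives in $R_0:=\beta(\tau)\cap S$, a rectangle of size $W_{i-1}\times(\text{at most }W_n)$. The short direction of the container is therefore $W_{i-1}$, not $3W_n$; comparing this with the separation $\rho_i W_{i-1}$ is what makes the short-direction count $\asymp\rho_i^{-1}$ with no stray factor of $W_n/W_{i-1}$. Your various attempts all place the sons in a box whose short side is of order $W_n$, and that is exactly why you keep picking up factors like $W_n/W_i$ or $L_i/W_i$ that you cannot control under the hypotheses of Theorem~\ref{thm:lower_bound}. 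The auxiliary inequalities you speculate about ($W_i\gg\rho_iW_n$, or $W_i$ comparable to $W_n$) are indeed false in general, and are not needed.

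There is a second idea you are circling but not landing: rather than work with the full rectangles $\beta(\tau')$ (long side $L_i\ge W_n$), intersect them with $S$ first. Each $R=\beta(\tau')\cap S$ has size $W_i\times l(R)$ with $l(R)\le W_n$; now enlarge $R$ to $R'$ of size $(W_i+\tfrac{\rho_i}{4}W_{i-1})\times W_n$. Because $L_i\ge W_n$, two distinct $\beta(\tau')$ whose enlarged intersections $R'$ overlap at a point must in fact be separated in the short direction, and one checks that any point of the enlarged container $R_0'$ (of size $3W_{i-1}\times 3W_n$) is covered by at most two of the $R'$. An area comparison then gives
\[
(W_i+\tfrac{\rho_i}{4}W_{i-1})\,W_n\cdot\sharp\mathcal S\le 2\cdot 9\,W_{i-1}W_n,
\]
hence $\sharp\mathcal S\le 72\rho_i^{-1}$. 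The paper's proof is exactly this; the step you were missing is confining the short direction to $W_{i-1}$ via $\beta(\tau')\subset\beta(\tau)$.
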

\begin{proof}
	Let 	$R_0=\beta\left(\tau \right)\cap S$ { and }
	\[
	\mathcal S=\left\{ \beta\left(\tau'\right)\cap S\::\: \tau'\in\mathcal T\left(\tau \right),\: \beta\left(\tau'\right)\cap S\neq\emptyset\right\} .
	\]
	Without loss of generality we assume $R_0$ is nonempty. Then $R_0$ is a rectangle with  size 
	$l_1\times l_2$ where
	$l_1= \min \{ W_n,  W_{i-1} \}=W_{i-1}$ and   $l_2\le \min \{ W_n,  L_{i-1} \}=W_n$. 
	Each  $R\in \mathcal S$ has  size $W_{ i }\times l(R)$ where $l(R)\le W_n$ and 
	the distance of  
	two different elements of $\mathcal S$ is  
	at least $W_{ i -1}\rho_{ i }$. 
	For every $R\in \mathcal S$ let $R'$ be the  rectangle with the same center and size  $  
	(W_{i}+\frac{\rho_i}{4} W_{i-1})\times W_n$.
	Similarly, let $R_0'$ be the rectangle with the same center as $R_0$ and size  $3 W_{i-1}\times 3W_n$. 
	Each point of $R'_0$ is covered by at most two  rectangles of $\{R': R\in \mathcal S \}$
	(here we use $L_i\ge  W_n$) and every $R'$
	is contained in $R_0'$.
	Therefore 
	\[
	\ (W_{i}+\frac{\rho_i}{4} W_{i-1})W_n \cdot 	\sharp \mathcal S\le  18 \ W_n W_{i-1},
	\]
	which implies 
	\begin{eqnarray*}
		\sharp\mathcal S
		\le 72 \frac{W_{n}}{W_n
		}\cdot\frac{W_{ i-1 }}{W_{ i -1}\rho_{i}}
		= 72\rho_{ i }^{-1}.
	\end{eqnarray*}
\end{proof}
\begin{lem}\label{lem:measure_bound}
	Let $n\in \N \cup \{0\}$ and   $\kappa \in \mathcal T_{n}$. Then for 
	any elementary square $S$ of $\beta(\kappa )$ one has 
	\begin{align}\label{eq;measure 1}
		\mu\left(S\right)\leq 72^{D_{n}-n}P_n^{-1}\prod_{ i =n+1}^{D_{n}}\rho_{ i }^{-1}C_i^{-1}.
	\end{align}
\end{lem}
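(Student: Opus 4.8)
The plan is to prove \eqref{eq;measure 1} by induction on $D_n - n$, using Lemma \ref{1_step_bound} as the one-step engine. The base case is $D_n = n$: then $S = \beta(\kappa)$ itself (an elementary square of $\beta(\kappa)$ coincides with $\beta(\kappa)$ when $W_n = L_n$, or more generally $S \subseteq \beta(\kappa)$), so $\mu(S) \le \mu(\beta(\kappa)) \le P_n^{-1}$ by the defining property of $\mu$ and the normalization $C_0 = 1$; the product over $i$ from $n+1$ to $D_n = n$ is empty, hence equals $1$, and $72^0 = 1$, so the bound holds. The inductive step will pass from the datum at height $n$ with $D_n > n$ down to height $n+1$. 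First I would observe that $D_{n+1} \ge D_n$ is not quite what is needed; rather, since $L_k \ge W_{n+1}$ whenever $L_k \ge W_n$ (as $W_{n+1} \le W_n$, the sequence $\{W_k\}$ being decreasing because $W_k \le L_k$ and $\{L_k\}$ decreasing — this needs a line to justify from regularity plus assumption (1): $\beta(\tau_{n+1}) \subseteq \beta(\tau_n)$ forces $W_{n+1} \le W_{n-\text{stuff}}$, actually $W_{n+1}\times L_{n+1}$ rectangle sits inside a $W_n \times L_n$ rectangle so both $W_{n+1}\le W_n$ and... care is needed here since the inclusion of a rectangle in a rectangle with axis-parallel sides does give both inequalities), we get $D_{n+1} \ge D_n$, with equality $D_{n+1} = D_n$ being the typical case but $D_{n+1} > D_n$ possible.

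The core estimate: fix an elementary square $S$ of $\beta(\kappa)$ with $\kappa \in \mathcal{T}_n$, $D_n > n$. I want to bound $\mu(S)$ in terms of measures of elementary squares at height $n+1$. Cover $S$ by the sets $\beta(\tau') \cap S$ for $\tau' \in \mathcal{T}(\kappa)$ with $\beta(\tau') \cap S \ne \emptyset$; by Lemma \ref{1_step_bound} applied with the roles $n \mapsto n$, $i \mapsto n+1$, $\tau \mapsto \kappa$ (legitimate since $n+1 \le D_n$), there are at most $72\rho_{n+1}^{-1}$ such $\tau'$. For each such $\tau'$, the rectangle $\beta(\tau')$ has size $W_{n+1} \times L_{n+1}$, and since $L_{n+1} \ge W_n \ge$ (the short side of $S$, which is $W_n$), $\beta(\tau') \cap S$ can be covered by a bounded number — in fact at most a constant, but I need it to be at most, say, $\lceil L_{n+1}/W_{n+1}\rceil + 1$ or simply to reorganize so the constant $72$ absorbs everything — of elementary squares of $\beta(\tau')$ (squares of side $W_{n+1}$). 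Here is where I must be careful with constants: Lemma \ref{1_step_bound} gives $72\rho_{n+1}^{-1}$ children meeting $S$, and $S$ has side $W_n \le L_{n+1}$, so $S$ meets $\beta(\tau')$ in a region of height at most $W_n \le L_{n+1}$ and width at most $W_{n+1}$; such a region lies in at most $\lceil W_n / W_{n+1} \rceil \le \ldots$ elementary squares of $\beta(\tau')$ — but this could be large. The resolution, matching the clean statement, is that we do \emph{not} further subdivide: instead $\mu(\beta(\tau') \cap S) \le \mu(\beta(\tau'))$, and $\mu(\beta(\tau')) \le \frac{1}{C_{n+1}} \mu(\beta(\kappa))$... but that reintroduces $\mu(\beta(\kappa))$, not a measure of an elementary square at level $n+1$.

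The honest inductive scheme is therefore: $\mu(S) \le \sum_{\tau'} \mu(\beta(\tau') \cap S)$ where the sum is over the $\le 72\rho_{n+1}^{-1}$ children meeting $S$; and $\beta(\tau') \cap S$, being contained in $\beta(\tau')$ and having short side $\le W_{n+1}$, is covered by elementary squares of $\beta(\tau')$ — and the key point is that an elementary square of $\beta(\tau')$ meeting $S$ is contained in $3S$ (the triple), while the number of elementary squares of $\beta(\tau')$ needed is at most $2$ when $L_{n+1} \ge W_n$ (cover the $W_{n+1} \times (\le W_n)$ strip; since $W_n \le L_{n+1}$ the strip fits in $\beta(\tau')$ and has height $\le W_n$, which is $\le L_{n+1}$, so it meets at most $\lceil W_n/W_{n+1}\rceil$... still not obviously $2$). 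Given the factor $72^{D_n - n}$ in the target, I believe the intended argument is: each $\beta(\tau')\cap S$ is covered by at most $\lceil L_{n+1}/W_{n+1}\rceil$ elementary squares of $\beta(\tau')$, but one then uses a \emph{different} grouping — cover $S$ itself (side $W_n$) by $S$ alone at the next level only after noting $D_{n+1}$ may have dropped. Cleanest: apply the inductive hypothesis to each child $\tau' \in \mathcal{T}(\kappa)$ meeting $S$ and each elementary square $S'$ of $\beta(\tau')$ that meets $S$, getting $\mu(S') \le 72^{D_{n+1} - (n+1)} P_{n+1}^{-1} \prod_{i=n+2}^{D_{n+1}} \rho_i^{-1} C_i^{-1}$; since $S$ is covered by $\le (72\rho_{n+1}^{-1}) \cdot \nu$ such squares $S'$ where $\nu$ is the number of elementary squares of a child needed, and since $P_{n+1} = C_{n+1} P_n$, $D_{n+1} = D_n$ generically, this yields $\mu(S) \le \nu \cdot 72 \rho_{n+1}^{-1} \cdot 72^{D_n - n - 1} P_n^{-1} C_{n+1}^{-1} \prod_{i=n+2}^{D_n}\rho_i^{-1}C_i^{-1} = \nu \cdot 72^{D_n-n} P_n^{-1}\prod_{i=n+1}^{D_n}\rho_i^{-1}C_i^{-1}$. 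So the lemma holds provided $\nu \le 1$, i.e. provided each $\beta(\tau')\cap S$ lies in a single elementary square of $\beta(\tau')$. The main obstacle — and the step I would scrutinize most — is exactly verifying this: that $S$ (side $W_n$) intersected with $\beta(\tau')$ (size $W_{n+1}\times L_{n+1}$) fits in one $W_{n+1}\times W_{n+1}$ square. The width is automatically $\le W_{n+1}$; for the height one needs $W_n \le W_{n+1}$, which is false in general, so the correct reading must be that elementary squares are allowed to overflow $\beta(\tau')$ slightly, or $\nu$ is genuinely bounded by a constant absorbed into a revised constant — in which case $72$ should be $72 \cdot (\text{that constant})$, and I would track this carefully, likely concluding $\nu \le \lceil W_n/W_{n+1}\rceil$ is \emph{not} used and instead the $\beta(\tau')\cap S$ are summed directly against $\mu(\beta(\tau'))$-type bounds. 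I would resolve this by re-deriving Lemma \ref{1_step_bound}'s output shape and choosing the subdivision so that the recursion closes with the stated constant $72$; the arithmetic with $P_n$, $\rho_i$, $C_i$ telescopes transparently once the combinatorial covering count is pinned down.
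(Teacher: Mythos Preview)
Your inductive approach has a genuine gap at exactly the point you flag: once you pass from level $n$ to level $n+1$, an elementary square $S$ of $\beta(\kappa)$ has side $W_n$, while elementary squares of $\beta(\tau')$ for $\tau'\in\mathcal T_{n+1}$ have side $W_{n+1}$. The intersection $\beta(\tau')\cap S$ is a rectangle of width $\le W_{n+1}$ but height up to $\min(W_n,L_{n+1})=W_n$ (using $n+1\le D_n$), so covering it requires on the order of $W_n/W_{n+1}$ elementary squares of $\beta(\tau')$. This ratio is not bounded by any absolute constant, so the recursion does not close with the factor $72^{D_n-n}$, and no rearrangement of the induction will repair this while still passing through elementary squares at intermediate levels.

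The paper's argument sidesteps the problem by never changing $S$. Note that Lemma \ref{1_step_bound} is stated for $\tau\in\mathcal T_{i-1}$ at \emph{any} level $n\le i-1\le D_n-1$, with $S$ always the fixed elementary square of $\beta(\kappa)$ at level $n$. Iterating it for $i=n+1,\ldots,D_n$ directly gives
\[
\sharp\{\tau\in\mathcal T_{D_n}:\beta(\tau)\cap S\neq\emptyset\}\le 72^{D_n-n}\prod_{i=n+1}^{D_n}\rho_i^{-1}.
\]
Then cover $S\cap\mathcal F(\mathcal T,\beta)$ by these $\beta(\tau)$ and use the tree bound $\mu(\beta(\tau))\le \mu(\beta(\kappa))\prod_{i=n+1}^{D_n}C_i^{-1}$ together with $\mu(\beta(\kappa))\le P_n^{-1}$. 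No subdivision into smaller elementary squares is ever needed; you descend through rectangles $\beta(\tau)$, not squares, and only invoke the measure bound once you reach level $D_n$.
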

\begin{proof}
	It is easy to see  that (\ref{eq;measure 1}) holds if 
	$D_n=n$. In the rest of the proof we assume   $D_n>n$. 
	Applying  Lemma \ref{1_step_bound} for $i=n+1,n+2,   \cdots , D_n$, we get
	\begin{align}\label{eq;starbucks}
		\sharp\left\{ \tau\in \mathcal T_{D_n}\::\: \beta\left(\tau\right)\cap S\neq\emptyset\right\} \le 72^{D_{n}-n}\prod_{ i =n+1}^{D_{n}}\rho_{ i }^{-1}.
	\end{align}
	We can cover $S\cap \mathcal F(\mathcal T, \beta)$ with the rectangles $$\left\{ \beta\left(\tau \right)\::\: \tau \in 
	\mathcal T_{D_{n}},\: \beta\left(\tau \right)\cap S\neq\emptyset\right\}. $$
	Therefore 
	\begin{align*}
		\mu\left(S\right) & \leq  \sum\limits _{\substack{\tau \in \mathcal T_{D_{n}} \\
				\beta\left(\tau\right)\cap S\neq\emptyset
			}
		}\mu\left(\beta\left(\tau \right)\right) \\
		&\le\mu\left(\beta\left(\kappa \right)\right)\prod_{ i =n+1}^{D_{n}}\frac{1}{C_{ i }}\cdot \sharp\left\{ \tau\in \mathcal T_{D_{n}}\::\: \beta\left(\tau\right)\cap S\neq\emptyset\right\} .
	\end{align*}
	The above inequality, (\ref{eq;starbucks}) and the fact  $\mu\left(\beta\left(\kappa \right)\right)\le P_n^{-1}$ imply 	(\ref{eq;measure 1}).
\end{proof}

Let $U$ be  an open subset of $\R^2$ with $U\cap \mathcal F(\mathcal T, \beta)\neq \emptyset$.  If  $U\cap \mathcal F(\mathcal T, \beta)$ contains at least two points   we let 
$n\left(U\right)$ be the largest  index $n\ge 0$ such that $U\cap \mathcal F(\mathcal T,\beta )\subset \beta\left(\tau\right)$
for some $\tau \in \mathcal T_n$. If $U\cap \mathcal F(\mathcal T, \beta)$  contains a single point we let  
$n(U)$ be the largest index $n\ge 0$ such that   $\mbox{diam}\left(U\right)\ge
\rho_{n+1}
W_{n}$. 
Then in $\mathcal T_{n(U)}$, there is a unique element denoted by $\kappa(U) $, such that $U\cap \mathcal F(\mathcal T, \beta)\subset \beta(\kappa(U))$. 

\begin{lem}
	\label{claim:replace_elementary}
	Let   $U$ be an open subset   of $\R^2$ with $U\cap \mathcal F(\mathcal T, \beta)\neq \emptyset$. 
	Let   $n=n\left(U\right)$ and  $\kappa=\kappa(U)$. 
	There is a family $\mathcal{S}$ of elementary squares of $\beta(\kappa)$ 
	such that 
	\begin{align}\label{eq;square}
		\bigcup_{S\in\mathcal{S}}S\supset U\cap \mathcal F(\mathcal T, \beta)\quad
		\text{ and }
		\quad	  W_n^t\cdot \sharp{ \mathcal{S}}\le 2
		\rho_{n+1}^{- t  }
		\mathrm{diam}\left(U\right)^{ t  }
	\end{align}
	for all $t\ge 1$.
\end{lem}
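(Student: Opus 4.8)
The plan is to cover $U\cap\mathcal F(\mathcal T,\beta)$ by elementary squares of $\beta(\kappa)$ in the economical way suggested by the two defining cases of $n(U)$, and then bound the number of such squares by comparing their common side-length $W_n$ with $\mathrm{diam}(U)$.

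First I would dispose of the easy direction of the size estimate: since $W_n\le L_n$ and $\beta(\kappa)$ is a rectangle of size $W_n\times L_n$, any subset of $\beta(\kappa)$ can be covered by a grid of elementary squares (side $W_n$) whose number is at most $\lceil W_n/W_n\rceil\cdot\lceil L_n/W_n\rceil=\lceil L_n/W_n\rceil$; but this is wasteful. Instead I would cover only the part near $U\cap\mathcal F(\mathcal T,\beta)$. Consider the two cases in the definition of $n=n(U)$. If $U\cap\mathcal F(\mathcal T,\beta)$ is a single point $x$, then $n$ is the largest index with $\mathrm{diam}(U)\ge\rho_{n+1}W_n$; in this case take $\mathcal S$ to be a single elementary square of $\beta(\kappa)$ containing $x$ (such a square exists because $x\in\beta(\kappa)$ and $W_n\le L_n$ so $x$ lies in some $W_n\times W_n$ subsquare). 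Then $\sharp\mathcal S=1$, and for $t\ge1$ we have $W_n^t\cdot 1\le (\rho_{n+1}^{-1}\mathrm{diam}(U))^t\le 2\rho_{n+1}^{-t}\mathrm{diam}(U)^t$ by the defining inequality $\rho_{n+1}W_n\le\mathrm{diam}(U)$, so (\ref{eq;square}) holds trivially.

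The substantive case is when $U\cap\mathcal F(\mathcal T,\beta)$ has at least two points. Here $n=n(U)$ is maximal with $U\cap\mathcal F(\mathcal T,\beta)\subset\beta(\tau)$ for some $\tau\in\mathcal T_n$; maximality of $n$ means $U\cap\mathcal F(\mathcal T,\beta)$ is \emph{not} contained in any single $\beta(\sigma)$ with $\sigma\in\mathcal T_{n+1}$, so it meets at least two of the children rectangles $\beta(\sigma)$, $\sigma\in\mathcal T(\kappa)$; by assumption (3) of Theorem \ref{thm:lower_bound} these are pairwise at distance $\ge\rho_{n+1}W_n$, hence $\mathrm{diam}(U)\ge\mathrm{dist}\big(\beta(\sigma),\beta(\sigma')\big)\ge\rho_{n+1}W_n$. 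Now let $d=\mathrm{diam}(U)$. Since $U$ has Euclidean diameter $d$, $U\cap\mathcal F(\mathcal T,\beta)$ is contained in a ball of radius $d$, hence in a square of side $2d$; intersecting the grid of elementary squares of $\beta(\kappa)$ (axis-parallel, side $W_n$) with this $2d\times 2d$ square, the number of elementary squares meeting it is at most $\big(\lceil 2d/W_n\rceil+1\big)^2\le\big(2d/W_n+2\big)^2$. Using $d\ge\rho_{n+1}W_n$ and $\rho_{n+1}\le1$ we have $W_n\le d/\rho_{n+1}$, so $2d/W_n+2\le 2d/W_n+2d/(\rho_{n+1}W_n)\le 4d/(\rho_{n+1}W_n)$, giving $\sharp\mathcal S\le 16d^2/(\rho_{n+1}^2W_n^2)$. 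This yields $W_n^2\cdot\sharp\mathcal S\le 16\rho_{n+1}^{-2}d^2$, which is the desired bound for $t=2$ but with constant $16$ rather than $2$; to get the stated clean bound I would instead cover $U\cap\mathcal F(\mathcal T,\beta)$ more carefully — since $U$ itself is a set of diameter $d$, pick any point $x_0\in U\cap\mathcal F(\mathcal T,\beta)$, so $U\cap\mathcal F(\mathcal T,\beta)\subset\overline{B}(x_0,d)$, then take $\mathcal S$ to be exactly those elementary squares of $\beta(\kappa)$ that meet $\overline{B}(x_0,d)$, whose count is at most $(\lceil d/W_n\rceil+1)^2$ refined via $W_n\le d/\rho_{n+1}$ and a slightly sharper lattice-point count to land on the factor $2$; alternatively one checks that for $t\ge1$ and $d\ge\rho_{n+1}W_n$ the bound $\sharp\mathcal S\le 2\rho_{n+1}^{-t}(d/W_n)^t$ follows because $\sharp\mathcal S$ is essentially $(d/W_n)^2\le(d/W_n)^t\cdot(W_n/d)^{t-2}$ and the single-point / two-point bookkeeping absorbs the constant.

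The main obstacle is pinning down the constant $2$ in (\ref{eq;square}) and handling the interplay between the exponent $t\ge1$ and the fact that the natural covering count is quadratic in $d/W_n$ — one must exploit that $d/W_n\ge\rho_{n+1}$ together with $\rho_{n+1}\le1$ so that $(d/W_n)^2\le\rho_{n+1}^{-t}(d/W_n)^t$ has room to spare, and then verify the constant is no worse than $2$ by choosing the covering family to be the elementary squares meeting a ball of radius $\mathrm{diam}(U)$ (not $2\,\mathrm{diam}(U)$) and counting axis-parallel unit cells meeting a disc sharply. Everything else is the routine lattice-cell count for a grid of squares intersecting a bounded set, plus the case split on $\sharp(U\cap\mathcal F(\mathcal T,\beta))$ which is already baked into the definition of $n(U)$ and $\kappa(U)$.
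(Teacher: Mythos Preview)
Your argument has a genuine gap in the multi-point case: you treat the elementary squares of $\beta(\kappa)$ as a two-dimensional grid and obtain a count of order $(\mathrm{diam}(U)/W_n)^2$. This cannot yield the stated inequality for $t$ close to $1$, because $(\mathrm{diam}(U)/W_n)^2$ is not controlled by $\rho_{n+1}^{-t}(\mathrm{diam}(U)/W_n)^t$ when $\mathrm{diam}(U)/W_n$ is large --- you only have the \emph{lower} bound $\mathrm{diam}(U)/W_n\ge\rho_{n+1}$, not an upper bound. Your attempted fix via ``$(d/W_n)^2\le (d/W_n)^t\cdot(W_n/d)^{t-2}$'' goes the wrong way for $t<2$.

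The point you are missing is that $\beta(\kappa)$ has width \emph{exactly} $W_n$, so any elementary square of $\beta(\kappa)$ already spans the full horizontal extent of $\beta(\kappa)$; the family of elementary squares is one-dimensional, indexed only by vertical position. Since $U\cap\mathcal F(\mathcal T,\beta)\subset\beta(\kappa)$ and has vertical extent at most $\mathrm{diam}(U)$, it can be covered by $\lceil \mathrm{diam}(U)/W_n\rceil$ elementary squares, a \emph{linear} count. The paper then splits not on $\sharp(U\cap\mathcal F)$ but on whether $\mathrm{diam}(U)\le W_n$ (one square suffices) or $\mathrm{diam}(U)>W_n$ (use $\lceil \mathrm{diam}(U)/W_n\rceil\le 2\,\mathrm{diam}(U)/W_n$ squares); in the second case $t\ge 1$ and $\mathrm{diam}(U)/W_n>1$ give $(\mathrm{diam}(U)/W_n)\le(\mathrm{diam}(U)/W_n)^t$, and $\rho_{n+1}\le 1$ absorbs the factor $\rho_{n+1}^{-t}$. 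Your derivation of $\mathrm{diam}(U)\ge\rho_{n+1}W_n$ is correct and is exactly what the paper uses as well.
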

\begin{proof}
	We claim that $\mbox{diam}\left(U\right)\ge\rho_{n+1}W_{n}$. In fact, if $U\cap \mathcal F(\mathcal T, \beta)$ contains a single 
	point then the claim follows directly from the definition  of $n(U)$. 
	Otherwise,  there are at least two elements
	$\tau\neq \tau'\in\mathcal T\left(\kappa\right)$ for which $\beta\left(\tau\right),\beta\left(\tau '\right)$
	intersect $U$, and hence $\mbox{diam}\left(U\right)\ge\rho_{n+1}W_{n}$ by the assumption (3) of 
	Theorem \ref{thm:lower_bound}.
	
	If $\mbox{diam}\left(U\right)\leq W_{n}$, then there is an elementary  square $S$ of $\beta(\kappa)$ such that 
	$U\cap \mathcal F(\mathcal T, \beta)\subset S$ and we may take $\mathcal{S}=\left\{ S\right\} $.
	Then 
	\[
	 W_n^t\cdot  \sharp \mathcal S\le
	\rho_{n+1}^{- t  }
	\mbox{diam}\left(U\right)^t.
	\]
	If $\mbox{diam}\left(U\right)>W_{n}$, then there is a cover of $U\cap \mathcal F(\mathcal T, \beta)$ by
	$\left\lceil \frac{\mbox{diam}\left(U\right)}{W_{n}}\right\rceil $
	elementary squares. In this case,
	\begin{eqnarray*}
	 W_n^t \cdot \sharp \mathcal S& = & \left\lceil \frac{\mbox{diam}\left(U\right)}{W_{n}}\right\rceil 
			W_{n}^{ t  }\le2\left(\frac{\mbox{diam}\left(U\right)}{W_{n}}\right)W_{n}^{t }\\
		& \leq & 2\left(\frac{\mbox{diam}\left(U\right)}{W_{n}}\right)^{ t  }W_{n}^{ t  }\le 2
		\rho_{n+1}^{-t}
		\mbox{diam}\left(U\right)^t,
	\end{eqnarray*}
	where in the last two inequalities we use the assumptions  $t\ge 1$
	 and  $\rho_{n+1}\le 1$ in  	  (3) of Theorem	\ref{thm:lower_bound}. 
	
\end{proof}


\begin{proof}[Proof of Theorem \ref{thm:lower_bound}]
	Let  $t$ be any real number such that   $ 1\le t<s$.
	By the definition of $s$
	there exists $n_0=n_0(t)$ such that for all $n\ge n_0$ one has
	\begin{align}\label{eq;lowerbound}
		P_nW_n^t
		\rho_{n+1}^t
		 \cdot \prod_{i=n+1}^{D_n}\rho_iC_i\ge 72^{\max\{D_n-n, 1\} }\ge 72^{D_n-n}.
	\end{align}
	Suppose $\mathcal U $ is an open cover of $\mathcal F(\mathcal T, \beta)$.
	We  assume that the diameters of all
	elements in $\mathcal U $ are small enough so that $n\left(U\right)>n_{0}$ for all $U\in \mathcal U$.
	Since $\mathcal F(\mathcal T, \beta)$ is compact, there is a finite subcover $\mathcal{U}_{0}$
	such that each element of $\mathcal{U}_{0}$ has a nonempty intersection with $\mathcal F(\mathcal T, \beta)$.

	Using Lemma  \ref{claim:replace_elementary}, for every $U\in \mathcal U_0$ there is a set 
	$\mathcal S_U$ of elementary squares of  $\beta(\kappa (U))$ such that (\ref{eq;square}) holds 
	for $\mathcal S=\mathcal S_U$. Let $\mathcal Q=\bigcup_{U\in \mathcal U_0}\mathcal S_U$ and $n(S)=n(U)$ for 
	$S\in \mathcal S_U$.  We note here that although it is possible that the same $S$ belongs to different
	$\mathcal S_U$, the number $n(S)$ is well-defined. 
	Then $\mathcal Q$ is a covering of $\mathcal F(\mathcal T, \beta)$ and  
	\begin{align*}
		\sum_{U\in\mathcal{U}}\mbox{diam}\left(U\right)^{ t  }&\ge \frac{1}{2}\sum_{S\in\mathcal{Q}}
		\rho_{n\left(S\right)+1}^{ t  }
		W_{n(S)}^t  && \mbox{by (\ref{eq;square})}\\
		&\ge \frac{1}{2}\sum_{S\in \mathcal Q} 72^{D_{n(S)}-n(S)}P_{n(S)}^{-1}\prod_{ i =n(S)+1}^{D_{n(S)}}\rho_{ i }^{-1}C_i^{-1}  && \mbox{by (\ref{eq;lowerbound})}\\
		& \ge \frac{1}{2}\sum_{S\in \mathcal Q} \mu(S) && \mbox{by (\ref{eq;measure 1})}\\
		& \ge \frac{1}{2}. 
	\end{align*}
	Therefore, $\dim_H(\mathcal F(\mathcal T, \beta))\ge  t $. By considering an arbitrary $t$ with $1\le t< s$ we have 
	$\dim_H(\mathcal F(\mathcal T, \beta))\ge  s $. 
	
\end{proof}

\subsection{Fractal  relation and upper bound}

Let $Q$ be a countable set. 
We call 
a subset   $\sigma$ of $Q^2= Q\times Q$   a relation on $Q$.
For each $\tau \in Q$ we let $\sigma(\tau)=\{\kappa \in Q: (\tau,\kappa )\in \sigma \}$. 
We write $\kappa \prec \tau$ if either $\kappa =\tau$ or 
 there exist  
$\tau_1, \ldots, \tau_n\in  Q$ such that 
$\tau_1=\tau$, 
$\tau_n=\kappa $
and  $(\tau_i, \tau_{i+1})\in \sigma $ for all $1\le i< n$.
The boundary of $\sigma $ is defined as 
\[
\partial \sigma= \{ \{\tau_i \}_{i\in \N}: (\tau_i, \tau_{i+1})\in \sigma    \}.
\]

A triple   $(Q, \sigma, \beta)$ is said to be a  fractal relation on a Polish space  $Y$ if 
$\beta $  is  a map from    $Q$ to nonempty  compact  subsets of $Y$ and 
$\sigma $ is 
a relation on   $Q$.
Moreover, we say  $(Q, \sigma, \beta)$ is  admissible if 
 $\mathrm{diam}\,  \beta(\kappa)
	<\mathrm{diam}\,  \beta(\tau)$ for any $(\tau,\kappa)\in \sigma$ and 
$\mathrm{diam}\,  \beta(\tau_i)\to 0$ as $i\to \infty$
	for   any sequence $\{ \tau_i\}_{i\in\N}\in \partial \sigma$. 
A fractal   relation $(Q, \sigma, \beta )$
gives a fractal  set 
\[
\mathcal F(Q, \sigma, \beta ):=\{ y\in Y: 
\{y\}= { \cap}_{i\in \N } \beta(\tau_i) \mbox{ for some } {\{\tau_i\}_{i\in \N}\in \partial \sigma} \}. 
\]

The following lemma is a self-affine version of \cite[Theorem 3.1]{c11}. 
\begin{lem}\label{thm-general-upper}
	Let $(Q, \sigma, \beta)$ be an admissible  fractal relation on $\R^2$ such that
	for every $\tau \in Q$ the compact set  
	$\beta(\tau)$ is a rectangle with size $W(\tau)\times L(\tau)$
	where $W(\tau)\le L(\tau)$. 
	Suppose  $s$ is  a positive real number with     
	\begin{align}\label{eq;liao assume}
		\sum_{\kappa \in \sigma(\tau )} L(\kappa )\cdot W(\kappa )^{s-1} \leq L(\tau)\cdot W(\tau)^{s-1}
	\end{align}
	for all $\tau\in Q$,
	then $\dim_H \mathcal F(Q, \sigma, \beta) \leq s$.
\end{lem}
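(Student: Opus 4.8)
The plan is to construct, for each small $\delta>0$, an explicit countable cover of $\mathcal F(Q,\sigma,\beta)$ by rectangles whose diameters are at most $\delta$ and whose $s$-dimensional ``rectangle sums'' $\sum L(\cdot)\,W(\cdot)^{s-1}$ are uniformly bounded; this bounds the $s$-dimensional Hausdorff (pre)measure and hence forces $\dim_H\le s$. The natural starting cover is the one-point set $\{\tau_0\}$ when $Q$ has a unique minimal element, but in general $Q$ need not; so first I would observe that it suffices to bound $\dim_H$ of $\mathcal F(Q,\sigma,\beta)\cap\beta(\tau)$ for each fixed $\tau\in Q$ separately (a countable union over $\tau$ ranging in a set hitting every $\partial\sigma$-sequence), and then work below a fixed root $\tau$. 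For that fixed $\tau$, define for each threshold $r>0$ the ``frontier'' $\mathcal G_r(\tau)$ to be the set of $\kappa\prec\tau$ with $\mathrm{diam}\,\beta(\kappa)\le r$ but $\mathrm{diam}\,\beta(\kappa')>r$ for the parent $\kappa'$ along some chosen chain (more carefully: the set of $\kappa\prec\tau$ such that $\mathrm{diam}\,\beta(\kappa)\le r < \mathrm{diam}\,\beta(\tau_i)$ for all proper ancestors $\tau_i$ on a chain from $\tau$ to $\kappa$). Admissibility guarantees that every point of $\mathcal F(Q,\sigma,\beta)\cap\beta(\tau)$ lies in $\beta(\kappa)$ for some $\kappa\in\mathcal G_r(\tau)$, so $\{\beta(\kappa):\kappa\in\mathcal G_r(\tau)\}$ is a cover, and as $r\to0$ all diameters in it tend to $0$.

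Next I would run the key estimate. Hypothesis \eqref{eq;liao assume} says precisely that the quantity $\Phi(\kappa):=L(\kappa)W(\kappa)^{s-1}$ is a supermartingale-type weight: $\sum_{\kappa'\in\sigma(\kappa)}\Phi(\kappa')\le\Phi(\kappa)$. Iterating this along the relation (partition $\mathcal G_r(\tau)$ according to chains emanating from $\tau$, peeling off one level of $\sigma$ at a time, using that each $\kappa\in\mathcal G_r(\tau)$ sits strictly below $\tau$ so the peeling terminates), one gets
\[
\sum_{\kappa\in\mathcal G_r(\tau)} L(\kappa)\,W(\kappa)^{s-1}\ \le\ L(\tau)\,W(\tau)^{s-1}.
\]
The only subtlety here is bookkeeping to make sure that no $\kappa$ is overcounted and that the finite telescoping is legitimate; since each $\kappa\prec\tau$ is reached from $\tau$ by a finite chain, one can induct on the maximal chain length, or equivalently exhaust $\mathcal G_r(\tau)$ by its subsets reachable in at most $m$ steps and let $m\to\infty$, with \eqref{eq;liao assume} applied at each interior node.

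Finally I would convert the rectangle sum into a genuine Hausdorff-measure estimate. For a rectangle $R$ of size $W\times L$ with $W\le L$ one has $\mathrm{diam}\,R=\sqrt{W^2+L^2}\le\sqrt2\,L$, and a standard chopping of $R$ into $\lceil L/W\rceil\le 2L/W$ squares of side $W$ (each of diameter $\sqrt2\,W$) yields a cover of $R$ by $\le 2L/W$ sets of diameter $\le\sqrt2\,W$, contributing $\le 2\,(L/W)\cdot(\sqrt2\,W)^s = 2^{1+s/2}\,L\,W^{s-1}$ to the $s$-dimensional sum. Applying this to every $\beta(\kappa)$, $\kappa\in\mathcal G_r(\tau)$, produces a cover of $\mathcal F(Q,\sigma,\beta)\cap\beta(\tau)$ by sets of diameter $\le\sqrt2\,W(\kappa)\le\sqrt2\,r$ with total $s$-sum $\le 2^{1+s/2}\sum_{\kappa\in\mathcal G_r(\tau)}L(\kappa)W(\kappa)^{s-1}\le 2^{1+s/2}L(\tau)W(\tau)^{s-1}$, a bound independent of $r$. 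Letting $r\to0$ gives $\mathcal H^s\big(\mathcal F(Q,\sigma,\beta)\cap\beta(\tau)\big)<\infty$, hence $\dim_H\le s$ on each piece, and then on the countable union. The main obstacle I anticipate is purely organizational rather than analytic: setting up the frontier $\mathcal G_r(\tau)$ and the finite telescoping of \eqref{eq;liao assume} cleanly enough that the termination and non-overcounting are transparent — once that is in place, the diameter/chopping step and the limit are routine.
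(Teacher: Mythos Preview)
Your proposal is correct and follows essentially the same route as the paper: reduce to a fixed root $\tau_0$ by a countable-union argument, define the frontier (the paper's $\mathcal S$, your $\mathcal G_r(\tau)$) of first descendants with diameter below the threshold, chop each rectangle into $\lceil L/W\rceil$ squares of side $W$, and use the supermartingale inequality \eqref{eq;liao assume} to telescope the sum $\sum L(\kappa)W(\kappa)^{s-1}$ back to $L(\tau_0)W(\tau_0)^{s-1}$. The paper carries out the telescoping by contradiction on a finite subfamily (constructing intermediate sets $\mathcal S_0=\{\tau_0\},\ldots,\mathcal S_n=\mathcal S'$ along chosen chains), which is exactly the bookkeeping device you flag as the only nontrivial step.
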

\begin{proof}
For $\tau_0\in Q$ let 	
	$\mathcal F(\tau_0)=  \{   
	{ \cap}_{i\in \N } \beta(\tau_i) : {\{\tau_i\}_{i\in \N}\in \partial \sigma, \tau_1=\tau_0}\}$.
	Since $\mathcal F(Q, \sigma, \beta)$ is a countable union of $\mathcal F(\tau_0) \ (\tau_0\in Q)$, it suffices to 
	 show that  $\dim_H \mathcal F(\tau_0)\leq s$
	for all $\tau_0\in Q$.

	We fix $\tau_0$ and assume that $\mathcal F(\tau_0)\neq \emptyset$.  
	For $0<\varepsilon< {\rm diam}\, \beta(\tau_0) $  we		
	will find an $\varepsilon $-covering $\mathcal{U}$ of $\mathcal F(\tau_0)$ such that $\sum_{U\in \mathcal U} \mathrm{diam}\,( U) ^s $ is bounded from above by a finite number independent of 
	$\varepsilon$.  This will imply $\dim_H \mathcal F(\tau_0)\leq s$.
	
	Let 
	\[
	\mathcal S=\{ \tau\in Q:  {\rm diam}\, \beta(\tau) \le  \varepsilon  \mbox{ and }  {\rm diam}\, \beta(\kappa)> \varepsilon  \mbox{ for some }\kappa \prec \tau_0 \mbox{ with }
	(\kappa, \tau)\in \sigma \}.
	\]
	Each rectangle $\beta (\tau )$ is covered  by $r(\tau ):=\lceil L(\tau)/W(\tau)\rceil$ elementary  squares  with sides 
	$W(\tau )$.  Denote these pieces by $\{S_{\tau, i} : 1\leq i \leq r(\tau )\}$. 
	Since $(Q, \sigma,\beta)$ is  admissible and $\varepsilon< {\rm diam}\, \beta(\tau_0)$, the set  $\mathcal F(\tau_0)$ is covered by 
	\[
	\bigcup_{\tau  \in \mathcal S} \beta(\tau) = \bigcup_{\tau  \in \mathcal S} \bigcup_{1\leq i \leq r(\tau )} S_{\tau , i},
	\] 
	which is an $\varepsilon$-covering. 
	So it suffices to show 
	\begin{equation}\label{upper-cov}
		\sum_{\tau  \in\mathcal  S} \sum_{1\leq i \leq r(\tau )} \mathrm{diam}\,(S_{\tau , i})^s 
		\le  2^{s+1}L(\tau_0)W(\tau_0)^{s-1}.
	\end{equation}
	
	We 
	first note that for each $\tau \in \mathcal{T}$
	\begin{align*}
		\sum_{1\leq i \leq r(\tau )} \mathrm{diam}\, (S_{\tau , i})^s  = r(\tau ) \cdot (\sqrt{2} \cdot W(\tau))^s 
		\leq 2^{s+1} \cdot {L(\tau ) \over W(\tau )}\cdot W(\tau)^s.
	\end{align*}
	Hence,
	\begin{equation}\label{upper-cov2}
		\sum_{\tau \in\mathcal  S} \sum_{1\leq i \leq r(\tau )}  \mathrm{diam}\, (S_{\tau , i})^s\leq  2^{s+1} \sum_{\tau  \in\mathcal  S}  L(\tau )\cdot W(\tau )^{s-1}.
	\end{equation}

	We claim that 
	\begin{align*}
		\sum_{\tau  \in\mathcal  S}  L(\tau )\cdot W(\tau )^{s-1}\le L(\tau_0 )\cdot W(\tau_0 )^{s-1}. 
	\end{align*}
	Suppose the contrary, then there exists a finite set 
	$
	\mathcal S'=\{\tau_i: 1\le i\le k\}\subset \mathcal S 
	$
	such that 
	\begin{align}\label{eq;liao 2}
		\sum_{\tau  \in \mathcal S'}  L(\tau )\cdot W(\tau )^{s-1}> L(\tau_0 )\cdot W(\tau_0 )^{s-1}. 
	\end{align}
	According to the definition of $\mathcal S$, for each $\tau_{i}\in \mathcal S'$, there exists a finite  sequence $\tau_{i, j} \ (0\le j\le n_i)$ such that 
	$(\tau_{i, j-1}, \tau_{i, j})\in \sigma  \ (1\le j\le n_i)$, $\tau_{i, 0}=\tau_0$, $\tau_{i, n_i}=\tau_i$ and $\mathrm{diam}\, (\beta (\tau_{i, n_i-1}))> \varepsilon. $

	For each   $ 0\le j\le n:=\max_{1\le i\le k} n_i$ let    $$\mathcal 	S_j=
	\{ \tau_{i, j}: 1\le i\le k,  n_i \ge j\}\dot{\cup} \{    
	\tau_i : 1\le i\le k, n_i<  j \} , $$
	where $\dot{\cup}$ denotes the  disjoint union. 
Note that for  $1\le j\le n $ 
	\begin{align*}
	\mathcal S_{j-1}= \{\tau_{i, j-1}: 1\le i\le k, n_i\ge  j  \}\dot{\cup} \{    
	\tau_i : 1\le i\le k, n_i<  j \}.
	\end{align*}
	The union of $\sigma(\tau)$ for $\tau$ runs over $\{\tau_{i, j-1}: 1\le i\le k, n_i\ge  j  \}$ contains $\{ \tau_{i, j}: 1\le i\le k,  n_i \ge j\}$. 
	Therefore, 
	 (\ref{eq;liao assume}) implies 
	\begin{align}\label{eq;2018}
	\sum_{\tau  \in \mathcal S_j}  L(\tau )\cdot W(\tau )^{s-1}\le \sum_{\tau  \in
		\mathcal  S_{j-1}}  L(\tau )\cdot W(\tau )^{s-1} \quad (1\le j\le n).
	\end{align}
	Observing $\mathcal S_0=\{\tau_0 \}$ and $\mathcal S_n=\mathcal S'$, we deduce from (\ref{eq;2018}) a contradiction to (\ref{eq;liao 2}). The claim then follows.
		The claim  together with (\ref{upper-cov2}) imply (\ref{upper-cov}), which completes  the proof. 

\end{proof}

\section{Counting lattice points in convex sets}\label{sec;count}
The aim of this section is to develop some tools for counting lattice points in a convex subset of  the Euclidean space
$\mathcal E_d=\R^d$. 
 Although we only need these results  in the case where $d\le 3$, 
 we give some results  in general Euclidean space in \S \ref{sec;count rd}.  The reason for this is that  the proofs are the same and they might be useful 
 in other contexts. 
 Results in \S \ref{sec;count r3}
will only be used in
  our   estimation 
of the lower bound.
Since this section contains technical results that we will use later, 
 the reader can skip this
section in the first reading and come back when needed.

\subsection{Lattice points counting in $\R^d$}\label{sec;count rd}

Let  $K$ be a bounded  centrally symmetric convex  subset of $\R^d$ with nonempty interior
and let $\Lambda$ be a lattice of $\R^d$. 
We use  $\lambda_i(K, \Lambda) \  (i= 1, 2, \ldots, d+1)$ to denote   the $i$-th minimum  of $\Lambda$ with respect to $K$, i.e.~the
infimum of those numbers $\lambda$ such that $\lambda K\cap \Lambda$ contains $i$ linearly independent
vectors. 
We remark  here that $\lambda_{d+1}(K ,\Lambda)=\infty$. 
Let $\vol(\cdot)$ be the Lebesgue measure on $\R^d$. The covolume of $\Lambda$, denoted by
$\cov(\Lambda)$, is the Lebesgue measure of a fundamental domain of $\Lambda$. Write
\[
\theta (K, \Lambda):=\frac{\vol(K)}{\cov(\Lambda)}.
\]
By Minkowski's (second) theorem (see \cite{cassels}) one has 
\begin{align}\label{eq;minkowski}
\frac{2^d}{d!}\le \lambda_1(K, \Lambda)\cdots \lambda_d(K, \Lambda) \cdot \theta(K, \Lambda)\le 2^d.
\end{align}

For an affine subspace $H$ of $\R^d$ we let  $\vol_H(\cdot)$ be the 
Lebesgue measure on $H$ with respect to the subspace  Riemannian structure. 
To simplify the notation we let
$\vol_H(S)=\vol_H(S\cap H)$ for a Borel measurable  subset   $S$ of $\R^d$.
A subspace $H$ of $\R^d$ is said to be $\Lambda$-rational if 
$H\cap \Lambda$ is a lattice of $H$. The covolume of the lattice $H\cap \Lambda$ in $H$ is denoted by
$\cov_H(\Lambda)$.  
The same notations are used for the dual vector space
 $ \mathcal E_d^* $ (the vector space of linear functionals on $\R^d$) with respect to the standard Euclidean structure. 
 For every  $\varphi\in \mathcal E_d^*$, denote $H_\varphi=\ker \varphi$.

We use  $\|\cdot\|$ for the Euclidean norms on $\R^d$ and $ \mathcal E_d^* $. 
For a normed vector  space $V$  we use $B_r(V)$ (or $B_r$ if  $V=\R^d$) to denote the ball of radius $r$ centered at $0\in V$.
We will also use $K$-norms on $\R^d$ and $ \mathcal E_d^* $ defined by 
\begin{align}\label{eq;knorm}
\left\{   
\begin{array}{ll}
\|\bv\|_K=\inf \{r>0: \bv\in rK \}&
 \quad \bv\in \R^d \\
 \|\varphi\|_K=\sup_{\bv\in K}|\varphi(\bv)| &\quad  \varphi\in \mathcal E_d^* 
\end{array}.
\right.
\end{align}
It can be checked that $K$-norms satisfy the triangle inequality and other
axioms of  norm on a real vector space.

Let $\mathcal L_d$ be the space  of unimodular lattices in $\R^d$. 
The group $SL_d(\R)$ acts transitively  on $\mathcal L_d$ via $g\Lambda=\{g\bv:\bv\in \Lambda\}$. The stabilizer of 
$\Z^d $ is $SL_d(\Z)$, so we can identify $\mathcal L_d$ with $SL_d(\R)/SL_d(\Z)$ as 
topological spaces. 
For $g\in SL_d(\R)$ we let $g^*$ be the adjoint action on $ \mathcal E_d^* $ defined by 
$\varphi\to \varphi\circ g$. Note that with respect to the standard basis $\be_1, \ldots, \be_d$ on $\R^d$ and its dual basis $\be_1^*, \ldots, \be_d^*$ on  $ \mathcal E_d^* $, the matrix
$g^*$ is the transpose of $g$.
We define 
\begin{align*}
\mathcal K_\varepsilon(d)&=\{ \Lambda\in \mathcal L_d:\|\bv\|\ge \varepsilon, \ \forall\  \bv\in \Lambda\setminus \{0\} \} 
=\{ \Lambda\in \mathcal L_d:\lambda_1(B_1, \Lambda)\ge \veps\}.
\end{align*}

 The dual lattice of $\Lambda$ is 
 the lattice in $ \mathcal E_d^* $
 defined by 
 \[
 \Lambda^*=\{\varphi\in  \mathcal E_d^* : \varphi(\bv)\in \Z, \ \forall\  \bv\in\Lambda   \}.
 \]
 We also define 
 \begin{align}\label{eq;mel} 
 \mathcal K_\varepsilon^*(d)&=\{ \Lambda\in \mathcal L_d:\|\varphi\|\ge \varepsilon, \ \forall\  \varphi\in \Lambda^*\setminus \{0\} \}.
 \end{align}
Recall that $ \mathcal E_d^* $ can be naturally identified with $\wedge^{d-1}_\R \R^d$ with the standard  Euclidean structure. 
Under this identification one has $\Lambda^*=\wedge ^{d-1}_\Z \Lambda$.
Therefore,  $\mathcal K_\varepsilon^*$ is the set of $\Lambda\in \mathcal L_d$ with the property that each $\Lambda$-rational 
hyperplane intersects $\Lambda$ in a lattice of covolume greater than or equal to $\varepsilon$. 
Using the natural identification $\mathcal E_d^{**}=\R^d$ and (\ref{eq;minkowski})
one has
\begin{align*}
\Lambda\in \mathcal K^*_\varepsilon(d) \implies \lambda_1(B_1, \Lambda)\gg_d \varepsilon^{d-1}\quad 
\mbox{and}\quad 
\lambda_d(B_1, \Lambda)\ll_d  \varepsilon^{-1}.
\end{align*}
There is a basis $\bv_1, \ldots, \bv_d$ of $\Lambda$ with the properties
\[
\lambda_i(B_1, \Lambda)\le \|\bv_i\|\le 2^i \lambda_i(B_1, \Lambda)\quad (1\le i\le d),
\]
see \cite[Lemma X.6.2]{siegel}. This basis is called a Minkowski reduced basis of $\Lambda$.

A nonzero  vector $\bv\in \Lambda$ is said to be primitive if $\frac{1}{n}\bv\not\in \Lambda$
for all $n\in \N$. The set of primitive vectors in $\Lambda$ is denoted by $\widehat \Lambda$.

\begin{lem}\label{lem;primitive}
	Let $d\ge 2$.
	For every lattice 
	 $\Lambda$  of $\R^d$ and   
	 every bounded  centrally symmetric convex subset  $K$ of $ \R^d$ with $\lambda_d(K, \Lambda)\le 1$ we have 
	\begin{align*}
	 \sharp K \cap \widehat\Lambda= \big(\zeta(d)^{-1}+\eta(K, \Lambda )\big)\cdot\theta(K, \Lambda)
	\end{align*}
	where $\zeta$ is the Riemann $\zeta$-function and 
	\[
	|\eta(K, \Lambda)|	\ll_d \lambda_d(K, \Lambda)- \lambda_d(K,\Lambda)\log\lambda_1(K,\Lambda).
	\]
\end{lem}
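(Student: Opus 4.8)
The plan is to count primitive vectors by M\"obius inversion, thereby reducing to a robust estimate for the number of \emph{all} nonzero lattice points in the dilates $\tfrac1m K$, $m\in\N$. For a bounded centrally symmetric convex body $K'$ with nonempty interior write $N(K')=\sharp(K'\cap\Lambda\setminus\{0\})$ and $P(K')=\sharp(K'\cap\widehat\Lambda)$. Every nonzero $\bv\in\Lambda$ factors uniquely as $\bv=m\bw$ with $m\in\N$ and $\bw\in\widehat\Lambda$, and $m\bw\in K'$ exactly when $\bw\in\tfrac1m K'$; hence $N(K')=\sum_{m\ge1}P(\tfrac1m K')$, a sum with finitely many nonzero terms. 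M\"obius inversion over the dilation action of $\N$ then gives
\[
P(K)=\sum_{m\ge1}\mu(m)\,N(\tfrac1m K),
\]
and $N(\tfrac1m K)=0$ once $m>\lambda_1(K,\Lambda)^{-1}$.

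The geometric heart is a sharp count of $\sharp(K'\cap\Lambda)$ for a single body. Fix a Minkowski reduced basis $\bv_1,\dots,\bv_d$ of $\Lambda$ with respect to $K'$, so that the fundamental parallelepiped $F$ it spans has $K'$-diameter $\ll_d\lambda_d(K',\Lambda)$ (cf.\ \cite[Lemma X.6.2]{siegel}). The translates $\bv+F$, $\bv\in K'\cap\Lambda$, tile $\R^d$ and their union is trapped between $(1-C_d\lambda_d(K',\Lambda))K'$ and $(1+C_d\lambda_d(K',\Lambda))K'$; when $C_d\lambda_d(K',\Lambda)<1$, comparing volumes and invoking $\theta(K',\Lambda)\asymp_d\prod_i\lambda_i(K',\Lambda)^{-1}$ (Minkowski's second theorem \eqref{eq;minkowski}) yields
\[
\bigl|\sharp(K'\cap\Lambda)-\theta(K',\Lambda)\bigr|\ll_d\lambda_d(K',\Lambda)\,\theta(K',\Lambda).
\]
Otherwise (so $\lambda_d(K',\Lambda)$ is bounded below, in particular when $K'$ contains fewer than $d$ linearly independent lattice vectors) one instead uses the crude bound $\sharp(K'\cap\Lambda)\ll_d\prod_i\max\{1,\lambda_i(K',\Lambda)^{-1}\}$.

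It remains to feed these estimates into the M\"obius sum. Writing $\lambda_i=\lambda_i(K,\Lambda)$ one has $\lambda_i(\tfrac1m K,\Lambda)=m\lambda_i$ and $\theta(\tfrac1m K,\Lambda)=m^{-d}\theta(K,\Lambda)$, so
\[
P(K)=\Bigl(\sum_{m\ge1}\mu(m)m^{-d}\Bigr)\theta(K,\Lambda)+\mathrm{Err}=\zeta(d)^{-1}\theta(K,\Lambda)+\mathrm{Err},
\]
where $\mathrm{Err}=\sum_{m\le\lambda_1^{-1}}\mu(m)\bigl(N(\tfrac1m K)-m^{-d}\theta(K,\Lambda)\bigr)-\sum_{m>\lambda_1^{-1}}\mu(m)m^{-d}\theta(K,\Lambda)$. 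The last sum is $\ll_d\lambda_1^{d-1}\theta(K,\Lambda)\le\lambda_d\theta(K,\Lambda)$. For $m\le\lambda_d^{-1}$ the one-body estimate makes the $m$-th summand $\ll_d m^{1-d}\lambda_d\theta(K,\Lambda)$; for $\lambda_d^{-1}<m\le\lambda_1^{-1}$ the set $\tfrac1m K\cap\Lambda$ spans a subspace of dimension $j=\sharp\{i:m\lambda_i\le1\}<d$, whence $N(\tfrac1m K)$ and $m^{-d}\theta(K,\Lambda)$ are both $\ll_d\prod_{i\le j}(m\lambda_i)^{-1}$. Summing the first range gives $\ll_d\bigl(\sum_{m\le\lambda_d^{-1}}m^{1-d}\bigr)\lambda_d\theta(K,\Lambda)$, which is $\ll_d\lambda_d\theta(K,\Lambda)$ for $d\ge3$ and $\ll_d(1-\log\lambda_1)\lambda_d\theta(K,\Lambda)$ for $d=2$; grouping the second range by the value of $j$ gives $\ll_d\sum_{j=1}^{d-1}(\lambda_1\cdots\lambda_j)^{-1}\sum_{\lambda_{j+1}^{-1}<m\le\lambda_j^{-1}}m^{-j}$, whose terms with $j\ge2$ are each $\ll_d\lambda_d\theta(K,\Lambda)$ while the $j=1$ term is $\ll_d\lambda_1^{-1}(1-\log\lambda_1)\ll_d(1-\log\lambda_1)\lambda_d\theta(K,\Lambda)$ (using $\theta(K,\Lambda)\asymp_d\prod_i\lambda_i^{-1}$ once more). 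Altogether $|\mathrm{Err}|\ll_d(\lambda_d-\lambda_d\log\lambda_1)\theta(K,\Lambda)$, which gives the asserted bound on $|\eta(K,\Lambda)|$.

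The one-body count is routine; I expect the real work to be the bookkeeping in the last step: tracking the dependence on \emph{all} the successive minima uniformly over the dilates $\tfrac1m K$, and verifying that the logarithm is genuinely forced. It comes from the divergence of $\sum_m m^{-1}$ --- once in the $d=2$ instance of the first range, and, for every $d\ge2$, on the strip $\lambda_2^{-1}<m\le\lambda_1^{-1}$ where $\tfrac1m K\cap\Lambda$ has become essentially one-dimensional.
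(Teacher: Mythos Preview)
Your proof is correct and follows essentially the same route as the paper: M\"obius inversion, the one-body count via a Minkowski-reduced fundamental domain (the paper's Lemma~\ref{lem;count 1}), and the same three-range split of the error. The only cosmetic difference is in the middle range $\lambda_d^{-1}<m\le\lambda_1^{-1}$: where you stratify by $j=\sharp\{i:m\lambda_i\le1\}$ and sum each sub-range separately, the paper invokes a uniform bound (Lemma~\ref{lem;count 2}) to get $\sharp(\tfrac1m K\cap\Lambda)\ll_d\lambda_d^{d-1}m^{-1}\theta(K,\Lambda)$ in one stroke, collapsing the whole middle range to a single harmonic sum $\sum m^{-1}\ll\log(\lambda_d/\lambda_1)$ and thereby avoiding the case analysis over $j$.
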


Note that 
$\lambda_d(K, \Lambda)\le 1$ implies that the interior  of $K$ (denoted by $K^\circ$)  is nonempty.
To  prove Lemma \ref{lem;primitive} we need  a few  preparations (Lemmas \ref{lem;count 1}-\ref{lem;count 2}).

\begin{lem}\label{lem;count 1}
Let $d\ge 1$. For every lattice 
$\Lambda$  of $\R^d$ and   
every bounded  centrally symmetric convex subset  $K$ of $ \R^d$ with $\lambda_d(K, \Lambda)\le 1$	
one has 
\[
\sharp K\cap (\Lambda\setminus \{0\})=\big(1+\alpha(K, \Lambda)\big)\cdot\theta(K, \Lambda), 
\]
where $|\alpha(K, \Lambda)|\ll_d \lambda_d(K, \Lambda)$. 
\end{lem}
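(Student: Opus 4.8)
The plan is to reduce to the classical statement that for a convex body $K$ with sufficiently large inradius relative to $\Lambda$, the number of lattice points in $K$ is $\vol(K)/\cov(\Lambda)$ up to a boundary error controlled by the ratio of the surface area to $\cov(\Lambda)$. First I would apply a linear change of variables $g\in \GL_d(\R)$ with $g\Lambda = \Z^d$, so it suffices to count $\sharp (gK)\cap(\Z^d\setminus\{0\})$; note $\theta(K,\Lambda)=\vol(gK)$ and the successive minima transform as $\lambda_i(K,\Lambda)=\lambda_i(gK,\Z^d)$. So I may assume $\Lambda=\Z^d$ from the start. Next, use a Minkowski reduced basis of $\Z^d$ with respect to $K$ (the analogue of the reduced basis quoted in the excerpt), equivalently a basis $\bv_1,\dots,\bv_d$ realizing the successive minima up to constants, and let $T\in\GL_d(\R)$ send $\be_i\mapsto \bv_i/\lambda_i(K,\Z^d)$ — this is the standard ``sandwiching'' trick: $K$ then contains a cube of side $\asymp_d 1$ centered at the origin in the $T$-coordinates, while $K$ is contained in a box whose $i$-th side has length $\asymp_d \lambda_i(K,\Z^d)/\lambda_i(K,\Z^d)$... more precisely one shows $c_d \,\square \subset T^{-1}K\subset C_d\,\square$ for a fixed cube $\square$, so that $T^{-1}K$ is a convex body of bounded eccentricity and inradius $\gg_d 1$.

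The key estimate is then the lattice-point count for a convex body $C$ of inradius $r\gg_d 1$ and bounded eccentricity against the lattice $T^{-1}\Z^d$ (whose covolume is $\abs{\det T}^{-1}$): by the usual argument of associating to each lattice point the unit cell of $T^{-1}\Z^d$ translated there, and comparing with $C$ inflated/deflated by the diameter of that cell, one gets
\[
\big| \sharp\, C\cap (T^{-1}\Z^d) - \abs{\det T}\,\vol(C)\big| \ll_d \abs{\det T}\,\vol(C)\cdot \frac{\operatorname{diam}(\text{cell})}{r},
\]
and since the cell of $T^{-1}\Z^d$ has diameter $\asymp_d$ the shortest edge, which corresponds to $\lambda_1(K,\Z^d)$, while $r\asymp_d \lambda_d(K,\Z^d)$... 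I need to track this carefully. The cleanest route: in the $T$-coordinates the body $T^{-1}K$ has all semi-axes $\asymp_d \lambda_i(K,\Z^d)/\lambda_i(K,\Z^d)=1$, wait — that is not right either, since $T\be_i = \bv_i/\lambda_i$, the image $T^{-1}K$ is squeezed to bounded eccentricity but the lattice $T^{-1}\Z^d$ is the one that becomes skew. Rather I should keep $K$ and rescale the lattice basis: with $\bw_i=\bv_i/\lambda_i(K,\Z^d)$ we have $\bw_i\in \partial K$ (up to constants), the $\bw_i$ span a sublattice, and $K$ contains and is contained in bounded multiples of the fundamental parallelepiped of $\spa_\Z\{\bw_i\}$. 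The point is that the error in replacing the point count by the volume is $\ll_d$ the number of fundamental cells of $\Z^d$ meeting $\partial K$, and each such cell — since $\Z^d \supset \spa_\Z\{\bv_i\}$ and near $\partial K$ the relevant scale is the smallest successive minimum — contributes a boundary layer of relative thickness $\ll_d \lambda_1(K,\Z^d)/\lambda_d(K,\Z^d)\le \lambda_d(K,\Z^d)$ (using $\lambda_1\le\lambda_d$ and $\lambda_1\le 1$). Combined with Minkowski's second theorem \eqref{eq;minkowski}, which gives $\vol(K)/\cov(\Z^d) = \theta(K,\Z^d) \asymp_d \prod_i \lambda_i(K,\Z^d)^{-1}$, the absolute error is $\ll_d \theta(K,\Z^d)\cdot \lambda_d(K,\Z^d)$, which is exactly the claimed bound $|\alpha(K,\Lambda)|\ll_d \lambda_d(K,\Lambda)$. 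Finally, subtracting the origin changes the count by $1$, and since $\lambda_d(K,\Z^d)\le 1$ forces $\theta(K,\Z^d)\gg_d 1$ by \eqref{eq;minkowski}, this $1$ is absorbed into the error term.

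The main obstacle I anticipate is making the boundary-layer estimate clean and uniform: one must show that the set of fundamental domains of $\Lambda$ meeting $\partial K$ is contained in $(1+O_d(\lambda_d))K \setminus (1-O_d(\lambda_d))K$, whose volume is $\ll_d \lambda_d \vol(K)$ by convexity (the map $t\mapsto \vol(tK)=t^d\vol(K)$ and $(1+c)^d-(1-c)^d\ll_d c$ for $c\le 1$). The subtlety is that the diameter of a fundamental cell of $\Lambda$ is not simply $\lambda_1$; rather one should use the parallelepiped spanned by a reduced basis, whose $i$-th edge has length $\asymp_d \lambda_i(K,\Lambda)$ \emph{in the $K$-norm}, so that a translate of this cell meeting $\partial K$ lies within $K$-distance $\ll_d \lambda_d(K,\Lambda)$ of $\partial K$. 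Phrasing everything in terms of the $K$-norm $\|\cdot\|_K$ defined in \eqref{eq;knorm} is what makes the eccentricity issue disappear, and that is the one point where I would be careful to get the dependence on $d$ (and only on $d$) right.
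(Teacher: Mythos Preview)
Your final paragraph contains the correct argument, and it is essentially the paper's proof: take a fundamental domain $\Omega$ of $\Lambda$ contained in the parallelepiped spanned by vectors realizing the successive minima, observe that $\Omega\subset d\lambda_d(K,\Lambda)\cdot K$ in the $K$-norm, and deduce that $K\cap\Lambda+\Omega$ is sandwiched between $(1\pm d\lambda_d(K,\Lambda))K$; then compare volumes and absorb the origin using $\theta(K,\Lambda)\gg_d 1$ from Minkowski's second theorem.

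The detours in your first two paragraphs are unnecessary and in places garbled. The reduction to $\Lambda=\Z^d$ buys nothing, the map $T$ sending $\be_i\mapsto \bv_i/\lambda_i$ leads you in circles, and the intermediate claim that the boundary layer has ``relative thickness $\ll_d \lambda_1/\lambda_d\le \lambda_d$'' is not the right inequality (indeed $\lambda_1/\lambda_d\le 1$ is trivial and not what you want). The point, which you state correctly at the end, is simply that the $K$-diameter of a fundamental cell is $\ll_d \lambda_d(K,\Lambda)$, so the inflation/deflation needed is by a factor $1\pm O_d(\lambda_d)$; no eccentricity normalization or coordinate change is required once you work in the $K$-norm from the start. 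The paper does exactly this in half a page.
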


\begin{proof}
	It follows from the definition of successive minima that there is a set of  linearly independent vectors 
	$\{\bv_1, \ldots, \bv_d\}\subset\Lambda$ such that $\|\bv_i\|_K= \lambda_i(K, \Lambda)$. 
	So there is a  fundamental domain  $\Omega$
	of $\Lambda$ contained in 
	\[
	\{s_1 \bv_1+\cdots+s_d\bv_d: |s_i|\le 1/2 \}\subset   d\lambda_d(K, \Lambda) K. 
	\]
	It follows that 
	\begin{align}\label{eq;lambda K}
	\lambda_d(K, \Lambda)^d \theta(K, \Lambda)\gg_d 1.
	\end{align}

    Since $(\sharp K\cap \Lambda ) \cdot\cov({\Lambda}) = \vol (K\cap \Lambda +\Omega)$ 
    and $ K\cap \Lambda +\Omega\subset  (1+d\lambda_d(K, \Lambda) )K $, one has
	\begin{align}\label{eq;count 1.1}
	 (\sharp K\cap \Lambda  )\cdot \cov({\Lambda}) \le  \vol\big ((1+d\lambda_d(K, \Lambda) )K\big).
	\end{align}
	If $\lambda_d(K, \Lambda)\ge  1/d$, then the conclusion of the lemma  follows from (\ref{eq;count 1.1}).
	In the case where $\lambda_d(K, \Lambda)< 1/d$
     one has 
	$(1-d\lambda_d(K, \Lambda) )K   \subset K\cap \Lambda +\Omega$ which implies 
	\begin{align}\label{eq;count 1.2}
	\vol ((1-d\lambda_d(K, \Lambda) )K)\le  (\sharp K\cap \Lambda)  \cdot \cov({\Lambda}). 
	\end{align}
	In view of (\ref{eq;lambda K}) (which takes  care of  $0\in \Lambda$), (\ref{eq;count 1.1}) and (\ref{eq;count 1.2}) the conclusion of the lemma  
	also holds  in this case.  
\end{proof}

\begin{lem}\label{cor;count 1}
	 	Let $K$ and $\Lambda$ be as in Lemma \ref{lem;count 1}. Then
	\begin{align}
	  &\sharp K\cap \Lambda \asymp_d \theta(K, \Lambda).
	   \label{eq;cor count 1}
	\end{align}
\end{lem}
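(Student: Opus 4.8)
The plan is to derive Lemma \ref{cor;count 1} as an immediate consequence of Lemma \ref{lem;count 1}. Recall that Lemma \ref{lem;count 1} gives $\sharp K\cap(\Lambda\setminus\{0\})=(1+\alpha(K,\Lambda))\theta(K,\Lambda)$ with $|\alpha(K,\Lambda)|\ll_d\lambda_d(K,\Lambda)$, and by hypothesis $\lambda_d(K,\Lambda)\le 1$. Since $\sharp K\cap\Lambda=\sharp K\cap(\Lambda\setminus\{0\})+1$ (the origin always lies in $K$ because $K^\circ\ni 0$, as $\lambda_d(K,\Lambda)\le 1$ forces $K$ to have nonempty interior), it suffices to bound $(1+\alpha(K,\Lambda))\theta(K,\Lambda)+1$ above and below by constant multiples of $\theta(K,\Lambda)$.

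For the upper bound, first observe that $|\alpha(K,\Lambda)|\ll_d\lambda_d(K,\Lambda)\le C_d$ for some constant $C_d$, so $1+\alpha(K,\Lambda)\le 1+C_d$, giving $\sharp K\cap(\Lambda\setminus\{0\})\le(1+C_d)\theta(K,\Lambda)$. For the ``$+1$'' term, we use inequality (\ref{eq;lambda K}) from the proof of Lemma \ref{lem;count 1}, namely $\lambda_d(K,\Lambda)^d\theta(K,\Lambda)\gg_d 1$; combined with $\lambda_d(K,\Lambda)\le 1$ this yields $\theta(K,\Lambda)\gg_d\lambda_d(K,\Lambda)^d\theta(K,\Lambda)\gg_d 1$, i.e.\ $1\ll_d\theta(K,\Lambda)$. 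Hence $\sharp K\cap\Lambda\le(1+C_d)\theta(K,\Lambda)+1\ll_d\theta(K,\Lambda)$, which is $\sharp K\cap\Lambda\ll_d\theta(K,\Lambda)$.

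For the lower bound $\sharp K\cap\Lambda\gg_d\theta(K,\Lambda)$, I would split into two cases mirroring the proof of Lemma \ref{lem;count 1}. If $\lambda_d(K,\Lambda)<1/(2C_d)$, then $\alpha(K,\Lambda)\ge-C_d\lambda_d(K,\Lambda)>-1/2$, so $\sharp K\cap\Lambda\ge\sharp K\cap(\Lambda\setminus\{0\})=(1+\alpha(K,\Lambda))\theta(K,\Lambda)\ge\tfrac12\theta(K,\Lambda)$. In the remaining case $\lambda_d(K,\Lambda)\ge 1/(2C_d)$, inequality (\ref{eq;lambda K}) gives $\theta(K,\Lambda)\le\lambda_d(K,\Lambda)^d\theta(K,\Lambda)\cdot(2C_d)^d\ll_d 1$, and since $\sharp K\cap\Lambda\ge 1$ (the origin is always counted), we get $\sharp K\cap\Lambda\ge 1\gg_d\theta(K,\Lambda)$. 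Combining the two cases with the upper bound gives $\sharp K\cap\Lambda\asymp_d\theta(K,\Lambda)$. There is no real obstacle here; the only point requiring a little care is handling the $\theta(K,\Lambda)$-small regime, where the additive constant $1$ coming from the origin dominates but is itself comparable to $\theta(K,\Lambda)$ precisely because (\ref{eq;lambda K}) and $\lambda_d\le 1$ pin $\theta(K,\Lambda)$ between two constants.
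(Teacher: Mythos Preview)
Your upper bound is fine and matches the paper's, which simply invokes (\ref{eq;count 1.1}) together with $\lambda_d\le 1$.

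The lower bound, however, has a genuine gap in your second case. You assert that when $\lambda_d(K,\Lambda)\ge 1/(2C_d)$, inequality (\ref{eq;lambda K}) gives $\theta(K,\Lambda)\ll_d 1$. But (\ref{eq;lambda K}) says $\lambda_d^d\,\theta\gg_d 1$, which is a \emph{lower} bound on $\lambda_d^d\,\theta$, not an upper bound; combined with $\lambda_d\ge 1/(2C_d)$ it yields nothing about $\theta$ from above. Indeed $\theta$ can be arbitrarily large while $\lambda_d$ stays bounded away from zero: take $d=2$, $\Lambda=\Z^2$, and $K=[-N,N]\times[-1,1]$. Then $\lambda_2(K,\Lambda)=1$ but $\theta(K,\Lambda)=4N$, so your argument ``$\sharp K\cap\Lambda\ge 1\gg_d\theta$'' fails completely here. (Of course $\sharp K\cap\Lambda=3(2N+1)\asymp\theta$ still holds in this example, so the \emph{statement} is true; it is only your proof that breaks.) No upper bound on $\lambda_d^d\,\theta$ follows from Minkowski's second theorem either, since $\lambda_1\cdots\lambda_d\le\lambda_d^d$ goes the wrong way.

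The paper avoids this issue entirely by citing a classical result (Cassels, Chapter~III, Theorem~II) which gives the clean inequality $\sharp K\cap\Lambda\ge 2^{-d}\theta(K,\Lambda)$ directly for any symmetric convex $K$, with no case split and no smallness assumption on $\lambda_d$. This is essentially a van der Corput/Blichfeldt-type argument and does not go through the error term of Lemma~\ref{lem;count 1} at all.
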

\begin{proof}
It  is clear from (\ref{eq;count 1.1}) that $\sharp K\cap \Lambda \ll_d \theta(K, \Lambda) $. 
On the other hand, it follows from \cite[Chapter 3 Theorem II]{cassels} that 
\begin{align*}
\sharp K\cap \Lambda\ge 2^{-d}\theta(K, \Lambda).
\end{align*}

\end{proof}

\begin{lem}\label{lem;teaching}
Let $\Lambda$ be a lattice of $\R^d$ ($d\ge 1$) and $K$ be a bounded centrally symmetric convex subset of $\R^d$ with nonempty interior. Then 
\begin{align}
\sharp K^\circ \cap \Lambda \asymp_d \sharp K \cap \Lambda \asymp_d \sharp  \overline K \cap \Lambda.
\label{eq;cor count 2}
\end{align}
\end{lem}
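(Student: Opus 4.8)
The plan is to reduce everything to a single application of Lemma \ref{cor;count 1} inside a well-chosen subspace. First note that $\|\cdot\|_{K^\circ}=\|\cdot\|_{K}=\|\cdot\|_{\overline K}$ is one and the same norm $N$ on $\R^d$ (here we use that $K$ is bounded, centrally symmetric, convex with nonempty interior), so that $K^\circ\subseteq K\subseteq\overline K$ and hence $\sharp K^\circ\cap\Lambda\le\sharp K\cap\Lambda\le\sharp\overline K\cap\Lambda$. Since the set of values of $N$ on $\Lambda$ is discrete, one checks that $\lambda_i:=\lambda_i(K^\circ,\Lambda)=\lambda_i(K,\Lambda)=\lambda_i(\overline K,\Lambda)$ for every $i$. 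Thus it is enough to prove the reverse inequality $\sharp\overline K\cap\Lambda\ll_d\sharp K^\circ\cap\Lambda$.

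Let $j\in\{0,1,\dots,d\}$ be the largest index with $\lambda_j\le 1$ (with the convention $\lambda_0=0$). If $j=0$, then every nonzero $v\in\Lambda$ satisfies $N(v)\ge\lambda_1>1$, so $\overline K\cap\Lambda=K\cap\Lambda=K^\circ\cap\Lambda=\{0\}$ and we are done. If $j\ge 1$, I would pick linearly independent $v_1,\dots,v_j\in\Lambda$ realizing the first $j$ minima, i.e.\ $N(v_i)=\lambda_i$ (possible by the standard realization of successive minima), and set $V=\spa_\R(v_1,\dots,v_j)$, so that $\Lambda_V:=\Lambda\cap V$ is a lattice of rank $j$ in $V$. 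The key observation is that $\overline K\cap\Lambda\subseteq V$: if some $w\in\overline K\cap\Lambda$ lay outside $V$, then $v_1,\dots,v_j,w$ would be $j+1$ linearly independent vectors of $\Lambda$ with $N\le 1$, forcing $\lambda_{j+1}\le 1$ and contradicting the choice of $j$. Consequently $\overline K\cap\Lambda=(\overline K\cap V)\cap\Lambda_V$ and likewise $K^\circ\cap\Lambda=(K^\circ\cap V)\cap\Lambda_V$.

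Identifying $V$ with $\R^j$ by a Euclidean isometry, both $\overline K\cap V$ and $K^\circ\cap V$ are bounded centrally symmetric convex subsets of $V$ with nonempty interior (as $0\in K^\circ$), whose Minkowski functionals on $V$ both coincide with $N|_V$; in particular $\lambda_j(\overline K\cap V,\Lambda_V)=\lambda_j(K^\circ\cap V,\Lambda_V)=\lambda_j\le 1$. Applying Lemma \ref{cor;count 1} in $V$ to each of these two bodies gives
\[
\sharp\overline K\cap\Lambda\asymp_j\frac{\vol_V(\overline K\cap V)}{\cov_V(\Lambda_V)},
\qquad
\sharp K^\circ\cap\Lambda\asymp_j\frac{\vol_V(K^\circ\cap V)}{\cov_V(\Lambda_V)}.
\]
Because the boundary of the convex body $\overline K\cap V$ is $\vol_V$-null, $\vol_V(\overline K\cap V)=\vol_V(K\cap V)=\vol_V(K^\circ\cap V)$, and therefore $\sharp\overline K\cap\Lambda\asymp_j\sharp K^\circ\cap\Lambda$; since $j\le d$ the implied constant depends only on $d$. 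Together with the sandwich inequality from the first paragraph this gives $\sharp K^\circ\cap\Lambda\asymp_d\sharp K\cap\Lambda\asymp_d\sharp\overline K\cap\Lambda$.

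I expect the only genuinely delicate point to be the inclusion $\overline K\cap\Lambda\subseteq V$, which is what makes the reduction to Lemma \ref{cor;count 1} possible even though $\lambda_d(\overline K,\Lambda)$ may be much larger than $1$; everything else is bookkeeping. As an alternative to the volume computation in the last step, one could instead bound $\sharp\overline K\cap\Lambda$ from above by Minkowski's second theorem (\ref{eq;minkowski}) applied in $V$, namely $\sharp\overline K\cap\Lambda\asymp_j\vol_V(\overline K\cap V)/\cov_V(\Lambda_V)\ll_j\prod_{i=1}^j\lambda_i^{-1}$, and bound $\sharp K^\circ\cap\Lambda$ from below by counting the lattice points $\sum_{i=1}^j c_iv_i$ with $c_i\in\Z$ and $|c_i|\le(2d\lambda_i)^{-1}$, of which there are $\prod_{i=1}^j\big(2\lfloor(2d\lambda_i)^{-1}\rfloor+1\big)\gg_d\prod_{i=1}^j\lambda_i^{-1}$.
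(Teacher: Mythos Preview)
Your proof is correct and follows essentially the same route as the paper: reduce to a subspace in which the top successive minimum is at most $1$, apply Lemma~\ref{cor;count 1} there to both $K^\circ$ and $\overline K$, and use that a convex body has null boundary so the two volumes agree. The only cosmetic difference is in how the subspace is introduced---the paper takes $H=\spa_\R(\overline K\cap\Lambda)$ directly, while you build $V=\spa_\R(v_1,\dots,v_j)$ from vectors realizing the first $j$ minima and then verify $\overline K\cap\Lambda\subseteq V$; these two subspaces coincide, and your version has the virtue of making explicit why $\lambda_j(K^\circ\cap V,\Lambda_V)\le 1$, which the paper uses tacitly.
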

\begin{proof}
	Let $H$ be the linear span of $ \overline K\cap \Lambda$.
	Suppose $H$ is an $i$-dimensional real vector space and we assume 
	without loss of generality that $i>0$. 
	Since  an open neighborhood of $0$ is contained in  $K$ and $K$ is convex, 
	the closure of  $K^\circ \cap H$ is equal to  $\overline K\cap H$. 
	It follows from (\ref{eq;cor count 1}) that 
	\begin{align*}
	\sharp \overline K\cap \Lambda \asymp _i\frac{\vol_H (\overline K  )}{\cov_H(\Lambda )} 
	\quad \mbox{and }\quad 
	\sharp  K^\circ \cap \Lambda  \asymp _i \frac{\vol_H ( K^\circ  )}{\cov_H(\Lambda )} .
	\end{align*}
	Since  every convex subset of the Euclidean space has negligible boundary, one has 
	$\vol_H(\overline  K)=\vol_H( K^\circ)$. 
	Since there is no difference between a constant depending on $d$ and on   $\{1, \ldots, d\}$,  
	(\ref{eq;cor count 2}) holds.
\end{proof}

\begin{lem}\label{lem;count 2}
Let $K$ and $\Lambda$ be as in Lemma \ref{lem;count 1}. Suppose $\lambda_i(K, \Lambda)\le s\le s'\le \lambda_{j+1}(K, \Lambda)$
where $1\le i\le j\le d$, 
then 
\begin{align}\label{eq;count 2}
\left (\frac{s'}{s}\right)^i\ll_d
\frac{\sharp s' K\cap \Lambda}{\sharp sK \cap \Lambda }
\ll_d \left (\frac{s'}{s}\right)^j.
\end{align}
\end{lem}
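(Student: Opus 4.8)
The plan is to reduce the two-sided estimate to the counting result of Lemma \ref{cor;count 1} together with a comparison of the quantities $\theta(sK,\Lambda)$ and $\theta(s'K,\Lambda)$ in terms of a well-chosen reduced basis of $\Lambda$. First I would fix a set of linearly independent vectors $\bv_1,\dots,\bv_d\in\Lambda$ realizing the successive minima, so that $\|\bv_m\|_K=\lambda_m(K,\Lambda)$ for each $m$; this is available by definition of $\lambda_m$. The key geometric observation is that for any $r>0$, the number of lattice points of $\Lambda$ in $rK$ is governed, up to constants depending only on $d$, by the product of those $r/\lambda_m(K,\Lambda)$ that exceed $1$. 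More precisely, if $\lambda_m(K,\Lambda)\le r\le\lambda_{m+1}(K,\Lambda)$ then I claim
\[
\sharp\, rK\cap\Lambda\;\asymp_d\;\prod_{l=1}^{m}\frac{r}{\lambda_l(K,\Lambda)}.
\]
Granting this claim, the lemma follows immediately: under the hypothesis $\lambda_i(K,\Lambda)\le s\le s'\le\lambda_{j+1}(K,\Lambda)$, passing from $s$ to $s'$ multiplies the right-hand side by $(s'/s)^{m}$ where $m$ is the number of successive minima in $[\,\cdot\,,s]$ (resp.\ ranges over intermediate values), and since $i\le m\le j$ throughout, the ratio is squeezed between $(s'/s)^i$ and $(s'/s)^j$.

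To prove the displayed claim, I would split into two cases according to whether $r\ge\lambda_d(K,\Lambda)$ or not. If $r\ge\lambda_d(K,\Lambda)$, then $\lambda_d(rK,\Lambda)=\lambda_d(K,\Lambda)/r\le 1$, so Lemma \ref{cor;count 1} applies to the convex body $rK$ and gives $\sharp\,rK\cap\Lambda\asymp_d\theta(rK,\Lambda)=r^d\theta(K,\Lambda)$; Minkowski's second theorem (\ref{eq;minkowski}) then rewrites $\theta(K,\Lambda)\asymp_d\prod_{l=1}^d\lambda_l(K,\Lambda)^{-1}$, which is exactly the claimed product when $m=d$. If instead $\lambda_m(K,\Lambda)\le r<\lambda_{m+1}(K,\Lambda)$ with $m<d$, I would pass to the rational subspace $H=\operatorname{span}(\bv_1,\dots,\bv_m)$: every lattice point in $rK$ in fact lies in $H$, since a point of $\Lambda\cap rK$ outside $H$ together with $\bv_1,\dots,\bv_m$ would give $m+1$ independent vectors in $rK$, contradicting $r<\lambda_{m+1}(K,\Lambda)$. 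Hence $\sharp\,rK\cap\Lambda=\sharp\,rK\cap(\Lambda\cap H)$, and now $K\cap H$ is a centrally symmetric convex body in the $m$-dimensional space $H$ with $\lambda_m(K\cap H,\Lambda\cap H)=\lambda_m(K,\Lambda)\le r$, so Lemma \ref{cor;count 1} in dimension $m$ applies and, combined again with Minkowski's theorem in $H$, yields $\sharp\,rK\cap\Lambda\asymp_m r^m\big/\prod_{l=1}^m\lambda_l(K,\Lambda)$. Since constants depending on $m\in\{1,\dots,d\}$ may be absorbed into constants depending on $d$, the claim is proved.

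The main obstacle I anticipate is the bookkeeping in the final squeeze: one must verify that for every intermediate radius $r\in[s,s']$ the index $m=m(r)$ with $\lambda_m(K,\Lambda)\le r\le\lambda_{m+1}(K,\Lambda)$ indeed satisfies $i\le m\le j$, so that each incremental factor lies between $(r'/r)^i$ and $(r'/r)^j$; this uses exactly the hypothesis $\lambda_i\le s$ and $s'\le\lambda_{j+1}$. A clean way to package this, avoiding a genuine integration, is to evaluate the product formula only at the endpoints and at the finitely many successive minima falling strictly between $s$ and $s'$, taking ratios across consecutive breakpoints; each such ratio is a power $(\cdot)^{m}$ with $i\le m\le j$, and multiplying them telescopes to give $(s'/s)^i\ll_d \sharp s'K\cap\Lambda/\sharp sK\cap\Lambda\ll_d(s'/s)^j$. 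The only mild subtlety is that the $\asymp_d$ in the product formula introduces a bounded multiplicative error at each of the (at most $d$) breakpoints, which is harmless since there are at most $d$ of them.
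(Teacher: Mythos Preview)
Your argument is correct and is essentially the paper's proof, only written out in more detail. The paper compresses the special case $\lambda_i\le s\le s'\le\lambda_{i+1}$ into a single appeal to Lemmas~\ref{cor;count 1} and~\ref{lem;teaching} (the latter already passes to the span $H$ of $\overline{K}\cap\Lambda$ and applies the volume count there), and then says the general case follows by chaining; you unpack this into an explicit product formula $\sharp\,rK\cap\Lambda\asymp_d\prod_{l\le m}r/\lambda_l$ and then telescope across the at most $d$ breakpoints, which amounts to the same thing.
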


\begin{proof}
	If  $\lambda_i(K, \Lambda)\le s\le s'\le \lambda_{i+1}(K, \Lambda)$, then 
	(\ref{eq;count 2})  follows from 
    Lemmas \ref{cor;count 1} and \ref{lem;teaching}. The general case  follows from  this special case. 
\end{proof}

\begin{proof}[Proof of Lemma \ref{lem;primitive}]
	The set \begin{align*}
	\widehat \Lambda=\Lambda\setminus\cup_{p\text{ prime}}p\Lambda=(\Lambda\setminus \{0\})\setminus
	\left( \cup_{p\text{ prime}}p\Lambda\setminus \{0\} \right ).
	\end{align*}
	Let  $\mu$ be the M\"obius function and $\lambda_i=\lambda_i(K,\Lambda)$.	
	It follows from the inclusion-exclusion principle that 
	\begin{equation}\label{eq;count 2.1}
	\begin{aligned}
	&\sharp K\cap (\widehat\Lambda\setminus \{0 \}) \\
	=&\sharp K\cap (\Lambda\setminus \{ 0\})+
	\sum_{\substack{p_1<\cdots <p_k\\\text{primes}}}(-1)^k\sharp K\cap(   p_1\cdots p_k\Lambda\setminus \{0\} )  \\
	=&\sum_{n=1}^\infty\mu(n)\sharp K\cap( n \Lambda\setminus \{0\})\\
	=&\sum_{n=1}^{\lfloor\lambda_d^{-1}\rfloor}\mu(n)\frac{\theta(K, \Lambda)}{n^d }\left(1+\alpha(K, n\Lambda)\right)
	+ \sum_{n=\lfloor\lambda_d^{-1}+1\rfloor}^{\lfloor\lambda_1^{-1}\rfloor}\sharp K\cap( n \Lambda\setminus \{0\}),
	\end{aligned}
	\end{equation}
	where in the last equality we use the notation of  Lemma \ref{lem;count 1} and the observation that for any integer $n> \lambda_1^{-1}$
	one has $K\cap n\Lambda=\{0\}$.

	It follows from  Lemmas \ref{cor;count 1} and  \ref{lem;count 2} (with $\lambda_1\le 
	s=n^{-1}\le s'= \lambda_d \le \lambda_{d+1}$) that 
	\begin{align}\label{eq;count 2.2}
	\sharp K\cap n\Lambda= \sharp n^{-1}K\cap \Lambda\ll_d \lambda_d^{d-1}n^{-1}\theta(K, \Lambda)
	\end{align}
	for $n\in [\lambda_d^{-1}, \lambda_1^{-1}]$.
	Using (\ref{eq;count 2.1}), (\ref{eq;count 2.2}) and the estimate of $\alpha$ in Lemma \ref{lem;count 1}, we
	can write 
	\[
	\sharp \widehat\Lambda\cap K= (\zeta(d)^{-1}+\eta(K, \Lambda))\theta(K, \Lambda),
	\]
	where 
	\begin{align*}
	|\eta(K, \Lambda)|&\ll_d \sum_{n=\lfloor\lambda_d^{-1}+1\rfloor}^\infty \frac{1}{n^d}+
	\sum_{n=1}^{\lfloor\lambda_d^{-1}\rfloor}\frac{1}{n^d}|\alpha(K, n\Lambda)|+
	\sum_{n=\lfloor\lambda_d^{-1}+1\rfloor}^{\lfloor\lambda_1^{-1}\rfloor}\lambda_d^{d-1}n^{-1} \\
	&\ll_d \lambda_d^{d-1}+\sum_{n=1}^{\lfloor\lambda_d^{-1}\rfloor} \frac{\lambda_d}{n^{d-1}}+\lambda_d^{d-1}\log (\lambda_d \lambda_1^{-1}) \\
	& \ll  \lambda_d -\lambda_d\log  \lambda_1.
	\end{align*}

\end{proof}


\begin{lem}\label{lem;hyperplane}
	Let $K$ be a bounded  centrally symmetric convex  subset of $\R^d$ $(d\ge 2)$ with nonempty interior and let $\varphi\in  \mathcal E_d^* \setminus \{0\}$. 
	Then 
	$
	\vol_{H_\varphi}(K)\asymp_d \frac{\|\varphi\| \vol(K) }{\|\varphi\|_K}.
	$
\end{lem}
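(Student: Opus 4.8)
The plan is to slice $K$ by the affine hyperplanes parallel to $H_\varphi$ and relate the central slice to the whole volume by Fubini. First I would identify $\mathcal E_d^*$ with $\R^d$ via the standard inner product, so that $\varphi(\bv)=\langle\bu,\bv\rangle$ for a nonzero $\bu$ with $\|\bu\|=\|\varphi\|$ and $H_\varphi=\bu^{\perp}$. Setting $\hat\bu=\bu/\|\bu\|$ and $K_t=\{\by\in H_\varphi:\by+t\hat\bu\in K\}$, the identity $\varphi(\by+t\hat\bu)=t\|\varphi\|$ gives $\vol(K)=\int_{\R}\vol_{H_\varphi}(K_t)\dd t$, while the definition of the $K$-norm of a functional yields $\|\varphi\|_K=\|\varphi\|\,T$ with $T=\sup\{t:K_t\neq\emptyset\}$ (finite and positive because $0$ lies in $K^\circ$, using $K=-K$ and convexity). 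Thus the statement reduces to the purely geometric claim that $\int_{-T}^{T}a(t)\dd t\asymp_d T\,a(0)$, where $a(t):=\vol_{H_\varphi}(K_t)$ is the slice-area function.

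The key input is Brunn's theorem: $t\mapsto a(t)^{1/(d-1)}$ is concave on $\{t:K_t\neq\emptyset\}$. Since $K=-K$ forces $K_{-t}=-K_t$, the function $a$ is even, so $a^{1/(d-1)}$ is an even concave function on an interval with endpoints $\pm T$. From evenness and concavity it attains its maximum at $0$, giving $a(t)\le a(0)$ and hence $\vol(K)\le 2T\,a(0)$; and comparing the value $a(0)^{1/(d-1)}$ at $0$ with the chord to the (nonnegative) endpoint value gives $a(t)\ge a(0)(1-|t|/T)^{d-1}$, whose integral is $\tfrac{2T}{d}a(0)$, so $\vol(K)\ge\tfrac{2T}{d}a(0)$. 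Substituting $T=\|\varphi\|_K/\|\varphi\|$ and $a(0)=\vol_{H_\varphi}(K)$ then gives $\vol_{H_\varphi}(K)\asymp_d\|\varphi\|\vol(K)/\|\varphi\|_K$, as desired.

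The remaining points are routine: $0\in K^\circ$ (by $K=-K$ and convexity), so $K_0\supseteq K^\circ\cap H_\varphi$ has positive $(d-1)$-dimensional measure, $T$ is positive and finite, and no quantity degenerates; one may also replace $K$ by its closure to make the suprema attained, though this is not needed. The only genuine ingredient is Brunn's theorem, which I do not expect to be an obstacle: it is classical and follows immediately from the Brunn--Minkowski inequality applied to the slices via the inclusion $\tfrac12 K_t+\tfrac12 K_{-t}\subseteq K_0$, valid for any convex $K$. Thus the proof is short; the only care needed is in the elementary one-variable concavity estimate that yields the two-sided bound for $\int a$.
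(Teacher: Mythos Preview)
Your proof is correct and follows essentially the same scheme as the paper: slice $K$ by the parallel affine hyperplanes $\varphi^{-1}(t)$, write $\vol(K)$ as an integral of the slice areas $a(t)$ via Fubini, and then bound $a(t)$ above and below in terms of the central slice $a(0)=\vol_{H_\varphi}(K)$.

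The difference is only in the tool used to control $a(t)$. For the lower bound the arguments coincide: the paper explicitly builds the cone from a point $\bv\in K$ near the top slice over $K\cap H_\varphi$, which is exactly the inequality $a(t)\ge a(0)(1-|t|/T)^{d-1}$ you extract from the concave chord. For the upper bound the paper avoids Brunn--Minkowski entirely: it takes any $\bv\in K$ at level $t$, uses $-\bv\in K$ and convexity to map the slice at level $t$ into $K\cap H_\varphi$ (giving $a(t)\ll_d a(0)$), and integrates. Your route via Brunn's theorem (evenness plus concavity of $a^{1/(d-1)}$ forces the maximum at $0$) is cleaner and yields the sharp bound $a(t)\le a(0)$, but at the cost of invoking a named inequality. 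So: same architecture, the paper more elementary, your version a bit slicker with better constants.
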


\begin{proof}
	Since $\|\varphi\|/\|\varphi\|_K=\|t\varphi\|/\|t\varphi\|_K  $ for every nonzero real number $t$, 
	we assume  without loss of generality that $\|\varphi\|=1$. 
	Using Fubini's theorem one has 
	\begin{align}\label{eq;vol decom}
	\vol(K)=\int_{\R} \vol_{\varphi^{-1}(t)}(K)\dd t=2\int_{0}^{\|\varphi\|_K} \vol_{\varphi^{-1}(t)}(K)\dd t.
	\end{align}
	Since $K$ is centrally symmetric,  for each $t\in \R$ and $\bv\in K$ with $\varphi(\bv)=t$ one has
	$-\bv\in K$ and the line segment  joining $-\bv$ and $\varphi^{-1}(t)$ is in $K$. Therefore 
	\[
	\vol_{\varphi^{-1}(t)}(K)\le 2 \vol_{H_\varphi}(K).
	\] 
	This estimate  and (\ref{eq;vol decom}) imply   
	\begin{align}\label{eq;hyperplane 1}
	\vol(K)\le 4  \|\varphi\|_K \vol_{H_\varphi}(K) .
	\end{align}

	For any   $0<\varepsilon< \|\varphi\|_K$, there is a vector $\bv\in K$ such that 
	$	
	\varphi(\bv)= \|\varphi\|_K-\varepsilon.
	 $
	Since $K$ is   convex, for every $t$ with  $0< t< \|\varphi\|_K-\varepsilon$ one has
	\[
	K\cap \varphi^{-1}(t)\supset \frac{\|\varphi \|_K-\varepsilon-t}{\|\varphi\|_K-\varepsilon} (K\cap H_\varphi)+ 
	\frac{t}{\|\varphi\|_K-\varepsilon}\bv,
	\]
	which implies 
	\[
	\vol_{\varphi^{-1}(t)}(K)\ge  \left( \frac{\|\varphi\|_K-\varepsilon-t}{\|\varphi\|_K-\varepsilon}\right)^{d-1}\vol_{H_\varphi}(K ) . 
	\]
	This estimate and  (\ref{eq;vol decom}) imply 
	\begin{align*}
	\vol(K)& \ge 2\int_{0}^{\|\varphi\|_K-\varepsilon} \vol_{\varphi^{-1}(t)}(K)  \dd t  \\
	&\ge  2 \vol_{H_\varphi}(K )\int_{0}^{\|\varphi\|_K-\varepsilon}  \left( \frac{\|\varphi\|_K-\varepsilon-t}{\|\varphi\|_K-\varepsilon}\right)^{d-1}\dd t  \\
	& =2\vol _{H_\varphi}(K) \frac{\|\varphi\|_K-\varepsilon}{d}  . 
	\end{align*}
	Taking  $\varepsilon=\|\varphi\|_K/2$,  we have
	\begin{align}\label{eq;hyperplane 2}
	\vol(K)\ge \frac{\vol_{H\varphi}(K)\|\varphi\|_K}{d}.
	\end{align}
	The lemma follows from (\ref{eq;hyperplane 1}) and (\ref{eq;hyperplane 2}). 

\end{proof}

\subsection{Lattice points counting in $\R^3$}\label{sec;count r3}

In this subsection we estimate the number of the lattice points in 
\begin{align}\label{eq;m r}
M_\br&=\{(x_1, x_2, x_3)\in \R^3 : |x_i|\le r_i\}
\end{align}
for a triple of positive real numbers $\br=(r_1, r_2, r_3)$.
Lemmas \ref{lem;many vectors}, \ref{lem;application 1}
and \ref{lem;application 2} will be used in \S \ref{sec;refinement}
for lower bound estimation. 
The  two latter results are deduced from
  Lemma~\ref{lem;technical} where  we prove an upper bound of  the number of the lattice
points in  $M_\br$ that lie in certain badly shaped hyperplanes.
Let 
\begin{equation*}
\begin{aligned}
M_{\br}^*     &=\{\varphi\in \mathcal E_3^* : |x_i^\varphi|\le r_i\} .
\end{aligned}
\end{equation*}
where $x_1^\varphi, x_2^\varphi, x_3^\varphi\in \R$ are
the coordinates of $\varphi\in \mathcal E _3^*$, that is, 
$\varphi=x_1^\varphi\be_1^*+ x_2^\varphi\be_2^*+ x_3^\varphi\be_3^*$.

\begin{lem}\label{lem;many vectors}
	There exists a positive  real number $ \til c< 1$ such that for every lattice  $\Lambda$ of
	$\R^3$ and every triple of positive real numbers $\br$ with 
	\[
    \lambda_3(M_\br, \Lambda )\le \til c\quad \mbox{and}\quad 	
    -\lambda_3(M_\br, \Lambda )\log
  \lambda_1(M_\br, \Lambda ) \le \til c
	\]
	one has 
	\[
     \frac{4}{5\zeta(3)}\theta(M_\br, \Lambda)\le \sharp	M_{\br}\cap \widehat \Lambda \le \frac{6}{5\zeta(3)}\theta(M_\br, \Lambda).
	\]
\end{lem}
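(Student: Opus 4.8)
The plan is to apply Lemma \ref{lem;primitive} directly with $d = 3$ and $K = M_\br$, and then choose the universal constant $\til c$ small enough to absorb the error term $\eta(M_\br, \Lambda)$ into the stated numerical window. Concretely, Lemma \ref{lem;primitive} gives
\[
\sharp M_\br \cap \widehat\Lambda = \big(\zeta(3)^{-1} + \eta(M_\br, \Lambda)\big)\cdot \theta(M_\br, \Lambda),
\]
valid once $\lambda_3(M_\br, \Lambda) \le 1$, with the estimate
\[
|\eta(M_\br, \Lambda)| \ll \lambda_3(M_\br, \Lambda) - \lambda_3(M_\br, \Lambda)\log\lambda_1(M_\br, \Lambda).
\]
Let $C_0$ denote an implied constant for this last inequality (it is universal, since here $d = 3$ is fixed). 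Under the two hypotheses $\lambda_3(M_\br, \Lambda) \le \til c$ and $-\lambda_3(M_\br, \Lambda)\log\lambda_1(M_\br, \Lambda) \le \til c$, we get $|\eta(M_\br, \Lambda)| \le 2C_0\til c$. So I would take $\til c = \min\{1,\ \tfrac{1}{10 C_0 \zeta(3)}\}$, which is a legitimate universal positive real number less than $1$; then $|\eta(M_\br, \Lambda)| \le \tfrac{1}{5\zeta(3)}$, and since $\zeta(3)^{-1} - \tfrac{1}{5\zeta(3)} = \tfrac{4}{5\zeta(3)}$ and $\zeta(3)^{-1} + \tfrac{1}{5\zeta(3)} = \tfrac{6}{5\zeta(3)}$, the claimed two-sided bound follows. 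One small point to check along the way: the hypothesis $\lambda_3(M_\br, \Lambda) \le \til c \le 1$ is exactly what Lemma \ref{lem;primitive} requires, so there is no gap in applicability; and $\theta(M_\br, \Lambda) \ge 0$ so multiplying the inequality $\zeta(3)^{-1} - |\eta| \le \zeta(3)^{-1} + \eta \le \zeta(3)^{-1} + |\eta|$ through by $\theta$ preserves directions.

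I expect there to be essentially no obstacle here: the lemma is a packaging of Lemma \ref{lem;primitive} into the specific form needed downstream in \S\ref{sec;refinement}, where having explicit constants $\tfrac{4}{5\zeta(3)}$ and $\tfrac{6}{5\zeta(3)}$ (rather than an $\asymp$) is presumably convenient for the later counting arguments. The only thing requiring a word of care is that the constant $C_0$ genuinely does not depend on $\Lambda$ or $\br$ — but that is already built into the statement of Lemma \ref{lem;primitive}, whose error bound has an implied constant depending only on $d$. Hence the argument is:
\begin{enumerate}
\item Invoke Lemma \ref{lem;primitive} with $d=3$, $K = M_\br$, using $\lambda_3(M_\br,\Lambda)\le\til c\le 1$.
\item Bound $|\eta(M_\br,\Lambda)| \le C_0\big(\lambda_3(M_\br,\Lambda) - \lambda_3(M_\br,\Lambda)\log\lambda_1(M_\br,\Lambda)\big) \le 2C_0\til c$ using both hypotheses.
\item Fix $\til c$ so that $2C_0\til c \le \tfrac{1}{5\zeta(3)}$, and conclude.
\end{enumerate}
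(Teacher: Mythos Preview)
Your proposal is correct and matches the paper's own proof, which simply reads ``This lemma follows directly from Lemma \ref{lem;primitive}.'' Your write-up spells out the choice of $\til c$ explicitly, but the content is identical: apply Lemma \ref{lem;primitive} with $d=3$, $K=M_\br$, and shrink $\til c$ so that the error term $|\eta|$ is at most $\tfrac{1}{5\zeta(3)}$.
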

\begin{proof}
	This lemma follows directly from Lemma \ref{lem;primitive}. 
\end{proof}

Let us fix $0<s<1/2$ and $\br\in \R^3$ with $1\le r_1\le r_2$ and $r_3=1$. 
Let 
\[
\|\varphi\|_\br=\max \{r_1|x^\varphi_1| , r_2|x^\varphi_2| , | x^\varphi_3|\}. 
\]
It can be checked directly that 
\begin{align}\label{eq;r norm}
\|\varphi\|_\br\le  \|\varphi\|_{M_\br} \le 3\|\varphi\|_\br.
\end{align}
For $q>0$ let 
 \begin{align}\label{eq;jia}
  N_q(\br, s)=\{\varphi\in \mathcal E_3^*: |x^\varphi _1|\le s, |x^\varphi_2|\le s, \|\varphi\|_{\br} \le q \}.
 \end{align}
A direct calculation shows that 
 \begin{align*}
 N_q(\br, s)=\left\{
 \begin{array}{ll}
M^*_{\frac{q}{r_1}, \frac{q}{r_2}, q}\quad 
& q\le sr_1 \\               
M^*_{s, \frac{q}{r_2}, q}  \quad 
 & sr_1\le q\le sr_2\\
M^*_{s, s, q} \quad 
& sr_2\le q
 \end{array}.
 \right.
 \end{align*}
 For a lattice $\Lambda$ of $\R^3$ let $q_i(\Lambda, \br, s )\  (i=1,2,3)$  be the infimum of those
 positive  numbers $q$ such that $N_q(\br, s)\cap \Lambda^*$ contains 
 $i$ linearly independent vectors. 
We use $\widehat {\Lambda^*} $ to denote the set of primitive vectors of 
 the dual lattice
 $\Lambda^*$.
  In the next lemma  we give an upper bound of the cardinality of 
 	\begin{align}\label{eq;jia1}
 	\mathcal S(\Lambda, \br, s):=\{\bv\in  M_\br \cap \widehat \Lambda: \varphi(\bv)=0 \mbox{ for some } \varphi\in N_{3sr_2}(\br, s)\cap \widehat{\Lambda^*} \}. 
 	\end{align}
 \begin{lem}\label{lem;technical}
Let $\Lambda$ be a unimodular lattice of $\R^3$ with    $ q_1(\Lambda, \br, s)\ge s^{-2} $. The  following
statements  hold:
\begin{enumerate}[label=(\roman*)]
\item  if $r_1=r_2$ and  $ q_3(\Lambda, \br, s) \le 2 s^{-1/2} r_2  $ then 
$
\sharp\mathcal S(\Lambda, \br, s)\ll  s^{1/2}\cdot \vol (M_\br);
$
\item if $r_1< r_2$ and  $q_3(\Lambda,\br,s)\log q_3(\Lambda, \br,s)\le sr_2$ then 
$
\sharp\mathcal S(\Lambda, \br, s) \ll  s^2\cdot \vol (M_\br).
$
\end{enumerate}
 \end{lem}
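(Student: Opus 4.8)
The plan is to count the primitive vectors $\bv \in M_\br \cap \widehat\Lambda$ that lie on some primitive dual hyperplane $\varphi \in N_{3sr_2}(\br,s) \cap \widehat{\Lambda^*}$ by organizing the sum over $\varphi$. First I would observe that a primitive dual vector $\varphi$ with $\|\varphi\|_{M_\br} \asymp \|\varphi\|_\br \le 3sr_2$ defines a $\Lambda$-rational hyperplane $H_\varphi$ with $\cov_{H_\varphi}(\Lambda) = \|\varphi\|$ (standard duality), and $M_\br \cap H_\varphi$ is a two-dimensional centrally symmetric convex body, so by Lemma \ref{lem;teaching} and Lemma \ref{cor;count 1} (applied inside $H_\varphi$, provided the last successive minimum of $M_\br$ relative to $H_\varphi \cap \Lambda$ is $\le 1$)
\[
\sharp (M_\br \cap \Lambda \cap H_\varphi) \ll \frac{\vol_{H_\varphi}(M_\br)}{\cov_{H_\varphi}(\Lambda)} \ll \frac{\|\varphi\|\,\vol(M_\br)}{\|\varphi\|_{M_\br}\,\|\varphi\|} \ll \frac{\vol(M_\br)}{\|\varphi\|_\br},
\]
where the middle step is Lemma \ref{lem;hyperplane} and the last uses (\ref{eq;r norm}). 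Summing over $\varphi$, and noting each $\bv$ can be overcounted only boundedly often unless many dual hyperplanes pass through it — but a primitive $\bv$ lies on $\sharp(\{\varphi \in N_{3sr_2}(\br,s)\} \cap \Lambda^*)$-bounded-many such hyperplanes, which I would bound crudely — we get
\[
\sharp \mathcal S(\Lambda,\br,s) \ll \vol(M_\br) \sum_{\varphi \in N_{3sr_2}(\br,s)\cap\widehat{\Lambda^*},\ \varphi \ne 0,\ \pm} \frac{1}{\|\varphi\|_\br}.
\]

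Next I would estimate the dyadic sum $\sum_\varphi \|\varphi\|_\br^{-1}$ using the hypotheses on $q_i(\Lambda,\br,s)$. Since $q_1(\Lambda,\br,s) \ge s^{-2}$, there are no dual lattice points with $\|\varphi\|_\br < s^{-2}$, so the sum ranges over $s^{-2} \le \|\varphi\|_\br \le 3sr_2$. Split into dyadic shells $N_{2^{j+1}}(\br,s) \setminus N_{2^j}(\br,s)$ for $s^{-2} \le 2^j \le 3sr_2$. In shell $j$ there are $\ll \theta(N_{2^j}(\br,s), \Lambda^*)$ lattice points by Lemma \ref{cor;count 1} (using $q_3 \le$ the upper cutoff to guarantee $\lambda_3 \le 1$), and $\theta(N_q(\br,s),\Lambda^*) = \vol(N_q(\br,s))$ since $\Lambda$ (hence $\Lambda^*$) is unimodular. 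Using the explicit description of $N_q(\br,s)$ I would compute $\vol(N_q(\br,s))$: in the regime $q \le sr_1$ it is $\asymp q^3/(r_1r_2)$, in $sr_1 \le q \le sr_2$ it is $\asymp sq^2/r_2$, and in $sr_2 \le q$ it is $\asymp s^2 q$ — but the constraint $q \le 3sr_2$ means we only see the first two regimes (plus the boundary). Then $\sum_{\text{shell }j} \|\varphi\|_\br^{-1} \ll 2^{-j}\vol(N_{2^{j+1}}(\br,s))$, and summing the geometric-type series over $j$ I would get the dominant contribution from the top shell $2^j \asymp sr_2$, giving $\ll \vol(N_{3sr_2}(\br,s))/(sr_2)$. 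In case (i), $r_1 = r_2$, so $\vol(N_{3sr_2}(\br,s)) \asymp s\cdot(sr_2)^2/r_2 \cdot$(adjust) — more precisely tracking the regimes with $r_1 = r_2$ yields $\ll s^2 r_2$... I would need to recompute carefully, but the target $s^{1/2}\vol(M_\br) = s^{1/2} \cdot 8 r_1 r_2 \cdot 1$ in case (i) versus $s^2 \vol(M_\br)$ in case (ii) suggests the $q_3$ cutoffs ($2s^{-1/2}r_2$ versus $sr_2/\log$) are precisely what make the last nonempty dyadic shell land at the right place.

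The likely main obstacle is handling the case $q_3(\Lambda,\br,s)$ between the cutoff and $\|\varphi\|_\br$ correctly: when $\lambda_3(M_\br \cap H_\varphi, H_\varphi \cap \Lambda)$ exceeds $1$ the naive volume count for $\sharp(M_\br \cap \Lambda \cap H_\varphi)$ fails, and I must instead bound it by $\ll \lambda_1(M_\br \cap H_\varphi, \cdot)^{-1}$ times something, i.e. by a one- or zero-dimensional count, using Minkowski's second theorem inside $H_\varphi$ together with the constraint $q_1(\Lambda,\br,s) \ge s^{-2}$ which forces the reciprocal lattice in $H_\varphi$ to be coarse. Keeping the bookkeeping between the $q_i$ of $\Lambda^*$ in the $N_q(\br,s)$ geometry and the successive minima of $M_\br$ restricted to individual hyperplanes is where the real work lies; I expect to invoke Lemma \ref{lem;count 2} to transfer between scales. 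The distinction between (i) and (ii) comes down to the shape of $N_q(\br,s)$ having an extra $s/r_2$-type gain when $r_1 < r_2$, which should be read off directly from the piecewise formula for $N_q(\br,s)$ above.
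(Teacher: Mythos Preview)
Your overall strategy is the same as the paper's: bound $\sharp\mathcal S(\Lambda,\br,s)$ by $\sum_{\varphi} \sharp(H_\varphi\cap M_\br\cap\widehat\Lambda)$, use Lemma~\ref{lem;hyperplane} to get $\sharp(H_\varphi\cap M_\br\cap\widehat\Lambda)\ll \vol(M_\br)/\|\varphi\|_\br$, and then estimate $\eta=\sum_{\varphi}\|\varphi\|_\br^{-1}$. Your dyadic decomposition is equivalent to the paper's Abel-summation integral $\eta=\frac{1}{3sr_2}\sharp(N_{3sr_2}\cap\widehat{\Lambda^*})+\int_{q_1}^{3sr_2} q^{-2}\,\sharp(N_q\cap\widehat{\Lambda^*})\,dq$. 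Two minor points: the overcounting remark is unnecessary (a crude union bound suffices), and the worry about $\lambda_2(M_\br\cap H_\varphi,\Lambda\cap H_\varphi)>1$ is handled trivially, since then $\sharp(H_\varphi\cap M_\br\cap\widehat\Lambda)\le 2$ while $\vol(M_\br)/\|\varphi\|_\br\gg r_1/s\gg 2$ anyway.

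The real gap is in the estimation of $\eta$, and it is not the obstacle you identify. The issue is the count $\sharp(N_q\cap\Lambda^*)$ for $q<q_3(\Lambda,\br,s)$, where the three-dimensional volume bound from Lemma~\ref{cor;count 1} is unavailable. In case~(i) the hypothesis gives $q_3\le 2s^{-1/2}r_2$, which is \emph{larger} than the top shell $3sr_2$, so you cannot apply Lemma~\ref{cor;count 1} to $N_{3sr_2}$ directly; the paper instead enlarges to $N_{3s^{-1/2}r_2}\supset N_{3sr_2}$, applies the volume count there, and absorbs the loss (this is where the exponent $s^{1/2}$ rather than $s^2$ comes from). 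For the intermediate range $q_2\le q< sr_2$ one also needs Lemma~\ref{lem;count 2} to compare $\sharp(N_q\cap\Lambda^*)$ with $\sharp(N_{sr_2}\cap\Lambda^*)$ before enlarging. In case~(ii) the hypothesis forces $q_3<sr_2$, so the volume count works for $q\ge q_3$; but on $q_2\le q<q_3$ one must work in the two-dimensional span $H$ of $N_{q_2}\cap\Lambda^*$, prove the slicing inequality $\vol_H(N_q)\le (q/q_3)\vol_H(N_{q_3})$, and deduce $\sharp(N_q\cap\widehat{\Lambda^*})\ll (q/q_3)\vol(N_{q_3})\ll q\,q_3 s/r_2$. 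Integrating $q^{-2}$ against this over $[q_2,q_3]$ produces a factor $\asymp (q_3 s/r_2)\log q_3$, and \emph{this} is precisely why the hypothesis reads $q_3\log q_3\le sr_2$ rather than merely $q_3\le sr_2$. Your sketch does not reach either of these mechanisms, and your final paragraph locates the difficulty in the wrong lattice (it is about $\Lambda^*$ in $N_q$, not about $\Lambda$ in $M_\br\cap H_\varphi$).
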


 \begin{proof}
We write  $N_q=N_q(\br, s), q_i=q_i(\Lambda, \br, s)$ and $\mathcal S=\mathcal S(\Lambda, \br, s) $ for simplicity. 
If $N_{3sr_2}\cap \widehat {\Lambda^*}$ is empty, then there is nothing to prove. So we assume in the remaining of the  proof that 
$N_{3sr_2}\cap \widehat {\Lambda^*}\neq \emptyset$, i.e.~$q_1\le 3sr_2$. 
It is clear from the definition that 
\begin{align}\label{eq;hyperplane 0}
  \sharp \mathcal S\le \sum _{\varphi\in N_{3sr_2}\cap \widehat {\Lambda^*}}\sharp H_\varphi\cap M_\br \cap \widehat \Lambda.
\end{align}

We claim that  for all $\varphi\in N_{3sr_2}\cap \widehat {\Lambda^*}$ 
\begin{align}\label{eq;a factor}
\sharp  H_\varphi\cap M_\br \cap \widehat \Lambda \ll \frac{\vol(M_\br)}{\|\varphi\|_{M_\br}}\le \frac{\vol(M_\br)}{\|\varphi\|_{\br}}, 
\end{align}
where the second inequality follows from (\ref{eq;r norm}). 
If $\sharp  H_\varphi\cap M_\br \cap \widehat \Lambda >2$, then   (\ref{eq;cor count 1})  and Lemma \ref{lem;hyperplane} imply 
\[
\sharp  H_\varphi\cap M_\br \cap \widehat \Lambda \ll 
\frac{\vol_{H_\varphi}( M_\br)}{\cov_{H_\varphi} ( \Lambda)}\ll \frac{\|\varphi\| \vol(M_\br)}{\cov_{H_\varphi}(\Lambda)\|\varphi\|_{M_\br}}
=\frac{\vol(M_\br)}{\|\varphi\|_{M_\br}}.
\]
On the other hand  for  every $\varphi\in N_{3sr_2}\cap \widehat {\Lambda^*}$ we always have 
\[
\frac{\vol(M_\br)}{\|\varphi\|_{M_\br}}\ge \frac{8r_1r_2}{9sr_2}\gg 2.
\]
This completes the proof of the claim.

In view of  (\ref{eq;hyperplane 0}) and (\ref{eq;a factor})
 it suffices to estimate 
\begin{align}\label{eq;not many 2}
\eta: =\sum _{\varphi\in N_{3sr_2}\cap \widehat{\Lambda^*}} \|\varphi\|_{\br}^{-1}=
\frac{1}{3sr_2}\sharp N_{3sr_2}\cap \widehat {\Lambda^*}  +\sum_{\varphi\in N_{3sr_2}\cap \widehat{\Lambda^*}} \int^{3sr_2}_{\|\varphi\|_{\br}} 
\frac{1}{q^2}\dd q.
\end{align}
The second term of the right hand side of (\ref{eq;not many 2}) is
\begin{align*}
\eta_2:=\sum_{\varphi\in N_{3sr_2}\cap \widehat{\Lambda^*}} \int^{3sr_2}_{\|\varphi\|_{\br}} 
\frac{1}{q^2}\dd q=\sum_{\varphi\in N_{3sr_2} \cap \widehat{\Lambda^*}}\int_{q_1}^{3sr_2}
\frac{\mathbbm{1}_q(\|\varphi\|_{\br})}{q^2}\dd q,
\end{align*}
where $\mathbbm{1}_q$ is the indicator function on $\R$ defined by
$\mathbbm{1}_q (x)=1$ if $x\le q$ and $0$ otherwise. 
Using Fubini's theorem  one has 
\begin{align*}
\eta_2  &=\int_{q_1}^{3sr_2}
\sum_{\varphi\in N_{3sr_2}\cap \widehat{\Lambda^*}}\frac{\mathbbm{1}_q(\|\varphi\|_{\br})}{q^2}\dd q   \\
&\le \int_{q_1}^{3sr_2}\frac{\sharp N_q\cap \widehat{\Lambda^*}}{q^2}\dd q .\notag
\end{align*}
If  $q_1\le q< q_2$ then
 $\sharp N_q\cap \widehat {\Lambda^*}=2$.
So 
\begin{align}\label{eq;not many 10}
\int_{q_1}^{q_2} \frac{\sharp N_q\cap \widehat {\Lambda^*}}{q^2}\dd q \le \int_{q_1}^{q_2} \frac{2\dd q}{q^2} \le \frac{2}{q_1}, 
\le 2s^2
\end{align}
where in the last inequality we use the assumption $q_1\ge s^{-2}$.

From here we  consider  different cases according to the two situations in the statement of the lemma.

\noindent{\bf Case \rmnum{1}:} 
 we show $\eta\ll\sqrt s$ under the assumption of (\rmnum{1}).
 We first compute 
 \begin{align}\label{eq;not many 11}
 \int_{sr_2}^{3sr_2} \frac{\sharp N_q\cap \widehat {\Lambda^*}}{q^2}\dd q
 &\le \int_{sr_2}^{3sr_2} \frac{\sharp N_{3sr_2}\cap  \Lambda^*}{q^2} \dd q 
 \le \frac{\sharp N_{3sr_2}\cap  \Lambda^*}{sr_2}.  
 \end{align}
 Note that $q_3\le 2 s^{-1/2}r_2$ by assumption
 and $sr_2< s^{-1/2} r_2$ since $s<1$. 
 So by Lemma \ref{cor;count 1} (which is used in the second inequality below)
 \begin{align}\label{eq;not many 12}
 \frac{\sharp N_{3sr_2}\cap  \Lambda^*}{sr_2}&\le \frac{\sharp N_{3s^{-1/2}r_2}\cap  \Lambda^*}{sr_2} 
 \ll  \frac{\vol( N_{3s^{-1/2}r_2})}{sr_2}
 \ll   s^{1/2}. 
 \end{align}
 
\noindent{\bf Case \rmnum{1}.1:} suppose  $q_2>3sr_2$. 
It follows from (\ref{eq;not many 2}),  (\ref{eq;not many 10}), the assumption $q_1\ge s^{-2}$ and the observation
\[
\frac{1}{3sr_2}\sharp N_{3sr_2}\cap \widehat {\Lambda^*} \le \frac{2}{3sr_2}\le \frac{2}{q_1}
\]
that $\eta\ll s^2\le \sqrt s$.

\noindent {\bf Case \rmnum{1}.2:}  suppose  $sr_2\le q_2\le 3sr_2$. 
It follows from  (\ref{eq;not many 2}),  (\ref{eq;not many 10}),   (\ref{eq;not many 11}) and (\ref{eq;not many 12}) that 
\begin{align}
\eta 
 \le 2 s^2+\frac{2}{sr_2} \sharp N_{3sr_2}\cap  \Lambda^*
\ll   s^{1/2}. \notag
\end{align}

\noindent {\bf Case \rmnum{1}.3:}
suppose  $q_2<sr_2$.  For all   $q_2< q< sr_2=sr_1$  one has 
\begin{align*}
\sharp  N_q\cap \widehat {\Lambda^*}=\sharp\left( \frac{q}{sr_2} N_{sr_2}\right)\cap \widehat{\Lambda^*}\le \sharp \left(\frac{q}{sr_2} N_{sr_2}\right)\cap \Lambda^*\ll  
 \left(\frac{q}{sr_2}\right)^2 \sharp N_{sr_2}\cap \Lambda^*, 
\end{align*}
where in the last inequality we use Lemma \ref{lem;count 2}. Using this estimate, $sr_2< s^{-1/2}r_2$ and Lemma \ref{cor;count 1}
we get
\begin{align*}
\sharp  N_q\cap \widehat {\Lambda^*}\ll   \left(\frac{q}{sr_2}\right)^2 \sharp N_{s^{-1/2}r_2}\cap \Lambda^*\ll \frac{q^2 \vol( N_{s^{-1/2}r_2})}{s^2r_2^2}\ll  \frac{q^2s^{1/2}}{sr_2}.
\end{align*}
So 
\begin{align}\label{eq;q1231}
\int_{q_2}^{sr_2} \frac{\sharp N_q\cap\widehat {\Lambda^*}}{q^2}\dd q 
\ll \int_{q_2}^{sr_2}\frac{ s^{1/2}\dd q}{ sr_2}
\le s^{1/2}.
\end{align}	
It follows from (\ref{eq;not many 2}),  (\ref{eq;not many 10}), (\ref{eq;not many 11}), (\ref{eq;not many 12}) and (\ref{eq;q1231})
that $\eta \ll \sqrt s$.

\noindent {\bf Case \rmnum{2}: } we show $\eta\ll s^2$  under the assumption of (\rmnum{2}). 
Since $s<1/2$ and $q_1\ge s^{-2}$, we have $q_3\ge q_1\ge 4$. 
Therefore $q_3< sr_2$, since 
 $q_3\log q_3\le sr_2$ by the assumption. 
 So by Lemma \ref{cor;count 1}
\begin{align}\label{eq;not many 13}
\frac{\sharp N_{3sr_2}\cap \widehat {\Lambda^*}}{3sr_2}\ll \frac{\vol(N_{3sr_2})}{3sr_2} =8 s^2.
\end{align}
Similarly, for   $sr_2< q< 3sr_2$ one has
$
\sharp  N_q\cap \widehat {\Lambda^*}\le \sharp  N_{q}\cap \Lambda^*\ll \vol(N_q) \le 8s^2 q
$.
So 
\begin{align}\label{eq;not many 14}
\int_{sr_2}^{3sr_2} \frac{\sharp N_q\cap\widehat {\Lambda^*}}{q^2}\dd q 
\ll \int_{sr_2}^{3sr_2}\frac{8s^2 \dd q}{q}
=8 s^2 \log 3.
\end{align}	
Using Lemma \ref{cor;count 1} for  $q_3< q<sr_2$, one has 
$
\sharp N_q \cap \widehat {\Lambda^*}\ll \vol(N_q)
 \ll q^2s/r_2
$.
It follows that 
\begin{align}\label{eq;not many 15}
\int_{q_3}^{sr_2}\frac{\sharp N_q \cap \widehat {\Lambda^*}}{q^2}\dd q
\ll \frac{s(sr_2-q_3)}{r_2}\le s^2.
\end{align}

Now we estimate $\sharp N_q\cap \widehat {\Lambda^*}$ for $q_2\le  q<  q_3<sr_2$. 
Let $H$ be the $\R$-linear span of $N_{q_2}\cap \Lambda^*$. 
We claim that \[
\mathrm{vol}_H(N_q)\le \frac{q}{q_3}\mathrm{vol}_H(N_{q_3}).
\]
If $H=\mathrm{span}_{\R}\{\be_2^*,\be_3^* \}$, then the claim follows easily from the definition  
of $N_q$ for $q<sr_2$. Otherwise, the intersection of $H$  with  the affine hyperplane 
$H_t=t \be_1^*+ \mathrm{span}_{\R}\{\be_2^*,\be_3^* \}$ is a line. 
The  length of $H\cap H_t \cap N_q  $    is  at most  $\frac{q}{q_3}$ times  the length 
of $H\cap H_t \cap N_{q_3}  $. Since the volume of $H\cap N_q$ is proportional to  the integration of    the length of $H\cap H_t \cap N_q  $ with respect to  $t$, the claim follows. 
It follows from the claim that for  $q_2\le  q<   q_3$ 
\begin{align}\label{eq;maobi0}
\sharp  N_q\cap \widehat {\Lambda^*}\ll \frac{\vol_H(N_q)}{\cov_H(\Lambda^*)}
\le   \frac{q}{q_3}  \frac{\vol_H(N_{q_3})}{\cov_H(\Lambda^*)}.
\end{align}  
Note that 
\begin{equation}\label{eq;maobi}
\begin{aligned}
 \frac{\vol_H(N_{q_3})}{\cov_H(\Lambda^*)}&\ll \sharp N_{q_3}\cap H\cap \Lambda^* 
&& \qquad \mbox{by (\ref{eq;cor count 1})}\\
             & \ll \sharp N_{q_3}^{\circ}\cap H\cap \Lambda^*   &&\qquad \mbox{by (\ref{eq;cor count 2})}\\
             & =\sharp N_{q_3}^{\circ}\cap \Lambda^* \\
             & \le\sharp  N_{q_3 }\cap \Lambda^*.
\end{aligned}
  \end{equation}
In view of (\ref{eq;maobi0}) and (\ref{eq;maobi}), for $q_2\le q< q_3$ we have 
\begin{align*}  
\sharp  N_q\cap \widehat {\Lambda^*} \ll \frac{q}{q_3}  \sharp N_{q_3}\cap \Lambda^*\ll \frac{q}{q_3}\vol(N_{q_3})
\ll \frac{q_3q s}{r_2}.
\end{align*}
Therefore 
\begin{align}\label{eq;not many 16}
\int_{q_2}^{q_3}\frac{\sharp  N_q\cap \widehat {\Lambda^*}}{q^2}\dd q\ll s^2 \frac{q_3 \log q_3}{sr_2}\le s^2,
\end{align}
where in the last inequality we use the assumption $q_3\log q_3\le sr_2$.
It follows from (\ref{eq;not many 2}),  (\ref{eq;not many 10}),
(\ref{eq;not many 13}), (\ref{eq;not many 14}), (\ref{eq;not many 15}) and   (\ref{eq;not many 16}) that  $\eta\ll s^2$.

\end{proof}

We will   apply Lemma \ref{lem;technical} in the  two concrete cases below. 
We first introduce some notation. 
Let $w=(w_1,w_2)$ be as in Theorem \ref{thm;main}. We moreover assume that $w_1>w_2$.  
We fix $C\ge 1$ such that $C$ is an implied constant for 
the conclusions of Lemma \ref{lem;technical} (\rmnum{1}) and (\rmnum{2}).
For a lattice $\Lambda'$ of $\R^3$ and fixed $\br, s$ we let 
$q_i(\Lambda')=q_i(\Lambda', \br, s)$ $(i=1,2,3)$ for simplicity.
Similarly, we write $N_q=N_q(\br, s)$. Recall that 
 \begin{align*}
	\mathcal L_3'=\{\Lambda\in \mathcal L_3: \Lambda\cap \R\be_3=r \Z\be_3 \mbox{ for  some } r 
	\mbox{ with } 1/2<r\le 1\}.
\end{align*}

\begin{lem}\label{lem;application 1}
	Let $s=\varepsilon^2, \br=(r_1, r_2, r_3)=(\varepsilon e^{t}, \varepsilon e^{t}, 1)$,
	 $\Lambda\in \mathcal K^*_{\varepsilon^2}\cap  \mathcal L_3'$ and 
	 $a_t=\mathrm{diag}(e^{w_1t}, e^{w_2t}, e^{-t})$.
	 There exists a positive real number  $ \til \varepsilon\le 1 $  such that for all 
	 $\varepsilon ,t> 0$ with  $e^{-w_2 t/20}<\varepsilon<\til \varepsilon$
	   one has 
	 \begin{align}\label{eq;application1.1}
	 \sharp\mathcal S(a_t\Lambda, \br, s)\le  \varepsilon^{1/2}\cdot \vol (M_\br).
	 \end{align}
\end{lem}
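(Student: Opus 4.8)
The strategy is to verify the hypotheses of Lemma \ref{lem;technical}(\rmnum{1}) for the lattice $\Lambda' = a_t\Lambda$, the box parameters $\br = (\varepsilon e^t, \varepsilon e^t, 1)$, and $s = \varepsilon^2$, and then feed the conclusion through (\ref{eq;r norm}). Note first that $r_1 = r_2 = \varepsilon e^t$, so the case $r_1 = r_2$ applies, and that $s = \varepsilon^2 < 1/2$ since $\varepsilon < \til\varepsilon \le 1$ (we will shrink $\til\varepsilon$ as needed). The two conditions to check are: (a) $q_1(a_t\Lambda) \ge s^{-2} = \varepsilon^{-4}$; and (b) $q_3(a_t\Lambda) \le 2 s^{-1/2} r_2 = 2\varepsilon^{-1}\cdot \varepsilon e^t = 2 e^t$. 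Once both hold, Lemma \ref{lem;technical}(\rmnum{1}) gives $\sharp \mathcal S(a_t\Lambda, \br, s) \ll s^{1/2}\vol(M_\br) = \varepsilon \vol(M_\br)$, and absorbing the implied constant $C$ into a further smallness requirement on $\varepsilon$ (using $C\varepsilon \le \varepsilon^{1/2}$ for $\varepsilon$ small) yields (\ref{eq;application1.1}).

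For (a), I would translate the condition $q_1(a_t\Lambda, \br, s) < \varepsilon^{-4}$ into the existence of a nonzero $\varphi \in (a_t\Lambda)^* = (a_t^*)^{-1}\Lambda^*$ lying in $N_{\varepsilon^{-4}}(\br, s)$, and show this contradicts $\Lambda \in \mathcal K^*_{\varepsilon^2}$. Concretely, a functional $\varphi$ in this set has its $a_t$-pulled-back coordinates small: $|x_1^\varphi| \le s = \varepsilon^2$, $|x_2^\varphi| \le \varepsilon^2$, and $\|\varphi\|_\br \le \varepsilon^{-4}$, i.e. $r_1|x_1^\varphi|, r_2|x_2^\varphi|, |x_3^\varphi| \le \varepsilon^{-4}$. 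Writing $\psi = a_t^*\varphi \in \Lambda^*$ (so $\psi$ has coordinates $e^{w_1 t} x_1^\varphi$, $e^{w_2 t} x_2^\varphi$, $e^{-t} x_3^\varphi$), I estimate each coordinate of $\psi$: the first two are bounded by $e^{w_i t}\varepsilon^2$, and combined with $r_i|x_i^\varphi| \le \varepsilon^{-4}$ one gets $|x_i^\varphi| \le \varepsilon^{-5}e^{-t}$, hence $e^{w_i t}|x_i^\varphi| \le \varepsilon^{-5} e^{-w_2 t}$ (using $w_i \le 1$, so $e^{w_i t - t} = e^{-(1-w_i)t} \le e^{-w_2 t}$ for $i=1$ and trivially for $i=2$), while $|e^{-t}x_3^\varphi| \le \varepsilon^{-4}e^{-t}$. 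So $\|\psi\| \ll \varepsilon^{-5}e^{-w_2 t}$, and the hypothesis $e^{-w_2 t/20} < \varepsilon$ gives $e^{-w_2 t} < \varepsilon^{20}$, whence $\|\psi\| \ll \varepsilon^{15} < \varepsilon^2$ for $\varepsilon$ small — contradicting $\psi \in \Lambda^* \setminus\{0\}$ and $\Lambda \in \mathcal K^*_{\varepsilon^2}$. (The exponents here are deliberately generous; I would tune them carefully but the slack $\varepsilon^{20}$ is comfortable.)

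For (b), I would exploit the assumption $\Lambda \in \mathcal L_3'$, which says $\Lambda$ contains a vector $r\be_3$ with $1/2 < r \le 1$, hence $a_t\Lambda$ contains $r e^{-t}\be_3$. Dually, and more usefully, I want three linearly independent functionals in $(a_t\Lambda)^*$ with $N$-norm at most $2e^t$. Since $\Lambda$ is unimodular, $\Lambda^*$ is unimodular, and Minkowski's second theorem together with $\Lambda \in \mathcal K^*_{\varepsilon^2}$ (which via the displayed consequence after (\ref{eq;mel}) bounds $\lambda_3(B_1,\Lambda^*) \ll \varepsilon^{-1}$, say — I would use the analogous bound for $\Lambda^*$, noting $(\Lambda^*)^* = \Lambda \in \mathcal K^*_{\varepsilon^2}$ controls the minima of $\Lambda^*$) gives a basis of $\Lambda^*$ of bounded Euclidean length. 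Pulling back by $(a_t^*)^{-1} = \mathrm{diag}(e^{-w_1 t}, e^{-w_2 t}, e^t)$ scales the third coordinate by $e^t$ and contracts the first two, so the $\|\cdot\|_\br$-norm (with $\br = (\varepsilon e^t, \varepsilon e^t, 1)$) of each transformed basis vector is $\max\{\varepsilon e^t \cdot e^{-w_i t}|x_i|, e^t |x_3|\}$; the first two terms are $\le \varepsilon e^t \cdot \|\text{basis vector}\| \le \varepsilon e^t \cdot O(\varepsilon^{-1}) = O(e^t)$ and the third is $e^t O(\varepsilon^{-1})$ — this last is too big, so instead I should bound the third coordinates of the Minkowski basis of $\Lambda^*$ directly. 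The cleaner route: the vector $r\be_3 \in \Lambda$ means every $\psi \in \Lambda^*$ has $x_3^\psi \cdot r \in \Z$, so $x_3^\psi \in r^{-1}\Z \subset 2\Z$... this does not immediately bound it. I expect the honest argument here needs the counting estimates (Lemma \ref{cor;count 1}) applied to $\Lambda \in \mathcal L_3'$ to show the full box $N_{2e^t}(\br,s) = M^*_{s,s,2e^t}$ captures a rank-$3$ sublattice of $\Lambda^*$, using that $\vol(M^*_{s,s,2e^t}) = 8 s^2 e^t = 8\varepsilon^4 e^t$ and that $e^{-w_2 t/20} < \varepsilon$ forces $e^t$ large relative to any fixed negative power of $\varepsilon$; combined with $\Lambda \in \mathcal K^*_{\varepsilon^2}$ to rule out degeneracy into a single hyperplane. \textbf{The main obstacle is precisely this step (b)}: cleanly controlling $q_3(a_t\Lambda)$, i.e. showing $a_t\Lambda$ has a full set of short dual vectors after the $a_t$-distortion, which requires combining the $\mathcal L_3'$ membership, the $\mathcal K^*_{\varepsilon^2}$ non-degeneracy, and lattice-point counting in the right order; the condition $q_3 \le 2e^t$ is exactly what makes $r_1 = r_2$ case (\rmnum{i}) of Lemma \ref{lem;technical} applicable rather than the weaker generic bound.
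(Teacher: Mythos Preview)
Your overall strategy and your treatment of condition (a) ($q_1(a_t\Lambda)\ge s^{-2}$) match the paper's proof essentially line for line. The gap you flag in (b) is real, and neither of your two suggested routes closes it cleanly: a Minkowski basis of $\Lambda^*$ with Euclidean norm $\ll \varepsilon^{-4}$ has no control on its \emph{third} coordinates, and after multiplying by $e^t$ this blows up; and the volume argument for $N_{2e^t}=M^*_{s,s,2e^t}$ gives a large volume but the box is extremely thin in two directions, so without structural input there is no reason three independent dual vectors should fit.

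The missing idea, which the paper supplies, is to use $\Lambda\in\mathcal L_3'$ to build a \emph{specific} basis of $\Lambda$ with controlled third coordinates, and then pass to $\Lambda^*\cong\wedge^2\Lambda$. Concretely: from $r\be_3\in\widehat\Lambda$ with $1/2<r\le 1$, the orthogonal projection $\mathbf{pr}(\Lambda)$ to $\R\be_1+\R\be_2$ is a lattice of covolume $1/r$; the assumption $\Lambda\in\mathcal K^*_{\varepsilon^2}$ forces $\lambda_1(B_1(\R^2),\mathbf{pr}(\Lambda))\ge\varepsilon^2$ (via $\|\bv\wedge r\be_3\|\ge\varepsilon^2$), hence by Minkowski $\lambda_2\ll\varepsilon^{-2}$. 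Lift a Minkowski reduced basis $v^{(1)},v^{(2)}$ to $\bv_1,\bv_2\in\Lambda$ with $|\be_3^*(\bv_i)|<1$, and set $\bv_3=r\be_3$. Then $\bv_1,\bv_2,\bv_3$ is a basis of $\Lambda$ with all $\|\bv_i\|\ll\varepsilon^{-2}$, and the wedge products $\bv_i\wedge\bv_j$ form a basis of $\Lambda^*$. The point is that the $\be_1\wedge\be_2$-coordinate of $\bv_i\wedge\bv_j$ (which is the dangerous third dual coordinate) vanishes whenever $\bv_3=r\be_3$ is a factor, and equals $1/r\le 2$ for $\bv_1\wedge\bv_2$. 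The other coordinates are bounded by $\|\bv_i\|\cdot\|\bv_j\|\ll\varepsilon^{-4}\le s e^{w_2 t}$ using $e^{-w_2 t/20}<\varepsilon$. This places all three $\bv_i\wedge\bv_j$ inside $a_t^*N_{2e^t}$, giving $q_3(a_t\Lambda)\le 2e^t$ as required.
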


\begin{proof}
	 
	We will show that the lemma holds for 
	$\til \varepsilon =\frac{1}{100 C^2}$.
	 In view of Lemma \ref{lem;technical} (\rmnum{1}) and the choice of $\til \varepsilon$, it suffices to prove 
	\[
	  q_1(a{_t} \Lambda)\ge s^{-2}\quad \mbox{ and}\quad  q_3(a_t\Lambda)\le 2 s^{-1/2}r_2.
	\]
	We need to look at the lattice points in 
	\begin{align}\label{eq;hei0}
	N_q\cap (a_t\Lambda)^*=N_q \cap a_{-t}^*\Lambda^*= a^*_{-t}(a^*_{t} N_q\cap \Lambda^*),
	\end{align}
	where $a^*_t$ is the transpose  of $ a_t $ with respect to the standard basis of $\R^3$ and its dual basis. 
	
	Since 
	$e^{-w_2 t/20}<\varepsilon$ by assumption, we have
	\begin{align}\label{eq;bigger than}
	\varepsilon^{20} e^{w_2 t}>1.
	\end{align}
	It follows that   $s^{-2}\le r_1s$. So 
	\[
	N_{s^{-2}}=\{\varphi\in \mathcal E_3^*:
	|x^\varphi_1|\le  \varepsilon^{-5} e^{-t},	|x^\varphi_2| \le \varepsilon^{-5} e^{-t}, 	|x^\varphi_3|\le \varepsilon^{-4}      \}.
	\]
	Hence
	\begin{align*}
	a^*_{t} N_{s^{-2}}=
	\{\varphi\in \mathcal E_3^*:
		|x^\varphi_1|\le  \varepsilon^{-5} e^{-w_2t},	|x^\varphi_2| \le \varepsilon^{-5} e^{-w_1t}, 	|x^\varphi_3|\le \varepsilon^{-4}e^{-t}\}  
	\end{align*}
	which is contained in the interior of  	 $B_{\varepsilon^2}({\mathcal E_3^*})$ by (\ref{eq;bigger than}).
	Since $\Lambda\in \mathcal K_{\varepsilon^2} ^*$,  the dual lattice $\Lambda^*$ has no nonzero 
	vectors in $a^* _{t}  N_{s^{-2}}$.
	Therefore 
	$q_1( a_t \Lambda)>s^{-2}$.

    Next we turn to the proof of $q_3(a_t\Lambda)\le 2s^{-1/2}r_2=2e^t$. 
    Since $2e^t>r_2>r_2s$ one has 
	\begin{align}\label{eq;hei}
	a^*_{t} N_{2e^{t}}=
	\{\varphi\in  \mathcal E_3^* :
	|x^\varphi_1|\le s  e^{w_1t},|x^\varphi_2| \le s e^{w_2t},|x^\varphi_3|\le 2\}  .
	\end{align}
	It follows from the assumption  $\Lambda\in\mathcal L_3'$ that  $r\be_3\in \widehat\Lambda$ for  some $r$ with $1/2<r\le 1$. 
	Let $\mathbf{pr}: \R^3\to \R^2$ be the orthogonal  projection to the subspace $\R\be _1+\R\be_2$. 
	It follows that $\mathbf{pr}(\Lambda)$ is a lattice  with covolume $1/r$ in $\R^2$.
	Suppose  $\bv\in \Lambda$ and $ \|\mathbf{pr}(\bv)\|= \lambda_1(B_1({\R^2}), \mathbf{pr}(\Lambda))$,
	then 
	\begin{align}\label{eq;zimmer}
	\lambda_1(B_1({\R^2}), \mathbf{pr}(\Lambda))\ge  r \cdot \lambda_1(B_1({\R^2}), \mathbf{pr}(\Lambda))=\|\bv \wedge r \be_3 \|\ge \varepsilon^2, 
	\end{align}
	where in the last inequality we use  $\Lambda\in \mathcal K^*_{\varepsilon^2}$.
	 It follows from 
	  of (\ref{eq;minkowski}) and (\ref{eq;zimmer}) that 
	\[
	\lambda_2(B_1({\R^2}), \mathbf{pr}(\Lambda))\le 8 \varepsilon^{-2} .
	\] 
	Recall that there exists a Minkowski reduced basis $v^{(1)}, v^{(2)}$ of 
	$\mathbf{pr}(\Lambda)$ with the property 
	$\|v^{(1)}\|\le 4 \lambda_1(B_1({\R^2}), \mathbf{pr}(\Lambda))$ and 
	$\|v^{(2)}\|\le 4 \lambda_2(B_1({\R^2}), \mathbf{pr}(\Lambda))$. 
	Let $\bv_i\in \Lambda\ (i=1,2)$ with $\mathbf{pr}(\bv_i)=v^{(i)}$ and 
	$\be_3^*(\bv_i) < 1$. 
	It follows that $\bv_1, \bv_2,\bv_3:= r\be_3$ is a basis 
	of $\Lambda$. Recall that $\Lambda^*$ can be identified with $\wedge^2\Lambda$ as 
	 Euclidean spaces.
	 In view of (\ref{eq;hei0}) and (\ref{eq;hei})
	 it suffices to show that  the coordinates of  
	 \[
	 \bv_i\wedge \bv_j= x_1 \be_2 \wedge\be_3+ x_2\be_1 \wedge\be_3+   x_3 \be_1\wedge \be_2
	 \]
	 satisfy
	 \begin{align}\label{eq;eating}
	 |x_1|\le se^{w_1t},\  |x_2|\le se^{w_2t} \quad \mbox{and }|x_3|\le 2.
	 \end{align}
	 It follows from the definition of $\bv_i$ that 
	 $\|\bv_i\|\le 1+32\varepsilon^{-2}\le 33\varepsilon^{-2}$. So 
	  $$\|\bv_i\wedge \bv_j\|\le \|\bv_i\|\cdot \|\bv_j\|\le  33^2 \varepsilon^{-4}\le  se^{w_2t}$$
	  where in the last inequality we use 
	  (\ref{eq;bigger than}) and the assumption $\varepsilon <\widetilde \varepsilon$.   Therefore     the
	  upper bounds of $|x_1|$ and $|x_2|$ in (\ref{eq;eating}) hold. 
	Finally note that $x_3=0$ unless $\{i, j \}=\{1, 2\}$ where $|x_3|=1/r\le 2$.

\end{proof}

\begin{lem}\label{lem;application 2}
	Let $ \br=(r_1, r_2, r_3)=(\varepsilon e^{w_1 t}, \varepsilon e^{(w_1+2w_2)t}, 1)$,
	$\Lambda\in \mathcal K^*_{\varepsilon^2}$ and  
	$b_t=\mathrm{diag}(e^{(w_1-w_2)t}, e^{2w_2t}, e^{-t})$.
	Then there exists a positive real number  $\til s\le 1 $ such that for all 
	$s>0$ and $t>0$ with 
	$ e^{-\delta t}< \varepsilon<s <\til s$
	where $\delta=\frac{1}{20}\min\{{w_2}, {w_1-w_2}\}$
	 one has 
	\begin{align}\label{eq;application 2.1}
	\sharp\mathcal S(b_t\Lambda, \br, s)\le  s\cdot \vol (M_\br).
	\end{align}
	 
\end{lem}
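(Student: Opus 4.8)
The plan is to follow the proof of Lemma~\ref{lem;application 1}: reduce (\ref{eq;application 2.1}) to Lemma~\ref{lem;technical}, which now applies in case~(ii) since $r_1=\varepsilon e^{w_1t}<\varepsilon e^{(w_1+2w_2)t}=r_2$. We argue that the conclusion holds whenever $\til s>0$ is small enough, depending only on $w_1,w_2$ and the implied constants appearing below (and with $\til s\le\min\{1/C,1/2\}$, so that $s<1/2$ and $Cs^2\le s$). Note that $\det b_t=1$, so $b_t\Lambda$ is unimodular, and that the hypothesis $e^{-\delta t}<\varepsilon<s<\til s$ forces $t\ge\delta^{-1}\log\til s^{-1}$, so shrinking $\til s$ makes $t$ as large as needed; we also record $s^{-1},\varepsilon^{-1}<e^{\delta t}$. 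In view of Lemma~\ref{lem;technical}(ii) it therefore suffices to prove
\[
q_1(b_t\Lambda)\ge s^{-2}\qquad\text{and}\qquad q_3(b_t\Lambda)\log q_3(b_t\Lambda)\le sr_2,
\]
for then $\sharp\mathcal S(b_t\Lambda,\br,s)\le Cs^2\vol(M_\br)\le s\vol(M_\br)$. As in Lemma~\ref{lem;application 1} we pass to the dual lattice: writing $b_t^*$ for the transpose of $b_t$, which equals $b_t$ since $b_t$ is diagonal, we have $(b_t\Lambda)^*=b_{-t}^*\Lambda^*$ and, for every $q>0$,
\[
N_q\cap(b_t\Lambda)^*=b_{-t}^*\big(b_t^*N_q\cap\Lambda^*\big).
\]

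For the bound on $q_1$, note that $N_q\subset M^*_{q/r_1,\,q/r_2,\,q}$ for every $q$, so $b_t^*N_{s^{-2}}$ is contained in the box with half-sides $s^{-2}e^{(w_1-w_2)t}/r_1=s^{-2}e^{-w_2t}/\varepsilon$, $s^{-2}e^{2w_2t}/r_2=s^{-2}e^{-w_1t}/\varepsilon$ and $s^{-2}e^{-t}$, hence (as $w_2<w_1<1$ and $\varepsilon\le1$) in $B_\rho({\mathcal E_3^*})$ with $\rho\le\sqrt3\,s^{-2}e^{-w_2t}/\varepsilon<\sqrt3\,e^{(3\delta-w_2)t}$. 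Since $5\delta<w_2$, this is $<e^{-2\delta t}<\varepsilon^2$ once $t$ is large. Because $\Lambda\in\mathcal K^*_{\varepsilon^2}$, the lattice $\Lambda^*$ has no nonzero vector of norm $<\varepsilon^2$, so $b_t^*N_{s^{-2}}\cap\Lambda^*=\{0\}$, and since $N_q\subset N_{s^{-2}}$ for $q\le s^{-2}$ we conclude $q_1(b_t\Lambda)>s^{-2}$.

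The bound on $q_3$ is the step that differs from Lemma~\ref{lem;application 1}, which used $\Lambda\in\mathcal L_3'$; here we use only $\Lambda\in\mathcal K^*_{\varepsilon^2}$, i.e.\ that $\Lambda^*$ is a unimodular lattice of $\mathcal E_3^*$ with $\lambda_1(B_1({\mathcal E_3^*}),\Lambda^*)\ge\varepsilon^2$. By Minkowski's theorem (\ref{eq;minkowski}) then $\lambda_3(B_1({\mathcal E_3^*}),\Lambda^*)\ll\varepsilon^{-4}$, so $\Lambda^*$ has a Minkowski reduced basis $\varphi_1,\varphi_2,\varphi_3$ with $\|\varphi_i\|\le 8\lambda_3(B_1({\mathcal E_3^*}),\Lambda^*)\ll\varepsilon^{-4}$. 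The vectors $b_{-t}^*\varphi_i\in(b_t\Lambda)^*$ are linearly independent, and, using $r_1e^{-(w_1-w_2)t}=\varepsilon e^{w_2t}\le e^t$ and $r_2e^{-2w_2t}=\varepsilon e^{w_1t}\le e^t$, one computes
\[
|x_1^{b_{-t}^*\varphi_i}|\ll e^{-(w_1-w_2)t}\varepsilon^{-4},\qquad
|x_2^{b_{-t}^*\varphi_i}|\ll e^{-2w_2t}\varepsilon^{-4},\qquad
\|b_{-t}^*\varphi_i\|_\br\le e^t\|\varphi_i\|\ll e^t\varepsilon^{-4}.
\]
Since $5\delta<\min\{w_2,w_1-w_2\}$, the first two quantities are $<e^{-\delta t}<\varepsilon<s$ for $t$ large, so all three vectors lie in $N_{q_0}$ with $q_0\ll e^t\varepsilon^{-4}$, whence $q_3:=q_3(b_t\Lambda)\le q_0$. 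Using $q_0\ge q_3\ge q_1\ge s^{-2}>1$, monotonicity of $x\mapsto x\log x$ on $[1,\infty)$ and $\log\varepsilon^{-1}<\delta t$, we get $q_3\log q_3\le q_0\log q_0\ll t\,e^t\varepsilon^{-4}$. Finally $w_1+2w_2=1+w_2$ gives $sr_2=s\varepsilon e^{(1+w_2)t}$, and since $s\varepsilon^5>\varepsilon^6>e^{-6\delta t}$ and $w_2-6\delta>0$,
\[
\frac{sr_2}{t\,e^t\varepsilon^{-4}}=\frac{s\varepsilon^5e^{w_2t}}{t}\ge\frac{e^{(w_2-6\delta)t}}{t}\to\infty ,
\]
so $q_3\log q_3\le sr_2$ for $t$ large, which finishes the reduction.

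The delicate point — and the reason for the precise shape of $\br$ and of $\delta$ in the statement — is the $q_3$ estimate: transporting a Minkowski reduced basis of $\Lambda^*$ by $b_{-t}^*$ expands the third coordinate by $e^t$, so a priori $q_3$ can be of size $e^t\varepsilon^{-4}$, and one must check that $r_2=\varepsilon e^{(1+w_2)t}$ beats this even after multiplying by $\log q_3$. This works precisely because the exponent $w_1+2w_2=1+w_2$ of $r_2$ is strictly larger than $1$; the choice $\delta=\tfrac{1}{20}\min\{w_2,w_1-w_2\}$ must then be small enough that $5\delta<\min\{w_2,w_1-w_2\}$ and $6\delta<w_2$, the surplus in these strict inequalities being used to absorb the polynomial-in-$t$ terms and the implied constants into ``$t$ large''.
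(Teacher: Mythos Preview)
Your proof is correct and follows essentially the same approach as the paper: reduce to Lemma~\ref{lem;technical}(ii), verify $q_1(b_t\Lambda)>s^{-2}$ by showing $b_t^*N_{s^{-2}}$ is contained in the interior of $B_{\varepsilon^2}(\mathcal E_3^*)$, and bound $q_3(b_t\Lambda)$ by transporting a Minkowski reduced basis of $\Lambda^*$ into $N_{q_0}$. The only differences are bookkeeping---the paper fixes the explicit threshold $q_3\le e^{(1+w_2/2)t}$ and checks the regime $sr_1<e^{(1+w_2/2)t}<sr_2$ of $N_q$, while you work with the cruder bound $q_0\ll e^t\varepsilon^{-4}$ and the inclusion $N_q\subset M^*_{q/r_1,q/r_2,q}$---but the ideas and the use of the margin $20\delta\le\min\{w_2,w_1-w_2\}$ are the same.
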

\begin{proof}
There exists a positive real number  $c<1$ such that if $e^{-\delta t}< c$ then 
\begin{align}\label{eq;no heat 1}
e^{w_2t/20}\ge (1+\frac{w_2}{2})t.
\end{align}
We will  show that 
(\ref{eq;application 2.1}) holds
for $\til s =\min\{\frac{1}{100 C} , c \}$.
   In view of Lemma \ref{lem;technical} (\rmnum{2}) it suffices to prove  
   \[
	q_1(b_t \Lambda)\ge s^{-2}\quad  \mbox{and} \quad q_3(b_t\Lambda)\log q_3(b_t\Lambda)\le s \varepsilon e^{(w_1+2w_2) t}.
	\]

	Using the  assumption $e^{-\delta t}<\varepsilon <s$ one has 
	\begin{align}\label{eq;no heat}
	s^5\varepsilon^5 \ge e^{-10\delta t},
	\end{align}
 which implies   $s^{-2}\le sr_1$.
It follows that 
\begin{align*}
 b_{t}^* N_{s^{-2}} 
=  \{ \varphi\in  \mathcal E_3^* : |x^\varphi_1| \le e^{-w_2t}\varepsilon^{-1}s^{-2},  
|x^\varphi_1| \le e^{-w_1t}\varepsilon^{-1}s^{-2}   ,|x^\varphi_1|\le  e^{-t}s^{-2} \}
\end{align*}
which in view of (\ref{eq;no heat}) is contained in the interior of  $ B_{\varepsilon^2}(\mathcal E _3^*)$.
Since $\Lambda\in \mathcal K_{\varepsilon^2}^*$, one has 
\[
N_{s^{-2}}\cap (b_t \Lambda)^*=b_{-t}^*\left( b_{t}^*N_{s^{-2}}\cap \Lambda^*\right)=\{0\}. 
\]
Therefore $q_1(b_t\Lambda)\ge s^{-2}$.

We claim  that $q_3(b_t\Lambda)\le e^{(1+w_2/2)t}$. 
Note that $r_1s< e^{(1+w_2/2)t}<r_2 s$ by  (\ref{eq;no heat}). 
Therefore 
\begin{align*}
 b_{t}^*N_{e^{(1+w_2/2)t}} 
=
\{\varphi\in  \mathcal E_3^* : |x^\varphi_1|\le  se^{(w_1-w_2)t}, 
|x^\varphi_2|\le  e^{3w_2t/2}\varepsilon^{-1}, |x^\varphi_3|\le e^{w_2t/2} \}.
\end{align*}
Since $\Lambda\in \mathcal K_{\varepsilon^2}^*$, 
Minkowski's second theorem (\ref{eq;minkowski}) implies $\lambda_3(B_1({ \mathcal E_3^* }),\Lambda^*)\le {2\varepsilon^{-4}} $.
Therefore there exists Minkowski reduced basis 
$\varphi_1, \varphi_2, \varphi_3$ of $\Lambda^*$  such that 
$\|\varphi_i\|\le {16} \varepsilon^{-4}\le \varepsilon^{-5} $. 
Using  (\ref{eq;no heat}) it is not hard to see that  
$\varphi_1, \varphi_2, \varphi_3\in b_{t}^*N_{e^{(1+w_2/2)t}}$.
Therefore $q_3(b_t\Lambda)\le e^{(1+w_2/2)t}$. 
This completes the proof of the claim.
Finally we have
\begin{align*}
q_3(b_t\Lambda)\log q_3(b_t\Lambda)&\le e^{(1+w_2/2)t}(1+w_2/2)t && \mbox{by the claim} \\
& \le e^{(1+w_2/2)t} e^{w_2 t/20 }  && \mbox{by (\ref{eq;no heat 1})}\\
& \le s\varepsilon e^{(w_1+2w_2)t} && \mbox{by (\ref{eq;no heat})}.
\end{align*}
\end{proof}

\section{Lower Bound}

Recall that  $\mathcal L_3$ is the space  of unimodular lattices in $\R^3$ and $\N=\{1, 2,   3, 4 ,\ldots   \}$. 
Let $a_t $ and $h(x)$ be as in (\ref{eq;a t}) and (\ref{eq;h x new}) respectively. 
A vector $x\in \R^2$ is $w$-singular if and only if the trajectory 
$
\{ a_t  h(x)\Z^3 : t\ge 0\} 
$
is divergent, i.e.~for any compact subset $\mathcal K$ of $\mathcal L_3$, there exists $T_0>0$ such that 
$ a_t  h(x)\Z^3 \not \in \mathcal  K$ for all  $t\ge T_0$.

In this section we will construct a fractal subset of $\mathrm{Sing}(w)$ whose
Hausdorff dimension is equal to that of $\mathrm{Sing}(w)$  using the above  dynamical interpretation and the idea of  shadowing.  
Roughly speaking
shadowing  means the following:
given $t_0\in \R $, if $x,y\in \R^2$ are close to each other (depending on $t_0$), then
$a_{t_0+t}h(x)\Z^3$ and $
a_{t_0+t}h(y)\Z^3
$ are close to each other for 
$t\in \R$ with $|t|\le C$ where $
C$ is a constant depending on $x$ and $y$.

The construction of the fractal structure starts with the lattice $\Z^3$. But all the results 
and proofs remain valid if  $\Z^3$ is replaced by a lattice in $\mathcal L_3'$, the subset  of $ \mathcal L_3$  defined in (\ref{eq;l prime}). This observation 
allows us to give a proof of Theorem \ref{thm;slice} at the end of this section.

\subsection{Construction of the  fractal set}\label{sec;construction}

We  define a fractal structure $(\mathcal T', \beta)$ on $\R^2$ inductively  for any choice of 
sequences of positive real numbers $\{\varepsilon_n\}_{n\in \N}$ and  $\{t_n\}_{n\in \N}$
with the following properties:
\begin{align}
& \varepsilon_n\le \varepsilon_{n-1} \mbox{ for all } n\in \N
	\mbox{ and } \varepsilon_n\to 0 \mbox{ as } n\to \infty, \label{eq;epsilon}\\
& t_{n}\ge t_{n-1}+1 \mbox{ for all } n\in \N \mbox{ and } t_{n+1}-t_n\to\infty\mbox{ as } n\to \infty,
\label{eq;t}
\end{align}
where  we set $t_0=0$ and $\varepsilon_0=1$ for convenience.

For $x=(x_1, x_2)\in \R^2$ and $r_1, r_2>0$ we let 
\[
I(x; r_1, r_2)=[x_1-r_1, x_1+r_1]\times [x_2-r_2, x_2+r_2]\subset \R^2. 
\]
We remark that  $\Z^3\in \mathcal L_3'$ and elements of 
$\mathcal L_3'$ will play the role of $\Z^3$ in our inductive construction 
of the fractal structure. 

The tree $\mathcal T'$ will have vertices in the set of  rational vectors $\Q^2$. 
We take the root of $\mathcal T'$ to be  $\tau_0=(0, 0)$ and define
\begin{align*}
\beta(\tau_0)&=I(\tau_0; \varepsilon_0 e^{-w_1t_1}, \varepsilon_0 e^{-w_2t_1}).
\end{align*}
Suppose we have defined the tree structure and  the map $\beta$  till height   $(n-1)$   of $\mathcal T'$.
For each vertex
$\tau_{n-1}\in \mathcal T'_{n-1}$ we want to define the set $\mathcal T'(\tau_{n-1})$ and  the map $\beta$  on 
it. This will complete the construction of the fractal structure.  
We define 
\begin{align}\label{eq;son}
\mathcal T'(\tau_{n-1})=\{\tau\in \beta(\tau_{n-1}): a_{t_{n}} h(\tau) \Z^3 \in \mathcal L_3' \}.
\end{align}
It is clear from the definition of $\mathcal L_3'$ and the assumption $t_{n}\ge t_{n-1}+1$   that
$\mathcal T'(\tau_{n-1})$ has empty intersection with $\bigcup_{0\le i\le n-1}\mathcal T'_i$.
For $\tau\in \mathcal T'(\tau_{n-1})$ we define 
\begin{align}\label{eq;box}
\beta(\tau)&=I(\tau; \varepsilon_{n} e^{-w_1t_{n+1}-t_{n}}, \varepsilon_{n} e^{-w_2 t_{n+1}-t_{n}}).\text{\footnotemark} 
\end{align}
\footnotetext{Recall that $\beta$ is a map from  $\mathcal T'$ which is identified with the vertices of the tree to compact subsets of $\R^2$. Each vertex $\tau \in \mathcal T'$ has a height $n$ and our definition of $\beta(\tau)$ also depends on $n$. Similar concerns apply in the definition of $\widetilde \beta $ below.}

It follows from (\ref{eq;son}) that for every $\tau \in \mathcal T_n\ (n\ge 0)$ there 
is a unique vector 
\begin{align}\label{eq;tau v}
\bv(\tau)\in \{r\be_3: 1/2<r\le 1\}\cap a_{t_{n}}h(\tau)\Z^3 .
\end{align} 
This property will be  used several times below. 
We end up this subsection by proving the following lemma. 
\begin{lem}\label{lem;contained}
	$	\mathcal F (\mathcal T', \beta)\subset \mathrm{Sing}(w)$.
\end{lem}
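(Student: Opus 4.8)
The plan is to show that any $x\in\mathcal F(\mathcal T',\beta)$ is $w$-singular by verifying directly that the trajectory $\{a_th(x)\Z^3:t\ge 0\}$ is divergent in $\mathcal L_3$. By construction, $x$ lies in $\bigcap_{n\ge 0}\beta(\tau_n)$ for a boundary point $\{\tau_n\}\in\partial\mathcal T'$, and for each $n\ge 1$ we have $\tau_n\in\mathcal T'(\tau_{n-1})$, so $a_{t_n}h(\tau_n)\Z^3\in\mathcal L_3'$; in particular there is a short vector $\bv(\tau_n)\in\{r\be_3:1/2<r\le 1\}\cap a_{t_n}h(\tau_n)\Z^3$. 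The strategy is: first, use shadowing to transfer the information "$a_{t_n}h(\tau_n)\Z^3$ has a vector close to $\be_3$'' to "$a_th(x)\Z^3$ has a short vector'' for all $t$ in a window around $t_n$; second, check that consecutive windows overlap (or at least cover $[T_0,\infty)$) thanks to the growth condition $t_{n+1}-t_n\to\infty$ being compatible with the window sizes; and third, conclude that $a_th(x)\Z^3$ leaves every compact set eventually, i.e. the trajectory diverges.

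First I would make the shadowing estimate precise. Write $\bw_n = h(\tau_n)^{-1}h(x)\,\bv(\tau_n)\in a_{t_n}^{-1}(\text{something})$; more concretely, since $\bv(\tau_n)=a_{t_n}h(\tau_n)\bz_n$ for some $\bz_n\in\Z^3$, the vector $a_th(x)\bz_n = a_{t-t_n}\,a_{t_n}h(x)h(\tau_n)^{-1}\,a_{t_n}^{-1}\cdot\bv(\tau_n)$, and $h(x)h(\tau_n)^{-1}=h(x-\tau_n)$ is unipotent with upper-right entries $x_1-\tau_{n,1},x_2-\tau_{n,2}$ bounded by the side-lengths of $\beta(\tau_n)$, namely $\le\varepsilon_n e^{-w_1 t_{n+1}-t_n}$ and $\le \varepsilon_n e^{-w_2 t_{n+1}-t_n}$ (using that $x\in\beta(\tau_n)\subset\beta(\tau_{n-1})$ and the nesting). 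Conjugating $h(x-\tau_n)$ by $a_{t_n}$ multiplies the $(1,3)$ entry by $e^{(w_1+1)t_n}$ and the $(2,3)$ entry by $e^{(w_2+1)t_n}$, giving entries bounded by $\varepsilon_n e^{-w_1(t_{n+1}-t_n)}$ and $\varepsilon_n e^{-w_2(t_{n+1}-t_n)}$, which are small. Then applying $a_{t-t_n}$ with $|t-t_n|$ not too large keeps things controlled: one gets $\|a_t h(x)\bz_n\|$ comparable to $\|a_{t-t_n}\bv(\tau_n)\|\le e^{-(t-t_n)}$ up to a bounded multiplicative factor, at least once $t\ge t_n$ and up to $t = t_{n+1}$ or so. The upshot is that for $t$ in an interval $[t_n + c, t_{n+1}]$ (for a suitable small constant, or perhaps all of $[t_{n-1},t_{n+1}]$ with the right bookkeeping), the lattice $a_th(x)\Z^3$ contains the nonzero vector $a_th(x)\bz_n$ of norm at most, say, $C_0 e^{-(t-t_n)}$ when $t\ge t_n$, and of norm at most $C_0\varepsilon_n$ (a quantity $\to 0$) throughout the window.

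Next I would patch the windows together. The intervals should be chosen so that $[t_{n-1},t_{n+1}]$ for $n\ge 1$ cover $[0,\infty)$ — this is automatic since $t_n\to\infty$ and $t_n$ is increasing. On the window attached to index $n$, $a_th(x)\Z^3$ has a nonzero vector of norm $\le C_0\varepsilon_n$ (or even smaller away from the endpoints). Given $\delta>0$, choose $N$ with $C_0\varepsilon_N<\delta$ (possible by $\varepsilon_n\to 0$); then for all $t\ge t_{N-1}$, the lattice $a_th(x)\Z^3$ has a nonzero vector of norm $<\delta$, so by Mahler's compactness criterion $a_th(x)\Z^3$ leaves the compact set $\{\Lambda\in\mathcal L_3:\lambda_1(B_1,\Lambda)\ge\delta\}=\mathcal K_\delta(3)$. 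Since $\delta$ is arbitrary, the trajectory is divergent, hence $x\in\mathrm{Sing}(w)$.

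The main obstacle is the shadowing estimate itself — getting honest, uniform control of $\|a_th(x)\bz_n\|$ over a window whose length grows with $n$, while the "error'' introduced by $x\ne\tau_n$ must stay negligible. The key quantitative point is that the displacement $x-\tau_n$ was chosen with the extra factors $e^{-w_1 t_{n+1}-t_n}$ (rather than just $e^{-w_1 t_n}$), precisely so that after conjugation by $a_{t_n}$ and then pushing forward by $a_{t-t_n}$ for $t$ up to $t_{n+1}$, the off-diagonal contribution is still $\lesssim\varepsilon_n$, dominated by the decaying third coordinate $e^{-(t-t_n)}$ of $a_{t-t_n}\bv(\tau_n)$. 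I would isolate this as an elementary matrix computation: expand $a_th(x)\bz_n = a_{t-t_n} h(a_{t_n}(x-\tau_n))\,\bv(\tau_n)$ where $h(a_{t_n}(x-\tau_n))$ denotes the unipotent matrix with the conjugated entries (here I am using $h(x)h(\tau_n)^{-1}=h(x-\tau_n)$ and $a_{t_n}h(v)a_{t_n}^{-1}=h(D_{t_n}v)$ for the appropriate diagonal scaling $D_{t_n}$), and bound each coordinate using $1/2<r\le 1$ for the $\be_3$-component and the side-length bounds for the perturbation. Everything else — covering $[0,\infty)$ by the windows, invoking Mahler — is routine once this estimate is in hand.
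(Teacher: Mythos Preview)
Your approach is essentially the paper's: exhibit for each large $t$ a short nonzero vector in $a_th(x)\Z^3$ by transporting $\bv(\tau_n)=r\be_3$ via the identity $a_th(x)\bz_n=a_{t-t_n}\bigl(a_{t_n}h(x-\tau_n)a_{t_n}^{-1}\bigr)\bv(\tau_n)$, and conclude by Mahler. The coordinate computation you describe is correct and gives, for $t\in[t_n,t_{n+1}]$,
\[
\|a_th(x)\bz_n\|\le 3\max\bigl\{\varepsilon_n e^{-w_1(t_{n+1}-t)},\ \varepsilon_n e^{-w_2(t_{n+1}-t)},\ e^{-(t-t_n)}\bigr\}.
\]

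There is one slip in the patching step, however: the single vector $\bz_n$ does \emph{not} give a bound $\le C_0\varepsilon_n$ on the whole window you name. At $t=t_n$ the third coordinate of $a_th(x)\bz_n$ equals $r\in(\tfrac12,1]$, so the norm is $\asymp 1$ there; more generally the bound above is only $\le C_0\varepsilon_n$ once $t\ge t_n+\log(1/\varepsilon_n)$, not from $t_n+c$ for a fixed constant $c$. Hence the windows $[t_n+c,t_{n+1}]$ neither cover $[T_0,\infty)$ nor carry a bound tending to $0$, and your deduction ``for all $t\ge t_{N-1}$ there is a vector of norm $<\delta$'' does not follow as written.

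The paper closes this gap by using \emph{two} vectors on each interval $[t_n,t_{n+1}]$: the vector attached to $\tau_{n-1}$ on $[t_n,l_n]$ and the vector attached to $\tau_n$ on $[l_n,t_{n+1}]$, where the crossover $l_n$ is determined by $\varepsilon_{n-1}e^{w_1(l_n-t_n)}=e^{-(l_n-t_n)}$, i.e.\ $l_n-t_n=\frac{\log(1/\varepsilon_{n-1})}{1+w_1}\to\infty$. On $[t_n,l_n]$ the bound from $\tau_{n-1}$ is $3\max\{e^{-(l_n-t_n)},e^{-(t_n-t_{n-1})}\}$, and on $[l_n,t_{n+1}]$ the bound from $\tau_n$ is $3\max\{\varepsilon_n,e^{-(l_n-t_n)}\}$; both tend to $0$ since $\varepsilon_n\to 0$ and $t_n-t_{n-1}\to\infty$. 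With this correction (which is exactly the ``right bookkeeping'' you allude to), your argument is complete and coincides with the paper's.
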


\begin{proof}
	
	Let $n\in \N\cup \{0\}$, $\tau\in \mathcal T'_n$ and 
	$x\in \beta(\tau)$. 
    Let $\bv(\tau)\in a_{t_n}h(\tau)\Z^3$ be as in (\ref{eq;tau v}).  
	Then for $t\in \R$ the lattice
	\[
	 a_t h(x)\Z^3 = a_t h(x-\tau)a^{-1}_{t_{n}} \cdot a_{t_{n}}h(\tau)\Z^3 
	\]
	contains the  primitive vector 
	\[
	 a_t h(x-\tau)a_{t_{n}}^{-1}\bv(\tau)
	\]
	whose norm is less than or equal to 
	\[
	3\max\{\varepsilon_{n} e^{-w_1(t_{n+1}-t)}, \varepsilon_{n} e^{-w_2( t_{n+1}-t)} ,e^{-(t-t_{n})}\}.
	\]

	Recall that we assume $w_1\ge w_2$. So for $n\in \N$ we solve the 
	 equation 
	\[
	\varepsilon_{n-1} e^{w_1(t-t_{n})}=e^{-(t-t_{n})}
	\]
	to get a unique solution $t=l_{n} $ with
	\[
	l_{n}-t_{n}=\frac{\log \varepsilon_{n-1}^{-1}}{1+w_1}\ge 0.
	\]
	Since $\varepsilon_n\to 0$ one has  $l_{n}-t_{n}\to \infty$ as $n\to \infty$. 
	
	Suppose $x\in \bigcap_{n\in \N\cup \{0\}} \beta(\tau_n)$ where $\{\tau_n\} 
	 \in \partial \mathcal T'$. For $n\in \N$ and  $t\in [t_{n}, l_{n}]$, the lattice 
	$ a_t h(x)\Z^3 $ contains the  primitive vector
	\[
	 a_t h(x-\tau_{n-1})a_{t_{n-1}}^{-1}\bv(\tau_{n-1})
	\]
	whose norm is less than or equal to 
	\begin{align}\label{eq;number 1}
	3\max \,\{ e^{-(l_n-t_n)}, e^{-(t_n-t_{n-1}}) \}.
	\end{align}
	For $t\in [l_{n}, t_{n+1}]$,  the lattice 
	$ a_t h(x)\Z^3 $ contains the primitive vector
	\[
	 a_t h(x-\tau_{n})a^{-1}_{t_{n}}\bv(\tau_{n})
	\]
	whose norm is less than or equal to 
	\begin{align}\label{eq;number 2}
	3 \max\{\varepsilon_n, e^{-(l_{n}-t_{n})}\}.
	\end{align}
	As the numbers in (\ref{eq;number 1}) and (\ref{eq;number 2}) tend to  zero as $n\to \infty$, 
	Mahler's compactness criterion (see \cite[Chapter \Rmnum{5}]{cassels}) implies  $x\in \mathrm{Sing}(w)$.

\end{proof}

\subsection{Refinement of the fractal structure}\label{sec;refinement}

We make explicit choices of the sequences
$\{\varepsilon_n\}, \{t_n\}$ and refine the tree  $\mathcal T'$ associated to them to get a
subtree $\mathcal T$ so that  $(\mathcal T, \beta)$ 
is a regular self-affine  structure satisfying the assumptions of Corollary \ref{cor;real real}.
In this subsection  we assume in addition that 
 $w_1>w_2$, although    our method also works in unweighted case where we 
 use first two conditions
 of (\ref{eq;key}) below  to define the subtree structure.
 In the lower bound estimate  we  will not go into details of unweighted case where   the Hausdorff dimension of $\mathrm{Sing}(w)$ is known.

Let $\til c, \til \varepsilon, \til s\le 1$ be 
positive real numbers as in Lemmas \ref{lem;many vectors},
 \ref{lem;application 1} and 
 \ref{lem;application 2}  respectively. 
We fix $\varepsilon,  t,r>0$ with the following properties 
\begin{itemize}
	\item [(\rmnum{1})] $0< \varepsilon< r <\frac{1}{10^4}\min\{ \til \varepsilon, \til s, \til c,w_2,  w_1-w_2\}$;
	\item [(\rmnum{2})]  $ t=\frac{100}{\varepsilon^2 }$.
\end{itemize}
The sequence $\{\varepsilon_n\}$ and $\{t_n\}$ are defined by  
\begin{itemize}
	\item [(\rmnum{3})]  $\varepsilon_n=\frac{\varepsilon}{n}$ for  $n\in \N$;
	\item [(\rmnum{4})]  $t_{n}-t_{n-1}= n t$ for  $n\in \N$. \footnote{Recall that 
	$t_0=0$ and $\varepsilon _0=1$.}
\end{itemize}
It is not hard to see that  for  any integer  $n\ge 0$ one has
\begin{align}
\label{eq;why}
\varepsilon_n^{-100}\le \min\{ e^{w_2nt}, e^{(w_1-w_2)nt}\}. 
\end{align}

It can be checked directly that  (\ref{eq;epsilon}) and (\ref{eq;t})  hold for the sequences
$\{\varepsilon_n  \}_{n\in \N}$ and $\{t_n \}_{n\in \N}$. Hence they define a
fractal structure $(\mathcal T', \beta)$ with $\mathcal F(\mathcal T', \beta)\subset \mathrm{Sing}(w)$
by Lemma \ref{lem;contained}. 
For $n\in \N\cup \{ 0 \}$ we let 
\begin{align*}
b_n&=\mathrm{diag}(e^{-w_2nt},e^{w_2nt}, 1 )\in SL_3(\R)   \\
\til \beta(\tau)& =I(\tau; \varepsilon_{n+1} e^{-w_1t_{n+1}-t_{n}}, \varepsilon_{n+1} e^{-w_2 t_{n+1}-t_{n}})
\quad \mbox{where}\quad  \tau \in \mathcal T_{n}. 
\end{align*}
From the definition, it is evident that $\til \beta(\tau)\subset \beta(\tau)$ for $\tau \in \mathcal T_{n}$.

Let $\mathcal T$ be the rooted subtree of $\mathcal T'$ defined in the following way: 
$\tau_0=(0,0)$ is the root of $\mathcal T$ and 
$\mathcal T(\kappa)$ ($\kappa \in \mathcal T_{n-1}$) consists of all the 
$\tau \in \til \beta(\kappa)$ with the following properties:
\begin{equation}
\label{eq;key}
\begin{aligned}
&a_{t_n}h(\tau )\Z^3\in \mathcal L_3', \\
&a_{t_n}h(\tau)\Z^3  \in \mathcal K_{\varepsilon_n^2}^*, \\
&b_n a_{t_n}h(\tau)\Z^3 \in \mathcal K_{r}^*,
\end{aligned}
\end{equation}
where $\mathcal K_r^*$ is defined in (\ref{eq;mel}).
It can be checked directly  that   $\beta(\tau) \subset \beta(\kappa)$ for all $\tau \in 
\mathcal T(\kappa)$ (this  is the main reason for  using $\til \beta$).
It will follow from  Lemma \ref{lem;plenty} below that  
 each  vertex of $\mathcal T$ has nonempty set of sons. 
 Therefore  $(\mathcal T, \beta)$ is a regular self-affine structure.
\begin{lem}\label{lem;plenty}
	For every  $n\in \N$ and  $y\in \mathcal T_{n-1}$ one has   
	\[
\frac{1}{100} \varepsilon_{n}^2 e^{2 n t }\le 	\sharp \mathcal T(y)\le 10 \varepsilon_{n}^2 e^{2 n t }.
	\]
\end{lem}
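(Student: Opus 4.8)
The plan is to estimate $\sharp\mathcal T(y)$ for $y\in\mathcal T_{n-1}$ by translating the three conditions in (\ref{eq;key}) into a lattice-point count. First I would fix $y\in\mathcal T_{n-1}$ and write $\Lambda_y=a_{t_{n-1}}h(y)\Z^3$; by the inductive construction $\Lambda_y$ belongs to $\mathcal L_3'$ (and satisfies the starred conditions at level $n-1$). For $\tau\in\til\beta(y)$ one has $a_{t_n}h(\tau)\Z^3 = a_{t_n}h(\tau-y)a_{t_{n-1}}^{-1}\Lambda_y$, and since $\tau-y$ ranges over a small rectangle, the map $\tau\mapsto a_{t_n}h(\tau)\Z^3$ is, up to the usual shadowing estimates, essentially parametrized by which primitive vector of $\Lambda_y$ gets pulled into the region $\{r\be_3:1/2<r\le1\}$ by $a_{t_n}h(\tau-y)a_{t_{n-1}}^{-1}$. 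Concretely, following the heuristic in the introduction, the number of $\tau\in\til\beta(y)$ with $a_{t_n}h(\tau)\Z^3\in\mathcal L_3'$ is, up to bounded multiplicative constants, the number of primitive vectors of $a_{t_n-t_{n-1}}$-scaled copies of $\Lambda_y$ lying in a box $M_\br$ with $\br\asymp(\varepsilon_n e^{nt},\varepsilon_n e^{nt},1)$ — so that $\vol(M_\br)\asymp\varepsilon_n^2 e^{2nt}$, which is exactly the target order of magnitude.

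The second step is to upgrade the count for the bare condition $a_{t_n}h(\tau)\Z^3\in\mathcal L_3'$ to the count where the two extra conditions $a_{t_n}h(\tau)\Z^3\in\mathcal K^*_{\varepsilon_n^2}$ and $b_n a_{t_n}h(\tau)\Z^3\in\mathcal K^*_r$ are also imposed. For the upper bound this is immediate — adding conditions only decreases the count — so $\sharp\mathcal T(y)\le 10\,\varepsilon_n^2 e^{2nt}$ should follow directly from Lemma \ref{lem;many vectors} (applied with the lattice $a_t\Lambda$ in the notation of Lemma \ref{lem;application 1}), once one checks its hypotheses on $\lambda_1,\lambda_3$ using the level-$(n-1)$ membership of $\Lambda_y$ in $\mathcal K^*_{\varepsilon_{n-1}^2}$ together with (\ref{eq;why}). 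For the lower bound one has to show that the vectors $\tau$ that satisfy $a_{t_n}h(\tau)\Z^3\in\mathcal L_3'$ but fail one of the two starred conditions are a small fraction of the total. A vector $\bv\in M_\br\cap\widehat\Lambda$ corresponding to such a bad $\tau$ lies on a "bad" hyperplane in exactly the sense of $\mathcal S(\Lambda,\br,s)$ from (\ref{eq;jia1}): failure of $\mathcal K^*_{\varepsilon_n^2}$ after applying $a_t$, or of $\mathcal K^*_r$ after applying $b_t$, produces a short dual vector, i.e. a $\varphi\in N_{3sr_2}(\br,s)\cap\widehat{\Lambda^*}$ annihilating $\bv$. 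This is precisely what Lemmas \ref{lem;application 1} and \ref{lem;application 2} are designed to bound: the first handles the $\mathcal K^*_{\varepsilon^2}$-condition under $a_t$ and the second the $\mathcal K^*_r$-condition under $b_t$, each giving $\sharp\mathcal S\le\varepsilon^{1/2}\vol(M_\br)$, resp. $s\,\vol(M_\br)$, which is $o(\vol(M_\br))$.

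So the final assembly is: $\sharp\mathcal T(y) \ge \big(\tfrac{4}{5\zeta(3)}-o(1)\big)\theta(M_\br,\Lambda) - \sharp\mathcal S_1 - \sharp\mathcal S_2$ where $\mathcal S_1,\mathcal S_2$ are the two exceptional sets; since $\theta(M_\br,\Lambda)\asymp\vol(M_\br)\asymp\varepsilon_n^2 e^{2nt}$ (the lattice being unimodular after the $a_t$-normalization) while $\sharp\mathcal S_i\ll\varepsilon^{1/2}\vol(M_\br)$, and $\varepsilon$ is taken small by condition (\rmnum{1}), the main term dominates and one gets $\sharp\mathcal T(y)\ge\tfrac{1}{100}\varepsilon_n^2 e^{2nt}$; the matching upper bound $10\,\varepsilon_n^2 e^{2nt}$ comes from the upper estimate in Lemma \ref{lem;many vectors} alone. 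The main obstacle I anticipate is not any single estimate — those are all packaged in Section 3 — but the bookkeeping that converts the geometric statement "$\tau\in\mathcal T(y)$" into the arithmetic statement "$\bv\in M_\br\cap\widehat\Lambda$" with the correct box $\br$, the correct auxiliary toral group elements ($a_t$ for the first condition, $b_t=b_n a_{t_n}(\cdots)$ for the third), and a verification that the shadowing/conjugation distortions only affect bounded multiplicative constants; in particular one must check carefully that the hypotheses $\lambda_1\ge s^{-2}$-type bounds and $q_3$-bounds feeding Lemmas \ref{lem;application 1}–\ref{lem;application 2} are guaranteed by the level-$(n-1)$ conditions on $y$ plus the quantitative choices (\rmnum{1})–(\rmnum{4}) and (\ref{eq;why}). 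Once that translation is set up cleanly, the three lemmas of Section 3 close the argument.
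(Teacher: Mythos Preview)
Your proposal is correct and follows essentially the same route as the paper: the paper reduces Lemma~\ref{lem;plenty} to Lemma~\ref{lem;card}, which carries out exactly the three steps you outline --- a bijection between $\{\tau\in\til\beta(y):a_{t_n}h(\tau)\Z^3\in\mathcal L_3'\}$ and primitive lattice points in a region sandwiched between two boxes of volume $\asymp\varepsilon_n^2 e^{2nt}$ (handled by Lemma~\ref{lem;many vectors}), and then the subtraction of the two exceptional sets $\mathcal S_1,\mathcal S_2$ bounded via Lemmas~\ref{lem;application 1} and~\ref{lem;application 2} after verifying that a $\tau$ failing $\mathcal K^*_{\varepsilon_n^2}$ or $\mathcal K^*_r$ yields a $\varphi\in N_{3s_ir_{i2}}\cap\widehat{\Lambda_i^*}$ with $\varphi(\bw_i(\tau))=0$. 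The only point you gloss over is that the region for the $\mathcal L_3'$-condition is not literally a box $M_\br$ but the truncated cone $\{|z_1|,|z_2|\le\varepsilon_n e^{nt}|z_3|,\ 1/2<|z_3|\le1\}$, which the paper handles by the sandwich $M^{(1)}\setminus M^{(2)}\subset M\subset 2M^{(2)}$; this is a routine adjustment and does not affect your argument.
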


Let us fix   $n\in \N$,  $y\in \mathcal T_{n-1}$.
We first reduce the calculation of $\sharp \mathcal T(y)$ to lattice points counting  in $\R^3$ so that 
we can use the  results of \S \ref{sec;count r3}. 
We set 
\begin{equation}
\label{eq;lambda 1}
\begin{aligned}
\Lambda&=a_{t_{n-1}}h(y)\Z^3\in \mathcal L_3'\cap \mathcal K_{\varepsilon_{n-1}^2}^*\subset
\mathcal L_3'\cap \mathcal K_{\varepsilon_{n}^2}^*, \\
\Lambda_1&=a_{t_{n}}h(y)\Z^3=a_{nt}\Lambda,   \\
 \Lambda_2&=b_na_{t_{n}}h(y)\Z^3 =b_n a_{nt} \Lambda.
\end{aligned}
\end{equation}
Given $x\in  \widetilde \beta(y)$, to have  $x\in \mathcal T(y)$ 
the lattices
\begin{equation}\label{eq;lambda x}
\begin{aligned}
\Lambda_1(x)& =a_{t_n}h(x)\Z^3 =a_{t_n}h(x-y)a^{-1}_{t_{n}} \Lambda_1 \quad \mbox{and}\\
\Lambda_2(x)& =b_na_{t_n}h(x)\Z^3 =b_na_{t_n}h(x-y)a^{-1}_{t_{n}} b_n^{-1}\Lambda_2,
\end{aligned}
\end{equation}
must  satisfy   $ \Lambda_1(x)\in\mathcal  K^*_{\varepsilon_{n}^2}, \Lambda_2(x)\in \mathcal K^*_{r}$ and $\Lambda_1(x)\in \mathcal L_3'$
(which implies $\Lambda_2(x)\in \mathcal L_3'$).
Therefore Lemma \ref{lem;plenty} follows from the following  lemma.
\begin{lem}\label{lem;card}
	Let $n\in \N$ and $y\in \mathcal T_{n-1}$. Then  
	\begin{align}
	 \frac{1}{10} \varepsilon_{n}^2 e^{2n t }\le \sharp \{ x\in  \til\beta(y): \Lambda_1(x) \in \mathcal L_3'  \}&\le 10\varepsilon_{n}^2 e^{2n t };
	 \label{eq;card} \\
	\sharp \{ x\in  \til \beta(y)  :\Lambda_1(x) \in \mathcal L_3' \setminus \mathcal K_{\varepsilon_n^2}^*                         \}
	&\le \frac{8}{100} \varepsilon_{n}^2 e^{2nt }; 
	\label{eq;card 1}\\
	\sharp \{ x\in \til\beta(y)  : \Lambda_2(x) \in \mathcal L_3' \setminus \mathcal K_{r}^*                         \}
	&\le \frac{1}{100} \varepsilon_{n}^2 e^{2 nt }.
	\label{eq;card 2}
	\end{align}
\end{lem}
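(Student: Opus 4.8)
The plan is to translate each of the three counting problems in Lemma~\ref{lem;card} into a lattice-point count for the lattices $\Lambda_1$ and $\Lambda_2$ of \eqref{eq;lambda 1}, and then to apply the results of \S\ref{sec;count r3}. First I would set up the dictionary. A point $x\in\widetilde\beta(y)$ is determined, up to the action of $a_{t_n}h(x-y)a_{t_n}^{-1}$ on $\Lambda_1$ (respectively $b_na_{t_n}h(x-y)a_{t_n}^{-1}b_n^{-1}$ on $\Lambda_2$), by the requirement that the primitive vector $\bv(\kappa)\in\{r\be_3:1/2<r\le1\}$ guaranteed by \eqref{eq;tau v} survive this deformation. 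Concretely, the condition $\Lambda_1(x)\in\mathcal L_3'$ says precisely that $a_{t_n}h(x-y)a_{t_n}^{-1}$ carries some primitive vector of $\Lambda_1$ into the slab $\{1/2<|z_3|\le1\}$ with the first two coordinates bounded by the half-side-lengths of $\widetilde\beta(y)$ rescaled by $a_{t_n}$. This is exactly the set $M_{\br}\cap\widehat\Lambda_1$ for an appropriate box $M_\br$, and one checks that with $\varepsilon_n,t_n$ as in (\rmnum{3})--(\rmnum{4}) of \S\ref{sec;refinement} the box has $\br\asymp(\varepsilon_n e^{w_2 n t},\varepsilon_n e^{w_1 n t},1)$ up to harmless constants — in any case with $\vol(M_\br)\asymp\varepsilon_n^2e^{2nt}$. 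So \eqref{eq;card} is essentially a statement about $\sharp M_\br\cap\widehat\Lambda_1\asymp\theta(M_\br,\Lambda_1)=\vol(M_\br)$, which is what Lemma~\ref{lem;many vectors} gives — provided we verify its hypotheses, namely $\lambda_3(M_\br,\Lambda_1)\le\widetilde c$ and $-\lambda_3(M_\br,\Lambda_1)\log\lambda_1(M_\br,\Lambda_1)\le\widetilde c$. These follow from $\Lambda=a_{t_{n-1}}h(y)\Z^3\in\mathcal K^*_{\varepsilon_{n-1}^2}$ together with \eqref{eq;why}: the dual bound on $\Lambda$ controls $\lambda_1,\lambda_3$ of $M_\br\cap\Lambda_1$ after the diagonal push, because $a_{nt}$ expands the relevant directions by factors dominated by $e^{w_1nt}$, and $\varepsilon_n^{-100}\le e^{w_2nt}$ keeps everything inside the good range. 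The sharper numerical constants $\tfrac1{10}$ and $10$ come from the explicit $\tfrac{4}{5\zeta(3)}$ and $\tfrac{6}{5\zeta(3)}$ in Lemma~\ref{lem;many vectors} after bounding $\vol(M_\br)$ both ways and absorbing the $\asymp$ constants (which here are just the $3$'s from $M_\br$ versus the true box); I would be slightly careful that $\zeta(3)>1$ so $\tfrac{6}{5\zeta(3)}<\tfrac65<10$ and $\tfrac{4}{5\zeta(3)}$ combined with $\vol(M_\br)\ge$ (something)$\cdot\varepsilon_n^2e^{2nt}$ gives $\ge\tfrac1{10}$.

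For \eqref{eq;card 1} and \eqref{eq;card 2} the point is to count the \emph{bad} $x$, i.e.\ those for which the deformed lattice $\Lambda_1(x)$ (resp.\ $\Lambda_2(x)$) fails the dual-shortest-vector condition $\mathcal K^*_{\varepsilon_n^2}$ (resp.\ $\mathcal K^*_r$) while still lying in $\mathcal L_3'$. Failing $\mathcal K^*_\delta$ means there is a nonzero $\varphi$ in the dual lattice with $\|\varphi\|<\delta$; pairing this against the surviving primitive vector $\bv\in M_\br\cap\widehat{\Lambda_1}$ coming from the $\mathcal L_3'$ condition, and using that $\varphi(\bv)\in\Z$ with $|\varphi(\bv)|$ small forces $\varphi(\bv)=0$, shows the bad $x$ correspond exactly to vectors $\bv$ lying on some hyperplane $H_\varphi$ with $\varphi$ short — that is, to the set $\mathcal S(\Lambda',\br,s)$ of \eqref{eq;jia1} for the right choice of $\Lambda'$, $\br$ and $s$. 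This is the content I would then feed into Lemmas~\ref{lem;application 1} and~\ref{lem;application 2}. For \eqref{eq;card 1} I take $s=\varepsilon_n^2$, $\br=(\varepsilon_n e^{nt},\varepsilon_n e^{nt},1)$, $\Lambda'=\Lambda_1=a_{nt}\Lambda$ with $\Lambda\in\mathcal L_3'\cap\mathcal K^*_{\varepsilon_n^2}$; Lemma~\ref{lem;application 1} then yields $\sharp\mathcal S\le\varepsilon_n^{1/2}\vol(M_\br)$, and since $\varepsilon_n<\varepsilon<\tfrac1{10^4}$ we get $\varepsilon_n^{1/2}<\tfrac8{100}$ — wait, I should instead keep $\varepsilon_n^{1/2}$ and note $\varepsilon_n^{1/2}\le\varepsilon^{1/2}<10^{-2}<\tfrac8{100}$, so $\sharp\mathcal S\le\tfrac8{100}\varepsilon_n^2e^{2nt}$ after $\vol(M_\br)\le$ const$\cdot\varepsilon_n^2e^{2nt}$, and I'd track the implied constant so that the product is genuinely below $\tfrac8{100}$ (this is where the $100/\varepsilon^2$ choice of $t$ and the slack in (\rmnum{1}) of \S\ref{sec;refinement} get used). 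Similarly for \eqref{eq;card 2} I take $\br=(\varepsilon_n e^{w_1nt},\varepsilon_n e^{(w_1+2w_2)nt},1)$, $s=r$, $\Lambda'=\Lambda_2=b_na_{nt}\Lambda$, noting $b_na_{nt}=\diag(e^{(w_1-w_2)nt},e^{2w_2nt},e^{-nt})$ is exactly the $b_t$ of Lemma~\ref{lem;application 2} with $t$ there equal to $nt$; the hypothesis $e^{-\delta nt}<\varepsilon_n<r<\widetilde s$ is checked via \eqref{eq;why} and the constraints on $\varepsilon,r$, and the conclusion $\sharp\mathcal S\le r\cdot\vol(M_\br)$ gives $\le\tfrac1{100}\varepsilon_n^2e^{2nt}$ once $r<\tfrac1{10^4}$ eats the implied constants in $\vol(M_\br)$.

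The one genuinely delicate point — and the step I expect to be the main obstacle — is the bookkeeping that turns ``$x$ is bad'' into ``$\bv\in\mathcal S(\Lambda',\br,s)$'' with the \emph{correct} parameters, i.e.\ making sure that the short dual vector $\varphi$ witnessing the failure of $\mathcal K^*_\delta$ for the \emph{deformed} lattice $\Lambda_i(x)$ corresponds, under conjugation back by $a_{t_n}h(x-y)a_{t_n}^{-1}$, to a dual vector of the \emph{fixed} lattice $\Lambda_i$ lying in the region $N_{3sr_2}(\br,s)$ of \eqref{eq;jia}, and that the orthogonality $\varphi(\bv)=0$ is preserved (it is, since the deformation is unipotent and integer-valued). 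Here one must also be honest about the difference between the box $\widetilde\beta(y)$ and the box $M_\br$: the affine map $a_{t_n}h(x-y)a_{t_n}^{-1}$ translates in the $\be_3$-direction by a vector whose first two coordinates are $(x_1-y_1)e^{w_1nt}$ and $(x_2-y_2)e^{w_2nt}$, bounded using the side-lengths in $\widetilde\beta$, which is where the factor $e^{-w_1t_{n+1}-t_n}$ vs.\ $e^{-w_2t_{n+1}-t_n}$ in the definition of $\widetilde\beta$ enters and produces the asymmetric box $\br$. Once this translation is set up cleanly the three estimates are immediate from \S\ref{sec;count r3}; I would therefore spend most of the write-up on a single careful lemma-internal computation identifying $M_\br$, $\br$, $s$ and checking $\vol(M_\br)\asymp\varepsilon_n^2e^{2nt}$ with explicit constants, and then invoke Lemmas~\ref{lem;many vectors}, \ref{lem;application 1} and~\ref{lem;application 2} in turn.
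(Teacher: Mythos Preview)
Your approach is essentially the paper's: the bijection $x\mapsto a_{t_n}h(y-x)a_{t_n}^{-1}\bv(x)$ into $\widehat\Lambda_1$ followed by Lemma~\ref{lem;many vectors} for \eqref{eq;card}, and the reduction of \eqref{eq;card 1}--\eqref{eq;card 2} to $\mathcal S(\Lambda_i,\br_i,s_i)$ via the orthogonality argument $|\varphi(\bw)|<1\Rightarrow\varphi(\bw)=0$, followed by Lemmas~\ref{lem;application 1} and~\ref{lem;application 2}. Two computational points to correct before the write-up:

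\textbf{(a)} Your box parameters in the first paragraph are wrong. One has $a_{t_n}h(y-x)a_{t_n}^{-1}=h\bigl((y_1-x_1)e^{(1+w_1)t_n},(y_2-x_2)e^{(1+w_2)t_n}\bigr)$, not the version with exponents $e^{w_int}$ you wrote; combined with the side-lengths of $\widetilde\beta(y)$ (remember $y\in\mathcal T_{n-1}$, so those half-lengths are $\varepsilon_n e^{-w_it_n-t_{n-1}}$) this gives $|x_i^{(1)}|\le\varepsilon_n e^{nt}$ for \emph{both} $i=1,2$ --- a \emph{symmetric} box $\br_1=(\varepsilon_n e^{nt},\varepsilon_n e^{nt},1)$, as you in fact write later for \eqref{eq;card 1}. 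The asymmetric $\br\asymp(\varepsilon_n e^{w_2nt},\varepsilon_n e^{w_1nt},1)$ is internally inconsistent anyway, since its volume is $\asymp\varepsilon_n^2 e^{nt}$, not $\varepsilon_n^2 e^{2nt}$.

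\textbf{(b)} The image set for \eqref{eq;card} is not literally a box $M_\br$: the constraint is $|z_i|\le\varepsilon_n e^{nt}|z_3|$ with $1/2<|z_3|\le 1$, a truncated cone. The paper handles this by sandwiching $M^{(1)}\setminus M^{(2)}\subset M\subset 2M^{(2)}$ for two genuine boxes and applying Lemma~\ref{lem;many vectors} to each; the constants $\tfrac{1}{10}$ and $10$ then fall out of $\tfrac{4}{5\zeta(3)},\tfrac{6}{5\zeta(3)}$ together with $1<\zeta(3)<2$ and the volume ratios of $M^{(1)},M^{(2)},2M^{(2)}$. You should make this sandwiching explicit rather than asserting ``this is exactly the set $M_\br\cap\widehat\Lambda_1$''.

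Otherwise your identification of the delicate step --- pulling the short dual vector for $\Lambda_i(x)$ back through $h(x^{(i)})^*$ to land in $N_{3s_ir_{i2}}(\br_i,s_i)\cap\widehat{\Lambda_i^*}$ and then invoking \S\ref{sec;count r3} --- is exactly right and matches the paper.
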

\begin{proof}
	We first prove (\ref{eq;card}).
	Suppose $\Lambda_1(x)\in \mathcal L_3'$ where $x\in \til \beta(y)$. Then		
	there exists $s_x$ with $1/2<s_x\le 1$ such that $\Lambda_1(x)$ contains a  primitive vector 
	$\bv(x)=s_x \be_3$.
		It follows from the definition of  $\til  \beta(y)$ and a direct calculation that  the vector  $a_{t_n} h(y-x)a_{t_n}^{-1}s_x\be_3$ 
		belongs to 
		\[
	M=\{(z_1, z_2, z_3):	\max\{|z_1|,|z_2|\}\le \varepsilon_{n} e^{nt}|z_3| \quad\mbox{and} \quad 1/2<|z_3|\le 1\}.
		\]
		It is not hard to see that the map $x\to a_{t_n} h(y-x)a_{t_n}^{-1}\bv(x)$ is a bijection between the sets 
		 $\{x\in \til \beta(y): \Lambda_1(x)\in \mathcal L_3' \}$ and $M \cap \widehat \Lambda_1$. 	
    Let 
	\begin{align*}
	M^{(1)}& =\{(z_1, z_2, z_3)\in \R^3 :\max\{ |z_1|,|z_2|\}\le \frac{1}{2} \varepsilon_{n} e^{nt  },  |z_3|\le 1\}\\
	M^{(2)}& =\{(z_1, z_2, z_3)\in \R^3 :\max\{ |z_1|,|z_2|\}\le \frac{1}{2} \varepsilon_{n} e^{nt  },  |z_3|\le \frac{1}{2}\}.
	\end{align*}
	Then $(M^{(1)}\setminus M^{(2)})\subset M \subset  2 	M^{(2)}$. 	
	It follows that 
	\begin{align}\label{eq;card so}
	\sharp M^{(1)}\cap \widehat \Lambda_1-\sharp M^{(2)}\cap \widehat \Lambda_1\le 
	\sharp \big( M\cap \widehat \Lambda_1\big)\le 
	\sharp (2 	M^{(2)})\cap \widehat \Lambda_1 .
	\end{align}

	Using (\ref{eq;minkowski}) and  $\Lambda\in \mathcal K_{\varepsilon_{n}^2}^*$ one has
	\[
	\lambda_1(B_1, \Lambda)\ge 100^{-1} \varepsilon^4_{n}\quad \mbox{and} \quad \lambda_3(B_1, \Lambda)\le 100 \varepsilon^{-2}_{n}
	\]	
	where $B_r=B_r(\R^3)$ in this section. 
	Recall that  $a_{nt}^{-1}\Lambda_1=\Lambda$ by (\ref{eq;lambda 1}) .
	For $i=1, 2$ we have 
	\begin{gather*}
	\lambda_1(M^{(i)},  \Lambda_1)=\lambda_1(a_{nt}^{-1}M^{(i)} , \Lambda)\\
	\ge \lambda_1(a^{-1}_{nt}M^{(1)} , \Lambda ) 
	\ge \lambda_1(B_{3e^{nt}},\Lambda)
	\ge (300)^{-1} e^{-nt}\varepsilon^4_{n}
	\end{gather*} 	
	and 
	\begin{gather}
	\lambda_3(M^{(i)},  \Lambda_1)=\lambda_3(a^{-1}_{nt}M^{(i)} , \Lambda) \label{eq;lambda 3}\\
	\le \lambda_3(a_{nt}^{-1}M^{(2)},\Lambda)
	\le \lambda_3(B_{\frac{1}{2}\varepsilon_{n} e^{w_2nt}},\Lambda)
	\le 200  e^{-w_2nt}\varepsilon^{-3}_{n}.\notag
	\end{gather} 	
	By these estimates and (\ref{eq;why})
	it is straightforward  to check that the assumptions of 
	Lemma \ref{lem;many vectors} for $M^{(1)}$,  $M^{(2)}, 2M^{(2)}$ and $\Lambda$ are satisfied. 
	Therefore  (\ref{eq;card so}) and Lemma \ref{lem;many vectors} imply 
	\[
 (5\zeta(3))^{-1} \varepsilon_n^2 e^{2nt}\le 	\sharp \big( M\cap \widehat \Lambda_1\big)\le 48 (5\zeta(3))^{-1} \varepsilon_n^2 e^{2nt}. 
	\]
	To complete  the proof of  (\ref{eq;card}), it suffices to note that  $1< \zeta(3)< 2$.

Next we prove (\ref{eq;card 1}) and (\ref{eq;card 2}) together. 
	Let $s_1=\varepsilon_n^2,s_2=r, a^{(1)}=a_{nt}$,  $a^{(2)}=b_na_{nt}$ and   for $i=1, 2$
	\[
	\mathcal S_i=\{x\in \til \beta(y): \Lambda_i(x)
	 \in\mathcal L_3'\setminus \mathcal K_{s_i}^*\}.
	\]
    We will show  that 
	\begin{align}\label{eq;kuaidi}
	\sharp \mathcal S_1 \le 8\sqrt {\varepsilon_n} \varepsilon_n^2e^{2nt}\quad \mbox{and}\quad 
	\sharp \mathcal S_2 \le 8 r \varepsilon_n^2e^{2nt}.
	\end{align}
    In view of the definitions of $\varepsilon_n$ and $r$, this will complete the proof.

	Let $\Lambda_i$ and $\Lambda_i(x)$  be as in (\ref{eq;lambda 1}) and (\ref{eq;lambda x}) respectively.  Let $\bv(x)\in \Lambda_1(x)\cap \Lambda_2(x)$ be as in (\ref{eq;tau v}).   Let  
	$x^{(i)}\in \R^2 $  be  such that $h(x^{(i)})=a^{(i)}a_{t_{n-1}}h(y-x)(a^{(i)}a_{t_{n-1}})^{-1}$. Then 
	\begin{align*}
	\bw_i(x): = h(x^{(i)})\bv(x)\in \Lambda_i.
	\end{align*}
    It can be calculated that  for all $x\in \widetilde{\beta}(y)$
	\begin{equation}\label{eq;x i j}
	\begin{aligned}
    &\max \{|x^{(1)}_1|, |x^{(1)}_2|\}\le \varepsilon_{n}e^{nt },\\
     & |x^{(2)}_1|\le  \varepsilon_{n}e^{w_1nt },\quad  |x^{(2)}_2|
	\le  \varepsilon_{n}e^{(w_1+2w_2)nt }.
	\end{aligned}
	\end{equation}
	Let $M_i=M_{\br_i}$ (see (\ref{eq;m r}) for the definition) where 
	\begin{align*}
	\br_1&=(\varepsilon_{n}e^{nt },\varepsilon_{n}e^{nt },1 )=(r_{11}, r_{12}, r_{13})
	\\ \br_2&=(\varepsilon_{n}e^{w_1nt },\varepsilon_{n}e^{(w_1+2w_2)nt },1)=(r_{21}, r_{22}, r_{23}).
	\end{align*}
	The map $\mathcal S_i\to M_i\cap \widehat \Lambda_i$ with $x\to \bw_i(x)$ is injective. 
	If   for all $x\in \mathcal S_i$ there exists 
	$\varphi_i\in N_{3s_ir_{i2}}(\br_i, s_i)\cap \widehat {\Lambda_i^*}$ (see (\ref{eq;jia}) for the definition 
	of $N_q(\br, s)$) such that $\varphi_i(\bw_i(x))=0$,
	then 
	\[
	\sharp \mathcal S_i\le \sharp \mathcal S(\Lambda_i, \br_i, s_i)=\sharp \mathcal S(a^{(i)}\Lambda, \br_i, s_i)
	\]
	where $\mathcal S(\Lambda, \br, s)$ is defined in (\ref{eq;jia1}). 
	Therefore  the two estimates of (\ref{eq;kuaidi}) will   follow from 
	  Lemmas \ref{lem;application 1}  and \ref{lem;application 2} respectively. 
	Here the assumptions of these two lemmas  can be checked easily using (\ref{eq;lambda 1}) and  the assumptions (\rmnum{1})-(\rmnum{4})
	at the beginning of this section.

    Suppose $x\in \mathcal S_i$. We prove that $\varphi_i(\bw_i(x))=0$ for some $\varphi_i\in 
    N_{3s_ir_{i2}}(\br_i, s_i)\cap \widehat {\Lambda_i^*}$.
     It follows from the definition of $\mathcal S_i$ that   
	$a^{(i)}a_{t_{n-1}}h(x)\Z^3
	\not \in \mathcal K_{s_i}^*$. 
	So  there exists   $\varphi_{i}\in \widehat{\Lambda_i^*}$ such that  $\|h(x^{(i)})^*\varphi_i\|<s_i$
	where $h(x^{(i)})^*$ is the adjoint action defined by $g^*\varphi(\bv)=\varphi(g\bv)$ for all $g\in SL_3(\R)$ and $\bv\in \R^3$.
	We claim that $\varphi_i(\bw_i(x) )=0$. Note that $\bw_i(x)\in \Lambda_i$ and $\varphi_i\in\widehat{\Lambda_i}$ implies $\varphi_{i}(\bw_i(x) )\in \Z$. Then the claim follows from 
 \begin{align*}
 |\varphi_{i}(\bw_i(x) )|&=|h(x^{(i)})^*\varphi_{i} (h(-x^{(i)})\bw_i(x) )|=|h(x^{(i)})^*\varphi_{i}(\bv(x))| \\ &   \le | h(x^{(i)})^*\varphi_{i}(\be_3)|\le 
 \|h(x^{(i)})^*\varphi_{i}\|<s_i<1.
 \end{align*}
	From   direct calculations we have 
	\begin{align*}
	h(x^{(i)})^*\varphi_i=(\varphi_i(\be_1),\varphi_i(\be_2) ,x^{(i)}_1\varphi_i(\be_1)+x^{(i)}_2\varphi_i(\be_2)+ \varphi_i(\be_3)) .
	\end{align*}
	It follows from (\ref{eq;x i j}) and the fact  $\|h(x^{(i)})^*\varphi_{i}\|<s_i$ that 
	\begin{align*}
	\max\,\{|\varphi_i(\be_1)|, |\varphi_i(\be_2)|\}< s_i \quad \mbox{and }\quad |\varphi_i(\be_3)|< 3 s_i r_{i2}.
	\end{align*}
	Therefore $\varphi_i\in N_{3s_ir_{i2}}(\br_i, s_i)$ by (\ref{eq;jia}) and this completes the proof. 
\end{proof}

\subsection{The lower bound calculation}

In this subsection we complete the proof of the lower bound. 
\begin{thm}\label{thm;lower bound}
	Let $w=(w_1, w_2)$ where $w_1> w_2>0$ and $w_1+w_2=1$. 
	Then $\dim_{H} \mathrm{Sing}(w)\ge 2-\frac{1}{1+w_1}  $.
\end{thm}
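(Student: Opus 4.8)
The plan is to apply the self-affine lower-bound machinery of \S\ref{sec;lower} to the structure $(\mathcal T,\beta)$ constructed in \S\ref{sec;refinement}. Since $\mathcal T$ is a subtree of $\mathcal T'$, Lemma~\ref{lem;contained} gives $\mathcal F(\mathcal T,\beta)\subset\mathrm{Sing}(w)$; and by Lemma~\ref{lem;plenty} we have $\sharp\mathcal T(y)\ge\frac1{100}\varepsilon_n^2e^{2nt}\ge 1$ for every $y\in\mathcal T_{n-1}$, so every vertex has a son and $(\mathcal T,\beta)$ is a regular self-affine structure. Thus it suffices to show $\dim_H\mathcal F(\mathcal T,\beta)\ge 2-\frac{1}{1+w_1}$, and I will deduce this from Corollary~\ref{cor;real use} (Corollary~\ref{cor;real real} could be used interchangeably).

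First I would collect the data. From (\ref{eq;box}), for $\tau\in\mathcal T_n$ the rectangle $\beta(\tau)$ has size $W_n\times L_n$ with
\[
W_n=2\varepsilon_ne^{-w_1t_{n+1}-t_n}\le L_n=2\varepsilon_ne^{-w_2t_{n+1}-t_n},
\]
and by Lemma~\ref{lem;plenty} we may take $C_n=\lfloor\tfrac1{100}\varepsilon_n^2e^{2nt}\rfloor$. Recalling $\varepsilon_n=\varepsilon/n$, $t=100/\varepsilon^2$, $t_n-t_{n-1}=nt$ and $t_n=\tfrac{n(n+1)}2t$, one gets $\log C_n=2nt+O(\log n)$, $\log P_n=\sum_{k\le n}\log C_k=tn^2+O(n\log n)$, and hence $\log(L_nP_n)=\tfrac{w_1}2tn^2+O(n\log n)$ while $-\log W_n=\tfrac{w_1+1}2tn^2+O(n)$. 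Therefore the quantity $s$ of Corollary~\ref{cor;real use} is
\[
s\ =\ 1+\liminf_{n\to\infty}\frac{\log(L_nP_n)}{-\log W_n}\ =\ 1+\frac{w_1}{1+w_1}\ =\ 2-\frac1{1+w_1}\ >\ 1
\]
(and one checks the $\sup$-formula for $s$ gives the same value).

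The heart of the matter is the separation: producing $\{\rho_n\}$ for which hypothesis~(3) of Theorem~\ref{thm:lower_bound} holds and hypothesis~(\rmnum{4}) of Corollary~\ref{cor;real use} survives. This is exactly where the third condition in (\ref{eq;key}) enters. Let $\tau\ne\kappa$ be sons of some $\tau_{n-1}$, so $\tau,\kappa\in\mathcal T_n$; by (\ref{eq;tau v}) choose a primitive $\bm\in\Z^3$ with $a_{t_n}h(\kappa)\bm=\bv(\kappa)=r_\kappa\be_3$, $\tfrac12<r_\kappa\le1$. Then
\[
\bw:=a_{t_n}h(\tau)\bm=a_{t_n}h(\tau-\kappa)a_{t_n}^{-1}(r_\kappa\be_3)\in a_{t_n}h(\tau)\Z^3
\]
is not proportional to $\be_3$ and has $\be_1$- and $\be_2$-coordinates $r_\kappa e^{(w_1+1)t_n}(\tau_1-\kappa_1)$ and $r_\kappa e^{(w_2+1)t_n}(\tau_2-\kappa_2)$. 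Since $\bw$ and $\bv(\tau)=r_\tau\be_3$ span a rational plane for both $a_{t_n}h(\tau)\Z^3$ and its image under $b_n=\mathrm{diag}(e^{-w_2nt},e^{w_2nt},1)$, the conditions $a_{t_n}h(\tau)\Z^3\in\mathcal K^*_{\varepsilon_n^2}$ and $b_na_{t_n}h(\tau)\Z^3\in\mathcal K^*_r$ bound $\|\bw\wedge\bv(\tau)\|$ and $\|b_n\bw\wedge b_n\bv(\tau)\|$ below, which after dividing out $r_\tau r_\kappa$ yields
\[
\max\!\bigl(|\tau_1-\kappa_1|\,e^{(w_1+1)t_n-w_2nt},\ |\tau_2-\kappa_2|\,e^{(w_2+1)t_n+w_2nt}\bigr)\ \gg\ r.
\]
Because $t_n\sim\tfrac t2n^2\gg nt$, comparing the two exponents gives $\mathrm{dist}(\beta(\tau),\beta(\kappa))\gg r\,e^{-(w_1+1)t_n+w_2nt}$ (in whichever coordinate the gap occurs it dwarfs the corresponding side of the tiny boxes). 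Hence hypothesis~(3) holds, for $n$ large, with $\rho_n\asymp\tfrac{r}{\varepsilon_{n-1}}e^{-w_1nt}\le 1$ — the finitely many remaining $n$ being covered by the crude bound $\mathrm{dist}(\beta(\tau),\beta(\kappa))\gg e^{-2t_n}$ from common denominators. The gain is that, using $w_1+w_2=1$, the exponentials now cancel completely:
\[
\rho_nC_n\frac{L_n}{L_{n-1}}\ \asymp\ r\varepsilon\,e^{-w_2t}\cdot\frac{(n-1)^2}{n^3}\ \gg\ \frac1n\ \ge\ n^{-k}\qquad(k\ge2),
\]
so hypothesis~(\rmnum{4}) holds.

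It remains to check the other hypotheses of Corollary~\ref{cor;real use} for one fixed large $k$, which is routine: (\rmnum{1}) $D_n\le kn$, since $L_m\ge W_n$ forces (up to bounded factors) $(w_2+1)(t_m-t_n)\le(w_1-w_2)t_{n+1}$, hence $m\le\sqrt{\tfrac{w_1+1}{w_2+1}}\,n+O(1)$; (\rmnum{2}) $e^{n/k}\le C_n\le e^{nk}$ since $\log C_n\sim 2tn$; (\rmnum{3}) $\rho_n\ge e^{-nk}$ since $\log\rho_n\sim -w_1tn$. (For Corollary~\ref{cor;real real} one additionally observes $L_{kn}/L_{kn-1}\le W_n/W_{n-1}$ and $L_{kn_0-1}<W_{n_0-1}$ for $k$ large, and uses $\lim_n\frac{\log(L_nC_n/L_{n-1})}{-\log(W_n/W_{n-1})}=\tfrac{w_1}{1+w_1}$.) Corollary~\ref{cor;real use} then gives $\dim_H\mathcal F(\mathcal T,\beta)\ge s=2-\tfrac1{1+w_1}$, and combined with $\mathcal F(\mathcal T,\beta)\subset\mathrm{Sing}(w)$ this proves the theorem. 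The main obstacle is the separation estimate: one must extract from the three lattice conditions in (\ref{eq;key}) precisely the anisotropic lower bound on $\tau-\kappa$ that makes the exponential factors in $\rho_nC_nL_n/L_{n-1}$ telescope to a constant — which is why the auxiliary condition $b_na_{t_n}h(\tau)\Z^3\in\mathcal K^*_r$ was built into the construction, and this is the step where the hypothesis $w_1>w_2$ is genuinely used.
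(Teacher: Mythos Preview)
Your proposal is correct and follows essentially the same route as the paper: combine Lemma~\ref{lem;contained} with a lower bound for $\dim_H\mathcal F(\mathcal T,\beta)$, the latter obtained from the separation estimate (your inline argument is exactly the content of the paper's Lemma~\ref{lem;separation}) together with the self-affine machinery of \S\ref{sec;lower}. The only cosmetic difference is that the paper packages the dimension step as Proposition~\ref{prop;transfer} and invokes Corollary~\ref{cor;real real} (the local-ratio version), whereas you compute $s$ directly via Corollary~\ref{cor;real use}; the two corollaries are interchangeable here, as you note.
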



\begin{prop}\label{prop;transfer}
Suppose $w_1>w_2>0$ and let  $(\mathcal T, \beta)$ be the self-affine structure on $\R^2$ defined in the previous section. 
Then \[
\mathrm{dim}_H\mathcal F(\mathcal T, \beta)\ge 2-\frac{1}{1+w_1}. 
\]
\end{prop}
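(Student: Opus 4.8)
The plan is to verify that the regular self-affine structure $(\mathcal T,\beta)$ built in \S\ref{sec;refinement} satisfies all the hypotheses of Corollary \ref{cor;real real}, and then read off the lower bound $1+r$ with $r = \frac{1}{1+w_1}-1+1 = \cdots$ — more precisely, to compute the limit appearing there and check it equals $1-\frac{1}{1+w_1}$, so that $1+r = 2-\frac{1}{1+w_1}$. First I would record the relevant data of the structure. A vertex $\tau\in\mathcal T_n$ has $\beta(\tau)$ a rectangle of size $W_n\times L_n$ with
\[
W_n = 2\varepsilon_n e^{-w_1 t_{n+1}-t_n},\qquad L_n = 2\varepsilon_n e^{-w_2 t_{n+1}-t_n},
\]
(so $W_n\le L_n$ since $w_1\ge w_2$), and by Lemma \ref{lem;plenty} one may take $C_n$ with $\frac{1}{100}\varepsilon_n^2 e^{2nt}\le C_n\le 10\varepsilon_n^2e^{2nt}$ for $n\ge 1$ and $C_0=1$; here $\varepsilon_n=\varepsilon/n$, $t_n-t_{n-1}=nt$, so $t_n = t\binom{n+1}{2} = \tfrac{t}{2}n(n+1)$. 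The separation constant $\rho_n$ comes from the fact that distinct sons $\tau,\kappa\in\mathcal T(\kappa')$ correspond to distinct primitive vectors $\bv(\tau),\bv(\kappa)$ of a lattice $\Lambda_1\in\mathcal K^*_{\varepsilon_n^2}$, giving a quantitative separation of order $\varepsilon_n^2\cdot(\text{the shorter scale})$; I would extract $\rho_n \asymp \varepsilon_n^{2}$ (up to harmless polynomial-in-$n$ factors), which is $\ge e^{-nk}$ for suitable $k$ — this establishes hypothesis (iii) of Corollary \ref{cor;real use}.

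Next I would check hypotheses (ii) and (iv) of Corollary \ref{cor;real use} (invoked via Corollary \ref{cor;real real}), together with the two extra conditions of Corollary \ref{cor;real real}. Condition (ii), $e^{n/k}\le C_n\le e^{kn}$, follows from $C_n\asymp \varepsilon_n^2 e^{2nt} = (\varepsilon/n)^2 e^{2nt}$ since $t$ is a fixed positive constant; take $k$ large enough to absorb the $n^{-2}$ and the constants. Condition (iv), $\rho_n C_n L_n/L_{n-1}\ge n^{-k}$: here $L_n/L_{n-1} = \frac{\varepsilon_n}{\varepsilon_{n-1}}e^{-w_2(t_{n+1}-t_n)-(t_n-t_{n-1})} = \frac{n-1}{n}e^{-w_2(n+1)t - nt}$, so $C_n L_n/L_{n-1}\asymp \varepsilon_n^2 e^{2nt}\cdot e^{-w_2(n+1)t-nt}$ times a polynomial factor, and multiplying by $\rho_n\asymp\varepsilon_n^2$ one checks the exponent of $e^t$ is nonnegative for all large $n$ (this is exactly the "almost optimal separation'' remark after Corollary \ref{cor;real use}); the polynomial losses are absorbed into $n^{-k}$. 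For the remaining Corollary \ref{cor;real real} hypotheses I need an integer $k$ with $L_{kn}/L_{kn-1}\le W_n/W_{n-1}$ for large $n$ and $L_{kn_0-1}<W_{n_0-1}$; both reduce to comparing exponents of $e^t$: $W_n/W_{n-1}$ has exponent $\asymp -w_1(n+1)t-nt \asymp -(1+w_1)nt$ while $L_{kn}/L_{kn-1}$ has exponent $\asymp -(1+w_2)(kn)t$, so any $k$ with $k(1+w_2)>(1+w_1)$, e.g. $k=3$, works for large $n$, and $L_{kn_0-1}<W_{n_0-1}$ holds for $n_0$ large by the same exponent comparison.

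Finally I would compute
\[
r = \lim_{n\to\infty}\frac{\log\bigl(L_n C_n/L_{n-1}\bigr)}{-\log(W_n/W_{n-1})}.
\]
The numerator: $\log(L_n/L_{n-1}) = -w_2(n+1)t - nt + O(\log n)$ and $\log C_n = 2nt + O(\log n)$, so the numerator is $\bigl(2n - w_2(n+1) - n\bigr)t + O(\log n) = \bigl((1-w_2)n - w_2\bigr)t + O(\log n) = w_1 n t + O(\log n)$. The denominator: $-\log(W_n/W_{n-1}) = w_1(n+1)t + nt + O(\log n) = (1+w_1)nt + O(\log n)$. Hence $r = \frac{w_1}{1+w_1}$ and $\dim_H\mathcal F(\mathcal T,\beta)\ge 1+r = 1+\frac{w_1}{1+w_1} = \frac{2+2w_1 - 1}{1+w_1} = 2-\frac{1}{1+w_1}$, as claimed. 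The main obstacle I anticipate is extracting the separation exponent $\rho_n\asymp\varepsilon_n^2$ rigorously and verifying hypothesis (iv): one must argue from $\Lambda_1(\tau)\in\mathcal K^*_{\varepsilon_n^2}$ that two distinct sons yield rectangles $\beta(\tau),\beta(\kappa)$ at distance $\gg \varepsilon_n^2 W_{n-1}$ (rather than merely $\gg W_n$), using geometry of numbers on the lattice $a_{t_n}h(\cdot)\Z^3$ — this is where the $\mathcal K^*$-conditions in \eqref{eq;key} are used, and it is the only place the estimates are genuinely delicate rather than bookkeeping; everything else is exponent arithmetic with polynomial-in-$n$ errors that are swallowed by the $\liminf$/$\lim$ and by the crude bounds $e^{n/k}\le C_n$, $\rho_n\ge e^{-nk}$.
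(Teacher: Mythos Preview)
Your overall strategy and the final limit computation are exactly what the paper does: apply Corollary~\ref{cor;real real}, verify its hypotheses, and compute
\[
\lim_{n\to\infty}\frac{\log(L_nC_n/L_{n-1})}{-\log(W_n/W_{n-1})}=\frac{w_1}{1+w_1},
\]
giving the bound $2-\tfrac{1}{1+w_1}$. The exponent arithmetic for $W_n/W_{n-1}$, $L_n/L_{n-1}$, and the choice of $k$ with $k(1+w_2)>1+w_1$ are fine.

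The genuine gap is your value of $\rho_n$. You claim $\rho_n\asymp\varepsilon_n^2$ from the condition $\Lambda_1(\tau)=a_{t_n}h(\tau)\Z^3\in\mathcal K^*_{\varepsilon_n^2}$. That condition alone does \emph{not} give this. If you run the wedge argument with $\Lambda_1(y)\in\mathcal K^*_{\varepsilon_n^2}$, the bound $\|\bv\wedge s_y\be_3\|\ge\varepsilon_n^2$ yields only
\[
\text{either}\quad |x_1-y_1|\gtrsim\varepsilon_n^2 e^{-(1+w_1)t_n}\quad\text{or}\quad |x_2-y_2|\gtrsim\varepsilon_n^2 e^{-(1+w_2)t_n},
\]
and the first alternative translates to $\rho_n\asymp e^{-nt}$ (not $\varepsilon_n^2$). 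With $\rho_n\asymp e^{-nt}$ one computes $\rho_nC_nL_n/L_{n-1}\asymp e^{-w_2(n+1)t}$, which decays exponentially, so hypothesis~(iv) \emph{fails}. This is precisely the point of the remark after Lemma~\ref{lem;separation}.

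The correct separation constant is $\rho_n=e^{-w_1nt}$ (note $w_1<1$, so this is genuinely larger than $e^{-nt}$), and it is supplied by Lemma~\ref{lem;separation}, which you should simply cite. That lemma uses the \emph{third} condition in \eqref{eq;key}, namely $b_na_{t_n}h(\tau)\Z^3\in\mathcal K^*_r$ with the twist $b_n=\mathrm{diag}(e^{-w_2nt},e^{w_2nt},1)$; the second condition $\mathcal K^*_{\varepsilon_n^2}$ is not what drives the separation. With $\rho_n=e^{-w_1nt}$ the exponent in $\rho_nC_nL_n/L_{n-1}$ is $-w_1nt+2nt-w_2(n+1)t-nt=-w_2t$, a fixed constant, so (iv) holds with polynomial slack only --- this is the ``almost optimal separation'' you allude to, but it is tighter than your calculation suggests. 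Once you replace your $\rho_n\asymp\varepsilon_n^2$ by the value from Lemma~\ref{lem;separation} and take $k$ large enough (the paper uses $k\ge w_1/w_2+10$, which also covers $\rho_n\ge e^{-nk}$ since $t$ is fixed), the rest of your proof goes through unchanged.
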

Our tool is Corollary \ref{cor;real real}.  Let 
$t $ and $ \varepsilon$ be  constants fixed at the beginning  of  \S \ref{sec;refinement}. 
It is clear from its constructions that  $(\mathcal T, \beta)$
 is a regular self-affine structure satisfying assumptions (1) and (2) of 
 Theorem \ref{thm:lower_bound} with 
\begin{align*}
 C_n = \frac{\varepsilon^2}{100n^2} e^{2n t}, \quad 
 W_n= \frac{2\varepsilon }{n} e^{- w_1t_{n+1}-t_n}\quad \mbox{and}\quad 
 L_n  =\frac{2\varepsilon }{n} e^{- w_2t_{n+1}-t_n}
\end{align*}
where $n\ge 1$ and 
  $t_n=\sum_{i=0}^n it=  n(1+n)t/2$. \footnote{For $n=0$ we take  $W_0=e^{-w_1t}$ and $L_n=e^{-w_2t}$ and $C_0=1$.}
We will see from the following lemma about 
well-separated property of the fractal structure that assumption (3) of  Theorem \ref{thm:lower_bound} holds for 
\begin{align*}
 \rho_n=e^{-w_1n t }
\end{align*} 
provided $n$ is sufficiently  large.

\begin{lem}\label{lem;separation}
	Let $\tau \in \mathcal T_{n-1} \  (n\in \N)$. Then for all different  $x, y\in \mathcal T(\tau)$
	one has 
	\[
	\mathrm{dist}\,(\beta(x), \beta(y))\ge W_{n-1} \cdot \frac{r}{8\varepsilon_{n-1}}\min\{
	e^{-w_1nt},  e^{(w_1-w_2)t_{n}-(1+w_2)nt}\}.
	\]
\end{lem}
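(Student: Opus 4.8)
The plan is to translate the separation statement on $\R^2$ into a statement about lattice vectors in $\R^3$ via the dynamical dictionary, exactly as was done in the counting lemmas. Fix $\tau\in\mathcal T_{n-1}$ and two distinct sons $x,y\in\mathcal T(\tau)$. Write $\Lambda=a_{t_{n-1}}h(\tau)\Z^3$, and recall from \eqref{eq;key} that $a_{t_n}h(x)\Z^3$ and $a_{t_n}h(y)\Z^3$ both lie in $\mathcal L_3'$; each therefore contains a primitive vector of the form $s\be_3$ with $1/2<s\le 1$, namely $\bv(x),\bv(y)$ as in \eqref{eq;tau v}. The difference $h(y)\bv(x)-\bv(x)$ (pulled back through $a_{t_n}$) encodes $y-x$ in its first two coordinates. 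First I would argue that $x\ne y$ forces $\bv(x)$ and $\bv(y)$ to be \emph{linearly independent} in $a_{t_n}h(x)\Z^3$: if they were parallel, both being scalar multiples of $\be_3$ after applying $h(x-y)$ to one of them, one would deduce $x=y$ from comparing the $\be_1,\be_2$-components, a contradiction. So the relevant object is a rank-$2$ sublattice.

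Next I would exploit the $\mathcal K^*$-conditions to get a lower bound on $|x-y|$ in each coordinate direction separately. Concretely: since $a_{t_n}h(x)\Z^3\in\mathcal K^*_{\varepsilon_n^2}$ and $b_n a_{t_n}h(x)\Z^3\in\mathcal K^*_r$, every nonzero vector of the dual lattice has $\|\cdot\|\ge\varepsilon_n^2$ (resp. $\ge r$ in the $b_n$-twisted norm). The wedge $\bv(x)\wedge(\text{preimage of }\bv(y))$ is a nonzero dual vector whose coordinates are, up to the explicit scaling factors coming from $a_{t_n}$ and $b_n$, the components of $y-x$ times $e^{-t_n}$-type weights. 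Reading off the two norm bounds — the untwisted one controls the $e^{-w_1 nt}$ branch and the $b_n$-twisted one controls the $e^{(w_1-w_2)t_n-(1+w_2)nt}$ branch — gives $\mathrm{dist}(x,y)\ge c\,\min\{e^{-w_1nt},\,e^{(w_1-w_2)t_n-(1+w_2)nt}\}$ for a suitable constant; converting $x,y$ (centers of $\beta(x),\beta(y)$) to the rectangles $\beta(x),\beta(y)$ costs subtracting the side-lengths $W_n,L_n$, which are negligible against this bound for $n$ large, and then rewriting the prefactor in terms of $W_{n-1}=\tfrac{2\varepsilon}{n-1}e^{-w_1t_n-t_{n-1}}$ and $\varepsilon_{n-1}$ produces exactly the claimed constant $\frac{r}{8\varepsilon_{n-1}}$.

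The main obstacle I anticipate is bookkeeping the exponents correctly: one must carefully track how the scaling matrices $a_{t_n}=\mathrm{diag}(e^{w_1t_n},e^{w_2t_n},e^{-t_n})$ and $b_n=\mathrm{diag}(e^{-w_2nt},e^{w_2nt},1)$ act on the wedge product $\wedge^2\R^3\cong(\R^3)^*$, and verify that the two dual-lattice lower bounds $\varepsilon_n^2$ and $r$ indeed line up with the two terms in the minimum rather than getting mixed up — in particular checking that it is the $b_n$-twisted $\mathcal K^*_r$ condition that yields the second, more delicate exponent $(w_1-w_2)t_n-(1+w_2)nt$. A secondary point is to confirm that the wedge vector genuinely lies in the relevant $N_q$-type box so that the $\mathcal K^*$-hypothesis applies to it; this is where I would use \eqref{eq;why} and the smallness assumptions (i)–(ii) on $\varepsilon,r,t$ from \S\ref{sec;refinement} to absorb lower-order factors, just as in the proof of Lemma \ref{lem;card}. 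Once the exponent arithmetic is pinned down the argument is short.
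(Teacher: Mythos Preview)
Your overall strategy---form the wedge of the two ``$\be_3$-vectors'' $\bv(x),\bv(y)$ inside a common lattice and invoke the $\mathcal K^*$ lower bound on dual vectors---is exactly right, and is what the paper does. But you have mis-identified which hypothesis drives which branch of the minimum. In the paper's argument \emph{only} the $b_n$-twisted condition $b_n a_{t_n}h(y)\Z^3\in\mathcal K^*_r$ is used, and \emph{both} coordinates of the wedge are read off from that single bound. Concretely, writing $\bv=b_na_{t_n}h(y-x)(b_na_{t_n})^{-1}\,s\be_3$ one computes
\[
\|\bv\wedge l\be_3\|=ls\,\bigl\|\bigl((y_1-x_1)e^{t_n+w_1t_n-w_2nt},\,(y_2-x_2)e^{t_n+w_2t_n+w_2nt}\bigr)\bigr\|\ge r,
\]
and the dichotomy ``first coordinate $\ge r/2$'' versus ``second coordinate $\ge r/2$'' produces exactly the two exponents $e^{-w_1nt}$ and $e^{(w_1-w_2)t_n-(1+w_2)nt}$ after rewriting in terms of $W_{n-1}$.

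Your plan to use the \emph{untwisted} condition $a_{t_n}h(x)\Z^3\in\mathcal K^*_{\varepsilon_n^2}$ for the first branch would not give the stated bound: that wedge has first coordinate $ls(y_1-x_1)e^{(1+w_1)t_n}$, so the resulting horizontal separation is of order $\varepsilon_n^2\,e^{-nt}W_{n-1}/\varepsilon_{n-1}$, i.e.\ both a smaller constant ($\varepsilon_n^2$ instead of $r$) and a worse exponent ($e^{-nt}$ instead of $e^{-w_1nt}$). This is precisely the too-weak separation noted in the remark after the lemma. The second $\mathcal K^*$ condition in \eqref{eq;key} plays no role here; its purpose is solely in the counting Lemma~\ref{lem;card}.

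Finally, your worry about ``verifying the wedge vector lies in the relevant $N_q$-type box'' is misplaced: $\mathcal K^*_r$ is by definition a lower bound on \emph{every} nonzero dual vector, so once you know $\bv$ and $l\be_3$ are linearly independent (which follows immediately from $x\ne y$), the bound $\|\bv\wedge l\be_3\|\ge r$ is automatic. No box-membership check or appeal to \eqref{eq;why} is needed.
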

\begin{rem}
It will be clear from the proof that for $n$ sufficiently large  either $\beta(x)$ and $\beta(y)$ 
have horizontal distance  at least 
$ e^{-w_1nt}W_{n-1}$ (which is $\gg W_n$) or they have vertical distance at least $L_{n-1}e^{-nt-w_2nt}$
(which is $\approx  L_n$).  If we do not  assume the third
condition of (\ref{eq;key}), the same argument below will give the corresponding  horizontal   (resp.~vertical) separation  $e^{-nt}\varepsilon_n^2W_n$  (resp. $e^{-nt}\varepsilon_n^2L_n$). But then the possible  horizontal separation is too small and the assumption (\rmnum{4}) of Corollary \ref{cor;real use}  no longer holds. 
The validity of Corollary \ref{cor;real use} (\rmnum{4}) means that, roughly speaking, nearby
$\beta(x)$ and $\beta(y)$ have either  horizontal distance $ e^{-w_1nt}W_{n-1}$ or  vertical distance
$L_{n-1}e^{-nt-w_2nt}$.
\end{rem}

\begin{proof}
	Since $x,y\in \mathcal T(\tau)$, 
	there are $1/2< s, l\le 1$ such that 
	\[
	s \be_3\in b_{n}a_{t_{n}}h(x)\Z^3\quad \mbox{and}\quad l \be_3\in b_{n} a_{t_{n}}h(y)\Z^3. 
	\]
	Let
	\[
	\bv=b_{n} a_{t_{n}} h(y-x)(b_{n} a_{t_{n}})^{-1} s\be_3\in  b_{n} a_{t_{n}}h(y)\Z^3. 
	\]
	Since $ b_{n} a_{t_{n}}h(y)\Z^3\in\mathcal K_r^*$, one has 
	\begin{align*}
	\|\bv\wedge l\be_3\| &=ls  \|((y_1-x_1)e^{t_n+w_1t_n -w_2nt},(y_2-x_2)e^{t_n+w_2t_n +w_2nt} )\|  
	 \ge r.
	\end{align*}
	Then either 
	\begin{enumerate}[label=(\roman*)]
		\item $|y_1-x_1|e^{t_n+w_1t_n -w_2nt}\ge r/2$ \qquad \mbox{or}
		\item $|y_2-x_2|e^{t_n+w_2t_n +w_2nt}\ge r/2$.
	\end{enumerate}
	Let  $x'\in \beta(x) $ and $y'\in \beta(y)$.  
	If (\rmnum{1}) holds  then
	\begin{align*}
	\|y'-x'\|&\ge |y'_1-x'_1| \\
	& \ge |y_1-x_1|-|x_1-x_1'|-|y_1-y_1'|\\
	&\ge e^{-t_n-w_1t_n+w_2nt}(r/2-2\varepsilon_n  e^{-nt})\\
	&\ge e^{-t_n-w_1t_n+w_2nt}r/4\\
	&=W_{n-1}\cdot \frac{r}{8\varepsilon_{n-1}}  e^{-w_1nt}.
	\end{align*}
	If (\rmnum{2}) holds, then
	\begin{align*}
	\|y'-x'\|&\ge |y'_2-x'_2| \\
	& \ge |y_2-x_2|-|x_2-x_2'|-|y_2-y_2'|\\
	& \ge e^{-t_n-w_2t_{n}-w_2nt}(r/2-2\varepsilon_n e^{-w_2t})\\
	& \ge e^{-t_n-w_2t_{n}-w_2nt}r/4\\
	& =  W_{n-1}\cdot \frac{r}{8\varepsilon_{n-1}} e^{(w_1-w_2)t_{n}-(1+w_2)nt}.
	\end{align*}
This completes the proof. 
\end{proof}

\begin{proof}[Proof of Proposition \ref{prop;transfer}]
	We will apply   Corollary \ref{cor;real real} which uses the local data.  We have 
	\begin{align*}
	 W_n/W_{n-1}=\frac{n-1}{n} e^{-(n+1) t w_1 -n t} \\
	 L_n/L_{n-1}=\frac{n-1}{n} e^{-(n+1) t w_2 -n t}
	\end{align*}
	for $n\ge 2$.
	It can be checked directly that for any integer  $k$ with $k\ge \frac{w_1}{w_2}+10$ the  assumptions 
    of Corollary \ref{cor;real real}  hold. Moreover, we have
	\[
    \lim_{n\to \infty}  \frac{\log ({L_n} C_n/{L_{n-1}} )}{-\log (W_n/W_{n-1})}=\frac{w_1}{1+w_1}. 
	\]
	Therefore Corollary \ref{cor;real real} implies $\dim_H \mathcal F(\mathcal T, \beta)\ge 1+\frac{w_1}{1+w_1}
	= 2-\frac{1}{1+w_1}$.
\end{proof}

\begin{proof}[Proof of Theorem \ref{thm;lower bound}]
	If $w>w_2$, then the conclusion follows from Proposition \ref{prop;transfer} and Lemma \ref{lem;contained}. If $w_1=w_2$ then the conclusion follows from \cite[Theorem 1.1]{c11}.
\end{proof}

\begin{proof}[Proof of Theorem \ref{thm;slice} (sketch)]
Note that the set 
\[
Q_\Lambda:=\{y\in \R^2:  { h(y ) \Lambda }\cap \R \be_3  \neq \{ 0\} \}
\]
is dense in $\R^2$. 
We fix $y\in U\cap Q_\Lambda$ and $s\in \R$ such that $a_s h(y)\Lambda \in\mathcal L_3'$. 
In our construction of the fractal structure $(\mathcal T,\beta)$ in \S \ref{sec;construction} and \S \ref{sec;refinement}
we only use the property $\Z^3\in \mathcal L_3'$. 
So the same construction 
will give a fractal structure $(\mathcal T'' , \beta'')$ such that the Hausdorff dimension of  $\mathcal F(\mathcal T'', \beta'')\subset I(0; e^{-w_1 t}, e^{-w_2 t})$ is at least $2-\frac{1}{1+w_1}$  and for any $x\in \mathcal F(\mathcal T'', \beta'')$	the trajectory 	
$\mathcal A^+ h(x)a_s h(y)\Lambda$ is divergent. 
By taking $t$ sufficiently large, we can make sure that 	
\[
a_{s}^{-1} h (\mathcal F(\mathcal T'', \beta'')) a_s h(y)\subset h(U). 
\]
This implies that $ \{ x\in U : \mathcal A^+h(x)\Lambda \mbox{ is  divergent }\}$ contains 
 $y+g \mathcal F(\mathcal T'', \beta'')  $ for some nonsingular linear transformation $g$ of 
 $\R^2$. 
Therefore the conclusion holds.
\end{proof}

\section{Best approximation and upper bound}\label{sec;upper best}
We first review the definition of $w$-weighted  best approximation and use it to construct a self-affine covering 
of 
\begin{align*}
{\mathrm{Sing}}(w) ^*&:=\{ x \in  {\mathrm{Sing}(w)}: 1, x_1, x_2 \mbox{ are linearly independent over }\Q\}
\end{align*} 
in \S \ref{sec;best}. By Khintchine's transference principle (\cite[Chapter IV, \S 5]{schmidt}), it is not hard to see that all $x \in \R^2$ with 
$1, x_1, x_2$ linearly dependent over $\Q $ are $w$-singular. Note that the set of these $x$ is a countable union of lines in $\R^2$. Thus the Hausdorff dimension of 
${\mathrm{Sing}}(w) \setminus {\mathrm{Sing}}(w) ^*$ is one. We prove in \S \ref{sec;upper} that the upper bound of the Hausdorff 
dimension of the fractal associated to  the self-affine structure  constructed in \S \ref{sec;best} can be arbitrarily close to 
$2-\frac{1}{1+w_1}> 1$. 
Therefore the Hausdorff dimension of $\mathrm{Sing}(w)$ is bounded from above by $2-\frac{1}{1+w_1}$.

\subsection{Best approximation and self-affine covering}\label{sec;best}

We define $w$-weighted quasi-norm
on $\mathbb R^2$ by 
\[
\|(x_1, x_2)\|_w=\max \{|x_1|^{1/w_1}, |x_2|^{1/w_2}\}.
\]
Although it does not satisfy  the triangle inequality, using convexity of the function $s\to s^{1/w_i}$ we have
\begin{align}\label{eq;weighted triangle}
\| x + y  \|_w\le 2^{w_1/w_2}(\|x\|_w+\|y\|_w) \quad \mbox{for all } x,y \in \R^2.
\end{align}
We say $({ p }, {q})\in \Z^2\times \N$ 
is a best approximation vector  of $ x \in \R^2$ with respect to  the  
$w$-weighted norm
if 
\begin{enumerate}[label=(\roman*)]
	\item $\|q x - p \|_w<\|q' x -p'\|_w$ for any $(p', q')\in \mathbb Z^2\times \N$ with  
	$q'<q$;
	\item $\|q x  - p \|_w\le \|q x - p '\|_w$ for any $ p '\in \mathbb Z^2$. 
\end{enumerate}
For simplicity we call $(p, q)$ a $w$-best approximate of $x$.

There is a naturally defined bijection between $\Q^2$ and 
\[
Q=\{( p , q)=(p_1, p_2, q)\in \Z^2\times \N: \mathrm{gcd}( p_1, p_2 , q)=1\}\subset \widehat  {\Z^3}, 
\]
namely, every $\bu=(p ,q)\in Q$ corresponds to  $\widehat \bu=\frac{p}{q}$.
For  $x\in \R^2$ and  $\bu=(p ,q)\in \Z^2\times \Z$ we let 
\[
|\bu|=q \quad \mbox{and}\quad  A( x , \bu )=\|q x - p \|_w  . 
\]
Let 
\begin{align}\label{eq; r v}
r(\bu )=\min_{\bv\in Q, \bv\neq \bu }A(\widehat \bu , \bv)
\end{align}
which is the  best approximation of $\widehat \bu$   
by $ \bv\in Q\setminus \{\bu\}$. 
We will see in  (\ref{eq;rv min}) that $r(\bu)$ is the  smallest $w$-weighted norm of nonzero vectors of 
a lattice in $\R^2$.  
For every $x\in \R^2\setminus \Q^2$ we  associate a sequence $\Sigma_ x =\{\bu _i\}_{i\in \N}\subset Q$
of  $w$-best approximates  of $x$ with the following  properties:
\begin{itemize}
	\item $|\bu_1|>1$; 
	\item 	$|\bu_i|< |\bu_{i+1}|$ for all  $i\in \N$ ;
	\item there is no  $w$-best approximate  $(p, q)$ of $x$ with 
	$|\bu_i|<q < |\bu_{i+1}|$.
\end{itemize}

\begin{lem}\label{lem;balance}
	Let  $\bu \in Q$ with $|\bu|>1$. Then for any $\bv\in Q$ with $|\bv|<|\bu|$ one has  
	$A(\widehat \bu , \bv)=A(\widehat \bu , \bu -\bv)$.
\end{lem}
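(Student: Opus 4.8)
The statement to prove is: for $\bu \in Q$ with $|\bu|>1$, and any $\bv \in Q$ with $|\bv| < |\bu|$, we have $A(\widehat\bu, \bv) = A(\widehat\bu, \bu - \bv)$. Recall $\widehat\bu = p/q$ where $\bu = (p,q)$, and $A(x, \bv) = \|q' x - p'\|_w$ when $\bv = (p', q')$. So the plan is to simply expand both sides. First I would write $\bu = (p, q)$ and $\bv = (p', q')$, so $\bu - \bv = (p - p', q - q')$. Note that since $|\bv| = q' < q = |\bu|$ and (implicitly from the setup, $Q$ consists of vectors with positive last coordinate) we have $q - q' > 0$, so $\bu - \bv$ is a legitimate element to feed into $A(\widehat\bu, \cdot)$ as far as the last coordinate goes — although strictly one should note $\bu - \bv$ need not lie in $Q$ (it need not be primitive), but the quantity $A(\widehat\bu, \cdot)$ is defined for any integer vector in $\Z^2 \times \Z$, so this is harmless.

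The key computation: by definition $A(\widehat\bu, \bv) = \| q' \widehat\bu - p' \|_w = \| q' \frac{p}{q} - p' \|_w = \frac{1}{?}$ — here I need to be careful about how the weighted quasi-norm scales. We have $\|(x_1,x_2)\|_w = \max\{|x_1|^{1/w_1}, |x_2|^{1/w_2}\}$, which is \emph{not} homogeneous of degree $1$; rather $\|\lambda x\|_w = |\lambda|^{?}$... actually $\|\lambda x\|_w = \max\{|\lambda x_1|^{1/w_1}, |\lambda x_2|^{1/w_2}\}$. Since $1/w_1 + 1/w_2$ need not be anything nice, this does not factor as a single power of $|\lambda|$ unless $w_1 = w_2$. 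So the cleaner route is: $q' \widehat\bu - p' = \frac{1}{q}(q' p - q p')$ coordinatewise, hence
\[
A(\widehat\bu, \bv) = \left\| \tfrac{1}{q}(q'p - qp') \right\|_w = \max\left\{ \tfrac{|q_1' p_1 - q p_1'|^{1/w_1}}{q^{1/w_1}}, \tfrac{|q' p_2 - q p_2'|^{1/w_2}}{q^{1/w_2}} \right\},
\]
writing $p = (p_1,p_2)$, $p' = (p_1', p_2')$. Now for $\bu - \bv = (p - p', q - q')$ the corresponding numerator in coordinate $i$ is $|(q - q')p_i - q(p_i - p_i')| = |q p_i - q' p_i - q p_i + q p_i'| = |q p_i' - q' p_i| = |q' p_i - q p_i'|$, which is exactly the numerator for $\bv$; and the denominator $q^{1/w_i}$ is unchanged. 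Hence the two weighted-norm expressions are literally equal, term by term under the max. Therefore $A(\widehat\bu, \bv) = A(\widehat\bu, \bu - \bv)$.

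There is essentially no obstacle here — this is a one-line algebraic identity $(q-q')p_i - q(p_i - p_i') = -(q' p_i - q p_i')$ dressed up in the weighted-norm notation. The only points requiring a word of care are (i) the weighted quasi-norm is not homogeneous, so one must keep the factor $1/q$ inside the norm rather than pulling it out, and (ii) $\bu - \bv$ may fail to be primitive, so one should phrase the conclusion in terms of $A(\widehat\bu, \cdot)$ as a function on $\Z^2 \times \Z$ rather than on $Q$. I would write the proof in three or four lines accordingly.
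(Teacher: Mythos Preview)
Your proposal is correct and takes essentially the same approach as the paper: both reduce to the algebraic identity $(q-q')\tfrac{p}{q}-(p-p') = -\bigl(q'\tfrac{p}{q}-p'\bigr)$ together with the evenness $\|-x\|_w=\|x\|_w$. The paper's proof is slightly more concise in that it applies this identity at the level of the vector $q'\widehat\bu - p'$ directly rather than expanding coordinatewise, so your remark about non-homogeneity of $\|\cdot\|_w$ is unnecessary here (no scalar is being pulled out); your observation that $\bu-\bv$ need not be primitive is a fair side comment but, as you note, harmless for the definition of $A(\widehat\bu,\cdot)$.
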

\begin{proof}
	Suppose $\bu=(p,  q)\in \Z^2 \times \N$ and $\bv=(s, l)\in \Z^2\times \N$. 
	Then $\bu-\bv=(p-s, q-l)$ and
	 $$A(\widehat \bu , \bu -\bv)=\|(q-l) {p}{q^{-1}}-(p-s)\|_w=\|-(l{p}{q}^{-1}-s)\|_w=A(\widehat \bu, \bv).$$	
\end{proof}

\begin{cor}\label{lem;half weight}
	Let  $\bu \in Q$ with $|\bu|>1$. Then there exists $\bv\in Q$ with $|\bv|\le \frac{|\bu |}{2}$ such that 
	$r(\bu )=A(\widehat \bu , \bv)$.
\end{cor}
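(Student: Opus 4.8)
The plan is to produce a minimizer of $A(\widehat\bu,\cdot)$ over $Q\setminus\{\bu\}$ whose index is already $<|\bu|$, and then to ``fold'' it across $\bu$ using Lemma~\ref{lem;balance}: if $\bv$ is such a minimizer with $|\bu|/2<|\bv|<|\bu|$, then $\bu-\bv$ has the same value of $A(\widehat\bu,\cdot)$ and index $|\bu|-|\bv|<|\bu|/2$, so one of $\bv,\bu-\bv$ is the required vector, provided it lies in $Q$.

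I would first record three elementary facts. (a) $A(\widehat\bu,\bv-m\bu)=A(\widehat\bu,\bv)$ for every $m\in\Z$; this follows from $q\widehat\bu=p$ (writing $\bu=(p,q)$), exactly as in the proof of Lemma~\ref{lem;balance}. (b) For an integer $d\ge1$ and any vector $\bw$, $A(\widehat\bu,d\bw)\ge d\,A(\widehat\bu,\bw)$, because $\|d\bx\|_w=\max_i(d|x_i|)^{1/w_i}\ge d\max_i|x_i|^{1/w_i}=d\|\bx\|_w$, using $d^{1/w_i}\ge d$ since $w_i<1$; in particular this is strict when $d\ge2$ and $A(\widehat\bu,\bw)>0$. (c) $0<r(\bu)<1$: the lower bound holds since $A(\widehat\bu,\bv)=0$ would force $\widehat\bv=\widehat\bu$, hence $\bv=\bu$, by the bijection $Q\leftrightarrow\Q^2$; the upper bound holds by taking $\bv=(p',1)\in Q$ with $p'\in\Z^2$ a nearest integer vector to $\widehat\bu$, which is distinct from $\bu$ (as $|\bu|>1$) and satisfies $A(\widehat\bu,\bv)\le\max\{(1/2)^{1/w_1},(1/2)^{1/w_2}\}<1$.

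Next, pick $\bv_0\in Q\setminus\{\bu\}$ with $A(\widehat\bu,\bv_0)=r(\bu)$, write $|\bv_0|=mq+q_1$ with $0\le q_1<q=|\bu|$, and set $\bv:=\bv_0-m\bu$; by (a), $A(\widehat\bu,\bv)=r(\bu)$ and $|\bv|=q_1<|\bu|$. If $q_1=0$ then $\bv=(p',0)$; since $r(\bu)>0$ by (c) we get $p'\neq0$, whence $A(\widehat\bu,\bv)=\|p'\|_w\ge1>r(\bu)$, impossible; hence $1\le|\bv|<|\bu|$. Moreover $\bv$ is primitive: if $\bv=d\bw$ with $d\ge2$ and $\bw\in Q$, then $|\bw|<|\bu|$ gives $\bw\neq\bu$, so $A(\widehat\bu,\bw)>0$ and, by (b), $r(\bu)=A(\widehat\bu,\bv)>A(\widehat\bu,\bw)\ge r(\bu)$, a contradiction. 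Thus $\bv\in Q$.

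If $|\bv|\le|\bu|/2$ we are done. Otherwise $|\bu|/2<|\bv|<|\bu|$; Lemma~\ref{lem;balance} gives $A(\widehat\bu,\bu-\bv)=A(\widehat\bu,\bv)=r(\bu)$ with $1\le|\bu-\bv|=|\bu|-|\bv|<|\bu|/2$, and the same primitivity argument (a proper divisor of $\bu-\bv$ would yield an element of $Q\setminus\{\bu\}$ with $A$-value below $r(\bu)$, contradicting (b) and minimality) shows $\bu-\bv\in Q$; also $\bu-\bv\neq\bu$ since $|\bu-\bv|<|\bu|$. Hence $\bv':=\bu-\bv$ satisfies $r(\bu)=A(\widehat\bu,\bv')$ and $|\bv'|<|\bu|/2$. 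The only point needing care throughout is verifying that the reduced and folded vectors are genuinely primitive, which is exactly where the strict scaling inequality in (b) (a consequence of $w_1,w_2<1$) and the minimality of $r(\bu)$ are used together.
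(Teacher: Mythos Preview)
Your proof is correct and follows essentially the same approach as the paper: reduce to a minimizer with $|\bv|<|\bu|$ and then fold via Lemma~\ref{lem;balance}. The paper's proof is just two sentences (``Since $|\bu|>1$ one has $r(\bu)=\min_{\bv\in Q,\,|\bv|<|\bu|}A(\widehat\bu,\bv)$; so the corollary follows from Lemma~\ref{lem;balance}''), leaving to the reader precisely the two points you take care to verify, namely that the reduced vector $\bv_0-m\bu$ and the folded vector $\bu-\bv$ are genuinely in $Q$ (primitive with positive third coordinate). Your argument for primitivity via the scaling inequality (b) and the minimality of $r(\bu)$ is the natural way to fill this gap; note that already the non-strict form $A(\widehat\bu,d\bw)\ge d\,A(\widehat\bu,\bw)$ suffices, since $d\ge 2$ and $A(\widehat\bu,\bw)>0$ give $A(\widehat\bu,d\bw)\ge 2A(\widehat\bu,\bw)>A(\widehat\bu,\bw)$, so the strict version of (b) (and hence the hypothesis $w_i<1$) is not actually needed.
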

\begin{proof}
	Since $|\bu|>1$ one has $r(\bu)=\min_{\bv\in Q, |\bv| <|\bu| }A(\widehat \bu , \bv)$.
	So the corollary follows from Lemma \ref{lem;balance}.
\end{proof}

\begin{lem}\label{lem;estimate}
	Let $\bu \in Q$ with $|\bu |>1$   be a $w$-best approximate of $ x  \in \R^2$.
	Then 
	$
	A( x , \bv)< 2^{1/w_2} A(\widehat \bu , \bv)
	$
	for all $\bv\in Q$ with   $|\bv|\le  |\bu |/2$. 
\end{lem}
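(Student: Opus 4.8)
The plan is to avoid the weighted quasi-triangle inequality (\ref{eq;weighted triangle}), whose constant $2^{w_1/w_2}$ is far too lossy to yield the sharp factor $2^{1/w_2}$, and instead to exploit that $\|\cdot\|_w$ is a coordinatewise maximum. Write $\bu=(p,q)$ and $\bv=(s,l)$ with $1\le l\le |\bu|/2=q/2$, so $q\ge 2$. For each $i\in\{1,2\}$ use the elementary identity
\[
l x_i - s_i \;=\; \tfrac{l}{q}\,(q x_i - p_i) \;+\; \big(\tfrac{l}{q}\,p_i - s_i\big),
\]
together with $l/q\le \tfrac12$, the bound $|q x_i - p_i|\le A(x,\bu)^{w_i}$ (immediate from $A(x,\bu)=\max\{|qx_1-p_1|^{1/w_1},|qx_2-p_2|^{1/w_2}\}$), and the analogous bound $|\tfrac{l}{q}p_i - s_i|\le A(\widehat\bu,\bv)^{w_i}$, to obtain
\[
|l x_i - s_i| \;\le\; \tfrac12\,A(x,\bu)^{w_i} + A(\widehat\bu,\bv)^{w_i}.
\]
Raising to the power $1/w_i$ and taking the maximum over $i$ gives
\[
A(x,\bv)\;\le\;\max_{i\in\{1,2\}}\Big(\tfrac12\,A(x,\bu)^{w_i}+A(\widehat\bu,\bv)^{w_i}\Big)^{1/w_i}.
\]

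The second ingredient is condition (i) in the definition of a $w$-best approximate: since $|\bv|=l<q=|\bu|$, applying it to the pair $(s,l)\in\Z^2\times\N$ yields $A(x,\bu)=\|qx-p\|_w<\|lx-s\|_w=A(x,\bv)$; in particular $A(x,\bv)>0$. Let $i_0\in\{1,2\}$ achieve the maximum in the displayed bound. Since $w_{i_0}>0$ and $A(x,\bu)<A(x,\bv)$ we have $A(x,\bu)^{w_{i_0}}<A(x,\bv)^{w_{i_0}}$, hence
\[
A(x,\bv)^{w_{i_0}}\;\le\;\tfrac12 A(x,\bu)^{w_{i_0}}+A(\widehat\bu,\bv)^{w_{i_0}}\;<\;\tfrac12 A(x,\bv)^{w_{i_0}}+A(\widehat\bu,\bv)^{w_{i_0}},
\]
so $A(x,\bv)^{w_{i_0}}<2\,A(\widehat\bu,\bv)^{w_{i_0}}$, i.e. $A(x,\bv)<2^{1/w_{i_0}}A(\widehat\bu,\bv)\le 2^{1/w_2}A(\widehat\bu,\bv)$, where the last step uses $w_1\ge w_2>0$.

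There is no genuine obstacle here; the statement is essentially a direct computation. The only point requiring care is precisely the choice to work with the coordinatewise maximum structure of the $w$-quasi-norm rather than its triangle-type estimate, so that the factor $\tfrac12$ coming from the hypothesis $|\bv|\le|\bu|/2$ is preserved exactly and can be absorbed against the strict gain in condition (i). (This is also why the hypothesis $|\bv|\le|\bu|/2$, rather than merely $|\bv|<|\bu|$, is needed, and it dovetails with Corollary \ref{lem;half weight} when the lemma is applied downstream.)
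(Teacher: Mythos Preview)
Your proof is correct and follows essentially the same approach as the paper's: work coordinatewise with the identity $lx_i-s_i=\tfrac{l}{q}(qx_i-p_i)+(\tfrac{l}{q}p_i-s_i)$, use $l/q\le\tfrac12$, and then absorb the $\tfrac12 A(x,\bu)^{w_i}$ term via the strict inequality $A(x,\bu)<A(x,\bv)$ from condition~(i). The only cosmetic difference is that the paper selects $i$ as the coordinate realizing $A(x,\bv)$ (so the left-hand side is exactly $A(x,\bv)^{w_i}$), whereas you take $i_0$ realizing the maximum of the right-hand bound; both choices lead to the same inequality $A(x,\bv)^{w_i}<2A(\widehat\bu,\bv)^{w_i}$ and hence the claim.
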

\begin{proof}
	It follows from the definition of best approximate that 
	\begin{align}\label{eq;apply best}
	A( x , \bu )& < A( x ,  \bv) .
	\end{align}
	Let $\bu =(p_1, p_2, q), \bv=(s_1 ,s_2, l)$ and choose $i\in \{1, 2\}$ such that 
	$A( x , \bv)=|lx_i-s_i|^{1/w_i}$. The choice of $i$  implies
	
	\begin{align}
	&& A( x , \bv)^{w_i}&= l|x_i-s_i/l|\notag \\
	&&           & \le l  \left(|x_i-p_i/q|+|p_i/q-s_i/l|\right)\notag \\
	&&             & = \frac{l}{q}  |qx_i-p_i|+|lp_i/q-s_i|\notag \\
	&&             & \le \frac{1}{2} A( x , \bu )^{w_i}+A(\widehat \bu , \bv)^{w_i}\notag   \\
	&&           & < \frac{1}{2} A( x , \bv )^{w_i}+A(\widehat \bu , \bv)^{w_i}. &&(\mbox{by (\ref{eq;apply best})}) \notag
	\end{align}
	Therefore
	\[
	A( x , \bv)< 2^{1/w_2}A(\widehat \bu , \bv).
	\]
	
\end{proof}

Let  $\bu \in Q$ and 
$
B_w(\bu, r)=\{x\in \R^2: A(x, \bu)< r \}. 
$
It is not hard to see that $B_w(\bu, r)$ is an open rectangle with 
center $\widehat{\bu}$. 
The 
set of $x\in \R^2$ which has $\bu\in Q$ as a $w$-best approximate  is 
\begin{align}\label{eq;domain}
\Delta (\bu )= \left (\bigcap_{|\bv|<|\bu|  }\Delta_{\bv}(\bu )\right ) \cap
 \left(\bigcap_{|\bv|=|\bu |,  \bu\neq \bv}\overline {\Delta_{\bv}(\bu )} \right).
\end{align}
where 
\begin{align*}
\Delta_{\bv}(\bu )=\{ x \in \R^2: A( x , \bv)>A( x , \bu )\}.
\end{align*}
The following lemma says that  $\Delta (\bu )$ is roughly the rectangle $B_w(\bu, r(\bu))$. 
\begin{lem}\label{lem;size delta}
	For any $\bu \in Q$ with  $|\bu |>1$ one has 
	\[
	B_w({\bu , {2^{-1/w_2}}r(\bu)})\subset \Delta (\bu )\subset B_w(\bu , 2^{1/w_2}r(\bu))).
	\]
\end{lem}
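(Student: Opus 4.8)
The plan is to prove the two inclusions separately, using the characterization of $\Delta(\bu)$ in \eqref{eq;domain} together with Lemmas \ref{lem;balance} and \ref{lem;estimate} and Corollary \ref{lem;half weight}.

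For the inclusion $B_w(\bu, 2^{-1/w_2}r(\bu))\subset \Delta(\bu)$, I would take $x$ with $A(x,\bu)< 2^{-1/w_2}r(\bu)$ and show $x\in \Delta_{\bv}(\bu)$ for every $\bv\in Q$ with $|\bv|<|\bu|$, and $x\in \overline{\Delta_{\bv}(\bu)}$ for $|\bv|=|\bu|$, $\bv\neq \bu$. The first case is the substantive one: for $|\bv|<|\bu|$ I want $A(x,\bv)>A(x,\bu)$. Using the weighted triangle inequality \eqref{eq;weighted triangle} in the form $\|qx-s\|_w$-type estimates — more precisely, writing $A(x,\bv)\ge$ (something like) $A(\widehat\bu,\bv) - (\text{error depending on }A(x,\bu))$ — and noting $A(\widehat\bu,\bv)\ge r(\bu)$ by definition \eqref{eq; r v}, while $A(x,\bu)$ is much smaller, I expect to get $A(x,\bv)>A(x,\bu)$. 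The cleanest route is probably to mimic the computation in the proof of Lemma \ref{lem;estimate}: for the coordinate $i$ realizing $A(\widehat\bu,\bv)=|l p_i/q - s_i|^{1/w_i}$, expand $|lx_i - s_i|\ge |lp_i/q - s_i| - (l/q)|qx_i - p_i|$ and bound $(l/q)\le 1/2$ (or $<1$), converting to $A$-quantities; the threshold $2^{-1/w_2}$ is exactly what makes the inequality go through. For $|\bv|=|\bu|$ one replaces strict inequality by its closure, which is routine since only the boundary is needed.

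For the inclusion $\Delta(\bu)\subset B_w(\bu, 2^{1/w_2}r(\bu))$, I would argue by contradiction: suppose $x\in\Delta(\bu)$ but $A(x,\bu)\ge 2^{1/w_2}r(\bu)$. By Corollary \ref{lem;half weight} there is $\bv\in Q$ with $|\bv|\le |\bu|/2$ and $r(\bu)=A(\widehat\bu,\bv)$. Now I claim $\bu$ cannot be a best approximate of $x$, or more directly that $x\notin\Delta_{\bv}(\bu)$. Indeed, since $|\bv|\le|\bu|/2$, I can estimate $A(x,\bv)$ from above by $A(\widehat\bu,\bv)$ plus an error controlled by $A(x,\bu)$ — but here I need the reverse direction from the previous paragraph, bounding $A(x,\bv)\le$ something like $2^{1/w_2}\bigl(A(\widehat\bu,\bv)+\tfrac12 A(x,\bu)\bigr)$ via the same coordinatewise expansion, hence $A(x,\bv)\le A(x,\bu)$ once $A(x,\bu)\ge 2^{1/w_2}r(\bu)$. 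This contradicts $x\in\Delta_{\bv}(\bu)$, which holds because $|\bv|<|\bu|$ and $x\in\Delta(\bu)$. Wait — I should double-check the constant bookkeeping: the factor $2^{1/w_2}$ must absorb both the convexity defect in \eqref{eq;weighted triangle} and the factor $1/2$ from $|\bv|/|\bu|$; tracking this carefully is the one place where I expect to need to be slightly careful rather than purely formal.

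The main obstacle, then, is not conceptual but the precise constant chasing through the passage between the quasi-norm $\|\cdot\|_w$ and the quantities $A(x,\cdot)$, $A(\widehat\bu,\cdot)$: one must choose the right coordinate $i\in\{1,2\}$ at each step and verify that $2^{1/w_2}$ (and $2^{-1/w_2}$) are exactly the constants that make both inclusions hold. Everything else — the reduction to finitely many $\bv$ with $|\bv|<|\bu|$, the treatment of the boundary terms $|\bv|=|\bu|$, and the contradiction setup — follows directly from \eqref{eq;domain}, \eqref{eq; r v}, and the lemmas already proved.
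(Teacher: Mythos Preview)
Your proposal is correct and follows essentially the same route as the paper. For the first inclusion the paper does exactly the coordinatewise expansion you describe (choosing $i$ with $A(\widehat\bu,\bv)=|lp_i/q-s_i|^{1/w_i}$ and using $l/q\le 1$, not $1/2$); for the second inclusion the paper is slightly slicker than your contradiction: since $x\in\Delta(\bu)$ means $\bu$ is a $w$-best approximate of $x$, one may apply Lemma~\ref{lem;estimate} directly to the $\bv$ produced by Corollary~\ref{lem;half weight}, obtaining $A(x,\bu)<A(x,\bv)<2^{1/w_2}r(\bu)$ in one line and bypassing the constant bookkeeping you were worried about.
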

\begin{proof}
	We write   $r=r(\bu )$ to simplify the notation. 
	Let $\bu =(p_1, p_2, q)$ and suppose $ x =(x_1, x_2)\in B_w(\bu ,2^{-1/w_2}r)$. 
	Let $\bv=(s_1, s_2, l) \in Q$ with $|\bv|\le |\bu |$ and $\bv\neq \bu$. 
	It follows from the  definitions of $B_w(\bu ,2^{-1/w_2}r)$ and $r(\bu )$ that 
	\begin{align}
	A( x , \bu ) < 2^{-1/w_2}r
	\le 2^{-1/w_2}A(\widehat \bu , \bv).\label{eq;half}
	\end{align}
	We choose $i\in \{1,2\}$ such that 
	\begin{align}\label{eq;choice 1}
	A(\widehat \bu , \bv)=|lp_i/q-s_i|^{1/w_i}.
	\end{align}
	Then
	\begin{equation}\label{eq;yufei}
	\begin{aligned}
	 A( x , \bv)^{w_i}& \ge |lx_i-s_i|  \\
	 &=l|x_i-{s_i}/{l}| \\
	 & \ge l (|p_i/q-s_i/l|- |x_i-p_i/q|) \\
	 & = |lp_i/q-s_i|-\frac{l}{q}|qx_i-p_i|\\
	   &\ge A(\widehat \bu , \bv)^{w_i}-A( x , \bu )^{w_i},
	\end{aligned}
	\end{equation}
    where in the last  inequality we use  	 (\ref{eq;choice 1}).
    In view of 
  (\ref{eq;yufei}) and   (\ref{eq;half}) one has 
	$
	A( x , \bv)>A( x , \bu )
	$,
   from which one has  $ x \in \Delta_{\bv}(\bu )$. 
	Since  $\bv $ is an arbitrary element of $Q\setminus \{\bu \}$ with $|\bv|\le |\bu|$, 
	the definition of $\Delta (\bu)$ in   (\ref{eq;domain}) implies 
	$ x \in \Delta (\bu )$. 
	
	Now suppose $ x \in \Delta (\bu )$.  Corollary \ref{lem;half weight} implies that  there exists 
	$\bv$ with
	$|\bv|\le  |\bu |/2$
	such that  $r=A(\widehat \bu , \bv)$.  
	It follows from 
	Lemma \ref{lem;estimate} that 
	\[
	A( x , \bu )< A( x , \bv)< 2^{1/w_2}r.
	\]
	
\end{proof}

\begin{lem}\label{lem;bound sequence}
	Let $x\in \R^2\backslash \Q^2$ and  $\Sigma_x=\{\bu _i\}_{i\in \N}$.  Then
	for  all $i,j\in \N$ one has
	\begin{align}\label{eq;best bound}
	{2^{-1/w_2}} A(\widehat \bu _{i+j}, \bu _i)<A( x , \bu _i)< 2^{1/w_2}{r(\bu _{i+1})}.
	\end{align}
\end{lem}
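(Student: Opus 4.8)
The plan is to derive both inequalities in \eqref{eq;best bound} by combining the monotonicity built into the sequence $\Sigma_x$ with the two comparison results already at hand, namely Lemma \ref{lem;estimate} and Lemma \ref{lem;size delta}. The right-hand inequality $A(x,\bu_i)<2^{1/w_2}r(\bu_{i+1})$ should be the easier of the two: since $\bu_{i+1}$ is a $w$-best approximate of $x$, the point $x$ lies in $\Delta(\bu_{i+1})$ (this is exactly what \eqref{eq;domain} encodes, using that no best approximate has denominator strictly between $|\bu_i|$ and $|\bu_{i+1}|$, so $\bu_i$ does not "block" $x$ from $\Delta(\bu_{i+1})$ — more directly, $x\in\Delta(\bu_{i+1})$ simply because $\bu_{i+1}\in\Sigma_x$). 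By the right inclusion of Lemma \ref{lem;size delta}, $x\in\Delta(\bu_{i+1})\subset B_w(\bu_{i+1},2^{1/w_2}r(\bu_{i+1}))$, so $A(x,\bu_{i+1})<2^{1/w_2}r(\bu_{i+1})$. Then since $|\bu_i|<|\bu_{i+1}|$ and $\bu_{i+1}$ is a best approximate, property (i) in the definition of best approximate gives $A(x,\bu_{i+1})<A(x,\bu_i)$... wait, that is the wrong direction, so instead I use that $\bu_i$ is a best approximate with $|\bu_i|<|\bu_{i+1}|$ to get $A(x,\bu_i)$ is compared to larger-denominator competitors — actually the clean route is: $A(x,\bu_i)\le A(x,\bu_{i+1})$ fails too. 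Let me reconsider: the correct monotonicity is that for a best approximate $\bu_{i+1}$, every $\bu$ with $|\bu|<|\bu_{i+1}|$ satisfies $A(x,\bu)>A(x,\bu_{i+1})$; this does not directly bound $A(x,\bu_i)$ from above. So instead I bound $A(x,\bu_i)$ using $r(\bu_{i+1})$ via $\bu_{i+1}$ itself as an approximant: $A(x,\bu_i)\le \max$ over competitors, and since $\bu_{i+1}$ is a best approximate with denominator exceeding $|\bu_i|$, the telescoping structure gives $A(x,\bu_i)\asymp$ its own neighborhood size $r(\bu_i)$ and then one relates $r(\bu_i)$ to $r(\bu_{i+1})$. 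I will use Lemma \ref{lem;size delta} applied to $\bu_i$: $A(x,\bu_i)<2^{1/w_2}r(\bu_i)$ since $x\in\Delta(\bu_i)$, and then show $r(\bu_i)\le r(\bu_{i+1})$ — but that is false in general, so the actual argument must instead observe $\widehat\bu_{i+1}$ has $A(\widehat\bu_{i+1},\bu_i)\ge r(\bu_{i+1})$ by definition of $r$, combined with a triangle-type estimate. This is the step I expect to be the main obstacle: pinning down which approximant realizes the bound and correctly tracking the weighted quasi-triangle inequality \eqref{eq;weighted triangle}.

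For the left-hand inequality $2^{-1/w_2}A(\widehat\bu_{i+j},\bu_i)<A(x,\bu_i)$, I would argue as follows. Since $\bu_{i+j}\in\Sigma_x$ with $|\bu_{i+j}|\ge |\bu_{i+1}|>|\bu_i|$, and $\bu_i$ is a $w$-best approximate, one has $A(x,\bu_i)>A(x,\bu_{i+j})$ by property (i) of the best-approximate definition applied with $q=|\bu_i|<|\bu_{i+j}|=q'$... again a direction check: property (i) says $\|qx-p\|_w<\|q'x-p'\|_w$ whenever $q'<q$, i.e. \emph{smaller} denominator gives \emph{smaller} weighted norm, so $A(x,\bu_i)<A(x,\bu_{i+j})$. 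Good — so actually $x$ is extremely close to $\widehat\bu_{i+j}$. Then by the quasi-triangle inequality for $\|\cdot\|_w$, $A(\widehat\bu_{i+j},\bu_i)=\|\,|\bu_i|\widehat\bu_{i+j}-p_i\|_w \le 2^{w_1/w_2}\big(A(x,\bu_i)+\|\,|\bu_i|(x-\widehat\bu_{i+j})\|_w\big)$, and the second term is controlled by $A(x,\bu_{i+j})$ which is smaller than $A(x,\bu_i)$. This yields $A(\widehat\bu_{i+j},\bu_i)\le C\cdot A(x,\bu_i)$ with a constant of the right shape; getting the constant down to exactly $2^{1/w_2}$ (rather than $2^{w_1/w_2}\cdot 2$ or similar) will require being careful and exploiting that one coordinate attains the max, as in the proof of Lemma \ref{lem;estimate}, rather than using \eqref{eq;weighted triangle} bluntly.

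In summary, the two halves are each a short computation in the weighted quasi-norm, with the scaling relation $A(\widehat\bu,\bv)=|\bu|\cdot\|\,\widehat\bu-\widehat\bv\,\|$-type identities and the monotonicity of $\{|\bu_i|\}$ doing the work; I would mirror the coordinate-selection trick from the proof of Lemma \ref{lem;estimate} (choose $k\in\{1,2\}$ attaining the maximum defining the relevant $A(\cdot,\cdot)$, split via the ordinary triangle inequality on that single coordinate, and use $|\bu_i|/|\bu_{i+j}|\le 1$) to keep the implied constants exactly $2^{\pm 1/w_2}$. The main obstacle, as flagged, is bookkeeping the constants and making sure the direction of every best-approximate inequality is used correctly; there is no deep idea beyond Lemmas \ref{lem;estimate} and \ref{lem;size delta}.
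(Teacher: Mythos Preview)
Your plan for the left inequality is essentially the paper's proof. Once the direction is fixed (the correct monotonicity is $A(x,\bu_{i+j})<A(x,\bu_i)$, since $\bu_{i+j}$ is a best approximate with $|\bu_{i+j}|>|\bu_i|$; your proposal states this both ways in adjacent sentences), the coordinate-selection argument you describe at the end is exactly what the paper does. Incidentally, your worry about the constant is unfounded: $2^{w_1/w_2}\cdot 2 = 2^{(w_1+w_2)/w_2}=2^{1/w_2}$ since $w_1+w_2=1$, so even the blunt quasi-triangle route already gives the sharp constant.

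The right inequality, however, has a genuine gap. You correctly diagnose that Lemma~\ref{lem;size delta} applied to $\bu_{i+1}$ only bounds $A(x,\bu_{i+1})$ (which sits below $A(x,\bu_i)$, the wrong side), and applied to $\bu_i$ only yields $A(x,\bu_i)<2^{1/w_2}r(\bu_i)$ with no control on $r(\bu_i)/r(\bu_{i+1})$; but you never supply the missing step. The paper's fix is to go inside the proof of the second inclusion of Lemma~\ref{lem;size delta} rather than quote its conclusion: by Corollary~\ref{lem;half weight} choose $\bv\in Q$ with $|\bv|\le|\bu_{i+1}|/2$ and $r(\bu_{i+1})=A(\widehat\bu_{i+1},\bv)$, and apply Lemma~\ref{lem;estimate} with $\bu=\bu_{i+1}$ to obtain $A(x,\bv)<2^{1/w_2}r(\bu_{i+1})$. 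The point you are missing is that this auxiliary $\bv$ has $|\bv|<|\bu_{i+1}|$, and because $\Sigma_x$ has no best approximate with denominator strictly between $|\bu_i|$ and $|\bu_{i+1}|$, one gets $A(x,\bu_i)\le A(x,\bv)$. Chaining gives $A(x,\bu_i)<2^{1/w_2}r(\bu_{i+1})$. So the ``triangle-type estimate relating $r(\bu_i)$ to $r(\bu_{i+1})$'' you were searching for is not needed and does not exist in general; the argument bypasses $r(\bu_i)$ entirely by comparing $A(x,\bu_i)$ directly to $A(x,\bv)$ for the \emph{specific} $\bv$ realizing $r(\bu_{i+1})$.
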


\begin{proof}
	By Corollary \ref{lem;half weight} there exists   
	$\bv\in Q$ with
	$|\bv|\le  |\bu _{i+1}|/2$
	such that  $r(\bu _{i+1})=A(\widehat \bu _{i+1}, \bv)$.  
	It follows from 
	Lemma \ref{lem;estimate} that 
	\begin{align}\label{eq;asymptotic1}
	A( x , \bv)< 2^{1/w_2}r(\bu _{i+1}).
	\end{align}
	On the other hand the definition of $w$-best approximate implies 
	\begin{align}\label{eq;asymptotic2}
	A( x , \bv)\ge A( x , \bu _{i }).
	\end{align}
	The second inequality of (\ref{eq;best bound})
	follows from (\ref{eq;asymptotic1}) and 
	(\ref{eq;asymptotic2}). 
	
	Let $\bu _{i+j}=(p_1, p_2, q)$, $\bu _i=(s_1,s_2, l)$ and choose 
	$k\in \{1, 2\}$
	such that $A(\widehat \bu _{i+j}, \bu _i)=|lp_k/q-s_k|^{1/w_k}$.
	We have 
	\begin{align}
	A( x , \bu _i) ^{w_k} & \ge |lx_k-s_k| \notag \\
	&= l|x_k-s_k/l|\notag \\
	&\ge l(|p_k/q-s_k/l|-|x_k-p_k/q|) \notag \\
	& = |lp_k/q-s_k|-(l/q)|x_kq-p_k| \notag \\
	& \ge A(\widehat{\bu }_{i+j}, \bu _i)^{w_k}-A( x , \bu _{i+j})^{w_k} \notag \\
	& > A(\widehat\bu _{i+j}, \bu _i)^{w_k}-A( x , \bu _i)^{w_k}. \notag                   
	\end{align}
	Therefore $A( x , \bu _i)> {2^{-1/w_2}}A(\widehat\bu _{i+j}, \bu _i)$. 
\end{proof}

Note that $r(\bu_{i+1})\le A(\widehat \bu_{i+1}, \bu_i)$. So Lemma \ref{lem;bound sequence} implies
the following corollary. 
\begin{cor}\label{cor;same size}
		Let $x\in \R^2\setminus \Q^2$ and  $\Sigma_x=\{ \bu _i\}_{i\in \N}$. Then for all $i\in \N$
		\[
		A(x, \bu_i)\asymp r(\bu_{i+1})\asymp A(\widehat \bu_{i+1}, \bu_i)
		\]
		where the implied constants do not depend on $i$. 
\end{cor}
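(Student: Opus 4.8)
The plan is to obtain all three comparisons from Lemma \ref{lem;bound sequence} applied with $j=1$, together with the trivial inequality noted immediately before the statement. First I would record that, by the definition of $r(\cdot)$ in (\ref{eq; r v}), since $\bu_i\in Q$ and $\bu_i\neq \bu_{i+1}$ (the terms of $\Sigma_x$ are pairwise distinct because $|\bu_i|<|\bu_{i+1}|$ for all $i$), the vector $\bu_i$ is a competitor in the minimum defining $r(\bu_{i+1})$, hence
\[
r(\bu_{i+1})\le A(\widehat\bu_{i+1},\bu_i).
\]

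Next I would invoke Lemma \ref{lem;bound sequence} with $j=1$, which gives
\[
2^{-1/w_2}\,A(\widehat\bu_{i+1},\bu_i) < A(x,\bu_i) < 2^{1/w_2}\,r(\bu_{i+1}).
\]
Writing $a=A(x,\bu_i)$, $b=r(\bu_{i+1})$, $c=A(\widehat\bu_{i+1},\bu_i)$ for brevity, the three displayed facts read $b\le c$, $c<2^{1/w_2}a$, and $a<2^{1/w_2}b$. Chaining them yields $a<2^{1/w_2}b\le 2^{1/w_2}c<2^{2/w_2}a$, so $a$, $b$, $c$ all agree up to the multiplicative constant $2^{2/w_2}$. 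Since this constant depends only on $w_2$ and not on $i$, this is exactly the asserted chain $A(x,\bu_i)\asymp r(\bu_{i+1})\asymp A(\widehat\bu_{i+1},\bu_i)$ with uniform implied constants.

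There is no genuine obstacle here: the argument is a three-line chase through inequalities already established in Lemma \ref{lem;bound sequence}. The only point requiring a word of justification is that $\bu_i\neq\bu_{i+1}$, which is immediate from the strict monotonicity $|\bu_i|<|\bu_{i+1}|$ built into the definition of $\Sigma_x$, and which is what licenses using $\bu_i$ as an admissible competitor in the definition of $r(\bu_{i+1})$.
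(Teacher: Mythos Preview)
Your argument is correct and is exactly the paper's approach: the paper just notes that $r(\bu_{i+1})\le A(\widehat\bu_{i+1},\bu_i)$ and then says the corollary follows from Lemma~\ref{lem;bound sequence}. Your write-up simply makes the chaining of the three inequalities explicit.
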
	

The following lemma gives a description of a $w$-singular vector using
its associated  best approximation sequence and it follows directly  from the definition. 
\begin{lem}\label{lem;charcaterise}
	Let $x\in \R^2\setminus \Q^2$ and  $\Sigma_x=\{ \bu _i\}_{i\in\N}$. Then $ x \in \mathrm{Sing}(w)$  if and only if 
	\[
	A(x, \bu _i) |\bu _{i+1}|\to 0 \quad \mbox{as }i\to \infty. 
	\]
\end{lem}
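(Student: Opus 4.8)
The plan is to unwind both the definition of $w$-singularity and the construction of $\Sigma_x$ into statements about the scalars $A(x,\bu)$ and $|\bu|$, and then match them up. First I would record the reformulation of singularity in this language: writing $\bu=(p,q)$, the condition $\|qx-p\|_w<\varepsilon T^{-1}$ is precisely $|qx_1-p_1|<\varepsilon^{w_1}T^{-w_1}$ and $|qx_2-p_2|<\varepsilon^{w_2}T^{-w_2}$, so the system (\ref{eq;improve}) at parameter $T$ asserts exactly that some $\bu\in\Z^2\times\N$ has $|\bu|<T$ and $A(x,\bu)<\varepsilon T^{-1}$. Hence $x\in\mathrm{Sing}(w)$ if and only if for every $\varepsilon>0$ there is $T_0>1$ such that for all $T>T_0$ there is $\bu\in\Z^2\times\N$ with $|\bu|<T$ and $A(x,\bu)<\varepsilon T^{-1}$.

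The second ingredient is an elementary consequence of the three defining properties of $\Sigma_x$: the values $A(x,\bu_1)>A(x,\bu_2)>\cdots$ are strictly decreasing, and for every $i\in\N$ and every $\bu=(p,q)\in\Z^2\times\N$ with $q<|\bu_{i+1}|$ one has $A(x,\bu)\ge A(x,\bu_i)$. Strict monotonicity is property (i) applied to $\bu_{i+1}$, which beats every vector of smaller denominator, in particular $\bu_i,\dots,\bu_1$; so it remains to treat $|\bu_i|\le q<|\bu_{i+1}|$. Here I would argue by contradiction: if $A(x,(p,q))<A(x,\bu_i)$ for some such $(p,q)$, then the minimum $m$ of $A(x,\cdot)$ over all vectors of denominator $\le q$ is $<A(x,\bu_i)$, and a minimizer $\bw$ with smallest possible denominator is, by construction, a $w$-best approximate of $x$; its denominator lies strictly between $|\bu_i|$ and $|\bu_{i+1}|$ — it cannot be $\le|\bu_i|$ by properties (i)--(ii) of $\bu_i$ — which contradicts the third defining property of $\Sigma_x$. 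This minimizer-of-smallest-denominator step is the only point with any content, and it is routine; everything else is bookkeeping.

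Granting these two ingredients, both implications are short. For the direction that $A(x,\bu_i)\,|\bu_{i+1}|\to0$ implies $x\in\mathrm{Sing}(w)$: fix $\varepsilon>0$, pick $i_0$ with $A(x,\bu_i)\,|\bu_{i+1}|<\varepsilon$ for all $i\ge i_0$, and set $T_0=|\bu_{i_0}|$; given $T>T_0$, let $i$ be the largest index with $|\bu_i|<T$, so $i\ge i_0$ and $|\bu_{i+1}|\ge T$, and take $\bu=\bu_i$, which satisfies $|\bu|<T$ and $A(x,\bu_i)\,T\le A(x,\bu_i)\,|\bu_{i+1}|<\varepsilon$. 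For the reverse direction: fix $\varepsilon>0$ and let $T_0$ be as in the reformulation; for every $i$ with $|\bu_{i+1}|>T_0$, apply the reformulation with $T=|\bu_{i+1}|$ to get $\bu\in\Z^2\times\N$ with $|\bu|<|\bu_{i+1}|$ and $A(x,\bu)<\varepsilon\,|\bu_{i+1}|^{-1}$; the second ingredient gives $A(x,\bu_i)\le A(x,\bu)$, hence $A(x,\bu_i)\,|\bu_{i+1}|<\varepsilon$, and letting $\varepsilon\to0$ yields $A(x,\bu_i)\,|\bu_{i+1}|\to0$. The anticipated main obstacle is just making precise the assertion that no $w$-best approximate has denominator strictly between $|\bu_i|$ and $|\bu_{i+1}|$ forces $A(x,\bu_i)$ to be the record value up to denominator $|\bu_{i+1}|$, which the argument above handles.
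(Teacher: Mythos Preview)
Your proof is correct and is exactly the argument the paper has in mind: the paper states only that the lemma ``follows directly from the definition,'' and your write-up is a careful unpacking of precisely that---the reformulation of (\ref{eq;improve}) as $A(x,\bu)<\varepsilon T^{-1}$ together with the record-value property of the sequence $\Sigma_x$. The minimizer-of-smallest-denominator step you flag is indeed the only nontrivial point, and your treatment of it is fine.
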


In view of Corollary \ref{cor;same size} and Lemma \ref{lem;charcaterise} one has the following corollary. 
\begin{cor}\label{cor;singular}
	Let $x\in \R^2\setminus \Q^2$ and  $\Sigma_x=\{ \bu _i\}_{i\in \N }$. Then $ x \in \mathrm{Sing}(w)$  if and only if 
	\[
	r(\bu_i)|\bu _{i}|\to 0 .
	\]
\end{cor}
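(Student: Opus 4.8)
The plan is to deduce this directly from Corollary \ref{cor;same size} and Lemma \ref{lem;charcaterise}, which together already do all the work. First I would record that, by the defining properties of $\Sigma_x$, the sequence $\{|\bu_i|\}_{i\in\N}$ is a strictly increasing sequence of positive integers, hence $|\bu_i|\to\infty$; in particular any statement of the form ``$c_{i+1}\to 0$'' is equivalent to ``$c_i\to 0$'', so we are free to reindex.

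Next, fix $x\in\R^2\setminus\Q^2$ and $\Sigma_x=\{\bu_i\}_{i\in\N}$. By Corollary \ref{cor;same size} there are constants $0<c\le C$, independent of $i$, with
\[
c\, r(\bu_{i+1})\le A(x,\bu_i)\le C\, r(\bu_{i+1})\qquad(i\in\N).
\]
Multiplying through by the positive integer $|\bu_{i+1}|$ gives
\[
c\, r(\bu_{i+1})|\bu_{i+1}|\le A(x,\bu_i)|\bu_{i+1}|\le C\, r(\bu_{i+1})|\bu_{i+1}|\qquad(i\in\N),
\]
so $A(x,\bu_i)|\bu_{i+1}|\to 0$ if and only if $r(\bu_{i+1})|\bu_{i+1}|\to 0$. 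By Lemma \ref{lem;charcaterise} the left-hand condition is equivalent to $x\in\mathrm{Sing}(w)$, and by the reindexing remark the right-hand condition is equivalent to $r(\bu_i)|\bu_i|\to 0$. This proves the corollary.

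There is no genuine obstacle here: the statement is a formal consequence of the two cited results. The only point that deserves (minor) care is that the implied constants in Corollary \ref{cor;same size} do not depend on $i$, so that the equivalence survives passage to the limit — and this uniformity is exactly what is asserted there.
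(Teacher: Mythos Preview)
Your proof is correct and is exactly the approach the paper takes: the corollary is stated immediately after Corollary~\ref{cor;same size} and Lemma~\ref{lem;charcaterise} with the remark that it follows from them, and your write-up just makes the (trivial) reindexing step explicit.
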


In view of Corollary \ref{cor;singular}, for $\varepsilon>0$ the  set  
\begin{align}
Q_{\varepsilon }&=\{\bu \in Q: r(\bu ) |\bu |<\varepsilon , |\bu|>1 \}       \notag
\end{align}
consists of  best approximates of   almost (or  $\varepsilon$-close) $w$-singular vectors. 
We are going to define a relation $\sigma_\varepsilon$ on $Q_\varepsilon$ so that together with  the 
map $\beta$ on $Q$ defined by 
\begin{align}\label{eq;rectangle}
\beta(\bu )=\overline {B_w(\bu, |\bu|^{-1})}=I(\widehat \bu ; |\bu |^{-(w_1+1) }   , |\bu |^{-(w_2+1)})
\end{align}
we get an admissible  fractal relation $(Q_\varepsilon, \sigma_\varepsilon, \beta)$ such that the corresponding  fractal 
contains $\mathrm{Sing}(w) ^*$.

Now we  fix  $\bu \in Q_\varepsilon $ and 
 define the set  $\sigma_\varepsilon (\bu)$. 
We
choose $\bu'\in Q$ with the property   $|\bu'|\le |\bu |/2$ and 
$r(\bu )=A(\widehat\bu ,\bu' )$.
Let 
 $H_\bu $ be the hyperplane in $\R^3$ generated by $\bu$ and 
  $\bu'$. 
Let 
\[
D(\bu,\varepsilon ) =\{\bv\in H_\bu\cap Q_\varepsilon:  |\bv|  \ge  |\bu| ,  A(\widehat \bv, \bu )<  2^{2/w_2}r(\bu )\}.
\]
We note here that all the $\bv\in H_\bu\cap Q_\varepsilon$ (including $\bu $) with 
$\widehat \bv$  close to $\widehat \bu $ belong to $D(\bu, \varepsilon )$.
For every  $\bv\in D(\bu , \varepsilon)$ let
  \[
  E(\bu , \bv, \varepsilon)=\{  \bw\in Q_\varepsilon: |\bw|>|\bv|, \bw\not\in H_\bu\mbox{ and }   A(\widehat \bw , \bv)<{\varepsilon }|\bw |^{-1}\}.
  \]  
We define
\[
\sigma_{\varepsilon }(\bu )=\bigcup _{\bv\in D(\bu, \varepsilon )} E(\bu , \bv, \varepsilon).  
\]

\begin{lem}\label{lem;cover}
	For every  $0<\varepsilon<1$  one has 	$\mathrm{Sing}(w)^*\subset \mathcal F (Q_{\varepsilon }, \sigma_\varepsilon, \beta)$. 
\end{lem}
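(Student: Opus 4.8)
\textbf{Proof plan for Lemma \ref{lem;cover}.}

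The plan is to take an arbitrary $x\in\mathrm{Sing}(w)^*$, produce from its best approximation sequence $\Sigma_x=\{\bu_i\}_{i\in\N}$ a subsequence that forms a boundary point of $\partial\sigma_\varepsilon$, and check that the nested rectangles $\beta(\bu_{i_k})$ shrink to $\{x\}$. First I would record two consequences of the material already developed. By Corollary \ref{cor;singular}, $x\in\mathrm{Sing}(w)^*$ gives $r(\bu_i)|\bu_i|\to 0$, so every tail of $\Sigma_x$ lies in $Q_\varepsilon$; in particular $\bu_i\in Q_\varepsilon$ for all large $i$. By Corollary \ref{cor;same size}, $A(x,\bu_i)\asymp r(\bu_{i+1})\asymp A(\widehat\bu_{i+1},\bu_i)$ with absolute implied constants, and by Lemma \ref{lem;size delta} together with the fact that $x\in\Delta(\bu_i)$ (since $\bu_i$ is a $w$-best approximate of $x$), we get $A(x,\bu_i)<2^{1/w_2}r(\bu_i)$. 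These give uniform comparability between $A(x,\bu_i)$, $r(\bu_i)$, $r(\bu_{i+1})$ and $|\bu_i|^{-1}$-type scales, which is what lets the combinatorial definition of $\sigma_\varepsilon$ be realized along $\Sigma_x$.

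Next I would build the boundary point of $\partial\sigma_\varepsilon$ inductively. Suppose $\bu_i\in Q_\varepsilon$ has been chosen as the current vertex. Pick $\bu_i'\in Q$ with $|\bu_i'|\le|\bu_i|/2$ and $r(\bu_i)=A(\widehat\bu_i,\bu_i')$ (Corollary \ref{lem;half weight}), and let $H_{\bu_i}$ be the plane through $\bu_i,\bu_i'$ in $\R^3$. Now I would show that $\bu_i\in D(\bu_i,\varepsilon)$: indeed $|\bu_i|\ge|\bu_i|$ trivially, $\bu_i\in H_{\bu_i}\cap Q_\varepsilon$, and $A(\widehat\bu_i,\bu_i)=0<2^{2/w_2}r(\bu_i)$ — so $D(\bu_i,\varepsilon)$ is nonempty and in fact contains $\bu_i$ itself (this is the remark in the text after the definition of $D$). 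Then, using that $x\in\mathrm{Sing}(w)^*$ so $1,x_1,x_2$ are $\Q$-linearly independent, not all $\bu_j$ ($j>i$) can lie in the fixed plane $H_{\bu_i}$ — otherwise the best approximates would eventually be confined to a rational plane, forcing $x$ into that plane, contradicting linear independence. So choose the smallest $j>i$ with $\bu_j\notin H_{\bu_i}$, set $\bv=\bu_{j-1}$ (which lies in $H_{\bu_i}$, and I would need to check $\bv\in D(\bu_i,\varepsilon)$ using $A(\widehat\bu_{j-1},\bu_i)\lesssim r(\bu_i)$ from the comparability estimates and monotonicity of $r(\bu_\ell)|\bu_\ell|$), and $\bw=\bu_j$. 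Then $|\bw|>|\bv|$, $\bw\notin H_{\bu_i}$, and $A(\widehat\bw,\bv)=A(\widehat\bu_j,\bu_{j-1})\asymp r(\bu_j)< \varepsilon|\bu_j|^{-1}=\varepsilon|\bw|^{-1}$ for $i$ large, by Corollary \ref{cor;same size} and $\bu_j\in Q_\varepsilon$; hence $\bw\in E(\bu_i,\bv,\varepsilon)\subset\sigma_\varepsilon(\bu_i)$. Setting the next vertex to be $\bu_j$ and iterating produces a sequence $\{\bu_{i_k}\}\in\partial\sigma_\varepsilon$.

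Finally I would verify $x\in\mathcal F(Q_\varepsilon,\sigma_\varepsilon,\beta)$, i.e.\ $\{x\}=\bigcap_k\beta(\bu_{i_k})$. For this I need $x\in\beta(\bu_{i_k})$ for every $k$: by (\ref{eq;rectangle}), $\beta(\bu)=\overline{B_w(\bu,|\bu|^{-1})}=\{y:A(y,\bu)\le|\bu|^{-1}\}$, and $A(x,\bu_{i_k})<2^{1/w_2}r(\bu_{i_k})$, which is at most $|\bu_{i_k}|^{-1}$ once $\bu_{i_k}\in Q_\varepsilon$ with $\varepsilon$ small — more carefully, $r(\bu_{i_k})|\bu_{i_k}|<\varepsilon<1$ gives $r(\bu_{i_k})<|\bu_{i_k}|^{-1}$, and I would absorb the constant $2^{1/w_2}$ either by noting it forces at worst passing to a slightly enlarged rectangle (harmless since we only need containment of the point in the closed rectangle) or, cleanly, by using that $\bu_{i_k}$ being a $w$-best approximate gives $x\in\Delta(\bu_{i_k})\subset\overline{B_w(\bu_{i_k},2^{1/w_2}r(\bu_{i_k}))}$ — here I would instead invoke the sharper bound $A(x,\bu_{i_k})\le A(x,\bu_{i_k-1})$... so some care is needed, and I'd pin down the exact constant chasing. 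That $\bigcap_k\beta(\bu_{i_k})$ is the single point $x$ follows from admissibility of $(Q_\varepsilon,\sigma_\varepsilon,\beta)$ (the diameters $\asymp|\bu_{i_k}|^{-(w_2+1)}\to 0$ since $|\bu_{i_k}|\to\infty$), which is the first issue to confirm is in place; the text asserts this triple is an admissible fractal relation, so I would cite that. The main obstacle I anticipate is the second paragraph: showing that the tail of $\Sigma_x$ is not trapped in the plane $H_{\bu_i}$ and simultaneously that the first ``escaping'' index $j$ still satisfies $\bu_{j-1}\in D(\bu_i,\varepsilon)$ and $\bu_j\in E(\bu_i,\bu_{j-1},\varepsilon)$ with the right inequalities — this is where the $\Q$-linear independence hypothesis and the uniform comparability constants from Corollary \ref{cor;same size} must be combined carefully, and where one must make sure the window defining $D$ (the constant $2^{2/w_2}$) and the window defining $E$ (the $\varepsilon|\bw|^{-1}$ bound) are both wide enough to catch consecutive best approximates.
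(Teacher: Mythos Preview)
Your proposal is essentially correct and follows the same route as the paper: pass to a tail of $\Sigma_x$, define $i_{n+1}$ as the first index past $i_n$ with $\bu_{i_{n+1}}\notin H_{\bu_{i_n}}$, take $\bv=\bu_{i_{n+1}-1}$ and $\bw=\bu_{i_{n+1}}$, and verify $\bv\in D(\bu_{i_n},\varepsilon)$, $\bw\in E(\bu_{i_n},\bv,\varepsilon)$ using Lemma~\ref{lem;bound sequence}.

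The one place where you hedge---the constant chasing for $x\in\beta(\bu_{i_k})$ and for the $E$-condition---is handled in the paper not by citing $\bu_i\in Q_\varepsilon$ but by using the full strength of $r(\bu_i)|\bu_i|\to 0$: choose $i_0$ so that $r(\bu_i)|\bu_i|<\varepsilon\,2^{-2/w_2}$ for $i\ge i_0$. Then Lemma~\ref{lem;bound sequence} gives $A(x,\bu_i)<2^{1/w_2}r(\bu_{i+1})<\varepsilon\,2^{-1/w_2}|\bu_{i+1}|^{-1}<|\bu_i|^{-1}$, which is exactly $x\in\beta(\bu_i)$; and $A(\widehat\bw,\bv)<2^{2/w_2}r(\bw)<\varepsilon|\bw|^{-1}$, which is the $E$-inequality. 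For $\bv\in D(\bu,\varepsilon)$ the paper chains $A(\widehat\bv,\bu)<2^{1/w_2}A(x,\bu)\le 2^{1/w_2}A(x,\bu_{i_n-1})<2^{2/w_2}r(\bu)$, again straight from (\ref{eq;best bound}). So your anticipated obstacle dissolves once you strengthen the tail threshold by the fixed factor $2^{-2/w_2}$.
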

\begin{proof}
	Let $x\in \mathrm{Sing}(w)^*$ and  $\Sigma_x=\{ \bu _i\}_{i\in \N}$.
	We are going to construct a subsequence $\{\bu_{i_n} \}_{n\in \N }$ such that $(\bu_{i_n}, \bu_{i_{n+1}})\in \sigma_\varepsilon$
	and $x\in \beta (\bu_{i_n})$ for all  $n\in\N$. This will complete the proof.

	According to  Corollary \ref{cor;singular} there exists $i_0\in\N$ such that for  $i\ge i_0$
	one has
	\begin{align}\label{eq;raining}
	r(\bu _i)<\frac{\varepsilon 2^{-2/w_2}}{|\bu _i|}.
	\end{align}
	By  Lemma \ref{lem;bound sequence} 
	\[
	A( x , \bu _i)\le 2^{1/w_2} r(\bu _{i+1})< \frac{\varepsilon 2^{-1/w_2}}{|\bu _{i+1}|}<
	\frac{1}{|\bu _i|},
	\]
	which implies that $ x \in B_w(\bu _i, |\bu_i|^{-1})$. 
	Let $i_1=i_0$ and we inductively define $i_{n+1}$ to  be  smallest integer of 
	\[
	\{m\in \N : m>i_n,  \bu _m\not \in H_{\bu _{i_n}}\}
	\]
	which is nonempty since  $1, x_1, x_2$ are linearly independent over $\Q$.

	To simplify the notation we take  $\bu =\bu _{i_n},\bv=\bu_{i_{n+1}-1}$ and $\bw =\bu _{i_{n+1}}$. 
	It suffices to show $\bv\in D(\bu ,\varepsilon)$ and  $\bw\in E(\bu, \bv, \varepsilon)$. 
	Using  (\ref{eq;best bound}) we have
	\begin{align}
	 A(\widehat \bv, \bu )&< 2^{1/w_2} A( x , \bu ) \notag\\
	&\le  2^{1/w_2} A( x , \bu _{i_n-1}) \notag \\
	&< 2^{2/w_2}  r(\bu ), \notag            
	\end{align}
	which implies $\bv\in D(\bu, \varepsilon )$. 
	Using (\ref{eq;best bound}) again and (\ref{eq;raining})  we have
	\[
	A(\widehat \bw , \bv)< 2^{2/w_2}r(\bw )< \varepsilon/|\bw |.
	\]
	Therefore $\bw \in E(\bu, \bv, \varepsilon)$. 
\end{proof}

\subsection{Upper bound}\label{sec;upper}
For every $\varepsilon>0$ sufficiently small  we estimate 
 the Hausdorff dimension of $\mathcal F(Q_\varepsilon, \sigma_\varepsilon, \beta)$.
 In view of Lemma \ref{lem;cover} and the discussion at the beginning of
 \S \ref{sec;upper best} this will give an upper bound of the Hausdorff dimension of $\mathrm{Sing}(w)^*$
 and $\mathrm{Sing}(w)$. 
 
 	For  $\bu=(p, q)\in Q$ we let  $\pi_\bu : \R^3 \to \R^2$ be the projection along
 	$\R \bu $ defined by 
 	\begin{align}\label{eq;pi v}
 	\pi_\bu (( x , x_3))=  x -\frac{x_3}{q} p .
 	\end{align}  
 	The  kernel of $\pi_\bu $ is $\R\bu  $ and $\R\bu \cap \Z^3=\Z\bu $. 
 	The set 
 	\begin{align}\label{eq;lambda v}
 	\Lambda_\bu:=\pi_\bu (\Z^3)
 	\end{align}
 	 is a lattice in $\R^2$ with covolume $1/|\bu |$. 
 	 It is easy to see that $A(\widehat \bu , \bv)=\|\pi_\bu (\bv)\|_w$
 	 for every $\bv\in Q$. Therefore 
 	 \begin{align}\label{eq;rv min}
 	 r(\bu)=\inf_{y\in \Lambda_\bu\setminus \{0 \}} \|y\|_w. 
 	 \end{align}
 
\begin{lem}\label{lemma-D}
	Let $0<\varepsilon< 1$ and  $\bu\in Q_\varepsilon$. 
	For every real number $t$ with  $2<t\le 3$ we have
	\begin{align}\label{eq;lemma-D}
	\sum_{\bv\in D\left(\bu, \varepsilon \right)}\left(\frac{\left|\bu\right|}{\left|\bv\right|}\right)^{t}\ll\frac{1}{t-2}.
	\end{align}
\end{lem}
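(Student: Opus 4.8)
The plan is to parametrise $D(\bu,\varepsilon)$ explicitly through a $\Z$-basis of the rank-two lattice $H_\bu\cap\Z^3$, so that the sum becomes a double sum over the two basis coefficients which can be controlled by comparison with an integral. Throughout, for $\bw=(p_\bw,|\bw|)\in Q$ I write $p_\bw\in\Z^2$ for the first two coordinates, so $\widehat\bw=p_\bw/|\bw|$.

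\textbf{Step 1: a $\Z$-basis of $H_\bu\cap\Z^3$.} Recall that $\bu'\in Q$ is chosen with $|\bu'|\le|\bu|/2$ and $A(\widehat\bu,\bu')=r(\bu)$, that $H_\bu=\R\bu+\R\bu'$, and (since $r(\bu)>0$) that $\bu,\bu'$ are linearly independent, so $\Gamma:=H_\bu\cap\Z^3$ is a lattice of rank two. Write $\pi=\pi_\bu$ as in $(\ref{eq;pi v})$ and $\Lambda_\bu=\pi(\Z^3)$ as in $(\ref{eq;lambda v})$. The restriction $\pi|_\Gamma$ has kernel $\R\bu\cap\Z^3=\Z\bu$, so $\pi(\Gamma)$ is a discrete rank-one subgroup of the lattice $\Lambda_\bu$, hence $\pi(\Gamma)=\Z g$ for some $g\ne 0$. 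Now $\|\pi(\bu')\|_w=A(\widehat\bu,\bu')=r(\bu)$, which by $(\ref{eq;rv min})$ equals $\inf_{y\in\Lambda_\bu\setminus\{0\}}\|y\|_w$; moreover directly from the definition of the $w$-quasi-norm and the assumption $w_1\ge w_2$ one has $\|c\,y\|_w\ge|c|^{1/w_1}\|y\|_w$ for every real $|c|\ge 1$ and every $y\in\R^2$. If $\pi(\bu')=kg$ with $|k|\ge 2$, this would give $\|g\|_w\le|k|^{-1/w_1}r(\bu)<r(\bu)$ with $g\in\Lambda_\bu\setminus\{0\}$, contradicting $(\ref{eq;rv min})$. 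Hence $\pi(\bu')$ generates $\pi(\Gamma)$, so every $\bv\in\Gamma$ is $\bv=m\bu'+j\bu$ for uniquely determined $m,j\in\Z$ (choose $m$ with $\pi(\bv)=m\pi(\bu')$, then $\bv-m\bu'\in\ker\pi\cap\Z^3=\Z\bu$); thus $\{\bu,\bu'\}$ is a $\Z$-basis of $\Gamma$, and $D(\bu,\varepsilon)\subset H_\bu\cap Q\subset\Gamma$ consists of such vectors.

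\textbf{Step 2: rewriting the defining conditions.} For $\bv=m\bu'+j\bu\in Q$ put $q=|\bu|$ and $q_\bv=|\bv|=m|\bu'|+jq>0$. Since $\pi(\bv)=m\pi(\bu')$ and, by a direct computation from $(\ref{eq;pi v})$, $q\widehat\bv-p_\bu=\tfrac{q}{q_\bv}\pi(\bv)$, we get $A(\widehat\bv,\bu)=\big\|\tfrac{m q}{q_\bv}\pi(\bu')\big\|_w$ with $\|\pi(\bu')\|_w=r(\bu)$. If $m=0$ then $\bv\in\Z\bu$ is primitive with positive last coordinate, so $\bv=\bu$. If $m\ne 0$ and $\bv\in D(\bu,\varepsilon)$, then $q_\bv\ge|\bu|=q$, while $A(\widehat\bv,\bu)<2^{2/w_2}r(\bu)$ together with $\|c\,y\|_w\ge|c|^{1/w_1}\|y\|_w$ ($|c|\ge 1$) forces $|m|q/q_\bv<2^{2w_1/w_2}=:K$. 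Therefore every $\bv=m\bu'+j\bu\in D(\bu,\varepsilon)$ with $m\ne 0$ has $q_\bv\ge B_m:=q\max\{1,|m|/K\}$; note $K\ge 4$ since $2w_1/w_2\ge 2$.

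\textbf{Step 3: the estimate.} For fixed $m\ne 0$ the numbers $q_\bv=m|\bu'|+jq$ ($j\in\Z$) lie in a single residue class modulo $q$ and are $\ge B_m$, so, discarding the remaining constraints,
\[
\sum_{j:\,m\bu'+j\bu\in D(\bu,\varepsilon)}\Big(\tfrac{q}{q_\bv}\Big)^t\le q^t\sum_{k\ge 0}(B_m+kq)^{-t}\le\frac{t}{t-1}\Big(\frac{q}{B_m}\Big)^{t-1}=\frac{t}{t-1}\min\Big\{1,\big(\tfrac{K}{|m|}\big)^{t-1}\Big\},
\]
the middle step being comparison with $\int_0^\infty(B_m+xq)^{-t}\,dx$ together with $B_m\ge q$. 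Adding the contribution $1$ from $m=0$ (i.e. $\bv=\bu$) and summing over $m\ne 0$, splitting at $|m|=K$,
\[
\sum_{\bv\in D(\bu,\varepsilon)}\Big(\tfrac{|\bu|}{|\bv|}\Big)^t\le 1+\frac{t}{t-1}\Big(2K+2K^{t-1}\!\!\sum_{m>K}m^{1-t}\Big)\le 1+\frac{t}{t-1}\Big(2K+\frac{2^{t-1}K}{t-2}\Big),
\]
where $\sum_{m>K}m^{1-t}\le\int_{K/2}^\infty x^{1-t}\,dx=\tfrac{(K/2)^{2-t}}{t-2}$ and $K^{t-1}(K/2)^{2-t}=2^{t-2}K$; here the hypothesis $t>2$ is exactly what makes the tail converge and produces the factor $1/(t-2)$. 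For $2<t\le 3$ the factors $t/(t-1)$ and $2^{t-1}$ are bounded, $1\le 1/(t-2)$, and $K=2^{2w_1/w_2}$ depends only on $w$, so the right-hand side is $\ll 1/(t-2)$, which is the assertion. The only non-routine point is Step 1 — that $\bu'$ completes $\bu$ to a $\Z$-basis of $H_\bu\cap\Z^3$, equivalently that $\pi_\bu(\bu')$ is primitive in $\pi_\bu(\Z^3)$ — which rests on $\pi_\bu(\bu')$ realising the first minimum $r(\bu)$ of $\Lambda_\bu$ for $\|\cdot\|_w$ via $(\ref{eq;rv min})$ and on the elementary scaling inequality $\|c\,y\|_w\ge|c|^{1/w_1}\|y\|_w$ for $|c|\ge 1$; once the parametrisation $\bv=m\bu'+j\bu$ is available, Steps 2–3 are direct computations with the quasi-norm and a geometric sum.
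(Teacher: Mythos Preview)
Your proof is correct and rests on the same key observation as the paper's: the projection $\pi_\bu$ sends $H_\bu\cap\Z^3$ onto a rank-one lattice generated by a vector of $w$-norm $r(\bu)$, which is exactly your Step~1 and the paper's assertion $\Lambda_\bu'=\Z x$ with $\|x\|_w=r(\bu)$. The only difference is bookkeeping: the paper groups $D(\bu,\varepsilon)$ into shells $D_k=\{\bv:k|\bu|\le|\bv|<(k+1)|\bu|\}$, shows $\sharp D_k\ll k$ via the same quasi-norm inequality you use, and then sums $\sum k/k^t$, whereas you parametrise directly by the basis coefficients $(m,j)$ and sum first over $j$ (an arithmetic progression) and then over $m$ --- a Fubini-equivalent reorganisation of the same estimate.
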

\begin{proof}
	For each $k\in \N$, let 
	\[
	D_{k}=\left\{ \bv\in D\left(\bu, \varepsilon\right)\::\: k\left|\bu\right|\leq\left|\bv\right|<\left(k+1\right)\left|\bu\right|\right\} .
	\]
	Since 
	\[
	\sum_{\bv\in D(\bu, \varepsilon )}\left(\frac{\left|\bu\right|}{\left|\bv\right|}\right)^{t}=\sum_{k=1}^{\infty}\sum_{\bv\in D_k}\left(\frac{\left|\bu\right|}{\left|\bv\right|}\right)^{t}\leq\sum_{k=1}^{\infty}\frac{\sharp D_{k}}{k^{t}},
	\]
	it  suffices  to show  $\sharp D_{k}\ll k$.

	Let $\pi_\bu$ and $\Lambda_\bu$ be as in (\ref{eq;pi v}) and (\ref{eq;lambda v}) respectively.  
	Since $\bu$ is a primitive vector of $\Z ^{3}$, the projection
	$\pi_{\bu}$ induces a bijection between 
	\[
	\left\{ \bv\in\mathbb{Z}^{3}\::\: k\left|\bu\right|\leq |\bv|<\left(k+1\right)\left|\bu\right|\right\} 
	\]
	and  $\Lambda_\bu$
	(note that  $\pi_{\bu}\left(\bv\right)=\pi_{\bu}\left(\bw\right)$ implies 
	 $\left|\bv\right|\equiv\left|\bw\right|\mbox{mod}\left|\bu\right|$).
	It follows that $\pi_{\bu}$ induces a bijection between 
	\[
	\left\{ \bv\in H_{\bu}\cap\Z^{3}\::\: k\left|\bu\right|\leq |\bv|<\left(k+1\right)\left|\bu\right|\right\} 
	\]
	and $\Lambda_{\bu}^{\prime}:=\Lambda_{\bu}\cap\pi_{\bu}\left(H_{\bu}\right)$.
	According to the  definition of $D(\bu, \varepsilon)$ and (\ref{eq;rv min})
	there exists  $ x\in \Lambda_{\bu}$ such
	that $\|{x }\|_w=r\left(\bu\right)$ and  $\Lambda_\bu'=\Z x $.
	Let $\bv\in D_k$, then $\pi_\bu(\bv)=s x$ for some $s\in \Z$. 
	Let $i\in \{1, 2 \}$ such that $\|x\|_w=|x_i|^{1/w_i}$. Then 
\begin{align*}
A\left(\widehat{\bv},\bu\right)&=\| |\bu|\widehat \bv-|\bu |\widehat \bu  \|_w=
\| |\bu| |\bv|^{-1} \cdot  (|\bv| \widehat\bv  -|\bv|\widehat \bu)   \|_w \\
&= \left\| {|\bu|}{|\bv|^{-1}} s x \right\|_w \ge\left ( \frac{|s|}{k+1}\right)^{1/w_i} r(\bu).\end{align*}
On the other hand, we have $A(\widehat \bv, \bu )< 2^{2/w_2}r(\bu)$ since $\bv\in D(\bu, \varepsilon)$.
It follows  that $|s|\ll k$ and hence $\sharp D_k\ll k$. 

%
%
\end{proof}
\begin{lem}\label{lem;equality}
	Let  $0<\varepsilon\le  2^{-2/w_2}$.
	For all   
	 $\bu\in Q_\varepsilon$ and $\bv\in D(\bu, \varepsilon)$
	 one has $\pi_\bu(H_\bu)=\pi_\bv(H_\bv)$  and $H_\bu=H_\bv$.  
\end{lem}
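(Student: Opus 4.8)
The plan is to reduce the whole statement to the single assertion $H_\bu=H_\bv$ and then to a statement about the shortest vector of the lattice $\Lambda_\bv$. If $\bv=\bu$ there is nothing to prove, so assume $\bv\neq\bu$; since $\bu,\bv$ are primitive with positive last coordinate they are $\R$‑linearly independent, hence $H_\bu=\spa_\R\{\bu,\bv\}$. Note that for any $2$‑plane $H\subset\R^3$ with $H\neq\R^2\times\{0\}$ and any $\bg\in H\cap Q$ one has $\pi_\bg(H)=H\cap(\R^2\times\{0\})$ (because $\bg\in H$ forces $\pi_\bg$ to map $H$ onto the line $H\cap\{x_3=0\}$), and $H_\bu\neq\R^2\times\{0\}$ because $|\bu|>1$. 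Therefore, once we know $H_\bu=H_\bv$ we automatically get $\pi_\bu(H_\bu)=H_\bu\cap(\R^2\times\{0\})=H_\bv\cap(\R^2\times\{0\})=\pi_\bv(H_\bv)$. So it suffices to show $H_\bu=H_\bv$; since $\bv\in H_\bu\cap H_\bv$ and both planes are two–dimensional, this is equivalent to $\bv'\in H_\bu$, and since $\bv'\in\Z^3$ and $\ker\pi_\bv=\R\bv\subset H_\bu$ (so $\pi_\bv^{-1}(\pi_\bv(H_\bu))=H_\bu$), it is equivalent to $\pi_\bv(\bv')\in\Lambda_\bv\cap\pi_\bv(H_\bu)$.

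For the structural step, first observe that $\{\bu,\bu'\}$ is a $\Z$‑basis of the rank‑two lattice $H_\bu\cap\Z^3$: by $(\ref{eq;rv min})$ the vector $\pi_\bu(\bu')$, which realizes $r(\bu)=\min_{\Lambda_\bu\setminus\{0\}}\|\cdot\|_w$, is in particular a shortest vector of the rank‑one lattice $\Lambda_\bu\cap\pi_\bu(H_\bu)=\pi_\bu(H_\bu\cap\Z^3)$ and hence generates it, while $\ker(\pi_\bu|_{H_\bu\cap\Z^3})=\Z\bu$. Write $\bv=m_0\bu+n_0\bu'$ with $\gcd(m_0,n_0)=1$ and $n_0\neq0$, choose $\bv^*=m_1\bu+n_1\bu'\in H_\bu\cap\Z^3$ with $m_0n_1-n_0m_1=1$ (so $\{\bv,\bv^*\}$ is again a $\Z$‑basis of $H_\bu\cap\Z^3$); then $z:=\pi_\bv(\bv^*)$ generates $\Lambda_\bv\cap\pi_\bv(H_\bu)=\pi_\bv(H_\bu\cap\Z^3)$ and $\pi_\bv(\bu)=-n_0z$. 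A direct computation gives
\[
A(\widehat\bv,\bu)=\|\pi_\bv(\bu)\|_w=\Bigl\|\tfrac{n_0|\bu|}{|\bv|}\,\pi_\bu(\bu')\Bigr\|_w .
\]
Combining this with $\pi_\bv(\bu)=-n_0z$, with $A(\widehat\bv,\bu)<2^{2/w_2}r(\bu)$ (as $\bv\in D(\bu,\varepsilon)$), with $r(\bu)|\bu|<\varepsilon$, with $|\bv|\ge|\bu|$, with $w_1\ge w_2$, and with $\varepsilon\le 2^{-2/w_2}$, one extracts a key upper bound for $\|z\|_w\,|\bv|$ (treating separately the cases $|n_0|\,|\bu|\le|\bv|$, where one uses the displayed identity to get $\|z\|_w|\bv|\le r(\bu)|\bu|$, and $|n_0|\,|\bu|>|\bv|$, where one instead uses $\|z\|_w\le |n_0|^{-1/w_1}A(\widehat\bv,\bu)$ together with the $D(\bu,\varepsilon)$ condition).

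It then remains to show $\pi_\bv(\bv')\in\Lambda_\bv\cap\pi_\bv(H_\bu)$, i.e.\ that the shortest vector of $\Lambda_\bv$ lies on the line $\ell:=\pi_\bv(H_\bu)$. Since $\Lambda_\bv$ has covolume $1/|\bv|$ and $\Lambda_\bv\cap\ell=\Z z$ is of full rank in $\ell$, every $v\in\Lambda_\bv$ not lying on $\ell$ has Euclidean distance at least $1/(|\bv|\,\|z\|)$ from $\ell$ (Euclidean norm $\|\cdot\|$); whereas the rectangle $\{v:\|v\|_w\le\|z\|_w\}$ meets $\ell$ (it contains $z$) and, because $z\in\ell$, every one of its points lies within Euclidean distance $2\|z\|_w/\|z\|$ of $\ell$. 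Since the structural step makes $\|z\|_w\,|\bv|$ small, a comparison of these two distances forces every $\Lambda_\bv$‑vector with $w$‑quasinorm $\le\|z\|_w$ to lie on $\ell$; in particular $\pi_\bv(\bv')$ lies on $\ell$, so $\bv'\in H_\bu$, $H_\bv=H_\bu$, and the lemma follows. I expect the main obstacle to be precisely this last geometric comparison — turning ``$\|z\|_w\,|\bv|$ small'' into ``every lattice vector shorter than $z$ lies on $\ell$'' — which requires keeping careful track of the extremal configuration of the rectangle relative to $\ell$; this is where the weighting $w_1\ge w_2$ and the calibration of the constants $2^{\pm2/w_2}$ (in the definition of $D(\bu,\varepsilon)$ and in the bound $\varepsilon\le2^{-2/w_2}$) are spent, and it is the only genuinely non‑formal part of the argument.
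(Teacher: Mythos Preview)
Your argument is correct, but it takes a more explicit and hands-on route than the paper's. The paper works inside $\Lambda_\bu$: it lets $y\in\Lambda_\bu\cap\pi_\bu(H_\bu)$ realize $r(\bu)$, uses Minkowski's second theorem on the convex body $K=\{\|x\|_w\le |\bu|^{-1}\}$ to get $\lambda_2(K,\Lambda_\bu)\ge \tfrac12\varepsilon^{-w_2}$, observes that $\Lambda_\bv\subset\Lambda_\bu+\R y$ (since $\bv\in H_\bu$), and concludes that the vector $z$ realizing $r(\bv)$ must lie on $\R y$, for otherwise a nearby translate $z+sy\in\Lambda_\bu\setminus\R y$ would violate the $\lambda_2$ bound. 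By contrast, you construct a generator $z$ of $\Lambda_\bv\cap\pi_\bv(H_\bu)$ explicitly via a $\Z$-basis change in $H_\bu\cap\Z^3$, bound $\|z\|_w|\bv|$ directly, and finish with a covolume argument in $\Lambda_\bv$. The paper's proof is slicker and uses the ready-made inequality $\lambda_1\lambda_2\ge 2\,\cov/\vol$; your proof is more constructive and makes the relevant generator visible.

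Two small simplifications to your write-up. First, the case split on $|n_0|\,|\bu|\lessgtr|\bv|$ is unnecessary: from $\pi_\bv(\bu)=-n_0 z$ and $\pi_\bv(\bu)=-\tfrac{|\bu|}{|\bv|}\,n_0\,\pi_\bu(\bu')$ you get $z=\tfrac{|\bu|}{|\bv|}\pi_\bu(\bu')$ unconditionally, and since $|\bu|/|\bv|\le 1$ and $\|cx\|_w\le |c|^{1/w_1}\|x\|_w\le |c|\,\|x\|_w$ for $|c|\le 1$, this gives $\|z\|_w|\bv|\le r(\bu)|\bu|<\varepsilon$ outright. Second, your anticipated ``main obstacle'' is in fact routine: the comparison you need is exactly $2\|z\|_w|\bv|<1$, and $\varepsilon\le 2^{-2/w_2}\le 2^{-4}$ already delivers it with room to spare, so no delicate tracking of the rectangle-versus-line geometry is required.
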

\begin{proof}
		Since $\bv\in H_\bu$,   $\pi_{\bu}(H_\bu)=\pi_{\bv}(H_\bv)$ implies 
		$H_\bu=H_\bv$. So  it suffices to prove the former. 
	According to (\ref{eq;rv min}) and the assumption $\bu \in Q_\varepsilon$, 
	there exists   $y\in \Lambda_\bu\cap \pi_{\bu}(H_\bu)$ such that 
	\[
	\|y\|_w=r(\bu)\le \frac{\varepsilon}{|\bu|}.
	\]	
	It follows that
	\[
	\lambda_1(K, \Lambda_\bu)\le \varepsilon^{w_2}
	\quad
	\mbox{where} \quad K=\{x\in \R^2: \|x\|_w\le |\bu|^{-1} \}.  
	\]  
		Since $\cov (\Lambda_\bu)=\frac{1}{4}\vol(K)=1/|\bu|$, Minkowski's second theorem (see (\ref{eq;minkowski})) implies that  
		\[\lambda_2(K, \Lambda_\bu)\ge {2^2 \over 2!} \cdot \lambda_1^{-1}\cdot {{\rm cov} (\Lambda_\bu) \over {\rm vol}(K)}\ge 2^{-1}\varepsilon^{-w_2}.\]
	Similarly, there exists 
	$z\in \Lambda_\bv\cap \pi_{\bv}(H_\bv)$ such that 
	\begin{align}\label{eq;starbuck}
	\|z\|_w=r(\bv)\le \frac{\varepsilon}{|\bv|}\le  \frac{\varepsilon}{|\bu|}.
	\end{align}
	Since $\bv\in H_\bu$ one has $\pi_\bu(\bx)-\pi_\bv(\bx)\in \R y$
	for all $\bx\in \R^3$.
		It follows that $\Lambda_\bv\subset \Lambda_\bu+\R y$, and hence  
 $z\in \Lambda_\bu+\R y$.
 We will show that  $z\in \R y$, 
 which  will imply  $\pi_{\bu}(H_\bu)=\pi_{\bv}(H_\bv)$ and   complete the proof.

Suppose $z\not \in \R y$, then there exists 
   $0\le s<1$ such that $z+sy\in \Lambda_\bu \setminus \R y$. 
    So 
   \begin{align*}
   \|z\|_K\ge \|z+sy\|_K-\|y\|_K \ge {2^{-1}}{\varepsilon^{-w_2}}- \varepsilon^{w_2} \ge 1
   \end{align*}
   where $\|\cdot\|_K$ be the norm on $\R^2$ defined in (\ref{eq;knorm}).   
    This contradicts (\ref{eq;starbuck}). Therefore $z\in \R y$.
\end{proof}

\begin{lem}\label{lemma-E}
	Let $0< \varepsilon \le 2^{-2/w_2}$.
	For all $\bu\in Q_\varepsilon$,   $\bv\in D(\bu, \varepsilon)$
	and  $2<t\le 3$ one has 
	\begin{align}\label{eq;sofa}
	\sum_{\bw\in E\left(\bu,\bv,\varepsilon\right)}\left(\frac{\left|\bv\right|}{\left|\bw\right|}\right)^{t}\ll\frac{\varepsilon}{t-2}.
	\end{align}
\end{lem}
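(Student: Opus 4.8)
The plan is to reduce the sum over $\bw\in E(\bu,\bv,\varepsilon)$ to a lattice-point count in $\R^2$ via the projection $\pi_\bv$, then group the lattice points into dyadic-type shells and estimate the number of points in each shell using the counting lemmas of \S\ref{sec;count}. First I would fix $\bu\in Q_\varepsilon$ and $\bv\in D(\bu,\varepsilon)$, and recall from Lemma \ref{lem;equality} that $H_\bu=H_\bv$, so $\bv$ has the same $r$-minimum structure; in particular $r(\bv)\le \varepsilon/|\bv|$ and, by \eqref{eq;rv min}, the lattice $\Lambda_\bv=\pi_\bv(\Z^3)$ (covolume $1/|\bv|$) has a very short vector $y$ with $\|y\|_w=r(\bv)$. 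Writing $K_\bv=\{x\in\R^2:\|x\|_w\le |\bv|^{-1}\}$, Minkowski's second theorem \eqref{eq;minkowski} gives $\lambda_1(K_\bv,\Lambda_\bv)\le\varepsilon^{w_2}$ and $\lambda_2(K_\bv,\Lambda_\bv)\gg \varepsilon^{-w_2}$, exactly as in the proof of Lemma \ref{lem;equality}.

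Next I would translate membership in $E(\bu,\bv,\varepsilon)$ into a condition on $\pi_\bv(\bw)$. For $\bw\in Q$ with $|\bw|>|\bv|$, the condition $A(\widehat\bw,\bv)<\varepsilon|\bw|^{-1}$ says $\|\pi_\bv(\bw)\cdot\tfrac{|\bv|}{|\bw|}\|_w<\varepsilon|\bw|^{-1}$, i.e. $\|\pi_\bv(\bw)\|_w< \varepsilon |\bv|^{-1}(|\bw|/|\bv|)^{?}$ after tracking the weighted-norm scaling; the requirement $\bw\notin H_\bu=H_\bv$ means $\pi_\bv(\bw)\notin \R y$, so these points are "second-minimum" points of $\Lambda_\bv$. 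As in Lemma \ref{lemma-D}, grouping $\bw$ according to $k|\bv|\le |\bw|<(k+1)|\bv|$ and using that $\pi_\bv$ is injective on each such shell modulo $\R y\cap\Lambda_\bv$, I would bound $\sharp\{\bw\in E(\bu,\bv,\varepsilon): k|\bv|\le|\bw|<(k+1)|\bv|\}$ by the number of points of $\Lambda_\bv$ lying in a rectangle (in the $K_\bv$-norm) of the appropriate size that are not on the line $\R y$. Because $\lambda_1\le \varepsilon^{w_2}$ is tiny while $\lambda_2\gg\varepsilon^{-w_2}$ is large, such a rectangle of radius $\asymp \varepsilon k$ (in suitable units) contains $\asymp \varepsilon k\cdot(\varepsilon k)$ points counting the short direction, of which only those with a nonzero second coordinate contribute; the count in the $y$-direction is $\asymp \varepsilon k/\lambda_1$ and in the transverse direction $\asymp \varepsilon k/\lambda_2\asymp \varepsilon^{1+w_2}k$, so by Lemmas \ref{cor;count 1} and \ref{lem;count 2} the number of off-line points is $\ll \varepsilon k\cdot \varepsilon k\cdot$(something)$\ll \varepsilon\cdot k$ after the $\lambda_2$ factor eats one power of $k$ and produces the gain of $\varepsilon$. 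Summing $\sum_{k\ge 1}(\varepsilon k)/k^t=\varepsilon\sum_k k^{1-t}\ll \varepsilon/(t-2)$ for $2<t\le 3$ then yields \eqref{eq;sofa}.

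I expect the main obstacle to be the precise bookkeeping in the shell count: one must verify that a point $\bw$ with $|\bw|\asymp k|\bv|$ and $A(\widehat\bw,\bv)<\varepsilon|\bw|^{-1}$ projects to a point of $\Lambda_\bv$ whose $K_\bv$-norm is $\ll \varepsilon k$ but whose component \emph{transverse} to $\R y$ is forced to be $\gg \lambda_2(K_\bv,\Lambda_\bv)^{-1}\cdot(\text{spacing})$, so that the transverse count in a radius-$\varepsilon k$ box is $\ll \varepsilon k/\lambda_2 \ll \varepsilon^{1+w_2}k$ rather than $\ll\varepsilon k$; combined with the $\ll \varepsilon k$ count in the $y$-direction this would even give $\ll\varepsilon^{2+w_2}k^2$, which is more than enough, but one must be careful that the relevant rectangle is genuinely the $K_\bv$-ball of radius $\asymp\varepsilon k$ and that injectivity of $\pi_\bv$ on each shell is used correctly (this is where $\bu$ primitive and the structure of $H_\bv$ enter). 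The weighted quasi-norm and its scaling behaviour under $x\mapsto \lambda x$ (namely $\|\lambda x\|_w=\lambda\|x\|_w$ when $\lambda>0$, since $\|\cdot\|_w$ is the $w$-weighted analogue of $\max$) must be tracked carefully, but beyond that the argument is a routine adaptation of the proof of Lemma \ref{lemma-D} with the extra saving coming from the large second minimum $\lambda_2(K_\bv,\Lambda_\bv)\gg\varepsilon^{-w_2}$, which is where the factor $\varepsilon$ on the right-hand side of \eqref{eq;sofa} originates.
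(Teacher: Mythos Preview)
Your overall plan—project via $\pi_\bv$, shell by $k|\bv|\le|\bw|<(k+1)|\bv|$, show $\sharp E_k\ll\varepsilon k$, and sum $\sum_{k\ge1}k^{1-t}\ll 1/(t-2)$—is exactly the paper's route, and the use of Lemma~\ref{lem;equality} to identify $H_\bu=H_\bv$ (so that $\bw\notin H_\bu$ becomes $\pi_\bv(\bw)\notin\R y$) is correct. Two places in your bookkeeping need repair, though.

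First, the scaling claim $\|\lambda x\|_w=\lambda\|x\|_w$ is false: since $\|x\|_w=\max_i|x_i|^{1/w_i}$ with $w_i<1$, one has $\|\lambda x\|_w=\max_i\lambda^{1/w_i}|x_i|^{1/w_i}$, which is not homogeneous of degree $1$. As a consequence the region containing $\pi_\bv(E_k)$ is \emph{not} a $K_\bv$-ball of radius $\asymp\varepsilon k$. Writing $z_\bw=\pi_\bv(\bw)$, the identity $A(\widehat\bw,\bv)=\bigl\|\,|\bv||\bw|^{-1}z_\bw\bigr\|_w$ together with $A(\widehat\bw,\bv)<\varepsilon|\bw|^{-1}$ yields $|z_{\bw,i}|\le (k+1)\bigl(\varepsilon/(k|\bv|)\bigr)^{w_i}$, i.e.\ $z_\bw$ lies in
\[
M_k:=\Bigl\{x\in\R^2:\ \bigl\|(k+1)^{-1}x\bigr\|_w\le \tfrac{\varepsilon}{k|\bv|}\Bigr\},
\]
an anisotropic rectangle whose area is $4(k+1)^2\,\varepsilon/(k|\bv|)\asymp\varepsilon k/|\bv|$ (here $w_1+w_2=1$ is used). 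This volume is the true source of the factor $\varepsilon$.

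Second, your mechanism for the $\varepsilon$ gain—counting separately along and transverse to $\R y$ and invoking $\lambda_2(K_\bv,\Lambda_\bv)\gg\varepsilon^{-w_2}$—is more than needed and, built on the wrong region, the arithmetic does not close as written. The paper's argument is shorter: the short vector $y$ always lies in $M_k$ (because $\|y\|_w\le\varepsilon/|\bv|$ and scaling by $(k+1)^{-1}$ shrinks each coordinate by at least $k^{-w_i}$), and whenever $E_k\neq\emptyset$ some $z_\bw\in M_k\cap\Lambda_\bv$ lies off $\R y$. Hence $\lambda_2(M_k,\Lambda_\bv)\le1$, and a single application of Lemma~\ref{cor;count 1} gives
\[
\sharp E_k\ \le\ \sharp(M_k\cap\Lambda_\bv)\ \ll\ \frac{\vol(M_k)}{\cov(\Lambda_\bv)}\ \asymp\ \varepsilon k.
\]
So the hypothesis $\bw\notin H_\bu$ is used only qualitatively, to ensure $\lambda_2(M_k,\Lambda_\bv)\le1$; no separate transverse count, and no appeal to the size of $\lambda_2(K_\bv,\Lambda_\bv)$, is required.
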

\begin{proof}

Let $\Lambda_\bv=\pi_\bv(\Z^3)$ and  $\Lambda_\bv^\circ=\Lambda_\bv\setminus \pi_\bv(H_\bu)$.
Since $H_\bu=H_\bv$ according to  Lemma \ref{lem;equality}, we have 
$\Lambda_\bv^\circ=\Lambda_\bv\setminus \pi_\bv(H_\bv)$.
 Let $y\in \Lambda_\bv\cap\pi_\bv( H_\bv)$ with $\|y\|_w=r(\bv)\le \veps/|\bv|$.
For each $k\in \N$, let 
\[
E_{k}=\left\{ \bw\in E\left(\bu,\bv,\varepsilon\right)\::\: k\left|\bv\right|\leq\left|\bw\right|<\left(k+1\right)\left|\bv\right|\right\} .
\]
Since
\[
\sum_{\bw\in E\left(\bu,\bv,\varepsilon\right)}\left(\frac{\left|\bv\right|}{\left|\bw\right|}\right)^{t}=\sum_{k=1}^{\infty}\sum_{\bw\in E_k}\left(\frac{\left|\bv\right|}{\left|\bw\right|}\right)^{t}\leq\sum_{k=1}^{\infty}\frac{\sharp E_k}{k^{t}},
\]
it  suffices  to show that $\sharp E_{k}\ll \veps k$. 
%
%
Let $\bw\in E_k$ and write $z_\bw=\pi_\bv(\bw)$. 
Then $\bw=\left(z_\bw+{|\bw|}{\widehat \bv},|\bw|\right)$ and hence 
by the definition of $E(\bu, \bv, \varepsilon)$
\begin{align}\label{eq;A of proj}
A(\widehat \bw,\bv)=\left\|{|\bv|}{|\bw|^{-1}}z_\bw\right\|_w\le \veps|\bw|^{-1}.
\end{align}

Consider the convex set \[M_k=\left \{x\in \R^2:\left \|\frac{1}{k+1}x\right \|_w\le\frac{\veps }{k|\bv|}\right \}.\]
In view of (\ref{eq;A of proj}) and the  definition of $E_k$ we have the inclusion $\pi_\bv(E_k)\subseteq \Lambda_{\bv}\cap M_k$. So $\sharp E_k\le \sharp M_k\cap \Lambda_\bv$ since $\pi_\bv|_{E_k}$ is injective.

Note that $y\in \Lambda_\bv $ is always in $M_k$. 
So if  $E_k$ is nonempty we have $\lambda_2(M_k,\Lambda_{\bv})\le 1$. 
  Hence by Lemma \ref{cor;count 1} we get 
 \[\sharp M_k\cap\Lambda_\bv\ll \frac{\vol\, M_k}{\cov\, \Lambda_\bv}. \]
Note  that $\vol\, M_k=\frac{4\veps}{k|\bv|}(k+1)^2$ and   $\cov\, \Lambda_\bv=\frac{1}{|\bv|}$.  Therefore  
$\sharp E_k\le\sharp M_k\cap\Lambda_\bv\ll \veps k$ as desired.
\end{proof}


Now we estimate the upper bound of the Hausdorff dimension of  $\mathrm{Sing}(w)^*$. 
\begin{thm}\label{thm;estimate} 
	There exists $C> 0$ such that for all   $0< \varepsilon \le 2^{-2/w_2}$  the
	 Hausdorff dimension of $\mathcal F(Q_\varepsilon, \sigma_\varepsilon, \beta)$
	 is less  than  or equal to
	 \begin{align}\label{eq;upper bound}
	 2-\frac{1}{1+w_1}+C\sqrt{\veps}. 
	 \end{align}
	 Therefore 
	the Hausdorff dimension of  $\mathrm{Sing}(w)^*$ and $\mathrm{Sing}(w)$ is less   than  or equal to 
	$
	2-\frac{1}{1+w_1}.
	$
	\end{thm}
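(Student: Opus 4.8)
The plan is to apply Lemma~\ref{thm-general-upper} to the admissible fractal relation $(Q_\varepsilon,\sigma_\varepsilon,\beta)$ with an exponent $s$ chosen just above $2-\frac{1}{1+w_1}$, and then to combine the resulting bound with the inclusion $\mathrm{Sing}(w)^*\subset\mathcal F(Q_\varepsilon,\sigma_\varepsilon,\beta)$ of Lemma~\ref{lem;cover}, letting $\varepsilon\to 0$. First I would record the elementary facts needed to invoke Lemma~\ref{thm-general-upper}: by \eqref{eq;rectangle} the set $\beta(\bu)$ is a rectangle of size $W(\bu)\times L(\bu)$ with $W(\bu)=2|\bu|^{-(1+w_1)}$ and $L(\bu)=2|\bu|^{-(1+w_2)}$, and since $w_1\ge w_2$ and $|\bu|>1$ one has $W(\bu)\le L(\bu)$; moreover $(Q_\varepsilon,\sigma_\varepsilon,\beta)$ is admissible, because $(\bu,\bw)\in\sigma_\varepsilon$ forces $|\bw|>|\bu|$ (hence $\mathrm{diam}\,\beta(\bw)<\mathrm{diam}\,\beta(\bu)$) and $|\bu_i|\to\infty$ along any sequence in $\partial\sigma_\varepsilon$. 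A direct computation gives
\[
L(\bu)\,W(\bu)^{s-1}=2^{s}\,|\bu|^{-t},\qquad t:=(1+w_1)s-(w_1-w_2),
\]
and one checks $t=2$ precisely when $s=2-\frac{1}{1+w_1}$, so that $t-2=(1+w_1)\big(s-(2-\tfrac{1}{1+w_1})\big)$.

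Next I would verify the hypothesis \eqref{eq;liao assume} of Lemma~\ref{thm-general-upper} for this pair $(s,t)$. Fix $\bu\in Q_\varepsilon$. Using $\sigma_\varepsilon(\bu)=\bigcup_{\bv\in D(\bu,\varepsilon)}E(\bu,\bv,\varepsilon)$, then Lemma~\ref{lemma-E}, then Lemma~\ref{lemma-D} (both valid in the range $2<t\le 3$, arranged below), with implied constants $C_E,C_D$ depending only on $w$,
\begin{align*}
\sum_{\bw\in\sigma_\varepsilon(\bu)}L(\bw)W(\bw)^{s-1}
&\le 2^{s}\sum_{\bv\in D(\bu,\varepsilon)}|\bv|^{-t}\sum_{\bw\in E(\bu,\bv,\varepsilon)}\left(\frac{|\bv|}{|\bw|}\right)^{t}\\
&\le \frac{2^{s}C_E\,\varepsilon}{t-2}\,|\bu|^{-t}\sum_{\bv\in D(\bu,\varepsilon)}\left(\frac{|\bu|}{|\bv|}\right)^{t}
\le \frac{2^{s}C_DC_E\,\varepsilon}{(t-2)^2}\,|\bu|^{-t}.
\end{align*}
Hence \eqref{eq;liao assume}, namely $\frac{C_DC_E\,\varepsilon}{(t-2)^2}\le 1$, holds as soon as $t-2\ge\sqrt{C_DC_E\,\varepsilon}$, i.e. $s\ge 2-\frac{1}{1+w_1}+C\sqrt{\varepsilon}$ with $C:=\frac{\sqrt{C_DC_E}}{1+w_1}$, a constant depending only on $w$.

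Then I would finish as follows. If $C\sqrt{\varepsilon}>\frac{1}{1+w_1}$ the asserted bound \eqref{eq;upper bound} exceeds $2$ and is trivial, since any subset of $\R^2$ has Hausdorff dimension at most $2$; so assume $C\sqrt{\varepsilon}\le\frac{1}{1+w_1}$ and take $s=2-\frac{1}{1+w_1}+C\sqrt{\varepsilon}$, for which indeed $2<t\le 3$. Lemma~\ref{thm-general-upper} then gives $\dim_H\mathcal F(Q_\varepsilon,\sigma_\varepsilon,\beta)\le s$, which is \eqref{eq;upper bound}. By Lemma~\ref{lem;cover} this yields $\dim_H\mathrm{Sing}(w)^*\le 2-\frac{1}{1+w_1}+C\sqrt{\varepsilon}$ for every $0<\varepsilon\le 2^{-2/w_2}$; letting $\varepsilon\to 0$ gives $\dim_H\mathrm{Sing}(w)^*\le 2-\frac{1}{1+w_1}$. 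Finally, as noted at the beginning of \S\ref{sec;upper best}, the set $\mathrm{Sing}(w)\setminus\mathrm{Sing}(w)^*$ is contained in a countable union of lines and so has Hausdorff dimension at most $1$, which is strictly less than $2-\frac{1}{1+w_1}$ because $w_1\ge w_2$ forces $w_1\ge\frac12$; therefore $\dim_H\mathrm{Sing}(w)\le 2-\frac{1}{1+w_1}$, and together with Theorem~\ref{thm;lower bound} this completes the proof of Theorem~\ref{thm;main}.

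The genuine difficulty is already absorbed into Lemmas~\ref{lemma-D} and~\ref{lemma-E}, which count, respectively, the descendants lying in the ``bad'' hyperplane $H_\bu$ and those transverse to it; for the present statement the only points that require care are the exponent substitution $s\leftrightarrow t$ (so that the critical value $t=2$ matches $s=2-\frac{1}{1+w_1}$ and the two $(t-2)^{-1}$ factors combine into the decisive $(t-2)^{-2}$), and the observation that the implied constants in those two lemmas depend on $w$ alone, so that $C$ can be taken uniform in $\varepsilon$ and in $\bu$.
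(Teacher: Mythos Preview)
Your proof is correct and follows essentially the same approach as the paper: reduce to the key inequality \eqref{eq;liao assume} of Lemma~\ref{thm-general-upper}, rewrite it via the substitution $t=(s-1)(1+w_1)+1+w_2$, and then combine Lemmas~\ref{lemma-D} and~\ref{lemma-E} to bound the sum by $C_DC_E\,\varepsilon/(t-2)^2$, yielding the choice $C=\sqrt{C_DC_E}/(1+w_1)$. Your treatment is in fact slightly more careful than the paper's, since you explicitly verify the range $2<t\le 3$ needed for those lemmas and dispose separately of the trivial case $C\sqrt{\varepsilon}>\tfrac{1}{1+w_1}$.
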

\begin{proof}
	Let $C' $ be the product of implied constants of (\ref{eq;lemma-D}) and (\ref{eq;sofa}). 
	By Lemma \ref{lem;cover} and the discussion at the beginning of \S \ref{sec;upper best} it suffices 
	to 
	show that   the Hausdorff dimension of $\mathcal F(Q_\varepsilon, \sigma_\varepsilon, \beta)$
	is less than or equal to (\ref{eq;upper bound}) for $C= {1 \over 1+w_1} \sqrt{C'}$.
    In view of  Lemma \ref{thm-general-upper} it suffices to show that for all 
	\begin{align}\label{eq;finish}
	s>2-\frac{1}{1+w_1}+ C \sqrt{\varepsilon}
	\end{align}
	one has 
	\begin{align}\label{eq;estimate 1}
	\sum_{ \bw \in \sigma_{\varepsilon}(\bu)} L( \bw )\cdot W( \bw )^{s-1} \leq L(\bu)\cdot W(\bu)^{s-1}
	\end{align}
	where $L(\bw)=2|\bw|^{-w_2-1}$ and $W(\bw)=2|\bw|^{-w_1-1}$. 
	
    Note that (\ref{eq;estimate 1}) is equivalent to 
	\begin{eqnarray}\label{eq;finish 1}
		\sum_{ \bw \in\sigma_{\varepsilon}\left(\bu\right)}\left(\frac{\left|\bu\right|}{\left| \bw \right|}\right)
		^{(s-1)(w_1+1)+w_2+1} \leq 1.
	\end{eqnarray}
	By Lemmas \ref{lemma-D} and \ref{lemma-E},  for all $t>2$ one has 
	\begin{eqnarray*}
	\sum_{ \bw \in\sigma_{\varepsilon}\left(\bu\right)}\left(\frac{\left|\bu\right|}{\left| \bw \right|}\right)^{t}\leq\sum_{\bv\in D\left(\bu, \varepsilon\right)}\left(\frac{\left|\bu\right|}{\left|\bv\right|}\right)^{t}\sum_{ \bw \in E\left(\bu,\bv,\varepsilon\right)}\left(\frac{\left|\bv\right|}{\left| \bw \right|}\right)^{t}\leq\frac{C' \cdot\varepsilon}{\left(t-2\right)^{2}}.
	\end{eqnarray*}
	Plugging in   $t=(s-1)(w_1+1)+w_2+1$ which is $>2+ \sqrt{C'\veps}$ by 
    (\ref{eq;finish}) in the  above inequality, we get 
	 (\ref{eq;finish 1}).
\end{proof}

\begin{proof}[Proof of Theorem \ref{thm;main}]
	The authentic weighted cases ($w_1>w_2$)
	 follow from Theorems \ref{thm;lower bound} and \ref{thm;estimate} and the unweighted case
	 ($w_1=w_2$) is proved in \cite{c11}. 
\end{proof}

\begin{proof}[Proof of Theorem \ref{thm;improve}]
	We will use notations of  \S \ref{sec;best}. 
	Let $\DI (w, \varepsilon )^*$ be the set of $x\in \DI (w, \varepsilon )$ such that 
	$1, x_1, x_2$ are linearly independent over $\Q$. 
	Let  $x\in \DI (w, \varepsilon )^*$ and let $\Sigma_x=\{\bu_i \}_{i\in \N}$ be the 
	fixed sequence of $w$-best approximates of $x$. 
	It follows from definition that there exists  $i_1\in \N$ such that for $i\ge i_1$ one has 
	\begin{align}\label{eq;shoe}
	A(x, \bu_i)< \frac{\varepsilon}{|\bu_{i+1}|}. 
	\end{align}
	On the other hand the first inequality of (\ref{eq;best bound}) implies that for all $i\in\N$
	\begin{align}\label{eq;shoe 1}
	2^{-1/w_2}r(\bu_{i+1})<A(x, \bu_i). 
	\end{align}
	It follows form (\ref{eq;shoe}) and (\ref{eq;shoe 1}) that  for all $i\ge i_1$
	\[
	r(\bu_{i+1}) < \frac{2^{1/w_2}\varepsilon}{|\bu_{i+1}|}. 
	\]
    Note that in the proof of 
	Lemma \ref{lem;cover}, we only use (\ref{eq;raining}) and the fact that    $1, x_1, x_2$ are linearly independent over $\Q$. Therefore the same  argument implies 
	\[
	x\in \mathcal F(Q_{\varepsilon2^{3/w_2}}, \sigma_{\varepsilon2^{3/w_2}}, \beta).
	\] 
	So we have 
	 \[
	 \DI(w, \varepsilon)^*\subset \mathcal F(Q_{\varepsilon 2^{3/w_2}}, \sigma_{\varepsilon 2^{3/w_2}}, \beta). 
	 \]
 By Theorem \ref{thm;estimate}
	 \begin{align}\label{eq;additional}
	 \dim_H  \DI(w, \varepsilon)^* \le 2-\frac{1}{1+w_1}+C\sqrt \varepsilon
	 \end{align}
	 where the constant $C$ 
	 is  independent of $\varepsilon$. 
	 The conclusion of Theorem \ref{thm;improve} follows from (\ref{eq;additional})
     and the observation that 	 
	   $\DI(w, \varepsilon)\setminus \DI(w, \varepsilon)^* $ is contained in 
	 a countable union of lines in $\R^2$. 
	
\end{proof}

\textbf{Acknowledgements:} We would like to thank Barak Weiss and the referee  for carefully reading the draft  and 
helping us improve the paper.

\end{document}